\newcommand{\aff}{\mathrm{Trans}}
\newcommand{\rk}{\mathrm{rk}}
\newcommand{\Hom}{\mathrm{Hom}}
\newtheorem{lemma}{Lemma}[section]
\newtheorem{corollary}{Corollary}[section]
\newtheorem{theorem}{Theorem}
\newtheorem{proposition}{Proposition}[section]
\newtheorem{definition}{Definition}[section]
\theoremstyle{definition}
\newtheorem{remark}{Remark}[section]
\newtheorem*{claim}{Claim}
\begin{document}

\title{Nilspaces, nilmanifolds and their morphisms}
\author{O.Antolin Camarena~~~~~ Bal\'azs Szegedy}

\maketitle

\abstract{Recent developments in ergodic theory, additive
  combinatorics, higher order Fourier analysis and number theory give
  a central role to a class of algebraic structures called {\it
    nilmanifolds}. In the present paper we continue a program started
  by Host and Kra. We study {\it nilspaces} that are structures
  satisfying a variant of the Host-Kra axiom system for parallelepiped
  structures. We give a detailed structural analysis of abstract and
  compact topological nilspaces. Among various results it will be
  proved that compact nilspaces are inverse limits of finite
  dimensional ones. Then we show that finite dimensional compact
  connected nilspaces are nilmanifolds with an extra algebraic
  structure determined by a filtration on the corresponding Lie
  group. The theory of compact nilspaces is a generalization of the
  theory of compact abelian groups. This paper is the main algebraic
  tool in the second authors approach to Gowers's uniformity norms and
  higher order Fourier analysis.}

\tableofcontents

\section{Introduction}

We start with the formal definition of $k$-step nilmanifolds.

\begin{definition} Let $L$ be a $k$-nilpotent Lie group. This means that the $k$-fold iterated commutator $$[\dots[[L,L],L],L\dots]$$ is trivial. Let $\Gamma$ be a co-compact subgroup in $L$. The left coset space $N=L/\Gamma$ is a compact topological space which is called a $k$-step {\bf nilmanifold}.
\end{definition}

Nilmanifolds were first introduced and studied by Mal'cev \cite{Malc} in 1951. He proved many crucial facts which can be also found in the book \cite{Rag}. Nilmanifolds are interesting from a purely geometric point of view \cite{Grom},\cite{Miln}. However recent development \cite{GrTao},\cite{HKr},\cite{Zieg} shows their important role in ergodic theory and additive combinatorics. In addition to this a spectacular application of nilmanifolds (more precisely nil-sequences) in number theory was presented by Green and Tao in \cite{GrTao2},\cite{GTZ}. 

\medskip

The main motivation for this paper comes from higher order Fourier analysis. Let $f$ be a bounded measurable function on a compact abelian group $A$. We denote by $\Delta_t f$ the function $x\mapsto f(x)\overline{f(x+t)}$. The $U_k$ uniformity norm of $f$ introduced by Gowers \cite{Gow},\cite{Gow2} is defined by
$$\|f\|_{U_k}=\bigl(\mathbb{E}_{t_1,t_2,\dots,t_k}\Delta_{t_1,t_2,\dots,t_k}(f)\Bigr)^{2^{-k}}.$$

In particular it can be computed that $$\|f\|_{U_2}=\Bigl(\sum_{\chi\in\hat{A}}|(f,\chi)|^4\Bigr)^{1/4}$$ where $\hat{A}$ is the dual group of $A$.
This formula explains the behavior of the $U_2$ norm through ordinary Fourier analysis.

Based on results in ergodic theory \cite{HKr},\cite{Zieg} it is expected that the behavior of the $U_k$ norm is in some sense connected to $k-1$ step nilmanifolds. However to clarify the precise connection (at least in the second author's interpretation) one needs a generalization of $k$-step nilmanifolds that we call $k$-step nilspaces. 
These objects, with a different set of axioms, were first introduced by Host and Kra \cite{HKr} and studied for $k=2$. Their pioneering work paved the way for our theory developed in this paper. 

Before giving the precise definition of $k$-step nilspaces we give a list of motivations and reasons to generalize nilmanifolds.

\begin{enumerate}
\item The structures which naturally arise in ergodic theory are not nilmanifolds but inverse limits of them.
\item A $k$-step nilspace (even if it is a nilmanifold topologically) has an extra algebraic structure which is needed in Higher order Fourier analysis.
\item In higher order Fourier analysis it will be convenient to study morphisms between nilmanifolds and nilspaces. It turns out that nilspaces are more natural for this purpose than nilmanifolds.
\item To study Gowers norms of functions on abelian groups with many bounded order elements nilmanifolds are not enough.
\item Nilspaces are directly defined through a simple set of axioms. This helps to separate the algebraic and analytic difficulty in Higher order Fourier analysis.
\item Gowers norms can be naturally defined for functions on compact nilspaces. This means that the notion of Higher order Fourier analysis naturally extends to them.
\item Related to the so-called limit theory for graphs and hypergraphs, interesting limit notions can be defined for functions on abelian groups. The limit objects are functions on nilspaces.
\end{enumerate}

The axiom system of nilspaces is a variant of Host-Kra's axiom system
\cite{HKr2} for parallelepiped structures.  Roughly speaking, a
nilspace is a structure in which cubes of every dimension are defined
and they behave in a very similar way as in an abelian group.  An
abstract $n$-dimensional cube is the set $\{0,1\}^n$.  A cube morphism
$\phi:\{0,1\}^n\rightarrow\{0,1\}^m$ is a map which extends to an
affine homomorphism (a homomorphism plus a shift) from $\mathbb{Z}^n$
to $\mathbb{Z}^m$. Equivalently, cube morphisms are those functions $f
: \{0,1\}^n \to \{0,1\}^m$ that can be written as $f(x_1, \ldots, x_n)
= (y_1, \ldots, y_m)$ where each $y_i$ is either $0$, $1$, $x_j$ or
$1-x_j$ for some $j$ (depending on $i$).  A nilspace is a set $N$
together with sets $C^n(N)\subseteq N^{\{0,1\}^n}$ of $n$ dimensional
cubes $f:\{0,1\}^n\rightarrow N$ for every integer $n\geq 0$ which
satisfy the following three axioms:

\begin{enumerate}
\item {\bf(composition)}: If $\phi:\{0,1\}^n\rightarrow\{0,1\}^m$ is a
  cube morphism and $f\in C^m(N)$ then $f \circ \phi\in C^n(N)$
\item {\bf(ergodicity)}: $C^1(N)=N^{\{0,1\}}.$
\item {\bf(gluing)}: Let $f:\{0,1\}^n\setminus 1^n$ be a map whose restrictions to $n-1$ dimensional faces containing $0^n$ are all cubes. Then $f$ extends to $\{0,1\}^n$ as an element in $C^n(N)$.
\end{enumerate}

We don't always assume the ergodicity axiom. If $N$ is not ergodic but satisfies the weaker condition $C^0(N)=N$ (equivalently: constant maps are morphisms) then it can be decomposed into a disjoint union of ergodic nilspaces.
We say that $N$ is a {\bf $k$-step nilspace} if in the gluing axiom the extension is unique for $n=k+1$.
It is not hard to see that $1$-step nilspaces are affine abelian groups with the usual notion of cubes. A cube $f:\{0,1\}^n\rightarrow A$ in an abelian group $A$ is a map which extends to an affine homomorphism from $\mathbb{Z}^n\rightarrow A$.

If a set $N$ satisfies the first axiom and that $C^0(N)=N$ (but not necessarily the others) then we say that $N$ is a {\bf cubespace}.
A {\bf morphism} $h:N\rightarrow M$ between two cubespaces $N$ and $M$ is a cube preserving map. We require that for every $f\in C^n(N)$ the composition $h\circ f$ is in $C^n(M)$. We denote by $\Hom(N,M)$ the set of morphisms between $N$ and $M$. In particular $C^n(N)=\Hom(\{0,1\}^n,N)$. With this notion we can introduce the categories of cubespaces and nilspaces.

We say that $N$ is a compact nilspace if it is a compact, Hausdorff, second countable topological space and $C^n(N)$ is a closed subset of $N^{\{0,1\}^n}$ for every $n\in\mathbb{N}$.
Morphisms between compact nilspaces are required to be continuous.

\begin{remark}
  Our cubespaces are presheaves on the category of cubes and cube
  morphisms, i.e., contravariant functors from that category to the
  category of sets (the composition axiom encodes the
  functoriality). Morphisms of cubespaces are simply natural
  transformations. Given a category $\mathcal{C}$ of ``geometric
  objects'' such as these discrete cubes, taking presheaves on
  $\mathcal{C}$ is a standard way of building a category of
  ``complexes build from gluing together those geometric objects'',
  and we will indeed think of cubespaces as built out of cubes in this
  way. Cubespaces are not arbitrary presheaves, in that a cube in a
  cubespace is determined by its vertices (while for an arbitrary
  presheaf, distinct cubes could share all of their
  boundary). A cubespace structure on a set $N$ is precisely a
  subfunctor of the functor $\{0,1\}^n \mapsto \{$ all functions
  $\{0,1\}^n \to N\}$. The extension condition defining nilspaces
  among all cubespaces is of the same nature as extension conditions
  traditionally used in presheaf categories, such as the Kan extension
  condition for simplicial sets.
\end{remark}

The present paper consists of two parts. In the first part we study abstract nilspaces and in the second part we study compact nilspaces.
The main topics in abstract nilspaces are the following:

\bigskip

\begin{enumerate}
\item For every natural number $k$ and nilspace $N$ we introduce a unique factor of $N$ which is a $k$-step nilspace. Then we prove basic properties of these factors.
\item We give a structure theorem for $k$-step nilspaces in terms of iterated abelian bundles.
\item We introduce a cohomology theory for extensions of nilspaces.
\item We study a sequence of groups $\aff_i(N)$ (introduced by Host and Kra) acting on a $k$-step nilspace $N$. They form a central series in the $k$-nilpotent group $\aff_1(N)$.
\end{enumerate}

\bigskip

The main topics in compact nilspaces are the following:

\bigskip

\begin{enumerate}
\item We generalize the concept of Haar measure for $k$-step compact nilspaces.
\item We prove a rigidity result for morphisms. This means that almost morphisms into finite dimensional nilspaces can be corrected into precise morphisms.
\item We show that a $k$-step compact nilspace is the inverse limit of finite dimensional ones.
\item We show that there are countably many finite dimensional $k$-step nilspaces up to isomorphism.
\item We show that a finite dimensional compact nilspace consists of connected components that have a nilmanifold structure. In particular connected finite dimensional nilspaces are nilmanifolds with a cubic structure related to a filtration of the corresponding Lie group..
\end{enumerate}

\medskip

\noindent{\bf Acknowledgement:}~~The authors are extremely thankful to Yonatan Gutman who carefully read an earlier version of this paper and had numerous comments and suggestions.

\medskip

\subsection{The role of nilspaces in Higher order Fourier analysis}

\bigskip

This chapter is a short explanation of the papers \cite{Sz0} and \cite{Sz4} which connect nilspaces with higher order Fourier analysis.
The main goal in \cite{Sz0} is to establish structure theorems for functions on compact abelian groups in terms of Gowers's uniformity norms.
To be more precise let $f:A\rightarrow\mathbb{C}$ be a measurable function on the compact abelian group $A$ such that $|f|\leq 1$.
The goal is to decompose $f$ as $$f=f_s+f_e+f_r$$ where $f_s$ is a structured part of bounded complexity, $f_e$ is an error with small $L^2$ norm and $f_r$ is quasi random with very small $U_k$ norm. We will refer to this decomposition as the {\bf $U_k$-regularity lemma.} (We omit here the precise statement) 
The main question is the following:~{\it What kind of structure is encoded in $f_s$?}

It is proved in $\cite{Sz0}$ and with a different method in $\cite{Sz4}$ that $f_s$ is the composition of two functions $\psi:A\rightarrow N$ and $g:N\rightarrow\mathbb{C}$ where $N$ is a compact finite dimensional $k-1$-step nilspace of bounded complexity, $\psi$ is a nilspace morphism and $g$ is Lipschitz with bounded constant.
(We omit here the definition of the complexity of a nilspace.)

The proof of the decomposition theorem is based on a decomposition theorem on ultra product groups. Let ${\bf A}$ be the ultra product of finite (or more generally compact) abelian groups.
One can introduce a natural measure space structure on ${\bf A}$ and a $\sigma$-topology (like a topology but only countable unions of open sets need be open).
A topological factor of ${\bf A}$ is given by a surjective continuous map $f:{\bf A}\rightarrow T$ (called factor map) where $T$ is a separable compact Hausdorff space. (Such a factor can also be viewed as an equivalence relation on ${\bf A}$ whose classes are the fibers of $f$.) Every such factor inherits a cubespace structure from ${\bf A}$ by composing the cubes in ${\bf A}$ with the factor map $f$.
A {\bf nilspace factor} of ${\bf A}$ is a topological factor of ${\bf A}$ whose inherited cubespace structure satisfies the nilspace axioms.

The non-standard $U_k$-regularity lemma is simpler to state than the standard one. It says the following. 

\bigskip

\noindent{\bf Non-standard $U_k$-regularity lemma:}~~{\it Every measurable function $f:{\bf A}\rightarrow\mathbb{C}$ with $\|f\|_\infty\leq\infty$ can be (uniquely) decomposed as $f=f_s+f_r$ where $\|f_r\|_{U_k}=0$ and $f_s$ is Borel measurable in a compact $k-1$ step nilspace factor.}

\bigskip

Note that $U_k$ is only a semi-norm on ${\bf A}$ so it is possible that $f_r$ is not $0$ but $\|f_r\|_{U_k}$ is $0$.
The non-standard $U_k$-regularity lemma implies the ordinary one using the results in this paper and standard techniques.

\subsection{Nilmanifolds as nilspaces}

Let $G$ be an at most $k$-nilpotent group. Let $\{G_i\}_{i=1}^{k+1}$ be a central series with $G_{k+1}=\{1\}$, $G_1=G$ and $[G_i,G_j]\subseteq G_{i+j}$.
We define a cubic structure on $G$ which depends on the given central series.
The set of $n$ dimensional cubes $f:\{0,1\}^n\rightarrow G$ is the smallest set satisfying the following properties.
\begin{enumerate}
\item {\it The constant $1$ map is a cube,}
\item {\it If $f:\{0,1\}^n\rightarrow G$ is a cube and $g\in G_i$ then the function $f'$ obtained from $f$ by multiplying the values on some $(n-i)$-dimensional face from the left by $g$ is a cube.}
\end{enumerate}

This definition builds up cubes by a generating system. However there is another way of describing them through equations.
For every $n$ we introduce an ordering $g_n:\{0,1\}^n\rightarrow\{1,2,\dots,2^n\}$ in the following way.
If $n=1$ then $g_1(0)=1,g_1(1)=2$.
If $n>1$ then $$g_n(a_1,a_2,\dots,a_n)=g_{n-1}(a_1,a_2,\dots,a_{n-1})$$ if $a_n=0$ and
$$g_n(a_1,a_2,\dots,a_n)=2^n+1-g_{n-1}(a_1,a_2,\dots,a_{n-1})$$ if $a_n=1$.
It is clear that (a cyclic version of) this ordering defines a Hamiltonian cycle of the one dimensional skeleton of $\{0,1\}^n$.

\begin{definition}  Let $G$ be a group and $f:\{0,1\}^n\rightarrow G$. We say that $f$ satisfy the {\bf Gray code property} if $$\prod_{i=1}^{2^n}f(g_n^{-1}(i))^{(-1)^i}=1.$$
\end{definition}

A function $f:\{0,1\}^n\rightarrow G$ is a cube if for every $i\in\mathbb{N}$ and $i$-dimensional face $F$ the restriction of $f$ to $F$ satisfies the Gray code property modulo $G_i$. If $i\geq k+1$ then we define $G_1$ to be trivial.
An easy induction shows that cubes in $G$ defined as above are symmetric under the automorphisms of $\{0,1\}^n$.

Assume that $G$ has a transitive action on a set $N$. Then we say that $f:\{0,1\}^n\rightarrow N$ is a cube if
$f(v)=x^{f'(v)}$ where $f':\{0,1\}^n\rightarrow G$ is a cube and $x\in
N$ is a fixed element. It is easy to see that these cubes make $N$
into a $k$-step nilspace.

If $G$ is actually a Lie group and $N$ is a nilmanifold $G / \Gamma$,
we would want the structure above to make $N$ into a \emph{compact}
nilspace, but this is not true without a further assumption on the
central series: to make $C^n(N)$ a closed subset of $N ^ {\{0,1\}^n}$,
we also need that $G_i \cap \Gamma$ is co-compact in $G_i$ for each
$i$. Under that assumption $N$ is indeed a compact nilspace.

\section{Abstract nilspaces}

\subsection{Operations with cubespaces and nilspaces}\label{operations}

Cubespaces and nilspaces are algebraic structures and most of the standard algebraic operations can be defined for them. However, quite interestingly, new operations appear which will be crucial in our proofs.

\medskip

\noindent{\bf Dirct product:}~If $P_1$ and $P_2$ are cubespaces then we define their direct product as the cubespace $P_1\times P_2$ whose cubes are functions $f:\{0,1\}^n\rightarrow P_1\times P_2$ such that the projections $f_1$ and $f_2$ to the direct components are both cubes. It is esay to see that direct products of nilspaces are nilspaces. 

\medskip

\noindent{\bf Sub-cubespace:}~If $P$ is a cubespace than a cubespace defined on $X\subseteq P$ is called sub-cubespace of $P$ if every cube $c:\{0,1\}^n\rightarrow X$ is also a cube in $P$.

\medskip

\noindent{\bf Congruence and Factor:}~Let $P$ be a cubespace and let $\sim$ be an equivalence relation on $N$. By composing cubes $f:\{0,1\}^n\rightarrow P$ in $C^n(P)$ with the projection $P\rightarrow P/\sim$ we obtain an induced cubespace structure on $P/\sim$. If $P$ is a nilspace then we say that $\sim$ is a congruence if the inherited cubespace structure on $P/\sim$ satisfies the nilspace axioms. The nilspace $P/\sim$ is called a factor of $P$. We will see that a stronger notion of factor is more useful in many application. This will be introduced later.

\medskip

\noindent{\bf Arrow space:}~If $N$ is a nilspace then taking direct product with itself yields a nilspace structure on $N\times N$. However there is another interesting cubspace structure on $N\times N$ and we will refer to it as the arrow space. In this construction a map $f_1\times f_2:\{0,1\}^n\rightarrow N\times N$ is a cube if the function $f':\{0,1\}^{n+1}\rightarrow N$ defined by $f'(v,0)=f_1(v),f'(v,1)=f_2(v)$ is a cube in $N$. The arrow space has fewer cubes than the direct product. The arrow space is not necessarily ergodic but lemma \ref{simpglue} implies that it satisfies the gluing axiom and so all its ergodic components are nilspaces.

\medskip

\noindent{\bf Higher degree arrow spaces:}~We will also need the following generalizations of the arrow space. The $i$-th arrow space is a (not necessarily ergodic) nilspace on $N\times N$. Let $f_1,f_2:\{0,1\}^n\rightarrow N$ be two maps. We denote by $(f_1,f_2)_i$ the map $g:\{0,1\}^{n+i}\rightarrow N$ such that $g(v,w)=f_1(v)$ if $w\in\{0,1\}^i\setminus\{1^i\}$ and $g(v,w)=f_2(v)$ if $w=1^i$. If $f:\{0,1\}^n\rightarrow N\times N$ is a single map with components $f_1,f_2$ then we denote by $(f)_i$ the map $(f_1,f_2)_i$.
A map $f:\{0,1\}^n\rightarrow N\times N$ is a cube in the $i$-th arrow space if $(f)_i$ is a cube in $N$.

\medskip

\noindent{\bf The operator $\partial_x$.}~Let $x\in N$ be a fixed element. Then the set $N$ is embedded into the arrow space with the map $y\mapsto(x,y)$. Using this embedding $N$ inherits a new cubespace structure that we call $\partial_x N$.
The operator might turn an ergodic nilspace into a non ergodic nilspace. More generally, let us call a nilspace $N$ {\bf $k$-fold ergodic} if $C^k(N)=N^{\{0,1\}^k}$. It is clear that if $N$ is $k$-fold ergodic then $\partial_x N$ is $k-1$-fold ergodic. 

\subsection{Simplicial nilspaces}

\begin{definition} Let $P$ be a cubespace and $X$ be a sub-cubespace in $P$. We say that $X$ has the {\bf extension property} in $P$ if for every nilspace $N$ and morphism $f:X\rightarrow N$ there is a morphism $f':P\rightarrow N$ with $f=f'|_X$. 
\end{definition}

If $S$ is a finite set and $h$ is a subset of $S$ then we denote by $\{0,1\}^S_h$ the set of vectors supported on $h$ which can be regarded
as the discrete cube of dimension $\left| h \right|$ in the obvious way.

\begin{definition} Let $S$ be a finite set and $H$ be an arbitrary set system in $S$.
 The collection of all cube morphisms $$\{f:\{0,1\}^n\rightarrow \{0,1\}^S_h~|~n\in\mathbb{N},~h\in H\}$$  defines a cubespace structure on $P=\cup_{h\in H}\{0,1\}^S_h$.  Cubespaces arising this way will be called {\bf simplicial}.
\end{definition}

In the above definition $P$ is a sub-cubespace in the full cube $\{0,1\}^S$.
Note that without loss of generality we can assume that $H$ is downwards closed. This means that if $h\in H$ then every subset of $h$ is also in $H$. Such set systems are called simplicial complexes. 

\begin{lemma}[Simplicial gluing]\label{simpglue} Let $S$ be a finite set and $P$ be a simplicial cubespace corresponding to a set system $H$ in $S$. Then $P$ has the extension property in $\{0,1\}^S$.
\end{lemma}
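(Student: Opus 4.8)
The plan is to build the extension one simplex at a time, invoking the gluing axiom at each step. As noted above I may assume $H$ is downwards closed, and I may assume $H\neq\emptyset$ (otherwise $P=\emptyset$ and there is nothing to prove), so that $\emptyset\in H$ and the origin of $\{0,1\}^S$ lies in $P$. Fix a nilspace $N$ and a morphism $f\colon P\to N$. Enumerate the subsets of $S$ not belonging to $H$ as $g_1,g_2,\dots,g_r$ in order of non-decreasing cardinality, and set $H_j=H\cup\{g_1,\dots,g_j\}$, so that $H_0=H$ and $H_r=2^S$, whence $P_{H_r}=\{0,1\}^S$. Because the $g_j$ are listed by size, every proper subset of $g_j$ already lies in $H_{j-1}$; hence each $H_j$ is again downwards closed. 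I will extend $f$ successively to morphisms $f_j\colon P_{H_j}\to N$, with $f_0=f$, and output $f'=f_r$.

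For the inductive step I identify the simplex $\{0,1\}^S_{g_j}$ with the abstract cube $\{0,1\}^d$, $d=|g_j|$, sending the origin of $\{0,1\}^S$ to $0^d$ and the characteristic vector $w_{g_j}$ of $g_j$ to $1^d$. The only vertex of $P_{H_j}$ not already present in $P_{H_{j-1}}$ is $w_{g_j}$, since every other vertex of $\{0,1\}^S_{g_j}$ has support a proper subset of $g_j$ and hence lies in $H_{j-1}$. Thus $f_{j-1}$ is already defined on $\{0,1\}^d\setminus 1^d$. The $(d-1)$-dimensional faces of this cube containing $0^d$ are exactly the subcubes $\{0,1\}^S_{g_j\setminus\{i\}}$ for $i\in g_j$; as $g_j\setminus\{i\}\in H_{j-1}$ these are cubes of $P_{H_{j-1}}$, so $f_{j-1}$ restricts on each of them to an element of $C^{d-1}(N)$. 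The gluing axiom therefore applies and produces an element of $C^d(N)$ extending $f_{j-1}|_{\{0,1\}^d\setminus 1^d}$; I define $f_j(w_{g_j})$ to be the value this extension takes at $1^d$ and set $f_j=f_{j-1}$ on all old vertices.

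It remains to check that $f_j$ is a morphism of $P_{H_j}$, i.e.\ that $f_j\circ\phi\in C^n(N)$ for every cube morphism $\phi\colon\{0,1\}^n\to\{0,1\}^S_h$ with $h\in H_j$. If $h\in H_{j-1}$ then $w_{g_j}\notin\{0,1\}^S_h$, since otherwise $g_j\subseteq h\in H_{j-1}$ would force $g_j\in H_{j-1}$ by downward closure; hence $f_j\circ\phi=f_{j-1}\circ\phi$ is a cube by the inductive hypothesis. If $h=g_j$ then, by construction, the restriction of $f_j$ to $\{0,1\}^S_{g_j}\cong\{0,1\}^d$ is an element of $C^d(N)$, and the composition axiom gives $f_j\circ\phi\in C^n(N)$. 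Thus $f_j$ is a morphism, and after $r$ steps $f'=f_r\colon\{0,1\}^S\to N$ is a morphism restricting to $f$ on $P$.

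The only point requiring care is the verification that the gluing hypothesis is met at every stage, and this is precisely what the size-ordering buys us: it keeps each $H_j$ downwards closed and guarantees that the codimension-one faces through the origin have already been filled in as genuine cubes. I expect this bookkeeping to be the main (and essentially the only) obstacle. Note that I use only the existence part of the gluing axiom and never uniqueness, so the argument is valid for an arbitrary nilspace $N$ rather than only a $k$-step one; and the lowest-dimensional steps, where some $g_j$ is a singleton, are covered by the same reasoning, the relevant $0$-dimensional faces being cubes because $C^0(N)=N$.
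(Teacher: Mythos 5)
Your proof is correct and follows essentially the same route as the paper: add the missing faces one at a time (your size-ordering is just an explicit way of always picking a minimal missing set, as the paper does), and close each new top vertex with the gluing axiom. The only difference is that you spell out the verification that the extended map is still a morphism of the enlarged simplicial cubespace, which the paper asserts without detail.
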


\begin{proof} We assume that $H$ is a simplicial complex. Note that the empty set is always a member in $H$. If $H$ is the full complex of subsets in $S$ then there is nothing to prove. If $H$ is not the full complex then there is a set $h'$ which is not in $H$ but every subset of $h'$ is contained in $H$. Let $H'=H\cup\{h'\}$ be a new simplicial complex. Let $f:P\rightarrow N$ be a morphism into some nilspace $N$. The gluing axiom guarantees that we can extend $f$ to $\cup_ {h\in H'}\{0,1\}^S_h$ with the cubespace structure corresponding to $H'$. By iterating this step we can extend $f$ to the full cube. 
\end{proof}

Let $N$ be a nilspace. We say that two cubes $f_1:\{0,1\}^k\rightarrow N$ and $f_2:\{0,1\}^k\rightarrow N$ in $C^k(N)$ are adjacent if they satisfy that $f_1(v,1)=f_2(v,0)$ for every $v\in\{0,1\}^{k-1}$. For such cubes we define their {\bf concatenation} as the function $f_3:\{0,1\}^k\rightarrow N$ with $f_3(v,0)=f_1(v,0)$ and $f_3(v,1)=f_2(v,1)$ for every $v\in\{0,1\}^{k-1}$. 

\begin{lemma} The concatenation of two adjacent cubes is a cube.
\end{lemma}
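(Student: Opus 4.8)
The plan is to realize the concatenation $f_3$ as the restriction of a single $(k+1)$-dimensional cube $G \in C^{k+1}(N)$ to a suitable diagonal, where $G$ is obtained by gluing $f_1$ and $f_2$ together along their shared face. Since cubes are closed under precomposition with cube morphisms (the composition axiom), once $G$ is produced it suffices to exhibit $f_3$ as $G$ composed with a cube morphism $\{0,1\}^k \to \{0,1\}^{k+1}$.

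First I would fix coordinates on $\{0,1\}^{k+1}$, writing a vertex as $(v,a,b)$ with $v \in \{0,1\}^{k-1}$ and $a,b \in \{0,1\}$. The target is a cube $G \colon \{0,1\}^{k+1} \to N$ whose face $\{b=0\}$ equals $f_1$, i.e.\ $G(v,a,0)=f_1(v,a)$, and whose face $\{a=1\}$ equals $f_2$, i.e.\ $G(v,1,b)=f_2(v,b)$. On the overlap $\{a=1,\,b=0\}$ these two prescriptions read $f_1(v,1)$ and $f_2(v,0)$, which coincide precisely by the adjacency hypothesis; so the data on the union of the two faces is consistent and defines a cube-preserving map from that union into $N$ (each face carries $f_1$ or $f_2$, which are cubes, and every lower face is handled by the composition axiom).

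To apply Lemma~\ref{simpglue} I would first flip the $a$-coordinate, setting $a'=1-a$, so that both faces pass through the origin: the face $\{b=0\}$ is the subcube supported on $\{v,a'\}$, and $\{a'=0\}$ (the old $\{a=1\}$) is the subcube supported on $\{v,b\}$. Their union is exactly the simplicial cubespace attached to the complex generated by these two maximal faces, which therefore has the extension property in $\{0,1\}^{k+1}$. The flip merely precomposes the $f_1$-face with a coordinate reflection, itself a cube morphism, so the consistency checked above is unaffected. Extending the resulting morphism yields the desired cube $G \in C^{k+1}(N)=\Hom(\{0,1\}^{k+1},N)$.

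Finally I would restrict $G$ along the diagonal morphism $\delta \colon \{0,1\}^k \to \{0,1\}^{k+1}$, $\delta(v,c)=(v,c,c)$; each output coordinate is one of the inputs, so $\delta$ is a cube morphism and $G\circ\delta \in C^k(N)$ by the composition axiom. Reading off values, $(G\circ\delta)(v,0)=G(v,0,0)=f_1(v,0)$ and $(G\circ\delta)(v,1)=G(v,1,1)=f_2(v,1)$, which are exactly $f_3(v,0)$ and $f_3(v,1)$, whence $G\circ\delta=f_3$ and $f_3$ is a cube. The only genuinely delicate point is the bookkeeping needed to bring the two gluing faces into the origin-containing (``supported on $h$'') form required by Lemma~\ref{simpglue}; this is what the coordinate flip accomplishes, and it is legitimate exactly because the cube structure is invariant under cube morphisms.
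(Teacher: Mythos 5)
Your proof is correct and is essentially the paper's own argument: both glue $f_1$ and $f_2$ into a morphism on the simplicial sub-cubespace of $\{0,1\}^{k+1}$ formed by two $k$-faces sharing a $(k-1)$-face, extend to a full $(k+1)$-cube via Lemma~\ref{simpglue}, and recover $f_3$ as the restriction along a diagonal cube morphism. The paper merely builds your coordinate flip into its choice of origin-containing faces $h_1=\{1,\dots,k\}$, $h_2=\{1,\dots,k-1,k+1\}$, so its diagonal is $\{0,1\}^{k-1}\times\{(1,0),(0,1)\}$ rather than $(v,c)\mapsto(v,c,c)$.
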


\begin{proof} Let $S=\{1,2,\dots,k+1\}$, $h_1=\{1,2,\dots,k\}$,$h_2=\{1,2,\dots,k-1,k+1\}$ and $H=\{h_1,h_2\}$.
Let $P$ be the simplicial cube space corresponding to $H$. Every pair of two adjacent cubes of dimension $k$ in a nilspace $N$ can be represented as a morphism $f$ of $P$ to $N$. By lemma \ref{simpglue}, the morphism $f$ extends to a morphism $f'$ of $\{0,1\}^S$ to $N$. The concatenation of the adjacent pair is the restriction of $f'$ to $C=\{0,1\}^{k-1}\times\{(1,0),(0,1)\}$. On the other hand $C$ is a cube in $\{0,1\}^S$. 
\end{proof}

\subsection{The 3-cubes}\label{threecubes}

In this section we define special cube spaces which will be useful in many calculations. These will simply be $n$-cubes of side length two, divided into
unit cubes. (They are called 3-cubes because they have 3 vertices on each
side). We will typically use them to form new cubes in a nilspace by
gluing together other cubes into a 3-cube and taking the outer vertices,
as justified in Lemma \ref{extvert} below.

Let $T_n=\{-1,0,1\}^n$ together with the following cubespace structure.
For every $v\in\{0,1\}^n$ we define the injective map $\Psi_v:\{0,1\}^n\rightarrow T_n$ by
$$\Psi_v(w_1,w_2,\dots,w_n)_j=(1-2v_j)(1-w_j).$$
We consider the smallest cubespace structure on $T_n$ in which all the maps $\Psi_v$ are cubes
(this just means that the $N$-cubes of $T_n$ are taken to be the maps
$\{0,1\}^N \to T_n$ that factor through the inclusion of some $\Psi_v$).
Note that in terms of the direct product introduced above, $T_n$ is just $(T_1)^n$.
We call $T_n$ the {\bf three cube} of dimension $n$.

In applications we will need various morphisms from three cubes to ordinary cubes. Let $f:\{-1,0,1\}\rightarrow\{0,1\}$ be an arbitrary function. It is clear that $f$ is a morphism of $T_1$ onto the one dimensional cube $\{0,1\}$. Then we have that $f^n:
\{-1,0,1\}^n\rightarrow\{0,1\}^n$ is also a morphism.
Similarly, let $f$ be the function $f(1)=(1,0),f(0)=(0,0),f(-1)=(0,1)$. Then $q=f^n$ is an injective morphism of $T_n$ into the $2n$ dimensional cube $\{0,1\}^{2n}$.
By abusing the notation we will identify $T_n$ with the subset $q(T_n)$ in $\{0,1\}^{2n}$.

The embedding $\omega:\{0,1\}^n\rightarrow T_n$ defined by $\omega(v)=\Psi_v(0^n)$ maps the cube $\{0,1\}^n$ to the set $\{1,-1\}^n$ of ``outer vertices'' of $T_n$.
Note that $\omega$ is not a cube in the above cubespace structure defined on $T_n$.
This fact makes the next lemma useful.

\begin{lemma}\label{extvert} Let $m:T_n\rightarrow N$ be a morphism
  into a nilspace $N$. Then the composition $m \circ \omega$ is in $C^n(N)$.
\end{lemma}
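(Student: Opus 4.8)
The plan is to realize $T_n$ as a simplicial cubespace embedded in a genuine discrete cube, so that the simplicial gluing lemma extends $m$ to the ambient cube and $m\circ\omega$ is recovered by precomposing with an honest cube morphism. I would work through the injective morphism $q=f^n:T_n\hookrightarrow\{0,1\}^{2n}$ defined above by $f(1)=(1,0),\ f(0)=(0,0),\ f(-1)=(0,1)$, and identify $T_n$ with $q(T_n)$.

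The key step is to check that $q(T_n)$ is in fact a \emph{simplicial} sub-cubespace of $\{0,1\}^{2n}$. Writing $S=\{1,\dots,2n\}$ and grouping the coordinates into the pairs $\{2j-1,2j\}$, a point of $q(T_n)$ is precisely a vector of $\{0,1\}^S$ in which no pair has both coordinates equal to $1$ (the image of $t_j\in\{-1,0,1\}$ under $f$ never hits $(1,1)$). Hence $q(T_n)=\bigcup_{h\in H}\{0,1\}^S_h$, where $H$ is the downward-closed set system of those $h\subseteq S$ meeting each pair in at most one point. To confirm that the induced cube structure is the simplicial one, I would compute the generators: a direct calculation shows that $q\circ\Psi_v$ is a cube-morphism isomorphism of $\{0,1\}^n$ onto the maximal face $\{0,1\}^S_{h_v}$, where $h_v=\{2j-1:v_j=0\}\cup\{2j:v_j=1\}$ selects one coordinate from each pair. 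Thus the generators $\Psi_v$ of the structure on $T_n$ correspond exactly to the maximal simplices of $H$, so the two cubespace structures agree.

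By Lemma~\ref{simpglue}, $q(T_n)$ then has the extension property in $\{0,1\}^{2n}$, so $m$, viewed as a morphism on $q(T_n)$, extends to $m':\{0,1\}^{2n}\to N$ with $m'\in C^{2n}(N)$ and $m'|_{q(T_n)}=m$. It remains to understand $q\circ\omega$. Since $\omega(v)_j=1-2v_j$, applying $f$ coordinatewise sends $v$ to the vector whose $j$-th pair is $(1-v_j,v_j)$; that is, $q\circ\omega:v\mapsto(1-v_1,v_1,\dots,1-v_n,v_n)$. Every output coordinate is either $v_j$ or $1-v_j$, so $q\circ\omega$ is a cube morphism, and its image lies in $q(T_n)$. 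Therefore $m\circ\omega=m'\circ(q\circ\omega)$, and the composition axiom applied to $m'\in C^{2n}(N)$ and the cube morphism $q\circ\omega$ yields $m\circ\omega\in C^n(N)$.

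I expect the main obstacle to be the second step: the bookkeeping that identifies $T_n$ with the concrete simplicial complex $H$ and matches each generator $\Psi_v$ with a maximal face. Once that identification is in place, the extension via Lemma~\ref{simpglue} and the final one-line application of the composition axiom are immediate, and the remaining verifications are routine coordinate computations.
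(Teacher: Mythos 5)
Your proof is correct and takes essentially the same route as the paper's: identify $T_n$ with $q(T_n)\subset\{0,1\}^{2n}$, observe it is simplicial so Lemma \ref{simpglue} extends $m$ to a cube $m'\in C^{2n}(N)$, and then compose with the cube morphism $q\circ\omega$ to conclude via the composition axiom. The only difference is that you write out the verifications the paper dismisses as clear, namely that $q(T_n)$ is the simplicial cubespace of the complex $H$ with the $\Psi_v$ matching its maximal faces, and that $q\circ\omega$ is a cube morphism.
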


\begin{proof} It is clear that $T_n$ is simplicial in $\{0,1\}^{2n}$ so by lemma \ref{simpglue} the map $m$ extends to $\{0,1\}^{2n}$. On the other hand $\omega$ is a cube morphism of $\{0,1\}^n$ into $\{0,1\}^{2n}$.
\end{proof}

\subsection{Characteristic factors}

In this section we introduce factors of nil-spaces that are crucial building blocks of them.

\begin{definition}\label{simdef} Let $\sim_k$ be the relation defined through the property that $x\sim_k y$ if and only if there are two cubes $c_1,c_2\in C^{k+1}(N)$ such that $c_1(0^{k+1})=x,c_2(0^{k+1})=y$ and $c_1(v)=c_2(v)$ for every element $v\in\{0,1\}^{k+1}\setminus\{0^{k+1}\}$.
\end{definition}

The relation $\sim_k$ is obviously reflexive and symmetric. The next lemma will imply transitivity.

\begin{lemma}\label{sim1} Two elements $x,y\in N$ satisfy $x\sim_k y$ if and only if there is a cube $c\in C^{k+1}(N)$ such that $c(0^{k+1})=y$ and $c(v)=x$ for all $v\in\{0,1\}^{k+1}\setminus\{0^{k+1}\}$.
\end{lemma}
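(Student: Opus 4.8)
The plan is to prove the two implications separately, the backward direction being a one-line reduction to the constant cube and the forward direction being the substantial part, handled via the three-cube of Lemma \ref{extvert}.

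For the backward implication, suppose there is a cube $c\in C^{k+1}(N)$ with $c(0^{k+1})=y$ and $c(v)=x$ for all $v\neq 0^{k+1}$. I would take $c_2=c$ and let $c_1$ be the constant cube with value $x$; constant maps are cubes, since they factor as $\{0,1\}^{k+1}\to\{0,1\}^0$ followed by an element of $C^0(N)=N$, so the composition axiom applies. Then $c_1(0^{k+1})=x$, $c_2(0^{k+1})=y$, and $c_1(v)=x=c(v)=c_2(v)$ for every $v\neq 0^{k+1}$, which is exactly the witness required by Definition \ref{simdef}, so $x\sim_k y$.

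For the forward implication I am given $c_1,c_2\in C^{k+1}(N)$ agreeing on all $v\neq 0^{k+1}$, with $c_1(0^{k+1})=x$ and $c_2(0^{k+1})=y$. The idea is to assemble a single morphism $m:T_{k+1}\to N$ from these two cubes placed on the unit sub-cubes $\Psi_v$, and then read off the outer vertices. Concretely I would prescribe $m\circ\Psi_{0^{k+1}}=c_2$ and $m\circ\Psi_v=c_1$ for every $v\neq 0^{k+1}$. Since the cubes of $T_{k+1}$ are precisely the maps factoring through some $\Psi_v$, such an $m$ is automatically a morphism once it is a well-defined function, because each $m\circ\Psi_v$ is then $c_1$ or $c_2$, hence a cube.

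The main obstacle is precisely the well-definedness of $m$ on the overlaps $\mathrm{Im}(\Psi_u)\cap\mathrm{Im}(\Psi_v)$. Here a short computation with the defining formula $\Psi_v(w)_j=(1-2v_j)(1-w_j)$ is the key point: any $t\in T_{k+1}$ lying in $\mathrm{Im}(\Psi_v)$ pulls back under $\Psi_v$ to the vertex $w(t)$ with $w(t)_j=1$ when $t_j=0$ and $w(t)_j=0$ otherwise, a formula independent of which admissible $v$ is used. Consequently assigning one fixed cube to all sub-cubes is consistent, and the only possible clash is between $\Psi_{0^{k+1}}$ (carrying $c_2$) and some $\Psi_v$ with $v\neq 0^{k+1}$ (carrying $c_1$). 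But any $t$ in that overlap satisfies $t_j=0$ for every coordinate $j$ with $v_j=1$, forcing $w(t)_j=1$ there and hence $w(t)\neq 0^{k+1}$; since $c_1$ and $c_2$ agree off $0^{k+1}$, the two prescriptions coincide at such $t$, so $m$ is well defined. With $m$ in hand, Lemma \ref{extvert} gives $c:=m\circ\omega\in C^{k+1}(N)$, and because each outer vertex $\omega(v)$ lies only in $\Psi_v$ and pulls back to $0^{k+1}$, one computes $c(0^{k+1})=c_2(0^{k+1})=y$ while $c(v)=c_1(0^{k+1})=x$ for $v\neq 0^{k+1}$, which is the desired cube.
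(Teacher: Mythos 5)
Your proof is correct and takes essentially the same approach as the paper's: both construct a morphism on the three-cube $T_{k+1}$ that restricts to $c_2$ on the sub-cube $\Psi_{0^{k+1}}$ and to $c_1$ on every other sub-cube $\Psi_v$, and then apply Lemma \ref{extvert} to read off the desired cube at the outer vertices. The paper packages this same map as a pullback $c_1\circ\phi$ modified at a single point (leaving the easy converse implicit), while you build it by gluing prescriptions on the sub-cubes and verify the overlaps explicitly; the resulting functions on $T_{k+1}$ coincide.
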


\begin{proof}  Let $c_1,c_2$ be two cubes satisfying the condition in definition \ref{simdef}.
Let us define the map $\phi=f^{k+1}:T_{k+1}\rightarrow\{0,1\}^{k+1}$
on the 3-cube $T_{k+1}$ where $f(-1)=1,f(0)=0,f(1)=1$. We denote by
$g:T_{k+1}\rightarrow N$ the function which is obtained from $c_1
\circ \phi$ by modifying the value on $1^{k+1}$
 from $x$ to $y$.  The condition on $c_1$ and $c_2$ guarantees that $g$ is a morphism.
Using lemma \ref{extvert} we get that $g\circ\omega$ is in $C^{k+1}(N)$.
\end{proof}

\begin{corollary} The relation $\sim_k$ is an equivalence relation for every $k\in\mathbb{N}$ and nilspace $N$.
\end{corollary}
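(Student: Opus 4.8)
The plan is to note that reflexivity and symmetry are immediate from Definition \ref{simdef}, so that the entire content of the statement is transitivity, which I will extract cleanly from the reformulation in Lemma \ref{sim1}. For reflexivity, take $c_1=c_2$ to be any cube with $c_1(0^{k+1})=x$ (for instance a constant cube, which is a cube by the composition axiom applied to a constant morphism). For symmetry, simply interchange the roles of $c_1$ and $c_2$ in Definition \ref{simdef}.

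The heart of the matter is transitivity: assume $x\sim_k y$ and $y\sim_k z$. The obstacle, if one argues directly from Definition \ref{simdef}, is that the witnessing cubes for the two relations need not agree off the distinguished vertex $0^{k+1}$; a priori they share no common ``body,'' so they cannot be spliced together into a single witness. This is exactly what Lemma \ref{sim1} resolves: it lets us choose each witness to be \emph{constant} off $0^{k+1}$, which I will use to force a common body. Concretely, using symmetry I first rewrite $x\sim_k y$ as $y\sim_k x$. Applying Lemma \ref{sim1} to $y\sim_k x$ yields a cube $c_1\in C^{k+1}(N)$ with $c_1(0^{k+1})=x$ and $c_1(v)=y$ for every $v\neq 0^{k+1}$; applying it to $y\sim_k z$ yields a cube $c_2\in C^{k+1}(N)$ with $c_2(0^{k+1})=z$ and $c_2(v)=y$ for every $v\neq 0^{k+1}$. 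These two cubes now agree everywhere off $0^{k+1}$, both taking the value $y$ there, so the pair $(c_1,c_2)$ is precisely a witness for $x\sim_k z$ in the sense of Definition \ref{simdef}, which gives transitivity.

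I expect no genuine difficulty in this argument, since all the real work has been front-loaded into Lemma \ref{sim1}: its normalization of a witness to have a constant body is the only nontrivial ingredient, and here it is simply invoked twice. The one point requiring care is the bookkeeping of which vertex carries which value when combining Lemma \ref{sim1} with the symmetry of $\sim_k$, so that the two produced cubes genuinely carry the \emph{same} value $y$ on all of $\{0,1\}^{k+1}\setminus\{0^{k+1}\}$; once that alignment is in place the conclusion is immediate.
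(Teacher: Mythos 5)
Your proof is correct and is essentially the paper's own argument: the paper likewise applies symmetry together with Lemma \ref{sim1} to produce two cubes that are constantly $y$ off $0^{k+1}$ and take values $x$ and $z$ at $0^{k+1}$, then invokes Definition \ref{simdef} to conclude $x\sim_k z$. Your bookkeeping of which vertex carries which value matches the paper exactly, and your brief treatment of reflexivity and symmetry is the same observation the paper makes before the corollary.
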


\begin{proof} Assume that in $N$ three elements satisfy $x\sim_k y$ and $y\sim_k z$. then by symmetry and lemma \ref{sim1} we obtain that there are two cubes $c_1,c_2\in C^{k+1}(N)$ such that $c_1(0^{k+1})=x,c_2(0^{k+1})=z$ and $c_1(v)=c_2(v)=y$ for every $v\neq 0^{k+1}$. This means that $x\sim_k z$.
\end{proof}

\begin{lemma}\label{sim2} Two elements $x,y\in N$ satisfy $x\sim_k y$ if and only if for every cube $c_1\in C^{k+1}(N)$ with $c_1(0^{k+1})=x$ the map $c_2:\{0,1\}^{k+1}\rightarrow N$ satisfying
$$c_2(0^{k+1})=y~{\rm and}~c_2(v)=c_1(v)~~\forall~v\in\{0,1\}^{k+1}\setminus\{0^{k+1}\}$$ is in $C^{k+1}(N)$.
\end{lemma}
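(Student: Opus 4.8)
The plan is to prove the two implications separately, with the forward direction carrying all the content and the reverse being immediate. For the reverse implication, suppose the stated property holds for every cube starting at $x$. Applying it to the constant cube $c_1\equiv x$ (a cube since constant maps are morphisms) produces a cube $c_2$ with $c_2(0^{k+1})=y$ and $c_2(v)=x$ for $v\neq 0^{k+1}$; by Lemma \ref{sim1} this spike cube witnesses $x\sim_k y$.

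For the forward implication I would fix an arbitrary $c_1\in C^{k+1}(N)$ with $c_1(0^{k+1})=x$ and graft the value $y$ at the origin onto $c_1$ using the three-cube $T_{k+1}$ and Lemma \ref{extvert}. First I build an auxiliary morphism $m_0:T_{k+1}\to N$ recovering $c_1$ on the outer vertices. Let $f':\{-1,0,1\}\to\{0,1\}$ be given by $f'(-1)=1$, $f'(0)=f'(1)=0$, let $\rho=(f')^{k+1}:T_{k+1}\to\{0,1\}^{k+1}$ (a morphism, being a product of the one-dimensional morphisms $f'$), and set $m_0=c_1\circ\rho$. A direct check gives $\rho(\omega(v))=v$ for every $v$, where $\omega(v)=(1-2v_1,\ldots,1-2v_{k+1})$, so $m_0\circ\omega=c_1$; in particular $m_0(\omega(0^{k+1}))=m_0(1^{k+1})=x$.

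The key observation is a locality statement about the corner $1^{k+1}$ of $T_{k+1}$: among the $2^{k+1}$ unit cubes $\Psi_v$, the vertex $1^{k+1}$ belongs only to $\Psi_{0^{k+1}}$, and $m_0\circ\Psi_{0^{k+1}}$ is the constant cube $x$, since $\rho$ collapses the whole orthant $\{0,1\}^{k+1}\subseteq T_{k+1}$ to the origin. Now define $m:T_{k+1}\to N$ by modifying $m_0$ at the single point $1^{k+1}$, setting $m(1^{k+1})=y$. Then $m\circ\Psi_v=m_0\circ\Psi_v\in C^{k+1}(N)$ for $v\neq 0^{k+1}$, while $m\circ\Psi_{0^{k+1}}$ is the constant cube $x$ with its value at $0^{k+1}$ changed to $y$, i.e. exactly the spike cube, which lies in $C^{k+1}(N)$ by Lemma \ref{sim1} because $x\sim_k y$. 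Since a map on $T_{k+1}$ is a morphism precisely when its restriction to each $\Psi_v$ is a cube, $m$ is a morphism.

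Finally, Lemma \ref{extvert} yields $m\circ\omega\in C^{k+1}(N)$, and since the modification touched only the corner $1^{k+1}=\omega(0^{k+1})$, this composite equals $c_1(v)$ for $v\neq 0^{k+1}$ and $y$ at $0^{k+1}$, that is, it is exactly $c_2$. I expect the main obstacle to be the bookkeeping in the locality observation, namely verifying that $1^{k+1}$ meets only the orthant $\Psi_{0^{k+1}}$ and that the chosen $f'$ makes $m_0$ both restrict to $c_1$ on the outer vertices and collapse that orthant to the constant $x$; everything else is a formal application of Lemmas \ref{sim1} and \ref{extvert}.
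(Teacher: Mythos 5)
Your proof is correct and is essentially the paper's own argument: your $\rho=(f')^{k+1}$ is exactly the paper's morphism $\phi=f^{k+1}$ with $f(-1)=1$, $f(0)=f(1)=0$, your $m$ is the paper's $g$ (the map $c_1\circ\phi$ with its value at $1^{k+1}$ changed from $x$ to $y$), and both arguments conclude by applying Lemma \ref{extvert} to get $c_2=m\circ\omega\in C^{k+1}(N)$. The only difference is that you spell out the locality and spike-cube verification (plus the trivial reverse implication) that the paper compresses into the single sentence ``Lemma \ref{sim1} guarantees that $g$ is a morphism.''
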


\begin{proof} Let $\phi=f^{k+1}:T_{k+1}\rightarrow\{0,1\}^{k+1}$ where $f(-1)=1,f(0)=0,f(1)=0$. Let $g:T_{k+1}\rightarrow N$ be the function obtained from $c_1\circ \phi$ by modifying the value on $1^{k+1}$ from $x$ to $y$.
Lemma \ref{sim1} guarantees that $g$ is a morphism. According to lemma \ref{extvert} the composition of $\omega$ and $g$ is in $C^{k+1}(N)$. On the other hand $c_2=g\circ\omega$.
\end{proof}

\begin{corollary}\label{sim2cor} For every $k\in\mathbb{N}$ and cube $c\in C^{k+1}(N)$ we have that if a function $c_2:\{0,1\}^{k+1}\rightarrow N$ satisfies $c(v)\sim_k c_2(v)$ for every $v\in\{0,1\}^{k+1}$ then $c_2\in C^{k+1}(N)$.
\end{corollary}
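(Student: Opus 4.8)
The plan is to reduce the statement to Lemma \ref{sim2} by changing the values of $c$ one vertex at a time. The essential observation is that Lemma \ref{sim2} lets us modify a cube at the single distinguished vertex $0^{k+1}$, and the composition axiom upgrades this into a modification at an arbitrary vertex.

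First I would record that cubes are invariant under the automorphism group of $\{0,1\}^{k+1}$: this group is generated by coordinate permutations and coordinate flips $x_i\mapsto 1-x_i$, each of which is a cube morphism, so by the composition axiom $c\circ\sigma\in C^{k+1}(N)$ whenever $c\in C^{k+1}(N)$ and $\sigma$ is such an automorphism. Since the flips already act transitively on vertices, for every $w\in\{0,1\}^{k+1}$ there is an automorphism $\sigma_w$ with $\sigma_w(0^{k+1})=w$.

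From this I would extract a single-vertex modification principle: if $d\in C^{k+1}(N)$ and $d(w)\sim_k z$ for some vertex $w$ and element $z$, then the function $d'$ obtained from $d$ by setting $d'(w)=z$ and leaving all other values unchanged is again in $C^{k+1}(N)$. Indeed $d\circ\sigma_w$ is a cube with value $d(w)$ at $0^{k+1}$; since $d(w)\sim_k z$, Lemma \ref{sim2} says that replacing this value by $z$ yields a cube $e$; and $d'=e\circ\sigma_w^{-1}$, which is a cube by automorphism invariance. Finally I would enumerate the vertices $v_1,\dots,v_{2^{k+1}}$ of $\{0,1\}^{k+1}$ and interpolate between $c$ and $c_2$ by changing one value at a time: set $d_0=c$ and let $d_j$ agree with $c_2$ on $v_1,\dots,v_j$ and with $c$ on the remaining vertices. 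Passing from $d_{j-1}$ to $d_j$ only alters the value at $v_j$, from $d_{j-1}(v_j)=c(v_j)$ to $c_2(v_j)$; since the hypothesis gives $c(v_j)\sim_k c_2(v_j)$, the single-vertex modification principle shows $d_j\in C^{k+1}(N)$ once $d_{j-1}$ is, and induction yields $c_2=d_{2^{k+1}}\in C^{k+1}(N)$.

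There is no hard analytic step here; the only thing to be careful about is the bookkeeping in the inductive interpolation, namely that each vertex is touched exactly once, so that the relation needed at stage $j$ is always the given $c(v_j)\sim_k c_2(v_j)$ rather than some relation between intermediate values. The genuine content is entirely contained in Lemma \ref{sim2}, and the role of the present corollary is simply to globalize it from one vertex to all vertices simultaneously.
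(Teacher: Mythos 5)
Your proof is correct and follows essentially the same route as the paper: the paper's own (much terser) proof says exactly to iterate Lemma \ref{sim2}, using the symmetries of the cube to move the distinguished vertex $0^{k+1}$ to an arbitrary vertex. Your write-up merely makes explicit the automorphism-invariance step and the one-vertex-at-a-time interpolation that the paper leaves implicit.
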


\begin{proof} We get the statement by iterating lemma \ref{sim2}. Note that by the symmetries of cubes the vector $0^{k+1}$ can be replaced by any other vector in lemma \ref{sim2}.
\end{proof}

\begin{corollary}\label{sim2cor2} A cube $c\in C^n(N/\sim_k)$ is uniquely determined by the elements $c(v)$ where $v\in\{0,1\}^n$ contains at most $k$ one's.
\end{corollary}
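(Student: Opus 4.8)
The plan is to reduce the statement to a uniqueness property for $(k+1)$-cubes and then bootstrap it along the faces of $\{0,1\}^n$. Write $\pi\colon N\to N/\sim_k$ for the quotient map; by the definition of the factor cubespace structure, every cube in $C^n(N/\sim_k)$ is of the form $\pi\circ\tilde c$ for some $\tilde c\in C^n(N)$. Thus it suffices to prove that two cubes $c,c'\in C^n(N/\sim_k)$ which agree on all $v$ with at most $k$ ones agree everywhere, and I will establish $c(v)=c'(v)$ by induction on the number of ones in $v$, the base case (at most $k$ ones) being exactly the hypothesis.

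For the inductive step, suppose $c$ and $c'$ agree on all vertices with at most $m$ ones, where $m\ge k$, and fix $u$ with exactly $m+1$ ones. I choose a set $I$ of $k+1$ coordinates on which $u$ equals $1$ and restrict attention to the $(k+1)$-dimensional face $F=\{v : v_j=u_j \text{ for } j\notin I\}$. Since the inclusion of a face is a cube morphism, the composition axiom gives $c|_F,c'|_F\in C^{k+1}(N/\sim_k)$; moreover the top vertex of $F$ is $u$ itself, while every other vertex of $F$ has at most $m$ ones, so $c|_F$ and $c'|_F$ agree at every vertex except possibly $u$. Hence it is enough to prove that a $(k+1)$-cube in $N/\sim_k$ is determined by its values at all vertices other than a single fixed one.

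To prove this, I lift $c|_F$ and $c'|_F$ to cubes $d,d'\in C^{k+1}(N)$. For each vertex $w\neq u$ we have $\pi(d(w))=c(w)=c'(w)=\pi(d'(w))$, i.e.\ $d(w)\sim_k d'(w)$. Now define $d''$ by $d''(u)=d(u)$ and $d''(w)=d'(w)$ for $w\neq u$; then $d''(w)\sim_k d(w)$ holds for every $w$, so Corollary~\ref{sim2cor} gives $d''\in C^{k+1}(N)$. The cubes $d''$ and $d'$ now agree at every vertex except $u$, so by Definition~\ref{simdef} (using the symmetry of cubes to move the distinguished vertex $0^{k+1}$ to $u$) we obtain $d''(u)\sim_k d'(u)$, that is $d(u)\sim_k d'(u)$, and therefore $c(u)=\pi(d(u))=\pi(d'(u))=c'(u)$. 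This closes the induction.

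The routine parts are the bookkeeping with faces and the invocation of the symmetry of cubes; the one genuinely load-bearing manoeuvre is the construction of the auxiliary cube $d''$, which converts the merely coordinatewise agreement modulo $\sim_k$ into an honest instance of the defining relation of $\sim_k$. This is precisely where Corollary~\ref{sim2cor} is indispensable, since without it there would be no reason for the spliced function $d''$ to be a cube at all, and the whole reduction would collapse.
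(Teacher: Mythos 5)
Your proof is correct and takes essentially the same route as the paper: the paper disposes of the case $n=k+1$ by exactly your splicing argument (Corollary \ref{sim2cor} to make the hybrid cube, then Definition \ref{simdef} with the symmetry of cubes), and handles $n>k+1$ by the same induction on the number of ones via restriction to $(k+1)$-dimensional faces. You have merely written out in full the details the paper's two-sentence proof calls ``straightforward.''
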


\begin{proof} For $n=k+1$ it follows directly from corollary \ref{sim2cor}. If $n>k+1$ then straightforward induction on the number of one's in $v$ complete the proof.
\end{proof}

\begin{lemma} For every $k\in\mathbb{N}$ and nilspace $N$ the equivalence relation $\sim_k$ is a congruence.
\end{lemma}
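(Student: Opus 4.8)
The plan is to check the three nilspace axioms for the factor cubespace $N/\sim_k$, whose $n$-cubes are by definition the maps $\pi\circ c$ with $c\in C^n(N)$, where $\pi\colon N\to N/\sim_k$ is the projection. The composition axiom holds for any factor cubespace, and ergodicity is inherited from $N$: a map $\{0,1\}\to N/\sim_k$ lifts vertexwise to a map into $N$, which lies in $C^1(N)=N^{\{0,1\}}$ and then projects back. Thus the only real content is the gluing axiom, and for it I would reduce everything to a lifting problem inside $N$. Suppose $\bar f\colon\{0,1\}^n\setminus\{1^n\}\to N/\sim_k$ has all its restrictions to the $(n-1)$-faces containing $0^n$ equal to cubes. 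It is enough to find a lift $f\colon\{0,1\}^n\setminus\{1^n\}\to N$ with $\pi\circ f=\bar f$ whose restrictions to those same faces are cubes of $N$; for then the gluing axiom of $N$ (equivalently Lemma~\ref{simpglue}) extends $f$ to some $c\in C^n(N)$, and $\pi\circ c\in C^n(N/\sim_k)$ is the required extension of $\bar f$.

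I would build the lift $f$ one vertex at a time, adding the vertices of $\{0,1\}^n\setminus\{1^n\}$ in order of increasing number of ones. With this order the only faces of the current domain through a newly added vertex $v^\ast$ are the subfaces of the principal face $[0^n,v^\ast]=\{v:v\le v^\ast\}$, so at each stage it suffices to choose $f(v^\ast)\in\pi^{-1}(\bar f(v^\ast))$ making $f|_{[0^n,v^\ast]}$ a cube of $N$ lifting $\bar f|_{[0^n,v^\ast]}$. Writing $m$ for the number of ones of $v^\ast$, i.e.\ the dimension of $[0^n,v^\ast]$, I split into two cases. If $m>k$, the boundary values $f|_{[0^n,v^\ast]\setminus\{v^\ast\}}$ already satisfy the gluing hypothesis of $N$, hence extend to some $\tilde c\in C^m(N)$; since the top vertex $v^\ast$ has $m>k$ ones, every vertex of $[0^n,v^\ast]$ with at most $k$ ones lies in the boundary, so $\pi\circ\tilde c$ and $\bar f|_{[0^n,v^\ast]}$ agree on all such vertices and Corollary~\ref{sim2cor2} forces $\pi\circ\tilde c=\bar f|_{[0^n,v^\ast]}$; I set $f(v^\ast):=\tilde c(v^\ast)$. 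If $m\le k$, I instead lift the factor cube $\bar f|_{[0^n,v^\ast]}$ to some $c'\in C^m(N)$; on the boundary $c'$ and the already-built $f$ both lift $\bar f$, so their values there are $\sim_k$-equivalent, and I correct $c'$ on the boundary to the $f$-values, finally setting $f(v^\ast):=c'(v^\ast)$, which projects to $\bar f(v^\ast)$ by construction.

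The boundary correction rests on an auxiliary fact: for $m\le k+1$, changing a cube $c\in C^m(N)$ at a single vertex $v$ to a value $\sim_k$-equivalent to $c(v)$ again yields a cube. To prove it I would realise $\{0,1\}^m$ as the face $\{u=0^{k+1-m}\}$ of $\{0,1\}^{k+1}$: precomposing $c$ with the coordinate projection $\{0,1\}^{k+1}\to\{0,1\}^m$ gives $\hat c\in C^{k+1}(N)$ restricting to $c$ on that face; Corollary~\ref{sim2cor} permits changing $\hat c$ at the single vertex lying over $v$; and restricting the modified cube back to the face produces exactly the desired modification of $c$. Iterating this one boundary vertex at a time, each step staying within a $\sim_k$-class, justifies the correction used above, and this is the only place the defining Corollary~\ref{sim2cor} enters.

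The step I expect to be the main obstacle is precisely this vertex-by-vertex lifting. Lifting each face on its own is immediate, but the faces containing $0^n$ overlap, and the real difficulty is to arrange that the partial lifts agree on the overlaps. The increasing-ones order is what tames the overlaps (reducing each new constraint to a single principal face), and the essential structural point is the dichotomy between vertices with more than $k$ ones, where the lifted value is pinned down by gluing in $N$ together with the determinacy of factor cubes (Corollary~\ref{sim2cor2}), and vertices with at most $k$ ones, where there is genuine freedom that must be corrected within $\sim_k$-classes using the auxiliary single-vertex fact.
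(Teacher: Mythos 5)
Your proof is correct and uses essentially the same strategy as the paper: reduce the gluing axiom in $N/\sim_k$ to lifting the corner map back to $N$, glue there, and project, with Corollary \ref{sim2cor} supplying the freedom to move lifted values within $\sim_k$-classes and Corollary \ref{sim2cor2} supplying uniqueness of factor cubes above level $k$. The paper organizes the lift more economically---it lifts arbitrarily on the set $T$ of vertices with at most $k+1$ ones (any vertexwise lift is automatically a morphism there by Corollary \ref{sim2cor}), extends once to the full cube via Lemma \ref{simpglue}, and only afterwards verifies agreement with the given corner via Corollary \ref{sim2cor2}---whereas your vertex-by-vertex induction constructs an exact lift of the whole corner, but the ingredients and the way they enter coincide.
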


\begin{proof} Let $M=N/\sim_k$ with the induced cubespace structure. It is clear that $M$ satisfies the ergodicity property. We need to check the gluing axiom. Let $f:\{0,1\}^n\setminus\{1^n\}\rightarrow M$ be a map which is a morphism of the corner of the $n$ dimensional cube to the cubespace $M$.
We need to show that $f$ extends to the whole cube $\{0,1\}^n$ as a morphism.
Let $T$ be the subset in $\{0,1\}^n$ of vectors with at most $k+1$ one's in the coordinates. Corollary \ref{sim2cor} shows that the restriction of $f$ to $T$ can be lifted from $M$ to $N$ as a morphism. Let $\bar{f}$ denote a lift.
Lemma \ref{simpglue} implies that $\bar{f}$ extends to a morphism $\bar{f}_2$ of the whole cube $\{0,1\}^n$ to $N$.
It is easy to see that the composition (call it $f_2$) of $\bar{f}_2$ with the factor map $\pi:N\rightarrow M$ is equal to $f$ when restricted to $\{0,1\}^n\setminus\{1^n\}$. Now corollary \ref{sim2cor2} shows that the restriction of $f_2$ to each face in $\{0,1\}^n$ of dimension $n-1$ and containing $0^n$ is equal to $f$. This completes the proof.
\end{proof}

\begin{definition} For a nilspace $N$ we denote by $\mathcal{F}_k(N)$ the factor $N/\sim_k$. We say that $N$ is a {\bf $k$-step nilspace} if $N=\mathcal{F}_k(N)$.
\end{definition}

Another way of formulating the previous definition is that $N$ is a $k$-step nilspace if and only if every morphism of the corner of the $k+1$ cube to $N$ extends  in a unique way to a morphism of the $k+1$ dimensional cube.
In other words the gluing axiom for $k+1$ dimensional cubes holds in a stronger form where uniqueness of the extension is guaranteed.
Note that this {\bf unique closing property} also appears in the Host-Kra theory of parallelepiped structures.
The next simple lemma will be important.

\begin{lemma}\label{lifting} Assume that $P\subset\{0,1\}^n$ is a sub-cubespace in $\{0,1\}^n$ with the extension property, $k$ is some natural number and $N$ is a nilspace. Then every morphism $f:P\rightarrow\mathcal{F}_k(N)$ has a lift $f':P\rightarrow\mathcal{F}_{k+1}(N)$ such that $f'$ is a morphism and $f'\equiv f$ modulo $\sim_k$.
\end{lemma}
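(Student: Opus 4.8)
The plan is to use the extension property to reduce the whole problem to lifting a single cube, for which the lift is immediate from the construction of factor nilspaces. As a preliminary I would record that $\sim_{k+1}$ refines $\sim_k$, so that the tower $N\rightarrow\mathcal{F}_{k+1}(N)\rightarrow\mathcal{F}_k(N)$ is available: writing $p_k:N\rightarrow\mathcal{F}_k(N)$ and $p_{k+1}:N\rightarrow\mathcal{F}_{k+1}(N)$ for the projections and $\pi:\mathcal{F}_{k+1}(N)\rightarrow\mathcal{F}_k(N)$ for the induced map, we have $\pi\circ p_{k+1}=p_k$, and the condition ``$f'\equiv f$ modulo $\sim_k$'' means exactly $\pi\circ f'=f$. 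The refinement follows from Lemma \ref{sim1} applied with $k+1$ in place of $k$: if $x\sim_{k+1}y$ there is $c\in C^{k+2}(N)$ with $c(0^{k+2})=y$ and $c(v)=x$ for all other $v$, and restricting $c$ to the $(k+1)$-dimensional face $\{0,1\}^{k+1}\times\{0\}$ gives, by the composition axiom, a cube in $C^{k+1}(N)$ witnessing $y\sim_k x$.

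For the main step, $\mathcal{F}_k(N)$ is a nilspace (being a congruence factor), so the extension property of $P$ in $\{0,1\}^n$ lets me extend $f:P\rightarrow\mathcal{F}_k(N)$ to a morphism $\bar f:\{0,1\}^n\rightarrow\mathcal{F}_k(N)$. Since a morphism out of the full cube is the same as an element of $C^n(\mathcal{F}_k(N))$, this $\bar f$ is a single cube of the factor, and by the definition of the factor cubespace structure it has the form $\bar f=p_k\circ g$ for some $g\in C^n(N)$. I then set $\bar f':=p_{k+1}\circ g$, an element of $C^n(\mathcal{F}_{k+1}(N))$, i.e. a morphism $\{0,1\}^n\rightarrow\mathcal{F}_{k+1}(N)$, which satisfies $\pi\circ\bar f'=\pi\circ p_{k+1}\circ g=p_k\circ g=\bar f$. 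Finally I put $f':=\bar f'|_P$; the restriction of a morphism to a sub-cubespace is a morphism, so $f':P\rightarrow\mathcal{F}_{k+1}(N)$ is a morphism, and $\pi\circ f'=(\pi\circ\bar f')|_P=\bar f|_P=f$, which is the required lift.

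The construction is short, and the only genuine point is consistency: lifting $f$ cube-by-cube over $P$ could fail to cohere on overlapping faces, and a naive pointwise choice of $\sim_{k+1}$-representatives need not respect cubes. The extension property is exactly the device that removes this difficulty, replacing $f$ by one cube on all of $\{0,1\}^n$, which lifts for free through the factor construction; restriction back to $P$ then costs nothing. So the ``hard part'' is really just the structural observation that the extension property reduces lifting a morphism to lifting a single cube.
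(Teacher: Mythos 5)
Your proof is correct and follows the same route as the paper: use the extension property to extend $f$ to a morphism on the full cube $\{0,1\}^n$, lift that single cube through the factor construction (a cube of $\mathcal{F}_k(N)$ is by definition the projection of a cube of $N$, which then projects to a cube of $\mathcal{F}_{k+1}(N)$), and restrict back to $P$. Your additional preliminary verifying that $\sim_{k+1}$ refines $\sim_k$ via Lemma \ref{sim1} is a detail the paper leaves implicit, but the argument is the paper's argument.
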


\begin{proof} The definition of the cubespace structure on $\mathcal{F}_k(N)$ immediately implies that the statement is true for $P=\{0,1\}^n$. If $P$ is smaller then we first extend $P$ to a morphism $g:\{0,1\}^n\rightarrow\mathcal{F}_k(N)$, then we lift $g$ to some morphism $g':\{0,1\}^n\rightarrow\mathcal{F}_{k+1}(N)$ and finally we restrict $g'$ to $P$. 
\end{proof}

\begin{lemma}\label{cubechar} Let $N$ be a $k$-step nilspace and $n\geq k+2$. A function $c:\{0,1\}^n\rightarrow N$ is in $C^n(N)$ if and only if its restrictions to $k+1$ dimensional faces with at least one point with $0$ in the last coordinate are all in $C^{k+1}(N)$.
\end{lemma}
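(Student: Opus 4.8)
The plan is to prove the two implications separately, the forward one being immediate and the reverse one requiring an induction on $n$.

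For the forward direction, suppose $c \in C^n(N)$. The inclusion of any $(k+1)$-dimensional face of $\{0,1\}^n$ into $\{0,1\}^n$ is a cube morphism, since each output coordinate is a constant $0$, a constant $1$, or one of the input variables. Hence the composition axiom shows that the restriction of $c$ to that face lies in $C^{k+1}(N)$; in particular this holds for the faces named in the statement, and nothing special about the last coordinate is used here.

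The substance is the reverse direction, which I would prove by induction on $n$, treating $n = k+2$ as the base case and $n \geq k+3$ through the inductive hypothesis applied to $(n-1)$-cubes. Assume $c : \{0,1\}^n \to N$ has all its restrictions to $(k+1)$-faces meeting $\{x_n = 0\}$ in $C^{k+1}(N)$. The first step is to show that the restriction of $c$ to every $(n-1)$-face through $0^n$ is a cube. Such a face has the form $\{x_i = 0\}$ for some $i$. When $n = k+2$ these faces are themselves $(k+1)$-faces containing points with $x_n = 0$, so they are cubes directly by hypothesis. When $n \geq k+3$, I identify the face with $\{0,1\}^{n-1}$ so that its last coordinate is $x_n$ (possible since $i < n$ for the side faces, while the bottom face $\{x_n = 0\}$ consists entirely of points with $x_n = 0$), note that the $(k+1)$-subfaces required by the inductive hypothesis are among the $(k+1)$-faces of $\{0,1\}^n$ meeting $\{x_n = 0\}$, and conclude that the restriction is an $(n-1)$-cube by the inductive hypothesis.

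Once every $(n-1)$-face through $0^n$ is known to be a cube, the second step is to apply the gluing axiom to the restriction of $c$ to $\{0,1\}^n \setminus \{1^n\}$, producing a cube $c' \in C^n(N)$ agreeing with $c$ everywhere except possibly at $1^n$. The final and main step is to verify $c'(1^n) = c(1^n)$. For this I pick a side face $F = \{x_i = 1\}$ with $i < n$; it contains $1^n$ and meets $\{x_n = 0\}$, so the same face argument (hypothesis for $n = k+2$, induction otherwise) gives $c|_F \in C^{n-1}(N)$, while $c'|_F$ is a cube as a restriction of $c'$, and the two agree on $F \setminus \{1^n\}$. Since $N$ is $k$-step we have $N = N/\!\sim_k$, so by Corollary \ref{sim2cor2} a cube on $F \cong \{0,1\}^{n-1}$ is determined by its values on vertices with at most $k$ ones. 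The only vertex of $F$ where $c|_F$ and $c'|_F$ could differ is the image of $1^n$, which carries $n-1 \geq k+1$ ones and is therefore excluded, so the two agree on all vertices with at most $k$ ones and hence everywhere. This forces $c(1^n) = c'(1^n)$, giving $c = c' \in C^n(N)$. The step I expect to require the most care is exactly this matching of the apex value: it is where $k$-stepness is indispensable, since without the unique-closing property encoded in Corollary \ref{sim2cor2} the gluing extension $c'$ need not recover the prescribed value of $c$ at $1^n$.
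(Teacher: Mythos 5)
Your proof is correct, but it follows a genuinely different route from the paper's. You run an induction on the dimension $n$: the inductive hypothesis shows that every $(n-1)$-face through $0^n$ carries a cube, which legitimizes applying the plain gluing axiom to the corner $\{0,1\}^n\setminus\{1^n\}$, and then Corollary \ref{sim2cor2}, applied to $c$ and $c'$ on a single side face $\{x_i=1\}$ with $i<n$, pins down the value at $1^n$. The paper makes no induction on $n$. Instead it extends $c|_P$, where $P$ is the set of vertices with at most $k$ ones, to a full cube $c'$ in one step via simplicial gluing (Lemma \ref{simpglue}); it then proves $c=c'$ by induction on the number of ones: at a minimal disputed vertex $w$ it chooses the $(k+1)$-face $F$ through $w$ spanned by the last $k+1$ coordinates of the support of $w$ --- this $F$ automatically contains a point with last coordinate $0$, so $c|_F$ is a cube by hypothesis, and all vertices of $F$ other than $w$ are already settled, so the unique closing property forces $c(w)=c'(w)$. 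The difference is mainly in where the work is placed: you delegate the vertex-by-vertex propagation to Corollary \ref{sim2cor2} (whose own proof is exactly such an induction on the number of ones), at the price of an outer induction on $n$ and of checking that the hypothesis family of faces restricts correctly to each $(n-1)$-face; the paper keeps that propagation explicit, at the price of invoking Lemma \ref{simpglue} and a combinatorial selection of the face $F$. Both arguments rest on the same two pillars --- the hypothesis faces are cubes, and $k$-stepness makes a cube rigid once enough of its vertices are fixed --- so each is a legitimate proof, with yours arguably using lighter gluing machinery and the paper's avoiding the nested induction.
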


\begin{proof} Let $P$ be the set of elements in $\{0,1\}^n$ with at most $k$ ones. Note that $P$ is the union of the $k$-dimensional faces containing $0^n$. The condition of the lemma implies that $c$ restricted to such faces are cubes. Using lemma \ref{simpglue} and the fact that $N$ is $k$-step we get that there is a unique element $c'$ in $C^n(N)$ whose restriction to $P$ is equal to the restriction of $c$ to $P$. We claim that $c=c'$. Let $t$ be the maximal integer such that $c=c'$ on every element $v\in\{0,1\}^n$ with at most $t$ ones in its coordinates. By contradiction assume that $t<n$. Then there is an element $w\in\{0,1\}^n$ with $t+1$ ones such that $c'(w)\neq c(w)$. Since $t>k$ It can be seen that $w$ is contained in a $k+1$ dimensional face $F$ such that every element in $F\setminus\{w\}$ has at most $t$ ones and furthermore there is at least one point if $F$ with $0$ in its last coordinates. Such a face can be found by choosing the last $k+1$ elements from the support of $w$ and then changing those coordinates in $w$.

We know that the restriction of $c$ to $F$ is in $C^{k+1}(N)$. Since there is only one way of completing $F\setminus\{w\}$ to a cube the proof is complete.
\end{proof}

\subsection{Linear and higher degree abelian groups}

We will see that abelian groups appear in the structures of nilspaces in various ways as building blocks.
Every abelian group $A$ has a natural nilspace structure that we call ``linear''.
Cubes in $C^n(A)$ are functions $f:\{0,1\}^n\rightarrow  A$ satisfying
\begin{equation}\label{linearcube}
f(e_1,e_2,\dots,e_n)=a_0+\sum_{i=1} e_ia_i
\end{equation}
for some elements $a_0,a_1,\dots,a_n\in A$.  There is however another
way of describing these functions. If $f$ satisfies (\ref{linearcube})
then for any cube morphism $\phi:\{0,1\}^2\rightarrow\{0,1\}^n$ we
have $$f(\phi(0,0))-f(\phi(0,1))-f(\phi(1,0))+f(\phi(1,1))=0$$ and it
is easy to see that it gives an alternative characterization for
linear cubes.  The advantage of the second description is that it can
be naturally generalized.  For an arbitrary map
$f:\{0,1\}^n\rightarrow A$ to an abelian group let us introduce the
weight of $f$ by
\begin{equation}\label{weight}
w(f)=\sum_{v\in\{0,1\}^{n}} f(v)(-1)^{h(v)}
\end{equation}
where $h(v)=\sum_{i=1}^{n} v_i$.

\begin{definition} For every $k\in\mathbb{N}$ and abelian group $A$ let us define the nilspace  $\mathcal{D}_k(A)$ on the point set $A$ in the following way.
A map $f:\{0,1\}^n\rightarrow A$ is in $C^n(\mathcal{D}_k(A))$ if and only if
for every morphism $\phi:\{0,1\}^{k+1}\rightarrow \{0,1\}^n$ we have that $w(f\circ \phi)=0$. We say that $\mathcal{D}_k(A)$ is the {\bf $k$-degree structure} on $A$.
\end{definition}

To check the gluing axiom in $\mathcal{D}_k(A)$ is a straightforward calculation.
Observe that $\mathcal{D}_k(A)$ is a $k$-step and $k$-fold ergodic nilspace. 
Lemma \ref{kabelian} will show the converse of this observation. The proof uses the next lemma which is interesting on its own right.

\begin{lemma}\label{onestep} One step nilspaces are affine abelian groups with the linear nilspace structure.
\end{lemma}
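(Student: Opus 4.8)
The plan is to distill, from the unique closing of $2$-cubes in a $1$-step nilspace, a ternary ``parallelogram'' operation on $N$, to recognise this operation as an \emph{abelian heap} (equivalently, a torsor over an abelian group, i.e.\ an affine abelian group), and finally to use Lemma \ref{cubechar} to check that the cubes of $N$ are exactly the affine (linear) cubes of the resulting structure. I will phrase everything through the operation rather than fixing a base point, so that the ``affine'' (torsor) formulation, with no distinguished origin, comes out directly; fixing an origin $e$ and writing $x\oplus y=T(e,x,y)$ is then only a convenience for checking the group laws.

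First I would set up the operation. By ergodicity every map $\{0,1\}\to N$ is a $1$-cube, so any assignment of values $a,b,c$ to the vertices $00,01,10$ of $\{0,1\}^2$ restricts to cubes on the two edges through $00$; the gluing axiom produces an extension to a $2$-cube, and since $N$ is $1$-step this extension is unique. Write $T(a,b,c)$ for its value at $11$. The ``easy'' heap identities come from two sources. Composition with cube automorphisms of $\{0,1\}^2$ (automorphisms are cube morphisms, so cubes are symmetric) gives $T(a,b,c)=T(a,c,b)$ from the coordinate swap, and the inversion identity $T(c,T(a,b,c),a)=b$ from the reflection $s\mapsto 1-s$. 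Pulling a $1$-cube back along a coordinate projection $\{0,1\}^2\to\{0,1\}$ (again a cube morphism) yields the degenerate $2$-cubes giving $T(a,a,c)=c$, $T(a,c,a)=c$ and $T(a,b,a)=b$. Together these are all the heap axioms \emph{except} associativity.

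The crux, and what I expect to be the main obstacle, is the (para-)associativity of $T$; equivalently, after fixing $e$ and setting $x\oplus y=T(e,x,y)$, the associativity of $\oplus$. A single $3$-cube does \emph{not} suffice: such a cube is determined by its four generating vertices (those with at most one $1$), the remaining four being forced completions, and the only identity it yields is the fully symmetric equality obtained by computing the closing vertex $c(1^3)$ through each of the three faces that contain it. Every such identity has only four free elements, and one checks that trying to exhibit a product like $T(e,T(e,x,y),z)$ on a face forces the generators to collapse; genuine para-associativity involves five independent elements and so cannot be read off one $3$-cube. Instead I would realise the two bracketings $(x\oplus y)\oplus z$ and $x\oplus(y\oplus z)$ as the unique completions of two \emph{distinct} two-dimensional faces of a four-dimensional cube whose $2$-faces do not share a mid-vertex; one verifies in the linear model that an affine parallelepiped with the required boundary values exists, hence the configuration is consistent. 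Building this cube in $N$ from a suitable choice of generating vertices (the bookkeeping is routine, guided by the linear model), using the gluing axiom for existence and the $1$-step hypothesis together with Lemma \ref{cubechar} for uniqueness of the iterated closures, the uniqueness of the closing vertex then forces the two bracketings to agree. Commutativity, the identity law, and inverses follow formally from the heap identities above, so that $(N,\oplus)$ is an abelian group and $N$ is an affine abelian group; moreover $T(a,b,c)=b\oplus c\ominus a$.

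Finally I would match the cube structures. By construction the $2$-cubes of $N$ are exactly the maps with $f(1^2)=T(f(00),f(01),f(10))=f(01)\oplus f(10)\ominus f(00)$, i.e.\ those with weight $w(f)=0$, which is the defining condition of $C^2(\mathcal D_1(N))$. For $n\ge 3$, Lemma \ref{cubechar} applied with $k=1$, combined with the symmetry of cubes under the automorphisms of $\{0,1\}^n$, shows that $f\in C^n(N)$ if and only if every two-dimensional face of $f$ is a $2$-cube. Since an injective cube morphism $\phi:\{0,1\}^2\to\{0,1\}^n$ has a two-dimensional face as image (and the non-injective ones impose conditions that hold automatically because the weight cancels), this is precisely the requirement $w(f\circ\phi)=0$ for all such $\phi$, which defines $C^n(\mathcal D_1(N))$. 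The cases $n\le 1$ are immediate. Hence $C^n(N)=C^n(\mathcal D_1(N))$ for all $n$, so $N$ is the affine abelian group $(N,\oplus)$ with its linear nilspace structure, proving the lemma.
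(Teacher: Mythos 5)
Your setup (the ternary completion operation, its symmetry under automorphisms of $\{0,1\}^2$, and the degenerate identities from coordinate projections) matches the paper's, and your plan to also match the cube structures via Lemma \ref{cubechar} is a legitimate addition (the paper only verifies the group axioms). But there is a genuine error at the heart of your argument: your claim that ``a single $3$-cube does not suffice'' for associativity is false, and it is false for a reason that matters. The paper's proof extends the map $g(0,0,0)=e$, $g(1,0,0)=x$, $g(0,1,0)=y$, $g(0,0,1)=z$ to a cube $g_2\in C^3(N)$ and then composes $g_2$ with the \emph{diagonal} cube morphisms $\phi_1(a,b)=(a,a,b)$ and $\phi_2(a,b)=(a,b,b)$. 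By the composition axiom $g_2\circ\phi_1$ and $g_2\circ\phi_2$ are $2$-cubes; their corners are $e,\,x+y,\,z$ and $e,\,x,\,y+z$ respectively (the values $x+y$ at $(1,1,0)$ and $y+z$ at $(0,1,1)$ being forced by uniqueness of $2$-cube completion), so the unique closing property gives $(x+y)+z=g_2(1,1,1)=x+(y+z)$. Your analysis that ``the only identity it yields is the fully symmetric equality'' considered only the three \emph{coordinate} $2$-faces containing $1^3$; you overlooked that cubes pull back along arbitrary cube morphisms, including diagonal embeddings, which is exactly how the product $T(e,T(e,x,y),z)$ appears on a face of the $3$-cube with no collapse of generators. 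Because of this, you replaced the decisive step by a four-dimensional construction that you never carry out (``the bookkeeping is routine, guided by the linear model'' is not a proof, and consistency in the linear model cannot be invoked before $N$ is known to be linear). So as written, associativity --- the crux of the lemma --- is asserted rather than proven.

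The same blind spot about diagonal morphisms resurfaces in your final step: it is not true that every injective cube morphism $\phi:\{0,1\}^2\to\{0,1\}^n$ has a coordinate $2$-face as image (consider $(a,b)\mapsto(a,a,b,0,\dots,0)$), so checking weight zero on coordinate $2$-faces does not, by your stated reasoning, give $w(f\circ\phi)=0$ for all $\phi$. This one is fixable: if all coordinate $2$-faces of $f$ have weight zero, an induction on the number of $1$'s in a vertex shows $f(v)=a_0+\sum_i v_ia_i$ is affine, and affine maps kill the weight of every morphism pullback. I recommend you repair the associativity step with the paper's diagonal-morphism argument (which also yields $T(a,b,c)=b\oplus c\ominus a$ directly, with base point $a$ in place of $e$) and insert the affineness observation in the last paragraph; with those repairs your proof is complete and, in its second half, somewhat more thorough than the paper's.
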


\begin{proof} Let $N$ be a one step nilspace. Let us distinguish an arbitrary element $e\in N$ and call it identity. For every $x,y\in N$ we define $x+y$ as the unique extension of the morphism defined by $f(0,0)=e,f(1,0)=x,f(0,1)=y$ (of the corner of the two dimensional cube) to $(1,1)$. We need to check the abalian group axioms.

Commutativity of $+$ follows directly from the symmetry of $\{0,1\}^2$ interchanging $(1,0)$ and $(0,1)$.

If $x,y,z\in N$ the we can extend the map $g(0,0,0)=e,g(1,0,0)=x,g(0,1,0)=y,g(0,0,1)=z$ to the full cube $\{0,1\}^3$. Let $g_2$ denote the extension. The composition of $g_2$ by the maps $\phi_1,\phi_2:\{0,1\}^2\rightarrow \{0,1\}^3~,~\phi_1(a,b)=(a,a,b)$ and $\phi_2(a,b)=(a,b,b)$ shows associativity.

If $f(0,0)=x,f(1,0)=e,f(0,1)=e$ then the unique extension $y=f(1,1)$ satisfies $x+y=e$.
\end{proof}

\begin{lemma}\label{kabelian} If $N$ is a $k$-step, $k$-fold ergodic nilspace then $N$ is isomorphic to $\mathcal{D}_k(A)$ for some abelian group $A$.
\end{lemma}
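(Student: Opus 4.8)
The plan is to manufacture an abelian group structure on the underlying set of $N$ out of the unique-closing property, and then to identify $C^n(N)$ with $C^n(\mathcal{D}_k(A))$ dimension by dimension. The starting observation is that $k$-fold ergodicity makes \emph{every} function on the corner $\{0,1\}^{k+1}\setminus\{1^{k+1}\}$ a legal corner: its restrictions to the $k$-faces through $0^{k+1}$ are cubes automatically. Since $N$ is $k$-step, each such corner has a unique closing, so there is a well-defined closing operator $\Phi$ sending a corner to the forced value at $1^{k+1}$. In $\mathcal{D}_k(A)$ this $\Phi$ is exactly the alternating sum dictated by $w$ (equation (\ref{weight})), so the whole task is to exhibit a group on $N$ for which $\Phi$ is this alternating sum. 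Note also that $\mathcal{F}_{k-1}(N)$ is a single point (given any common values off $0^k$, $k$-fold ergodicity lets us close with either endpoint at $0^k$), so there is no lower-order data to track: the object is purely top-dimensional.

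I would fix $e\in N$ as the identity and fix two distinct weight-$k$ vertices $p=(1^k,0)$ and $q=(1^{k-1},0,1)$, and define $x+y$ to be $\Phi$ applied to the corner that equals $x$ at $p$, $y$ at $q$, and $e$ at every other vertex. Commutativity is immediate: transposing the two coordinates in which $p$ and $q$ carry their single $0$ is a cube automorphism of $\{0,1\}^{k+1}$ that fixes $1^{k+1}$, exchanges $p$ and $q$, and preserves the all-$e$ pattern, so it carries the corner for $x+y$ to the corner for $y+x$. The identity law is equally clean: the function $h\colon\{0,1\}^k\to N$ equal to $x$ at $1^k$ and $e$ elsewhere is a cube by $k$-fold ergodicity, and its pullback along the projection $\{0,1\}^{k+1}\to\{0,1\}^k$ (a cube morphism) is a $(k+1)$-cube taking the value $x$ exactly at $p$ and $1^{k+1}$ and $e$ elsewhere; its unique closing being $x$ gives $x+e=x$.

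The technical heart, which I expect to be the main obstacle, is to prove that $\Phi$ is the \emph{full} alternating sum, i.e. that $c\colon\{0,1\}^{k+1}\to N$ lies in $C^{k+1}(N)$ precisely when $w(c)=0$ in $(N,+,e)$. I would establish this by a superposition principle for the closing operator: if two corners differ from the constant $e$ on disjoint sets of vertices, then the closing of their pointwise combination is the sum of their closings. Realizing this requires gluing two $(k+1)$-cubes along a shared $k$-face and extracting the outer vertices, using the concatenation lemma for adjacent cubes together with Lemma \ref{extvert} and the three-cube $T_{k+1}$, while $k$-fold ergodicity supplies the auxiliary cube data freely. Iterating the superposition over the vertices one at a time, starting from the constant cube (whose closing is $e$), yields the alternating-sum formula; associativity and the existence of inverses then follow, either directly from that formula or by reading a single higher cube through two different cube morphisms as in the proof of Lemma \ref{onestep}. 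This is where the nilspace axioms are genuinely used, and the bookkeeping of signs and faces is the part most likely to hide difficulties.

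With $A=(N,+,e)$ in hand and the identity $C^{k+1}(N)=\{c:w(c)=0\}$ established, I would finish by matching cubes in every dimension. For $n\le k$ both $C^n(N)$ and $C^n(\mathcal{D}_k(A))$ are all of $N^{\{0,1\}^n}$, since $N$ is $k$-fold ergodic and so is $\mathcal{D}_k(A)$. For $n=k+1$ the two agree by the previous paragraph, once one notes that the defining conditions of $\mathcal{D}_k(A)$ coming from non-injective morphisms $\{0,1\}^{k+1}\to\{0,1\}^{k+1}$ are automatic: such a morphism must omit an input coordinate, and $w$ vanishes on any function constant in some direction. Finally, for $n\ge k+2$ both $N$ and $\mathcal{D}_k(A)$ are $k$-step, so Lemma \ref{cubechar} characterizes their $n$-cubes by the same condition on $(k+1)$-faces; since those faces agree by the $n=k+1$ case, we get $C^n(N)=C^n(\mathcal{D}_k(A))$ for all $n$, and the identity map on $N$ is the desired isomorphism.
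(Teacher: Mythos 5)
Your supporting steps are fine: the observation that $k$-fold ergodicity makes every function on the corner a legal corner, the definition of $x+y$ via the closing operator $\Phi$, the commutativity and identity arguments, the remark that non-injective morphisms impose vacuous conditions on $C^{k+1}(\mathcal{D}_k(A))$, and the dimension-matching finish via Lemma \ref{cubechar} are all correct. The genuine gap is the step you yourself call the technical heart, and it cannot be repaired by ``bookkeeping of signs and faces''. Concatenation (even packaged through $T_{k+1}$ and Lemma \ref{extvert}) only merges data separated by a coordinate hyperplane: slicing $\{0,1\}^{k+1}$ along coordinate $i$ lets you replace the whole configuration in the half $\{v:v_i=0\}$ by its closing, placed at the single vertex with one $0$ in coordinate $i$. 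It cannot peel off one vertex at a time: already for $k+1=3$ the disjoint supports $\{(0,0,0),(1,1,0)\}$ and $\{(1,0,0)\}$ are not separated by any coordinate. Iterating the half-cube collapse instead terminates at configurations supported on the $k+1$ vertices of weight $k$, to which no further concatenation applies; evaluating $\Phi$ on those terminal configurations \emph{is} the associativity of your operation, which at that stage is unproven. Nor can associativity be read off ``directly from that formula'': the alternating sum $w(c)$ is not even a well-defined element of $N$ before associativity, commutativity and inverses are available, so that route is circular.

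The paper fills exactly this hole with an induction on $k$ that your proposal never invokes: the group $A$ is produced as $\partial_e^{k-1}N$ (a $1$-step ergodic nilspace, Lemma \ref{onestep}), and the inductive hypothesis is applied to $\mathcal{F}_{k-1}$ of the arrow space over $N$ to show that $(x_1,x_2)\sim_{k-1}(y_1,y_2)$ holds if and only if $x_1-x_2=y_1-y_2$ in $A$. This yields the key fact that adding a fixed $a\in A$ to the two endpoints of any edge of a cube preserves cubes; any $c\in C^{k+1}(N)$ can then be pushed to a cube that is constant off a single vertex, and unique closing forces it to be constant, which gives $C^{k+1}(N)=C^{k+1}(\mathcal{D}_k(A))$ with no sign bookkeeping at all. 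If you want to keep a non-inductive argument, the one workable device you mention -- reading a $(k+2)$-cube through two coordinate-duplicating morphisms as in Lemma \ref{onestep} -- must be carried out \emph{first}, to establish associativity before any formula is written; note that you may prescribe values freely on the weight-$\le k$ vertices of $\{0,1\}^{k+2}$ (these form a simplicial sub-cubespace, every face restriction is a cube by $k$-fold ergodicity, so Lemma \ref{simpglue} extends the data to a $(k+2)$-cube), and only then run your collapsing scheme. As it stands, that is a missing idea, not a routine verification.
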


\begin{proof} We use induction on $k$. Lemma \ref{onestep} shows the statement for $k=1$. Assume that $k\geq 2$ and the statement is already proved for $k-1$. Let $e$ be a fixed element in $N$. After $k-1$ repeated applications of $\delta_e$ to $N$ we obtain a $1$-step nilspace $\partial_e^{k-1}N$.  The condition that $N$ is a single class of $\sim_{k-1}$ implies by lemma \ref{sim2cor} that every function $f:\{0,1\}^k\rightarrow N$ is a cube. In particular $\partial_e^{k-1}N$ is ergodic. Lemma \ref{onestep} implies that $\partial_e^{k-1}N$ is isomorphic to an abelian group $A$ with the linear structure.

Let $M$ be the arrow space over $N$. Since $k\geq 2$ we have that $M$ is ergodic. Cubes of dimension $k+1$ in $N$ are in a one to one correspondence with cubes of dimension $k$ in $M$.
We claim that two arrows $x=(x_1,x_2)$ and $y=(y_1,y_2)$ in $M$ are $\sim_{k-1}$ equivalent if and only if $x_1-x_2=y_1-y_2$ in $A$. First notice that $M$ is in a single $\sim_{k-2}$ class and so the factor $\mathcal{F}_{k-1}(M)$ satisfies the condition of the lemma with $k-1$. Let $f:\{0,1\}^k\rightarrow M$ be the map defined in a way that $f(0^k)=x$, $f(1,0,0,\dots,0)=y$ and $f(v)=e$ everywhere else. The induction hypothesis guarantees that $x=y$ in the factor $\mathcal{F}_{k-1}(M)$ if and only if $f$ is a cube in $M$. This shows that $x\sim_{k-1} y$ if and only if $x_1,x_2,y_1,y_2$ form a two dimensional cube in $\partial_e^{k-1}N=A$. This proves the claim.

We obtain from the claim that if $c\in C^{k+1}(N)$ is an arbitrary cube then if we add the same element in $a\in A$ to the $c$ values of two endpoints of an arbitrary edge in $\{0,1\}^{k+1}$ then the resulting new function is still a cube.
By repeating this operation we can produce a new cube $c'$ in which all but one of the vertices are mapped to $e$. Using that constant functions are all cubes and the unique closing property we obtain that $c'$ has to be the constant function.
In other words $c$ can be obtained from the constant function with the inverses of the previous operations which shows that all the cubes are in $\mathcal{D}_k(A)$. The fact that every $2^{k+1}-1$ points can be completed to a cube shows that the cubes in $N$ are exactly the cubes in $\mathcal{D}_k(A)$.

\end{proof}

\begin{corollary} If $N$ is a $k$-step nilspace then every equivalence class of $\sim_{k-1}$ is an abelian group with the $k$-degree structure.
\end{corollary}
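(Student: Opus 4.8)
The plan is to equip each equivalence class $X$ of $\sim_{k-1}$ with the sub-cubespace structure inherited from $N$ (so a map $\{0,1\}^n\to X$ is declared a cube precisely when it lies in $C^n(N)$) and to prove that this makes $X$ into a $k$-step, $k$-fold ergodic nilspace. The conclusion then drops out of Lemma~\ref{kabelian}, which identifies every such nilspace with some $\mathcal{D}_k(A)$.

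First I would fix a base point $x_0\in X$ and record that constant maps are cubes: since $N$ is ergodic we have $C^0(N)=N$, and composing $x_0$ with the unique cube morphism $\{0,1\}^n\to\{0,1\}^0$ puts the constant map at $x_0$ into $C^n(N)$. To see that $X$ is $k$-fold ergodic, take any $f:\{0,1\}^k\to X$. As all values of $f$ lie in the single $\sim_{k-1}$-class $X$, we have $f(v)\sim_{k-1}c(v)$ for every $v$, where $c$ is the constant $k$-cube at $x_0$. Corollary~\ref{sim2cor}, applied with $k$ replaced by $k-1$, then forces $f\in C^k(N)$, so $f$ is a cube in $X$; hence $C^k(X)=X^{\{0,1\}^k}$. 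Precomposing with a coordinate projection $\{0,1\}^k\to\{0,1\}^j$ and restricting to a face shows $X$ is $j$-fold ergodic for every $j\le k$, so in particular the ergodicity axiom $C^1(X)=X^{\{0,1\}}$ holds, and the composition axiom is automatic for a sub-cubespace.

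The substantive point is gluing. Let $f:\{0,1\}^n\setminus\{1^n\}\to X$ have all its restrictions to the $(n-1)$-faces through $0^n$ lying in $X$, hence in $N$. For $n\le k$ there is nothing to prove, since $X$ is $k$-fold ergodic. For $n\ge k+1$ I would invoke the gluing axiom of $N$ to produce $\bar f\in C^n(N)$ with $\bar f|_{\{0,1\}^n\setminus\{1^n\}}=f$; the only issue is whether $\bar f(1^n)\in X$. Projecting to $\mathcal{F}_{k-1}(N)=N/\sim_{k-1}$, the cube $\pi\circ\bar f$ takes the constant value $[x_0]$ at every vertex with at most $k-1$ ones, because all such vertices differ from $1^n$ (as $n\ge k+1>k-1$) and there $\bar f$ agrees with $f$, landing in $X$. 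By Corollary~\ref{sim2cor2}, again read with $k-1$ in place of $k$, a cube of $N/\sim_{k-1}$ is determined by its values on vertices with at most $k-1$ ones, so $\pi\circ\bar f$ coincides with the constant cube $[x_0]$ everywhere; in particular $\pi(\bar f(1^n))=[x_0]$, i.e. $\bar f(1^n)\in X$. Thus $\bar f$ is a cube in $X$ extending $f$, and $X$ satisfies gluing.

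Finally, $X$ is $k$-step: existence of the closing cube for the corner of the $(k+1)$-cube is the case $n=k+1$ just established, while uniqueness is inherited from $N$, since two cubes of $X$ agreeing off $1^{k+1}$ are two cubes of the $k$-step nilspace $N$ agreeing off $1^{k+1}$, hence equal. Having checked that $X$ is a $k$-step, $k$-fold ergodic nilspace, Lemma~\ref{kabelian} yields $X\cong\mathcal{D}_k(A)$ for an abelian group $A$, which is exactly the claim. I expect the main obstacle to be precisely the gluing step — guaranteeing that the extension manufactured inside $N$ does not escape the class $X$ — and the cleanest route around it is the low-vertex determinacy of Corollary~\ref{sim2cor2} applied to the factor $\mathcal{F}_{k-1}(N)$, which sidesteps proving a separate uniqueness statement for high-dimensional extensions.
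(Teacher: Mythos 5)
Your proof is correct and follows essentially the same route as the paper's own argument (which appears inside the proof of Theorem \ref{bundec}): restrict the cubic structure to a class, get $k$-fold ergodicity from Corollary \ref{sim2cor}, show a corner's completion cannot leave the class by passing to $\mathcal{F}_{k-1}(N)$, and finish with Lemma \ref{kabelian}. The only cosmetic difference is that you justify the containment $\bar f(1^n)\in X$ by low-weight determinacy (Corollary \ref{sim2cor2}) in the factor, whereas the paper invokes uniqueness of the constant completion in the $(k-1)$-step factor --- two phrasings of the same fact.
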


\subsection{Bundle decomposition of nilspaces}\label{bundledecomp}

We give a structure theorem for $k$-step nilspaces which follows relatively easily from the axioms but which is useful as an intermediate step to prove deeper structure theorems.

\begin{definition} Let $A$ be an abelian group. An (abstract) {\bf $A$-bundle} over a set $S$ is a set $T$ with an action $\alpha:A\times T\rightarrow T$ and a bundle map $\pi:T\rightarrow S$ such that
\begin{enumerate}
\item the action $\alpha$ is free i.e. the stabilizer of every element is the trivial subgroup in $A$,
\item $\pi$ gives a bijection between the orbits of $A$ in $T$ and the elements of $S$.
\end{enumerate}
A {\bf $k$-fold abelian bundle} with structure groups $A_1,A_2,\dots, A_k$ is the last member of a sequence $T_0,T_1,\dots,T_k$ of ``factors'' where $T_0$ is a one element set and $T_i$ is an $A_i$ bundle over $T_{i-1}$.
These objects come together with projections (bundle maps) $\pi_{i,j}:T_i\rightarrow T_j$ for $i\geq j$.
By abusing the notation we use the shorthand notation $\pi_j$ for $\pi_{i,j}$.
A {\bf relative $k$-fold abelian bundle} is a generalization of a $k$-fold abelian bundle whose base $T_0$ can be arbitrary. 
\end{definition}

Note that if $T$ is an $A$-bundle over $S$ then fibers (preimages of points under $\pi$) can be regarded as affine versions of $A$. We will use the short hand notation $x+a$ for $\alpha(a,x)$. There is no distinguished bijection between the elements of a fiber $F$ and $A$ but there is a well defined difference map $F\times F\rightarrow A$ which, if $x,y\in F$, is given by the unique element in $a\in A$ satisfying $y+a=x$. We simply denote the difference of $x$ and $y$ by $x-y$.

\begin{definition} A {\bf degree-$k$ bundle} $N$ is a cubespace that also has the structure of a $k$-fold abelian bundle with structure groups $A_1,A_2,\dots,A_k$ and factors\\ $T_0,T_1,\dots,T_k=N$ with the following property: For every $0\leq i\leq k-1$, $n\in\mathbb{N}$ and $c\in C^n(T_{i+1})$ we have that
$$\{c_2|c_2\in C^n(T_{i+1})~,~\pi_i\circ c=\pi_i\circ c_2\}=\{c+c_3| c_3\in C^n(\mathcal{D}_{i+1}(A_{i+1}))\}$$
where $C^n(T_i)=\pi_i(C^n(N))$.
\end{definition}

\begin{theorem}[Bundle decomposition]\label{bundec} A cubespace $N$ is a degree-$k$ bundle if and only if $N$ is a $k$-step nilspace. Furthermore $\mathcal{F}_i(N)$ is equal to $T_i$ for every $1\leq i\leq k$.
\end{theorem}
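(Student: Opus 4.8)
The plan is to prove both implications simultaneously by induction on $k$, constructing the factors $T_i=\mathcal{F}_i(N)$ and the structure groups $A_i$ as we go. The base case $k=1$ is Lemma~\ref{onestep}: a $1$-step nilspace is an affine abelian group with the linear structure, which is precisely an $A_1$-bundle over the one-point set $T_0$ whose $n$-cubes form a single coset of $C^n(\mathcal{D}_1(A_1))$, and conversely. A fact I would use repeatedly is that $\sim_k$ refines $\sim_{k-1}$: restricting the two witnessing $(k+1)$-cubes to the face $\{w_{k+1}=0\}$ produces witnesses for $\sim_{k-1}$, so a $(k-1)$-step nilspace is automatically $k$-step; in particular $T_{k-1}$ will enjoy unique closing at level $k$.

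For the forward implication, let $N$ be a degree-$k$ bundle with factors $T_0,\dots,T_k=N$. Its truncation $T_{k-1}$ is a degree-$(k-1)$ bundle, so by induction it is a $(k-1)$-step nilspace with $\mathcal{F}_i(T_{k-1})=T_i$. To see that $N$ itself is a nilspace I would verify each axiom by lifting through $\pi_{k-1}$ and correcting by a vertical cube: ergodicity and gluing follow because any corner-datum in $N$ projects to a corner-datum in $T_{k-1}$, which extends there by induction and lifts to a cube $c\in C^n(N)$; the difference between the given datum and $c$ takes values in $A_k$ and, by the defining equality of a degree-$k$ bundle, is a corner-datum of $\mathcal{D}_k(A_k)$, which extends since $\mathcal{D}_k(A_k)$ is a nilspace, and adding this extension back to $c$ produces the desired cube. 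Unique closing at level $k+1$ is the crucial point: two extensions project to the same cube of $T_{k-1}$ (which is $k$-step), hence differ by a $c_3\in C^{k+1}(\mathcal{D}_k(A_k))$ vanishing off $1^{k+1}$; since every element of $C^{k+1}(\mathcal{D}_k(A_k))$ has zero weight, $c_3(1^{k+1})=0$ and the two extensions coincide. Finally $\mathcal{F}_{k-1}(N)=T_{k-1}$ because the $\sim_{k-1}$-classes are exactly the $A_k$-orbits — one inclusion uses Lemma~\ref{sim1} together with the $k$-fold ergodicity of $\mathcal{D}_k(A_k)$, the other the unique closing in $T_{k-1}$ — and composing factors gives $\mathcal{F}_i(N)=T_i$ for all $i$.

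For the converse, let $N$ be $k$-step and set $T_i=\mathcal{F}_i(N)$, so $T_0$ is a point and $T_k=N$. By induction $T_{k-1}$ is a degree-$(k-1)$ bundle, supplying the lower factors and groups $A_1,\dots,A_{k-1}$; it remains to exhibit $A_k$ together with a free $A_k$-action on $N$ whose orbits are the fibers of $\pi_{k-1}$, and to verify the bundle equation at the top level. The corollary to Lemma~\ref{kabelian} already endows each fiber (i.e. each $\sim_{k-1}$-class) with the structure of an abelian group carrying the $k$-degree structure $\mathcal{D}_k$; I would assemble these into a single structure group $A_k$ by identifying the fibrewise difference maps via parallel transport along cubes, using Lemmas~\ref{sim1} and \ref{sim2} to move a difference from one fiber to an adjacent one and the $k$-step property to check independence of the chosen transport. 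For the bundle equation it suffices, by Lemma~\ref{cubechar}, to treat cubes of dimension $k+1$: given $c\in C^{k+1}(N)$ and a vertical $d$ with $w(d)=0$, the restriction of $c+d$ to each $k$-face through $0^{k+1}$ is a cube by Corollary~\ref{sim2cor} (as $\mathcal{D}_k(A_k)$ is $k$-fold ergodic), so $c+d$ is a corner-datum whose unique closure again has zero-weight vertical part and therefore equals $c+d$; conversely any two cubes over a common base differ by a weight-zero vertical function.

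The main obstacle is the construction of the single structure group $A_k$ in the converse direction. The corollary to Lemma~\ref{kabelian} only gives a group structure fiber by fiber, and the real content is that these groups are canonically and compatibly identified, so that differences of cube values over an \emph{arbitrary} base cube — not merely over a constant one, where the statement is immediate from the corollary — form exactly $C^{k+1}(\mathcal{D}_k(A_k))$. Proving this amounts to showing that the fibrewise translations extend to a globally defined free action, which is where the unique closing property and the cube-surgery Lemmas~\ref{sim1}, \ref{sim2} and Corollary~\ref{sim2cor} must be combined; everything else reduces to lifting corner-data through $\pi_{k-1}$ and bookkeeping with weights.
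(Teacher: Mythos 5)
Your overall strategy coincides with the paper's: induction on $k$, the forward direction by projecting a corner to $T_{k-1}$, extending there, lifting, and correcting by a cube of $\mathcal{D}_{k}(A_{k})$, and the converse by first showing each $\sim_{k-1}$-class is some $\mathcal{D}_k(A_F)$ (via Lemma \ref{kabelian}) and then welding the fiber groups into a single group acting on $N$. Your forward direction is correct and is in fact spelled out in more detail than the paper's (the unique-closing verification at level $k+1$ and the identification of $\sim_{k-1}$-classes with $A_k$-orbits are both sound).

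The gap is in the converse, exactly at the point you yourself flag as the main obstacle --- and the proposal does not get past it. Two statements are needed: (A) if $c\in C^{k+1}(N)$ and $d$ is vertical with $w(d)=0$, then $c+d\in C^{k+1}(N)$; (B) any two cubes with the same projection to $T_{k-1}$ differ by such a $d$. Your argument for (A) --- close the corner of $c+d$ and note that the closure ``again has zero-weight vertical part'' --- assumes precisely (B) for the pair consisting of that closure and $c$, while (B) itself is only asserted (``conversely any two cubes over a common base differ by a weight-zero vertical function''), never derived; so as written (A) and (B) support each other circularly. The missing idea is the paper's edge-operation mechanism: the transport relation is defined there directly by a cube condition ($(x_1,x_2)\sim(y_1,y_2)$ iff an explicit $(k+1)$-dimensional configuration built from these four points lies in $C^{k+1}(N)$), from which it follows at once that applying $a\in A$ to the two endpoints of any edge of a cube yields a cube; since weight-zero vertical functions are exactly sums of such edge functions, this gives (A) without circularity, and (B) then follows by using edge operations to make two $\sim_{k-1}$-equivalent cubes agree off a single vertex and invoking unique closing there. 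Relatedly, your path-based description of the transport (``move a difference from one fiber to an adjacent one \dots check independence of the chosen transport'') would require a genuine path-independence argument that you do not supply; the paper's definition via the arrow space avoids this entirely, because well-definedness is just the unique-closing statement that for $x_1,x_2\in F_1$ and $y_1\in F_2$ there is a unique $y_2$ with $(x_1,x_2)\sim(y_1,y_2)$.
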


\begin{proof} First we show that if $N$ is a degree-$k$ bundle then it is a $k$-step nil-space. It is clear that $N$ satisfies the ergodicity axiom. It remains to show the gluing axiom. We use induction on $i$ to prove it in $T_i$. If $i=0$ then the statement is trivial.

 Assume that we have gluing in $T_i$. Let
 $f:\{0,1\}^n\setminus\{1^n\}\rightarrow T_{i+1}$ be a morphism of the
 corner of the $n$-dimensional cube. The map $\pi_i \circ f$ has an extension $f_2:\{0,1\}^n\to T_i$ to the full cube. Since $C^n(T_i)=\pi_i(C^n(N))$ we have that $f_2$ can be lifted (with respect to $\pi_i$) to a morphism $f_3:\{0,1\}^n\to T_{i+1}$. Let us consider $f_4=f-f_3$ on $\{0,1\}^n\setminus\{1^n\}$. It follows by definition that $f_4$ is a morphism of the corner to $\mathcal{D}_{i+1}(A_{i+1})$ and so it can be extended to a morphism $f_5:\{0,1\}^n\to \mathcal{D}_{i+1}(A_{i+1})$. Now it is clear that $f_3+f_5$ is an extension of $f$ to the full cube. The definition of degree-$i$ bundles implies that $\mathcal{F}_i(N)=T_i$.

\bigskip

We prove the other direction by induction on $k$. The step $k=0$ is trivial. Assume that it holds for $k-1$ and $N$ is a $k$-step nilspace. By induction we have the $k-1$ degree bundle structure on $\mathcal{F}_{k-1}(N)$.

First we show that every $\sim_{k-1}$ class $F$ (with the restricted cubic structure) is isomorphic to $\mathcal{D}_k(A_F)$ for some abelian group $A_F$. 
By lemma \ref{kabelian} it is enough to check that $F$ is a $k$-fold ergodic, $k$-step nilspace.
If $x\in F$ is a arbitrary element then the constant $x$ function on $\{0,1\}^k$ is in $C^k(N)$ and so by lemma \ref{sim2cor} every function $\{0,1\}^k\rightarrow F$ is in $C^k(N)$. Let $f:\{0,1\}^n\setminus\{1^n\}\rightarrow F$ be a corner where $n\geq k+1$. Since $N$ is a $k$ step nilspace we have that $f$ has a unique completion to a cube $f':\{0,1\}^n\rightarrow N$.  In the factor $\mathcal{F}_{k-1}(N)$ the function $f$ becomes constant and so the only completion is the constant function. This shows that $f'$ maps $\{0,1\}^n$ to $F$ and thus $F$ is a $k$-step nilspace.

Let $M=\{(x,y)|x,y\in N,~x\sim_{k-1} y\}\subset N\times N$. Note that $F\times F\subset M$ holds for every class $F$ of $\sim_{k-1}$.
We introduce an equivalence relation $\sim$ on $M$. Let $F_1,F_2$ be two $\sim_{k-1}$ classes of a $k$-step nilspace $N$. If $x_1,x_2\in F_1$ and $y_1,y_2\in F_2$ then we say that $(x_1,x_2)\sim(y_1,y_2)$ if $(x_1,y_1)\sim_{k-1} (x_2,y_2)$ in the arrow space $N'$ of $N$. Note that $N'$ is not necessarily ergodic but it will not cause any problem.

A simple description of the equivalence relation $\sim$ follows from lemma \ref{sim1}. Let $c:\{0,1\}^{k+1}\rightarrow N$ be the function such that $c(v,0)=x_1,c(v,1)=y_1$ if $v\in\{0,1\}^k\setminus\{1^k\}$ and $c(1^k,0)=x_2,c(1^k,1)=y_2$. Then $(x_1,x_2)\sim(y_1,y_2)$ if and only if $c\in C^{k+1}(N)$. In particular it follows that for every $x_1,x_2\in F_1$ and $y_1\in F_2$ there is a unique $y_2$ such that $(x_1,x_2)\sim(y_1,y_2)$. This follows from the fact that $c$ restricted to $\{0,1\}^{k+1}\setminus\{1^{k+1}\}$ is a corner and since $N$ is a $k$-step nilspace it has a unique completion $y_2$. The fact that $y_2$ has to be in $F_2$ follows by taking the situation modulo $\sim_{k-1}$ where the unique completion has to be congruent to $y_1$ since $x_2$ is congruent to $x_1$. 

If $F$ is a $\sim_{k-1}$ class and $x_1,x_2,y_1,y_2\in F$ then $(x_1,x_2)\sim(y_1,y_2)$ if and only if $x_2-x_1=y_2-y_1$ holds in the abelian groups $A_F$. In other words, inside one class of $\sim_{k-1}$, the elements of $A_F$ are in a bijection with the $\sim$ classes of vectors in such a way that $a\in A_F$ corresponds to the class of pairs of the form $(x,x+a)$. 
Using this, if $F_1$ and $F_2$ are two $\sim_{k-1}$ classes then there is a natural bijection $\phi$ between $A_{F_1}$ and $A_{F_2}$ such that $\phi(a)=b$ if and only if $(x,x+a)\sim (y,y+b)$ for every $x\in F_1$ and $y\in F_2$.

We show that the map $\phi$ is an isomorphism between $A_{F_1}$ and $A_{F_2}$.
It is clear from the definition of $\sim$ that if $(x_1,x_2)\sim (y_1,y_2)$ and $(x_2,x_3)\sim(y_2,y_3)$ then $(x_1,x_3)\sim(y_1,y_3)$. Inside one fiber the class of $(x_1,x_3)$ corresponds to the sum of the group elements represented by $(x_1,x_2)$ and $(x_2,x_3)$. It follows that  $\phi$ preserves addition in both directions and so it is a group isomorphism.

Let us denote by $A$ the unique abelian group formed by the $\sim$ classes in $F\times F$ for each $\sim_{k-1}$ class $F$. The group $A$ acts on each $\sim_{k-1}$ class and so on the whole space $N$. We denote this action by simple addition. This action satisfies that if $x\in F_1,y\in F_2$ the $(x,x+a)\sim (y,y+a)$ for every $a\in A$. It follows that if $c:\{0,1\}^{k+1}\rightarrow N$ is any cube and $a\in A$ then by applying the action of $a$ to the two endpoint of an arbitrary edge in $c$ we get a cube.
Assume now that two cubes $c_1$ and $c_2$ in $C^{k+1}(N)$ satisfy that $c_1\sim_{k-1} c_2$. Then by repeating the previous operations we can create a new cube $c_2'$ from $c_2$ that differs from $c_1$ at most at one vertex. Using the unique closing property this implies that $c_2'=c_1$ and $c_2-c_1\in\mathcal{D}_k(A)$.
\end{proof}

An interesting consequence of theorem \ref{bundec} is that in a $k$-step nilspace $N$ the $\sim_{k-1}$ classes are all isomorphic abelian groups with $k$-degree structures and there is a distinguished set of affine isomorphisms between any two of them. Let $F_1$ and $F_2$ be $\sim_{k-1}$ classes and let us fix elements $x\in F_1$ and $y\in F_2$. Then the map $\phi(x+a)=y+a~,~a\in A_k$ defines an affine morphism between $F_1$ and $F_2$.
Such maps will be called {\bf local translations}.
The next characterization of local translations follows directly from theorem \ref{bundec}.

\begin{lemma}\label{loctrans} Let $N$ be a $k$-step nilspace. Let us fix two $\sim_{k-1}$ classes $F_1,F_2$ and two elements $x\in F_1,y\in F_2$. For every $z\in F_1$ we denote by $\phi_{x,y}(z)$ the unique closure of the corner $c:\{0,1\}^{k+1}\setminus\{1^{k+1}\}\rightarrow N$ defined by $c(v,0)=x$ if $v\neq (1^k,0)$, $c(1^k,0)=z$ and $c(v,1)=y$ if $v\in\{0,1\}^k\setminus\{1^k\}$. Then the map $\phi_{x,y}$ is the local translation corresponding to $x$ and $y$.
\end{lemma}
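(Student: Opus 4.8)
The plan is to observe that the corner defining $\phi_{x,y}(z)$ is, after filling in its one missing vertex, exactly the cube used to describe the relation $\sim$ in the proof of Theorem~\ref{bundec}, and then to conclude by the uniqueness of closures in a $k$-step nilspace.

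First I would recall the concrete description of $\sim$ obtained in the proof of Theorem~\ref{bundec} via Lemma~\ref{sim1}: for $x_1,x_2\in F_1$ and $y_1,y_2\in F_2$ one has $(x_1,x_2)\sim(y_1,y_2)$ if and only if the map $c:\{0,1\}^{k+1}\rightarrow N$ with $c(v,0)=x_1$, $c(v,1)=y_1$ for $v\in\{0,1\}^k\setminus\{1^k\}$, together with $c(1^k,0)=x_2$ and $c(1^k,1)=y_2$, lies in $C^{k+1}(N)$. Comparing this with the present statement, the corner used to define $\phi_{x,y}(z)$ is precisely the restriction of this $c$ to $\{0,1\}^{k+1}\setminus\{1^{k+1}\}$ under the substitution $x_1=x$, $x_2=z$, $y_1=y$. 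Hence, by the definition of $\phi_{x,y}(z)$ as the unique closure of that corner, the closing value $c(1^{k+1})=\phi_{x,y}(z)$ is exactly the unique $y_2$ making $c$ a cube; equivalently, $\phi_{x,y}(z)$ is characterized as the unique element of $F_2$ satisfying $(x,z)\sim(y,\phi_{x,y}(z))$. That the closure lands in $F_2$ and is unique was already established in the proof of Theorem~\ref{bundec}, by reducing the situation modulo $\sim_{k-1}$ and using that $N$ is $k$-step.

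Next I would check that the local translation $\tau$ attached to $x$ and $y$ satisfies the same characterization. By the construction of the action of $A$ in the proof of Theorem~\ref{bundec} we have $(x,x+a)\sim(y,y+a)$ for every $a\in A$, and $\tau$ is by definition the affine map with $\tau(x+a)=y+a$. Writing $z=x+a$ with $a=z-x$, this yields $(x,z)\sim(y,\tau(z))$, so $\tau(z)$ also satisfies the relation characterizing $\phi_{x,y}(z)$. Since there is only one element $y_2\in F_2$ with $(x,z)\sim(y,y_2)$, we conclude $\phi_{x,y}(z)=\tau(z)$ for every $z\in F_1$, i.e.\ $\phi_{x,y}=\tau$.

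There is no real analytic difficulty here; the argument is entirely a matter of matching definitions and invoking uniqueness of closure. The only point requiring genuine care is the bookkeeping of vertex conventions: verifying that the distinguished vertex $(1^k,0)$ carrying $z$ in the present lemma corresponds to the vertex carrying $x_2$ in the description of $\sim$, while the generic vertices $(v,0)$ and $(v,1)$ carry $x$ and $y$ respectively. Once this identification is confirmed, the two cubes genuinely agree and the uniqueness statement from Theorem~\ref{bundec} applies verbatim.
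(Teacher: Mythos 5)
Your proof is correct and is precisely the argument the paper intends: the paper gives no explicit proof, stating only that the lemma ``follows directly from theorem \ref{bundec}'', and your write-up spells out exactly that deduction --- identifying the corner with the cube describing the relation $\sim$, using the fact that the $A_k$-action satisfies $(x,x+a)\sim(y,y+a)$, and invoking the unique closing property. The vertex bookkeeping and the observation that the closure lands in $F_2$ both match the corresponding steps already carried out inside the proof of Theorem \ref{bundec}, so nothing is missing.
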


\subsection{Sub-bundles and bundle morphisms}\label{subbundchap}

This is a very technical part of the paper. The main application is that the results below will help us in putting a probability space structure on homomorphism sets. 

\begin{definition} Let $T_k$ be a $k$-fold abelian bundle with structure groups $A_1,A_2,\dots, A_k$, factors $T_0,T_1,\dots,T_k$ and projections $\pi_1,\pi_2,\dots,\pi_k$.
We define the notion of a {\bf sub-bundle} of $T_k$ with structure groups $A_1'\leq A_1,A_2'\leq A_2,\dots,A_k'\leq A_k$ and factors $T'_0=T_0,T'_1\subseteq T_1,\dots,T'_k\subseteq T_k$.
If $k=0$ then $T'_0=T_0$ and both are equal to a one point space.
For a general $k$ we have the condition that $T'_{k-1}=\pi_{k-1}(T'_k)$ is already a sub-bundle and for every $x\in T_k'$ we have that $$\{a~|~a\in A_k,a+x\in T_k'\}=A_k'.$$
In particular if $k=1$ then a sub-bundle is just a coset of $A_1'$.
\end{definition}

An important example for sub-bundles is the following.
Let $P=\{0,1\}^n$ be a cube and $N$ be a $k$-step nilspace. Let us consider the natural embedding $\Hom(P,N)$ into the direct power $N^P$. This means that every homomorphism $\phi:P\rightarrow N$ is represented by the vector whose component at coordinate $p\in P$ is $\phi(p)$.
According to theorem \ref{bundec}, $\Hom(P,N)$ is a sub-bundle in $N^P$ with structure groups $\Hom(P,\mathcal{D}_i(A_i))$.

\begin{definition}\label{bundmorphdef} Let $T=T_k$ and $T'=T'_k$ be two $k$-fold abelian bundles with structure groups $\{A_i\}_{i=1}^k$,$\{A_i'\}_{i=1}^k$ and factors $\{T_i\}_{i=0}^k$,$\{T'_i\}_{i=0}^k$. We define the notion of a {\bf bundle morphism} $\psi:T\rightarrow T'$ with structure morphisms $\{\alpha_i:A_i\rightarrow A_i'\}$ by the next two axioms.
\begin{enumerate}
\item If $1\leq i\leq k$ we have  $\pi_i(x)=\pi_i(y)$ then $\pi_i(\psi(x))=\pi_i(\psi(y))$. In other words $\psi$ induces well defined maps $\psi_i:T_i\rightarrow T'_i$
\item $\psi_i(x+a)=\psi_i(x)+\alpha_i(a)$ where $1\leq i\leq k$,~$x\in T_i$~and $a\in A_i$.
\end{enumerate}
We say that $\psi$ is {\bf totally surjective} if all the structure morphisms are surjective.
\end{definition}

Now we generalize the concept of the kernel of a homomorphism between abelian groups to totally surjective bundle morphisms. The generalized kernel is a relative abelian bundle.

\begin{definition} Let $\psi:T\to T'$ be a totally surjective bundle morphism. We use the notation of definition \ref{bundmorphdef}.  The {\bf kernel} of $\psi$ is a relative $k$-fold abelian bundle $K$ on $T$ with base $T'$ and structure groups $\{\ker(\alpha_i)\}_{i=1}^k$ defined in the following way. Let $$K_i=\{(x,y)\in T_i\times T'_k|\psi_i(x)=\pi_i(y)\}.$$ For $1\leq i\leq k$ we let $\ker(\alpha_i)$ act on $K_i$ by $(x,y)+a=(x+a,y)$. Furthermore the projection $\pi_i:K_j\to K_i$ is defined by $\pi_i(x,y)=(\pi_i(x),y)$ for $j\geq i$. 

\end{definition}

\begin{remark}\label{kerident} We will identify $K_k$ with $T_k$ using the bijection $(x,y)\leftrightarrow x$ and $K_0$ with $T'_k$ using the bijection and $(x,y)\leftrightarrow y$. It is easy to see that under these identifications, $\pi_0 : K_k \to K_0$ becomes identified with $\psi : T \to T'$. 
\end{remark}

The next lemma justifies the previous definition.

\begin{lemma} The map $\pi_{i-1}:K_i\to K_{i-1}$ is a $\ker(\alpha_i)$ bundle.
\end{lemma}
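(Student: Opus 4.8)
The plan is to verify directly that the free action of $\ker(\alpha_i)$ on $K_i$, together with the projection $\pi_{i-1}:K_i\to K_{i-1}$, satisfies the two axioms in the definition of an abelian bundle. First I would record the two well-definedness checks implicit in the statement. If $(x,y)\in K_i$ and $a\in\ker(\alpha_i)$, then axiom 2 of a bundle morphism gives $\psi_i(x+a)=\psi_i(x)+\alpha_i(a)=\psi_i(x)=\pi_i(y)$, so $(x+a,y)\in K_i$ and the action is genuinely defined on $K_i$. Likewise, for $(x,y)\in K_i$ one computes $\psi_{i-1}(\pi_{i-1}(x))=\pi_{i-1}(\psi_i(x))=\pi_{i-1}(\pi_i(y))=\pi_{i-1}(y)$ using axiom 1 and the composition of the projections, so $\pi_{i-1}(x,y)=(\pi_{i-1}(x),y)$ does land in $K_{i-1}$.

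Freeness is immediate: if $(x+a,y)=(x,y)$ then $x+a=x$ in $T_i$, and since $T_i$ is an $A_i$-bundle over $T_{i-1}$ its $A_i$-action is free, forcing $a=0$. For the second axiom I would show that the fibers of $\pi_{i-1}$ are exactly the $\ker(\alpha_i)$-orbits. One inclusion is clear, since every $a\in A_i$ acts within the fibers of $\pi_{i-1}:T_i\to T_{i-1}$, giving $\pi_{i-1}((x,y)+a)=\pi_{i-1}(x,y)$. Conversely, suppose $(x_1,y)$ and $(x_2,y)$ lie in $K_i$ with $\pi_{i-1}(x_1)=\pi_{i-1}(x_2)$; the $A_i$-bundle structure of $T_i$ over $T_{i-1}$ produces a unique $a\in A_i$ with $x_2=x_1+a$, and then $\psi_i(x_1)+\alpha_i(a)=\psi_i(x_2)=\pi_i(y)=\psi_i(x_1)$ together with freeness of the $A_i'$-action on $T_i'$ forces $\alpha_i(a)=0$, i.e. $a\in\ker(\alpha_i)$. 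Hence the two points lie in a common orbit, so orbits and fibers coincide.

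Finally, surjectivity of $\pi_{i-1}:K_i\to K_{i-1}$ is the step that really uses total surjectivity and is the main thing to get right. Given $(u,y)\in K_{i-1}$, so $u\in T_{i-1}$ with $\psi_{i-1}(u)=\pi_{i-1}(y)$, I would first lift $u$ to some $x_0\in T_i$ with $\pi_{i-1}(x_0)=u$, using that $T_i$ is an $A_i$-bundle over $T_{i-1}$. Then $\psi_i(x_0)$ and $\pi_i(y)$ share the image $\psi_{i-1}(u)=\pi_{i-1}(y)$ under $\pi_{i-1}:T_i'\to T_{i-1}'$, so they differ by a unique $b\in A_i'$ with $\pi_i(y)=\psi_i(x_0)+b$. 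Because $\psi$ is totally surjective, $\alpha_i$ is onto, so there is $a\in A_i$ with $\alpha_i(a)=b$; setting $x=x_0+a$ gives $\pi_{i-1}(x)=u$ and $\psi_i(x)=\psi_i(x_0)+\alpha_i(a)=\pi_i(y)$, whence $(x,y)\in K_i$ projects to $(u,y)$. Combining freeness, the orbit–fiber identification, and this surjectivity shows that $\pi_{i-1}$ is a $\ker(\alpha_i)$-bundle; the only delicate point is the appeal to surjectivity of $\alpha_i$, without which the required lift $x$ need not exist.
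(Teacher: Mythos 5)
Your proposal is correct and follows essentially the same route as the paper's proof: lift the base point using the $A_i$-bundle structure of $T_i$ over $T_{i-1}$, correct the lift by an element of $A_i$ obtained from surjectivity of $\alpha_i$ to land in $K_i$ (surjectivity), note freeness is inherited from the free $A_i$-action, and identify fibers with orbits via the computation $\pi_i(y)=\psi_i(x_2)+\alpha_i(a)=\pi_i(y)$ forcing $a\in\ker(\alpha_i)$. The only difference is that you also spell out the well-definedness checks (that the action preserves $K_i$ and that $\pi_{i-1}$ lands in $K_{i-1}$), which the paper leaves implicit.
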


\begin{proof} First we check that $\pi_{i-1}$ is surjective. Let $(x,y)\in K_{i-1}$ which means that $\psi_{i-1}(x)=\pi_{i-1}(y)$.  Let $z\in T_i$ be such that $\pi_{i-1}(z)=x$.  Since $\pi_{i-1}(\psi_i(z))=\psi_{i-1}(\pi_{i-1}(z))=\psi_{i-1}(x)=\pi_{i-1}(y)$ there exists $a'\in A'_i$ such that $\psi_i(z)+a'=\pi_i(y)$. Since $\alpha_i$ is surjective there is $a\in A_i$ with $\alpha_i(a)=a'$. Then the pair $(z+a,y)$ is in $K_i$ and maps to $(x,y)$.   

Clearly $\ker(\alpha_i) \subset A_i$ acts freely on $K_i$, so it remains to check that it acts transitively on the fibers of $\pi_{i-1}$. Let $(x_1,y)$ and $(x_2,y)$ with  be any elements in the same fiber of $\pi_{i-1}:K_i\to K_{i-1}$. Since $\pi_{i-1}(x_1) = \pi_{i-1}(x_2)$ there is $a\in A_i$ with $x_1=x_2+a$. Then $\pi_i(y)=\psi_i(x_1)=\psi_i(x_2+a)=\psi_i(x_2)+\alpha_i(a)=\pi_i(y)+\alpha_i(a)$ which implies that $a\in\ker(\alpha_i)$. 
\end{proof}

\begin{lemma}\label{preimbund} Let $\psi:T\to T'$ be a totally surjective bundle morphism. Then for every $t'\in T'$ we have that $\psi^{-1}(t')$ is a sub-bundle of $T$ with structure groups $\{\ker(\alpha_i)\}_{i=1}^k$.
\end{lemma}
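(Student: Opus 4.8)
The plan is to give an explicit description of the factors of the claimed sub-bundle and then verify the two defining conditions of a sub-bundle directly. Writing $\psi=\psi_k$ and $S=\psi^{-1}(t')$, I would take as the $i$-th factor of $S$ the set $S_i=\pi_i(S)\subseteq T_i$, so that $S_k=S$ and $S_0\subseteq T_0$. The first thing I would establish is the clean identification
$$S_i=\psi_i^{-1}(\pi_i(t')),$$
which converts the projection description of $S_i$ into a fiber description and is what makes the rest of the argument routine. One inclusion is immediate from $\psi_i\circ\pi_i=\pi_i\circ\psi$; the other is exactly the lifting statement discussed next.

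The key step, and the main technical obstacle, is a lifting property: every $x\in T_i$ with $\psi_i(x)=\pi_i(t')$ is the image under $\pi_i$ of some $z\in T_k$ with $\psi_k(z)=t'$. I would prove this one level at a time and then iterate. For the single-level step, given $x\in T_i$ with $\psi_i(x)=\pi_i(t')$, I lift $x$ arbitrarily to $u\in T_{i+1}$ with $\pi_i(u)=x$; then $\pi_i(\psi_{i+1}(u))=\psi_i(x)=\pi_i(t')$, so $\psi_{i+1}(u)$ and $\pi_{i+1}(t')$ lie in the same fiber of $\pi_i:T'_{i+1}\to T'_i$ and differ by some $a'\in A'_{i+1}$. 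Here is where total surjectivity enters: since $\alpha_{i+1}$ is onto I can write $a'=\alpha_{i+1}(a)$ and correct $u$ to $w=u+a$, which satisfies $\pi_i(w)=x$ and $\psi_{i+1}(w)=\psi_{i+1}(u)+\alpha_{i+1}(a)=\pi_{i+1}(t')$. Iterating from level $i$ up to level $k$ produces the desired $z$. This is the same correction argument already used to show that $\pi_{i-1}:K_i\to K_{i-1}$ is surjective, and in particular (taking $i=0$) it shows $\psi_k$ is onto, so $S$ is nonempty and $S_0=T_0$.

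With the identification $S_i=\psi_i^{-1}(\pi_i(t'))$ in hand, the two sub-bundle axioms follow immediately. The recursive condition $\pi_{i-1}(S_i)=S_{i-1}$ splits into an easy inclusion (if $\psi_i(x)=\pi_i(t')$ then $\psi_{i-1}(\pi_{i-1}(x))=\pi_{i-1}(t')$) and the reverse inclusion, which is precisely the single-level lifting step applied between levels $i-1$ and $i$. For the structure-group condition at level $i$, I fix $x\in S_i$ and compute: for $a\in A_i$ one has $x+a\in S_i$ iff $\psi_i(x)+\alpha_i(a)=\pi_i(t')=\psi_i(x)$, and since the action of $A'_i$ on the fibers of $\pi_{i-1}:T'_i\to T'_{i-1}$ is free, this holds iff $\alpha_i(a)=0$, i.e. iff $a\in\ker(\alpha_i)$. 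Thus $\{a\in A_i\mid x+a\in S_i\}=\ker(\alpha_i)$, so $S$ is a sub-bundle of $T$ with structure groups $\{\ker(\alpha_i)\}_{i=1}^k$, as claimed.
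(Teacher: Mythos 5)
Your proof is correct, and it is complete: the identification $S_i=\psi_i^{-1}(\pi_i(t'))$, the level-by-level correction using surjectivity of the $\alpha_i$, the freeness argument pinning the structure groups down to $\ker(\alpha_i)$, and the nonemptiness/base-point check $S_0=T_0$ are all the facts needed to run the recursive definition of sub-bundle. Where you differ from the paper is in the route: the paper does not verify the axioms directly but derives the lemma from the kernel construction. It forms the relative $k$-fold abelian bundle $K=\ker(\psi)$ over $T'$, invokes the preceding lemma that $\pi_{i-1}:K_i\to K_{i-1}$ is a $\ker(\alpha_i)$-bundle, notes via Remark \ref{kerident} that the fiber of $K$ above $t'$ is exactly $\psi^{-1}(t')$, and then observes that the maps $K_i\to T_i$, $(x,y)\mapsto x$, become genuine inclusions once restricted to that fiber. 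The computational heart is the same in both arguments --- your single-level lifting step is precisely the surjectivity part of the paper's kernel lemma, and your freeness computation is its transitivity part, as you yourself point out --- so your proof is in effect the kernel lemma's proof specialized to one fiber and stripped of the kernel formalism. What the paper's packaging buys is the object $K$ itself, a relative bundle over all of $T'$ simultaneously, which is reused later (for instance in Lemma \ref{relcsm} and Construction 1 of Section \ref{morpro} to put a continuous system of measures on the fibers of a totally surjective morphism); what your version buys is self-containedness and an explicit description of the factors $S_i$, which the paper leaves implicit.
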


\begin{proof} This can be easily seen by using the relative abelian bundle $K = \ker(\psi)$.
First, it is clear that the fiber of $K$ above $t'$ inherits the structure of a $k$-fold abelian bundle with structure groups $\{\ker(\alpha_i)\}_{i=1}^k$. By remark \ref{kerident}, this fiber is $\psi^{ -1}(t')$. Next, it is easy to see that $K$
itself is ``like'' a sub-bundle of $T$, more precisely, it satisfies the conditions in the definition
of sub-bundle except that it is not true that $K_i \subseteq T_i$. Instead we have the maps
$K_i \to T_i$ defined by $(x,y) \mapsto x$. These maps are not inclusions, of course,
but they become inclusions when restricted to $\psi^{-1}(t')$, so that $\psi^{-1}(t')$ is
a sub-bundle of $T$.
\end{proof}

\begin{lemma}\label{morphbund} A morphism $\psi$ between two $k$-step nilspaces $N$ and $N'$ induces a bundle morphism between the corresponding $k$-degree bundles $T$ and $T'$.
\end{lemma}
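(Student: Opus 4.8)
The plan is to build the bundle morphism directly from the two structures furnished by Theorem \ref{bundec}: the factors $T_i = \mathcal{F}_i(N)$, $T_i' = \mathcal{F}_i(N')$, and the global abelian actions described in the discussion following that theorem. First I would observe that any morphism respects each relation $\sim_i$. Indeed, if $x \sim_i y$ is witnessed in $N$ by cubes $c_1, c_2 \in C^{i+1}(N)$ as in Definition \ref{simdef}, then $\psi \circ c_1$ and $\psi \circ c_2$ are cubes of $N'$ (since $\psi$ is cube-preserving) witnessing $\psi(x) \sim_i \psi(y)$. Hence $\psi$ descends to well-defined maps $\psi_i : T_i \to T_i'$ on the characteristic factors, and each $\psi_i$ is itself a morphism because a cube in $\mathcal{F}_i$ is by definition the image $\pi_i \circ c$ of a cube, and $\psi_i(\pi_i \circ c) = \pi_i' \circ (\psi \circ c)$ is again a cube. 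Since the bundle projections $\pi_j$ ($j \le i$) are precisely the factor maps $\mathcal{F}_i \to \mathcal{F}_j$, the $\psi_i$ commute with them; this is exactly axiom (1) of Definition \ref{bundmorphdef}, and in particular $\psi_i$ carries each $\sim_{i-1}$ class into a $\sim_{i-1}$ class.

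Next I would construct the structure morphisms $\alpha_i : A_i \to A_i'$. Recall from the discussion after Theorem \ref{bundec} that an element $a \in A_i$ is represented, for any $x \in T_i$, by the pair $(x, x+a)$, and that two pairs represent the same element of $A_i$ exactly when they are $\sim$-equivalent in the arrow space, a condition certified by an explicit $(i+1)$-dimensional cube. I define $\alpha_i(a)$ to be the element of $A_i'$ represented by $(\psi_i(x), \psi_i(x+a))$; this is a legitimate pair since $x \sim_{i-1} x+a$ forces $\psi_i(x) \sim_{i-1} \psi_i(x+a)$ by the first step. Well-definedness (independence of $x$) follows by applying the morphism $\psi_i$ to the cube certifying $(x, x+a) \sim (y, y+a)$: its image is a cube of $T_i'$ certifying $(\psi_i(x), \psi_i(x+a)) \sim (\psi_i(y), \psi_i(y+a))$, so the two candidate values agree. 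The homomorphism property is obtained identically from the concatenation description of addition: since $(x, x+a+b)$ represents the sum of the classes of $(x, x+a)$ and $(x+a, x+a+b)$, applying $\psi_i$ to the associated cube yields $\alpha_i(a+b) = \alpha_i(a) + \alpha_i(b)$.

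Finally, axiom (2) of Definition \ref{bundmorphdef} holds essentially by construction: by the very definition of $\alpha_i$, the pair $(\psi_i(x), \psi_i(x+a))$ represents $\alpha_i(a)$, which is just another way of writing $\psi_i(x+a) = \psi_i(x) + \alpha_i(a)$. I expect the only genuine work to lie in the second paragraph, namely the bookkeeping needed to transport each cube-theoretic certificate for the group structure of $A_i$ (equality of represented elements, and the concatenation encoding addition) correctly through $\psi_i$, and to check that the target pairs remain admissible. Once the first step establishes that $\psi_i$ is a morphism compatible with the projections, all of these transport statements collapse to the single fact that a morphism sends cubes to cubes, so no essentially new difficulty should arise.
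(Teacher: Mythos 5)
Your proposal is correct, and its overall skeleton matches the paper's proof: axiom (1) of Definition \ref{bundmorphdef} follows because $\psi$ carries the cubes witnessing $\sim_i$ to cubes witnessing $\sim_i$ (the paper cites Lemma \ref{sim1}, you argue straight from Definition \ref{simdef}; the two are interchangeable), and the cross-fiber consistency of the structure maps comes from the fact that the relation $\sim$ from the proof of Theorem \ref{bundec} is defined through cubes and hence preserved by $\psi_i$ --- exactly the paper's closing argument. Where you genuinely diverge is in how the homomorphism property of $\alpha_i$ is obtained. The paper first proves axiom (2) for morphisms between pure degree structures $\mathcal{D}_i(A_i)\rightarrow\mathcal{D}_i(A_i')$, recovering the group law via $\partial_x^{i-1}$ so that any such morphism is forced to be an affine homomorphism; this gives a homomorphism $\alpha_F$ on each $\sim_{i-1}$ class $F$ separately, and $\sim$-preservation is then used only to show that all the $\alpha_F$ coincide. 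You never invoke that classification: you define $\alpha_i$ globally from the start via the encoding of $a\in A_i$ by the $\sim$-class of pairs $(x,x+a)$, get well-definedness across fibers by pushing the explicit $(i+1)$-dimensional certificate cube through $\psi_i$, and read off additivity from the concatenation rule for addition in the structure group. Both routes rest entirely on material inside the proof of Theorem \ref{bundec}; the paper's is shorter given the lemmas already in hand, while yours makes fully explicit that every step reduces to ``morphisms send cubes to cubes'' applied to explicit certificates, at the cost of some bookkeeping. One small attribution slip: the facts you cite (pair representation of $A_i$, cube characterization of $\sim$, concatenation addition) live in the \emph{proof} of Theorem \ref{bundec}, not in the discussion following it, which concerns local translations.
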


\begin{proof} Lemma \ref{sim1} shows that if $x\sim_i y$ then $\psi(x)\sim_i\psi(y)$. This verifies the first axiom.

First we prove the second axiom when the nil-spaces are of the form $\mathcal{D}_i(A_i)$ and $\mathcal{D}_i(A_i')$. The abelian group structure of $A_i$ and $A_i'$ can be recovered by applying $\partial_x^{i-1}$ to the cubic structure with some fixed element $x$ in $A_i$ or $A_i'$. It is clear that $\psi_i$ preserves this structure and so $\psi_i$ has to be an affine homomorphism between the two abelian groups which means that $\psi_i(x+a)=\psi_i(x)+\alpha(a)$ where $\alpha$ is a homomorphism.

Now let $F$ be a $\sim_{i-1}$ class in $T_i$. Then $F=\mathcal{D}(A_i)$ and by the first part of the proof we have that $\psi_i$ restricted to $F$ satisfies $\psi_i(x+a)=\psi_i(x)+\alpha_F(a)$ where $x\in F,a\in A_k$ and $\alpha_F:A_i\rightarrow A'_i$ is a group homomorphism.

It remains to show that we have the same group homomorphism $\alpha_F$ corresponding to each $\sim_{i-1}$ class.
This follows from the fact that the relation $\sim$ defined in the proof of $\ref{bundec}$ is preserved under $\psi_i$ because it is defined through cubes. 
\end{proof}

\subsection{Fiber surjective morphisms}

In this chapter we introduce a family of morphisms which have useful properties.
Such morphisms come up naturally in various structure theorems (see for example theorem \ref{invlim}).

\begin{definition} A morphism $\psi: N_1\rightarrow N_2$ between two nilspaces will be called {\bf fiber surjective} if for every $n\in\mathbb{N}$ the image of a $\sim_n$ class in $N_1$ is a $\sim_n$ class in $N_2$.
\end{definition}

This definition immediately implies that if $N_1$ is a $k$-step nilspace then $N_2$ is an at most $k$-step nilspace.
It is also clear that $\psi$ induces fiber surjective maps between the factors $\mathcal{F}_i(N_1)$ and $\mathcal{F}_i(N_2)$ for every natural number $i$.
Note that lemma \ref{morphbund} implies that a fiber surjective map between two $k$-step nilspaces is a totally surjective bundle morphism between the corresponding $k$-fold bundles.
A natural way of viewing a fiber surjective map $\psi:N_1\rightarrow N_2$ is that $N_2$ is a factor space of $N_1$ using the equivalence relation $x\sim y~\Longleftrightarrow~\psi(x)=\psi(y)$.
The next lemma verifies that the induced cubic structure on $N_1/\sim$ is identical with the cubic structure on $N_2$ and thus $\psi$ is a factor map.

\begin{lemma}\label{lifting2} Let $\phi: N\rightarrow N'$ be a fiber
  surjective morphism between two $k$-step nilspaces. Then every cube
  $c\in C^n(N')$ can be lifted to a cube $c'\in C^n(N)$ such that
  $\phi \circ c'=c$. In other words $N'$ is a factor nilspace of $N$.
\end{lemma}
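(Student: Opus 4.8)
The plan is to construct the lift $c'$ one vertex at a time, processing the vertices of $\{0,1\}^n$ in order of increasing number of ones. Concretely, for $0\le m\le n$ let $P_m\subseteq\{0,1\}^n$ be the set of vertices with at most $m$ ones; this is the simplicial cubespace attached to the complex of all subsets of size $\le m$, so by Lemma \ref{simpglue} each $P_m$ has the extension property in $\{0,1\}^n$, while $P_n=\{0,1\}^n$ carries its usual cube structure. I would build morphisms $c'_m:P_m\to N$ with $\phi\circ c'_m=c|_{P_m}$ and $c'_{m+1}|_{P_m}=c'_m$. The base case uses only that $\phi$ is surjective (fiber surjectivity at level $0$): choose $c'_0(0^n)$ to be any point of $\phi^{-1}(c(0^n))$. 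Passing from $P_m$ to $P_{m+1}$ amounts to adjoining, for each vertex $w$ of weight $m+1$, the single top simplex $\mathrm{supp}(w)$; since all proper subsets of $\mathrm{supp}(w)$ already lie in the complex, the only new vertex of the face $\{0,1\}^{\mathrm{supp}(w)}=[0^n,w]$ is $w$ itself, so defining $c'_{m+1}(w)$ is exactly the choice of a completion of a corner to a cube in $C^{m+1}(N)$. Distinct weight-$(m+1)$ vertices have corners lying in $P_m$ and neither contains the other, so they do not interfere and the order within a weight level is irrelevant.

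The key step is to choose each completion so that it maps to the prescribed value $c(w)$. I claim the set $F$ of valid completions of the corner at $w$ is exactly one $\sim_m$ class of $N$. Indeed, moving the special vertex $w$ to $0^{m+1}$ by an automorphism of $\{0,1\}^{m+1}$ (automorphisms are cube morphisms, so cubes are closed under them), Definition \ref{simdef} together with Lemma \ref{sim2} shows that, for any single completion $x$, the completions are precisely the elements $\sim_m$-equivalent to $x$. Now pick such an $x\in N$ (it exists by the gluing axiom). Applying $\phi$ to the resulting cube shows $\phi(x)$ is a completion in $N'$ of the corner $c|_{[0^n,w]\setminus\{w\}}$; but $c(w)$ is also a completion of that same corner, since $c\in C^n(N')$. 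Hence $\phi(x)\sim_m c(w)$ in $N'$. Because $\phi$ is fiber surjective, the image of the $\sim_m$ class $F$ is a full $\sim_m$ class of $N'$ containing $\phi(x)$, so it equals the class of $c(w)$. Therefore some $c'_{m+1}(w)\in F$ satisfies $\phi(c'_{m+1}(w))=c(w)$, and this is the value we take.

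With these choices each $c'_m$ is a morphism of $P_m$ lifting $c|_{P_m}$, and $c'=c'_n\in C^n(N)$ satisfies $\phi\circ c'=c$, proving that $N'$ is a factor nilspace of $N$. The main obstacle, and the whole point of the hypothesis, is the identification of the completion set with a single $\sim_m$ class: once this is in place, fiber surjectivity is precisely the statement that lets the lift's value at the new vertex be steered onto $c(w)$ (for $m\ge k$ the class is a single point and the matching is automatic from the unique closing property). It is tempting instead to lift through the bundle decomposition of Theorem \ref{bundec}, reducing via Lemma \ref{morphbund} to lifting a cube in $\mathcal{D}_k(A'_k)$ along the surjection $\alpha_k:A_k\to A'_k$; but this meets an $\mathrm{Ext}$-type obstruction when $A_k$ has torsion, so the combinatorial vertex-by-vertex argument is the cleaner route.
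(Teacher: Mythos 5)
Your vertex-by-vertex construction is correct, and it takes a genuinely different route from the paper's proof. The paper argues by induction on the step $k$: it first lifts $c$ modulo $\sim_{k-1}$ (the induction hypothesis applied to the induced map on $\mathcal{F}_{k-1}$, combined with Lemma~\ref{lifting}), observes via Theorem~\ref{bundec} that the discrepancy $\phi\circ c_2-c$ is a cube of $\mathcal{D}_k(A_k')$, and then lifts that cube along the surjective structure homomorphism $\alpha_k\colon A_k\to A_k'$ (surjective by Lemma~\ref{morphbund}) by choosing arbitrary preimages at the vertices of weight at most $k$ and completing uniquely. Your proof instead inducts on the weight level inside $\{0,1\}^n$, and its key step --- that the set of completions of a corner in dimension $m+1$ is exactly one $\sim_m$ class, so that fiber surjectivity at level $m$ lets you steer the new value onto $c(w)$ --- is a correct use of Definition~\ref{simdef} and Lemma~\ref{sim2}, with the distinguished vertex moved by an automorphism exactly as the paper allows in the proof of Corollary~\ref{sim2cor}. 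Your route is more elementary: it needs only the gluing axiom, Lemma~\ref{simpglue} and the $\sim_m$ calculus, and never invokes the bundle decomposition; the paper's route is shorter given that Theorem~\ref{bundec} and Lemma~\ref{lifting} are already in hand, and it localizes the use of fiber surjectivity to the single fact that the structure homomorphisms $\alpha_i$ are onto.

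One correction is needed to your closing remark: the claim that the bundle-decomposition route ``meets an $\mathrm{Ext}$-type obstruction when $A_k$ has torsion'' is false --- and in fact that route is precisely the paper's proof. No homomorphic section of $\alpha_k$ is required; one only has to lift a \emph{single} cube $c_3\in C^n(\mathcal{D}_k(A_k'))$, not split the extension $0\to\ker\alpha_k\to A_k\to A_k'\to 0$. Choose arbitrary preimages of the values of $c_3$ at the vertices with at most $k$ ones (any such assignment is a morphism into $\mathcal{D}_k(A_k)$, since that nilspace is $k$-fold ergodic), and extend uniquely to a cube $c_4\in C^n(\mathcal{D}_k(A_k))$. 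Then $\alpha_k\circ c_4$ and $c_3$ are cubes of $\mathcal{D}_k(A_k')$ that agree on all vertices of weight at most $k$, hence are equal by Corollary~\ref{sim2cor2}. So the lift exists for any compact or abstract abelian group, torsion or not; the linear equations defining $\mathcal{D}_k$-cubes are always solvable over preimages, which is exactly why the paper's one-line completion argument is legitimate.
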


\begin{proof} The proof is an induction on $k$. If $k=0$ then there is nothing to prove.
  Assume that we have the statement for $k-1$. The map $\phi$ induces
  a map $\phi'$ from $\mathcal{F}_{k-1}(N)$ to
  $\mathcal{F}_{k-1}(N')$. This means (using lemma \ref{lifting}) that
  there is a cube $c_2\in C^n(N)$ such that $\phi\circ c_2\sim_{k-1} c$
  and so $c_3=\phi\circ c_2-c$ is in $C^n(\mathcal{D}(A_k'))$. Now it is enough to find a lift $c_4$ of $c_3$ under the surjective homomorphism $\alpha_k:A_k\rightarrow A_k'$ because then $c_2-c_4$ is a lift of $c$.
The existence of $c_4$ follows by first considering an arbitrary lift of a $k$-dimensional corner of $c_3$ and then by extending it (uniquely) to an $n$-dimensional cube.
\end{proof}

An important example of a fiber surjective map is the following.
Let $N$ be a $k$-step nilspace with structure groups $A_1,A_2,\dots,A_k$ and let $B\subseteq A_k$ be a subgroup of $A$. We introduce a nilspace denoted by $N/B$ in the following way. Let us say that two elements $x,y\in N$ satisfy $x\sim_B y$ if $x\sim_{k-1} y$ and $x-y\in B$. The elements of $N/B$ are the equivalence classes of $\sim_B$. It follows from theorem \ref{bundec} that $N/B$ is a factor of $N$ and the projection $N\rightarrow N/B$ is fiber surjective.

\subsection{Restricted morphisms}

\begin{definition}Let $X\subset P$ be a subset of the cubespace $P$ and let $f:X\rightarrow N$ be an arbitrary function.
We define the {\bf restricted homomorphism set} $\Hom_f(P,N)$ as the collection of those morphisms whose restrictions to $X$ is equal to $f$.
\end{definition}

\begin{lemma}\label{restbund} If $X$ is a sub-cubespace of $P$ with the extension property in $P$ and $P$ is a sub-cubespace of $\{0,1\}^n$ with the extension property then for every morphism $f:X\rightarrow N$ into some finite step nilspace $N$ the restricted homomorphism set $\Hom_f(P,N)$ is a sub-bundle in $N^P$ with structure groups $$\Hom_{X\rightarrow 0}(P,\mathcal{D}_i(A_i))$$ where $A_i$ is the $i$-th structure group of $N$.
\end{lemma}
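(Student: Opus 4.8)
The plan is to induct on the step $k$ of the nilspace $N$, using the recursive structure in the definition of a sub-bundle. By Theorem \ref{bundec}, $N$ is a degree-$k$ bundle with factors $T_0,\dots,T_k=N$ and structure groups $A_1,\dots,A_k$, and the product $N^P$ is again a degree-$k$ bundle with factors $T_i^P$ and structure groups $A_i^P$. I will show that the sets $\Hom_{f_i}(P,T_i)$, where $f_i=\pi_i\circ f:X\to T_i$, are the factors of the desired sub-bundle. Note first that $\Hom_f(P,N)$ is nonempty: since $X$ has the extension property in $P$, the morphism $f$ itself extends to a morphism $P\to N$. The base case $k=0$ is trivial since $T_0$ is a point.

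For the top structure group, fix $\phi\in\Hom_f(P,N)$ and $a\in A_k^P$ and ask when $a+\phi\in\Hom_f(P,N)$. The map $a+\phi$ agrees with $f$ on $X$ iff $a|_X=0$, because the $A_k$-action on fibers is free and $\phi|_X=f$. And $a+\phi$ is a morphism iff $a\in\Hom(P,\mathcal{D}_k(A_k))$: for every $m$ and every cube $c\in C^m(P)$ the cubes $(a+\phi)\circ c$ and $\phi\circ c$ have the same image under $\pi_{k-1}$, so by the degree-$k$ bundle equation (the case $i=k-1$) applied in $N$, $(a+\phi)\circ c\in C^m(N)$ exactly when $a\circ c\in C^m(\mathcal{D}_k(A_k))$. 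Combining the two conditions, the top structure group is $\Hom_{X\to 0}(P,\mathcal{D}_k(A_k))$, as required.

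The remaining point, and the main obstacle, is to identify the next factor and verify surjectivity of $\pi_{k-1}$, i.e. to prove $\pi_{k-1}(\Hom_f(P,N))=\Hom_{f_{k-1}}(P,T_{k-1})$. The inclusion $\subseteq$ is immediate by composing with the morphism $\pi_{k-1}$. For $\supseteq$, given $\psi\in\Hom_{f_{k-1}}(P,T_{k-1})$ I would lift it in three moves. First, using the extension property of $P$ in $\{0,1\}^n$, extend $\psi$ to a cube $\tilde\psi\in C^n(T_{k-1})$; since $C^n(T_{k-1})=\pi_{k-1}(C^n(N))$ this lifts to a cube of $N$, whose restriction to $P$ is a morphism $\phi_0:P\to N$ with $\pi_{k-1}\circ\phi_0=\psi$. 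Second, on $X$ the morphisms $\phi_0|_X$ and $f$ have the same $\pi_{k-1}$-image $f_{k-1}$, so they differ by a function $d:X\to A_k$ via the fiber difference $f(x)=\phi_0(x)+d(x)$; the bundle equation shows $d\circ c\in C^m(\mathcal{D}_k(A_k))$ for every cube $c$ of $X$, so $d\in\Hom(X,\mathcal{D}_k(A_k))$. Third, using the extension property of $X$ in $P$, extend $d$ to $\hat d\in\Hom(P,\mathcal{D}_k(A_k))$. Then $\phi=\phi_0+\hat d$ is a morphism by the top-structure-group computation, projects to $\psi$ under $\pi_{k-1}$, and restricts to $f$ on $X$, so $\phi\in\Hom_f(P,N)$ and $\pi_{k-1}\circ\phi=\psi$.

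Finally, the factor $\Hom_{f_{k-1}}(P,T_{k-1})$ is itself a sub-bundle with the correct structure groups $\Hom_{X\to 0}(P,\mathcal{D}_i(A_i))$ for $1\le i\le k-1$ by the induction hypothesis, applied to the $(k-1)$-step nilspace $T_{k-1}$ (whose first $k-1$ structure groups coincide with those of $N$) and the morphism $f_{k-1}:X\to T_{k-1}$, using the same two extension hypotheses on $X\subseteq P\subseteq\{0,1\}^n$. Together with the top structure group this verifies all clauses of the sub-bundle definition. The main difficulty is precisely the surjectivity step above, which is exactly where both extension properties are essential: the extension property of $P$ in $\{0,1\}^n$ for the lift of $\psi$, and the extension property of $X$ in $P$ for the extension of the correction $d$.
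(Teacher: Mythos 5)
Your proof is correct and follows essentially the same route as the paper: induction on the step $k$, lifting a morphism $P\to\mathcal{F}_{k-1}(N)$ using the extension property of $P$ in $\{0,1\}^n$ (you re-derive inline what the paper cites as Lemma \ref{lifting}), then correcting on $X$ by extending the difference $d\in\Hom(X,\mathcal{D}_k(A_k))$ via the extension property of $X$ in $P$, and identifying the fibers as cosets of $\Hom_{X\to 0}(P,\mathcal{D}_k(A_k))$. The only difference is expository: you spell out the coset identification via the bundle equation of Theorem \ref{bundec}, which the paper dismisses as trivial.
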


\begin{proof} To obtain this bundle structure we use an iterative argument.
Assume that $N$ is a $k$-step nilspace and the result is already established for $N_{k-1}=\mathcal{F}_{k-1}(N)$.
Let $f_2:P\rightarrow N_{k-1}$ be a morphism whose restriction to $X$ is $f$ modulo $\sim_{k-1}$. We claim that $f_2$ can be lifted to an element of $f_3\in\Hom_f(P,N)$ and that the set of possible lifts is exactly $f_3+\Hom_{X\rightarrow 0}(P,\mathcal{D}_k(A_k))$. 

Lemma \ref{lifting} implies that there is a lift $g:P\rightarrow N$ of $f_2$.
Then the function $g_2=g|_X-f$ is a morphism from $X$ to $\mathcal{D}_k(A_k)$. By the extension property there is a morphism $g_3:P\rightarrow\mathcal{D}_k(A_k)$ extending this morphism. Then $f_3=g-g_3$ is in $\Hom_f(P,N)$ and is a lift of $f_2$. 
The second claim is trivial.
\end{proof} 

\begin{remark}\label{restrem} The proof of lemma \ref{restbund} does not require the full extension property. It is enough to assume that morphisms into finite step nilspaces have extensions. This will be important in a special construction later. 
\end{remark}

\begin{lemma}\label{collection} Let $P\subseteq\{0,1\}^n$ be a sub-cubespace with the extension property in $\{0,1\}^n$ and $X\subset P$ be a sub-cubespace with the extension property in $P$. Let $\psi:N\rightarrow N'$ be a fiber surjective morphism between two $k$-step nilspaces.
Then
\begin{enumerate}
\item $\Hom(P,N)$ is a sub-bundle in the direct power $N^P$ with structure groups $\Hom(P,\mathcal{D}_i(A_i))$
\item $\psi^P:\Hom(P,N)\rightarrow\Hom(P,N')$ is a totally surjective bundle morphism with structure morphisms $$\alpha_i^P:\Hom(P,\mathcal{D}_i(A_i))\rightarrow\Hom(P,\mathcal{D}_i(A_i'))$$
\item The preimage of $t\in\Hom(P,N')$ under $(\psi^P)^{-1}$ is a bundle with structure groups $\Hom(P,\mathcal{D}_i({\rm ker}(\alpha_i)))$.
\item Let $t\in\Hom(P,N')$ and let $t_2\in\Hom(X,N')$ be its restriction to $X$. Then the projection $\pi_{X}$ from $(\psi^{P})^{-1}(t)$ to $(\psi^{P_2})^{-1}(t_2)$ is a totally surjective bundle morphism.
\end{enumerate}
\end{lemma}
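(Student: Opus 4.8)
The plan is to establish the four claims in order, treating (1)--(3) as essentially formal consequences of the machinery already developed and reserving the real work for (4). For (1) I would specialize Lemma \ref{restbund} to the empty sub-cubespace $X=\emptyset$ (which has the extension property in $P$ since constant maps are morphisms): then $\Hom_f(P,N)=\Hom(P,N)$ and the structure groups $\Hom_{X\to 0}(P,\mathcal D_i(A_i))$ become $\Hom(P,\mathcal D_i(A_i))$, which is exactly (1); the analogous statement for $N'$ makes $\Hom(P,N')$ a bundle as well. For (2), Lemma \ref{morphbund} already gives that $\psi$ is a bundle morphism $N\to N'$ with structure morphisms $\alpha_i$, and fiber surjectivity forces each $\alpha_i$ to be surjective. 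The point is then that post-composition $\psi^P\colon c\mapsto \psi\circ c$ inherits this structure. I would verify the two axioms of Definition \ref{bundmorphdef} pointwise: axiom (1) holds because $\psi$ descends to the factors $\mathcal F_i$, so $\psi^P$ descends to a map $\Hom(P,\mathcal F_i(N))\to\Hom(P,\mathcal F_i(N'))$; axiom (2) holds because the fibre-wise action of $A_i$ on $\Hom(P,\mathcal F_i(N))$ is pointwise and $\psi_i(x+a)=\psi_i(x)+\alpha_i(a)$. The structure morphism $\alpha_i^P$ is post-composition with $\alpha_i\colon\mathcal D_i(A_i)\to\mathcal D_i(A_i')$.

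For total surjectivity of $\psi^P$ I must show each $\alpha_i^P$ is onto. Given $b\in\Hom(P,\mathcal D_i(A_i'))$, I would extend $b$ to a cube on $\{0,1\}^n$ using the extension property of $P$, lift that cube through the fiber surjective morphism $\mathcal D_i(A_i)\to\mathcal D_i(A_i')$ by Lemma \ref{lifting2}, and restrict the lift back to $P$; this produces a preimage of $b$. Claim (3) is then immediate from Lemma \ref{preimbund} applied to the totally surjective bundle morphism $\psi^P$: the fibre $(\psi^P)^{-1}(t)$ is a sub-bundle with structure groups $\ker(\alpha_i^P)$, and $\ker(\alpha_i^P)=\Hom(P,\mathcal D_i(\ker\alpha_i))$ since $\alpha_i\circ a=0$ says exactly that $a$ takes values in $\ker\alpha_i$ (and a morphism into $\mathcal D_i(A_i)$ with values in the subgroup is the same as a morphism into $\mathcal D_i(\ker\alpha_i)$).

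Claim (4) is where the substance lies. First I would note that the extension property is transitive, so $X\subset\{0,1\}^n$ also has it, and hence claims (1)--(3) apply verbatim with $X$ in place of $P$, making $(\psi^{P_2})^{-1}(t_2)$ (i.e.\ $(\psi^X)^{-1}(t_2)$) a sub-bundle with structure groups $\Hom(X,\mathcal D_i(\ker\alpha_i))$. The restriction map $\pi_X\colon c\mapsto c|_X$ sends $(\psi^P)^{-1}(t)$ into $(\psi^X)^{-1}(t_2)$, because $\psi\circ(c|_X)=(\psi\circ c)|_X=t|_X=t_2$, and I would check that it is a bundle morphism whose structure morphisms are the restriction maps $\Hom(P,\mathcal D_i(\ker\alpha_i))\to\Hom(X,\mathcal D_i(\ker\alpha_i))$; the two axioms are again verified pointwise and by commuting restriction with the factor maps. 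The main obstacle, and the only place where the hypothesis on $X$ is genuinely used, is total surjectivity: each of these restriction maps must be onto, i.e.\ every morphism $X\to\mathcal D_i(\ker\alpha_i)$ must extend to $P$. This is precisely the extension property of $X$ in $P$ applied to the nilspace $\mathcal D_i(\ker\alpha_i)$. Thus the whole lemma reduces to the extension hypotheses, with (4) converting ``$X$ has the extension property in $P$'' directly into ``the restriction bundle morphism is totally surjective.''
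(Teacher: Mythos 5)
Your proposal is correct and follows essentially the same route as the paper's proof: the same bundle-morphism axiom checks for $\psi^P$, the same appeal to Lemma \ref{lifting2} (via the extension property of $P$) for surjectivity of the $\alpha_i^P$, the same use of Lemma \ref{preimbund} for part (3), and the same conversion of the extension property of $X$ in $P$ into surjectivity of the restriction maps $\Hom(P,\mathcal{D}_i(\ker\alpha_i))\rightarrow\Hom(X,\mathcal{D}_i(\ker\alpha_i))$ for part (4). The only deviation is part (1), where you specialize Lemma \ref{restbund} to $X=\emptyset$ instead of re-running the induction on $k$ as the paper does; this is a legitimate and slightly cleaner shortcut, since that induction is exactly how Lemma \ref{restbund} itself is proved.
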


\begin{proof}

We prove the first statement by induction on $k$. For $k=0$ it is trivial. If we know the statement for $k-1$ then we have by lemma \ref{lifting} $\Hom(P,\mathcal{F}_{k-1}(N))=\pi_{k-1}(\Hom(P,N))$ and so we have that $\pi_{k-1}(\Hom(P,N))$ is a sub-bundle of $\mathcal{F}_{k-1}(N)^P$. Let $g\in\Hom(P,\mathcal{F}_{k-1}(N))$. If $g'$ is any lift of $g$ to $N$ then by theorem \ref{bundec} the other preimages of $g$ are exactly those that differ from $g'$ by an element in $\Hom(P,\mathcal{D}_k(A_k))$, which is clearly a subgroup in $A_k^P$.

For the second statement we check the two axioms of bundle morphisms. The first axiom follows from the fact (use lemma \ref{sim1}) that the map $\psi^P$ preserves the relation $\sim_i$. Let $c\in\Hom(P,\mathcal{F}_i(N))$. It is clear that the structure morphisms are given by $\alpha_i^P$ on $\Hom(P,\mathcal{D}_i(A_i))$ but we have to show that they map surjectively to $\Hom(P,\mathcal{D}_i(A_i'))$. This follows from lemma \ref{lifting2}.

The third statement follows directly from lemma \ref{preimbund}.

In the fourth statement the structural maps are computed as  $$\Hom(P,\mathcal{D}_i({\rm ker}(\alpha_i)))\rightarrow\Hom(X,\mathcal{D}({\rm ker}(\alpha_i))).$$ It follows from the extension property of $X$ that these are surjective maps.
\end{proof}

\subsection{Extensions and cohomology}

\begin{definition}\label{kdegext} Let $N$ be an arbitrary nilspace. A degree $k$-extension of $N$ is an abelian bundle $M$ over $N$ which is a cube space with the following properties.
\begin{enumerate}
\item For every $n\in\mathbb{N}$ we have $\pi(C^n(M))=C^n(N)$,
\item If $c_1\in C^n(M)$ and $c_2:\{0,1\}^n\rightarrow M$ with $\pi(c_1)=\pi(c_2)$ then $c_2\in C^n(M)$ if and only if $c_1-c_2\in C^n(\mathcal{D}_k(A))$.
\end{enumerate}
The map $\pi$ is the projection from $M$ to $N$.
The extension $M$ is called a split extension if there is a cube
preserving morphism $m:N\rightarrow M$ such that $\pi\circ m$ is the identity map of $N$.
\end{definition}

A motivation to study such extensions is that we can obtain every $k$-step nilspace from a trivial nilspace by $k$ consecutive extensions of increasing degree.
To understand extensions we will need the notion of cocycles.

If $\sigma$ is an automorphism of the cube $\{0,1\}^k$ then the we define $s(\sigma):=(-1)^m$ where $m$ is the number of $1$'s in the vector $\sigma(0^k)$. The automorphism $\sigma$ also acts on $C^k(N)$ by composition.

\begin{definition}\label{cocycle} Let $N$ be a nilspace and $A$ be an abelian group. A cocycle of degree $k-1$ is a function $\rho:C^k(N)\rightarrow A$ with the following two properties. 
\begin{enumerate}
\item If $f\in C^k(N)$ and $\sigma\in{\rm aut}(\{0,1\}^k)$ then $\rho(\sigma(f))=s(\sigma)\rho(f)$.
\item If $f_3$ is the concatenation of two cubes $f_1,f_2\in C^k(N)$ then $\rho(f_3)=\rho(f_1)+\rho(f_2)$. 
\end{enumerate}
Let $Y_k(N,A)$ denote the abelian group of all $A$-valued cocycles of degree $k$ with respect to the pointwise addition. In particular $Y_{-1}(N,A)$ denotes the set of all $A$-valued functions on $N$.
\end{definition}

Let $t\in\Hom(T_k,N)$, $G$ be an abelian group and $\tau:C^k(N)\rightarrow G$ be an arbitrary function.
We introduce $\beta(t,\tau)\in G$ by
\begin{equation}\label{altcubesum2}
\beta(t,\tau)=\sum_{v\in\{0,1\}^k}\tau(t \circ \Psi_v)(-1)^{h(v)}
\end{equation}
where $h(v)=\sum_{i=1}^k v_i$.

Since the outer cube $\{-1,1\}^n$ of $T_n$ can be obtained iteratively by concatenating cubes of the form $\Psi_v$ the next lemma is easy to see. 

\begin{lemma}\label{threecsum} Let $T_n$ be the three-cube of dimension $n$ and let $t:T_n\rightarrow N$ be a morphism into a nilspace $N$.
Assume that $\rho$ is a cocycle on $N$ of degree $n-1$. Then 
$$\beta(t,\rho)=\rho(t \circ \omega).$$
\end{lemma}

Any $k-1$ degree cocycle $\rho$ induces a degree $k$ cocycle that we denote by $\partial\rho$.
The value of $\partial\rho$ on $c\in C^{k+1}(N)$ is the difference of $\rho$ on two opposite faces of $c$.
To be more precise let $c_0$ and $c_1$ be the restrictions of $c$ to the cubes $\{0,1\}^k\times\{0\}$ and $\{0,1\}^k\times\{1\}$. Then $\partial\rho(c)=\rho(c_0)-\rho(c_1)$.
We have that $\partial:Y_{k-1}(N,A)\rightarrow Y_k(N,A)$ is a homomorphism for every $k\geq 0$. 

\begin{definition} A coboundary of degree $k$ is an element of the group $\partial^{k+1}Y_{-1}(N,A)$. 
We denote by $H_k(N,A)$ the factor group $Y_k(N,A)/\partial^{k+1}Y_{-1}(N,A)$. 
\end{definition}
We show that the elements of $Y_k(N,A)$ describe degree $k$-extensions of $N$. Two extensions are equivalent if they have the same image in the factor $H_k(N,A)$. In particular coboundaries represent split extensions.

First we show that extensions of degree $k$ generate cocycles of degree $k$.
The other direction will be shown in the chapter of measurable cocyles.

Let $M$ be a degree $k$-extension of $N$ by the abelian group $A$. Let $\pi$ denote the projection $M\rightarrow N$.
For every $n\in N$ we choose an element $x(n)$ with $\pi(x(n))=n$. Such a function $x$ will be called a {\bf cross section}. Furthermore if $m\in M$ is an arbitrary element then we introduce $f(m)=m-x(\pi(m))$.
For an arbitrary cube $c\in C^{k+1}(M)$ we define 
$$\rho(c)=\sum_{v\in\{0,1\}^{k+1}}f(c(v))(-1)^{h(v)}.$$
It is clear that the value of $\rho$ depends only on the $\pi$ image of $c$ and so we can also interpret $\rho$ as a function $\rho:C^{k+1}(N)\rightarrow A$.
It is also clear that $\rho$ satisfies the cocycle axioms. We say that $\rho$ is the cocycle generated by the cross section $x$.

\subsection{Translations}

For an arbitrary subset $F$ in $\{0,1\}^n$ and map $\alpha:N\rightarrow N$ we define the map $\alpha^F$ from $C^n(N)$ to $N^{\{0,1\}^n}$ such that $\alpha^F(c)(v)=\alpha(c(v))$ if $v\in F$ and $\alpha^F(c)(v)=c(v)$ if $v\notin F$.

\begin{definition}\label{transdef} Let $N$ be a nilspace. A map $\alpha:N\rightarrow N$ is called a translation of height $i$
if for every natural number $n\geq i$, $n-i$ dimensional face $F\subseteq\{0,1\}^n$ and $c\in C^n(N)$ the map $\alpha^F(c)$ is in $C^n(N)$. We denote the set of height $i$ translations by $\aff_i(N)$. We will use the short hand notation $\aff(N)$ for $\aff_1(N)$.
\end{definition}

Note that transformations in $\aff_i(N)$ were first introduce by Host
and Kra \cite{HKr}, \cite{HKr2} related to both ergodic theory and parallelepiped structures. 
It is clear from this definition that $$\aff_1(N)\supseteq\aff_2(N)\supseteq\aff_3(N)\supseteq\dots.$$

\begin{lemma}\label{transmorph} A map $\alpha:N\rightarrow N$ is in $\aff_i(N)$ if and only if the map $h:N\rightarrow N\times N$ defined by $h(n)=(n,\alpha(n))$ is a morphism into the $i$-th arrow space.
\end{lemma}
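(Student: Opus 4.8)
The plan is to unwind both conditions into the same concrete statement about the operator $\alpha^{F}$ and then match them. For a cube $c\in C^n(N)$ the composite $h\circ c$ has components $c$ and $\alpha\circ c$, so by the definition of the $i$-th arrow space $h\circ c$ is a cube there exactly when $(h\circ c)_i\in C^{n+i}(N)$. Writing the domain of $(h\circ c)_i$ as $\{0,1\}^n\times\{0,1\}^i$ with coordinates $(v,w)$, this map is $c(v)$ for $w\neq 1^i$ and $\alpha(c(v))$ for $w=1^i$; in other words it is $\alpha^{F_0}(c')$, where $F_0=\{0,1\}^n\times\{1^i\}$ is the codimension-$i$ face $\{w=1^i\}$ and $c'(v,w)=c(v)$ is the cylinder over $c$. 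Thus $h$ is a morphism into the $i$-th arrow space iff $\alpha^{F_0}(c')\in C^{n+i}(N)$ for every $n$ and every $c\in C^n(N)$. The forward direction is then immediate: the cylinder $c'$ is $c$ precomposed with the coordinate projection $\{0,1\}^{n+i}\to\{0,1\}^n$, which is a cube morphism, so $c'\in C^{n+i}(N)$ by the composition axiom, and since $F_0$ is a codimension-$i$ face the defining property of $\aff_i(N)$ gives $\alpha^{F_0}(c')\in C^{n+i}(N)$.

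For the reverse direction I assume $h$ is a morphism and must show $\alpha^{F}(d)\in C^m(N)$ for every $m\geq i$, every codimension-$i$ face $F\subseteq\{0,1\}^m$ and every $d\in C^m(N)$. Choosing an automorphism $\sigma$ of $\{0,1\}^m$ with $\sigma^{-1}(F)=F_0=\{w=1^i\}$ and using the identity $\alpha^{F}(d)\circ\sigma=\alpha^{\sigma^{-1}F}(d\circ\sigma)$ together with the composition axiom, it suffices to treat the standard face $F_0$. The obstacle is that the hypothesis only produces $\alpha^{F_0}$ applied to cylinders, while $d$ is an arbitrary cube. The key device to close this gap is a cube morphism that ``localizes'' $\alpha$. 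Split off the last coordinate $w_i$ and set $d^1=d|_{\{w_i=1\}}\in C^{m-1}(N)$. Applying the morphism hypothesis to the cube $d^1$ itself yields $g:=(h\circ d^1)_i\in C^{m-1+i}(N)$, a cube on $\{0,1\}^{m-1}\times\{0,1\}^i$ equal to $d^1$ off the subface where the second block is $1^i$ and equal to $\alpha\circ d^1$ on it. Now precompose $g$ with the cube morphism $\psi:\{0,1\}^m\to\{0,1\}^{m-1+i}$ sending $(v,w',w_i)\mapsto\big((v,w'),(w',w_i)\big)$, i.e. duplicating the $w'$-coordinates into the second block and copying $w_i$ into the last slot. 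Since $(w',w_i)=1^i$ precisely when $w'=1^{i-1}$ and $w_i=1$, the composite $g\circ\psi$ equals $d^1$ except on $F_0$, where it equals $\alpha\circ d^1$; thus $g\circ\psi=\alpha^{F_0}(\hat d^1)$ for the $w_i$-cylinder $\hat d^1$ of $d^1$, and it lies in $C^m(N)$ by the composition axiom.

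To finish I concatenate. The cubes $d$ and $g\circ\psi$ are adjacent in the $w_i$-direction: the top face $\{w_i=1\}$ of $d$ is $d^1$, which is the bottom face $\{w_i=0\}$ of $g\circ\psi$. By the concatenation lemma their concatenation is a cube, and by construction its bottom face is $d|_{\{w_i=0\}}$ while its top face is $(g\circ\psi)|_{\{w_i=1\}}=\alpha^{\{w'=1^{i-1}\}}(d^1)$; together these are exactly $\alpha^{F_0}(d)$. Hence $\alpha^{F_0}(d)\in C^m(N)$, which completes the reverse direction and the proof. I expect the only non-formal step, and the crux of the argument, to be discovering the localizing morphism $\psi$; once it is in hand, the automorphism reduction, the forward direction, and the concatenation step are all routine applications of the composition and concatenation results already established.
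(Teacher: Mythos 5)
Your proof is correct, and the reverse direction takes a genuinely different route from the paper's. The paper makes the same reduction to a standard codimension-$i$ face by symmetry, but then argues via three-cubes: it pulls the cube $c$ back along $f_1^{n-i}\times f_2^i:\{0,1\}^{n-i}\times T_i\to\{0,1\}^n$, applies $\alpha$ on $\{0,1\}^{n-i}\times\{1^i\}$, checks that the modified map is still a morphism of $\{0,1\}^{n-i}\times T_i$ (this is where the arrow-space hypothesis enters, applied to the restriction of $c$ to the face $F$ itself), and then uses the extension property of simplicial cubespaces (Lemma \ref{simpglue}) to extend and read off the outer-vertex cube, which equals $\alpha^F(c)$. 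You instead apply the hypothesis to the codimension-one face restriction $d^1=d|_{\{w_i=1\}}$, realize $\alpha^{F_0}$ of the cylinder over $d^1$ as an honest cube by pulling $(h\circ d^1)_i$ back along the coordinate-duplicating morphism $\psi$, and finish by concatenating with $d$. Both arguments ultimately rest on the gluing axiom, since the concatenation lemma you invoke is itself proved from Lemma \ref{simpglue}, but the decompositions differ: the paper glues inside a product with a three-cube and extracts a sub-cube, whereas you assemble $\alpha^{F_0}(d)$ directly as a concatenation, with every step an explicit application of the composition axiom or the concatenation lemma. What your version buys is that all verifications are elementary coordinate computations, making explicit the bookkeeping the paper compresses into ``it is easy to see from our assumption that $h'$ is also a morphism''; what the paper's version buys is brevity and reuse of the three-cube machinery that recurs throughout (e.g.\ in Lemmas \ref{extvert}, \ref{sim1} and \ref{sim2}), at the cost of leaving that key verification to the reader.
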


\begin{proof} It is clear that $\alpha\in\aff_i(N)$ implies that $h$ is a morphism.
For the other direction assume that $h$ is a morphism.
Let $c\in C^n(N)$ be such that $n\geq i$. Let $F\subset\{0,1\}^n$ be the $n-i$ dimensional face with $0$'s in the last $i$ coordinates. Using the symmetries of cubes it is enough to show that for this particular face $\alpha^F(c)\in C^n(N)$.

Let $Q=\{0,1\}^{n-i}\times\{-1,0,1\}^i=\{0,1\}^{n-i}\times T_i$, let $f_1$ be the identity on $\{0,1\}$ and $f_2$ be the function with $f_2(-1)=1,f_2(0)=0,f_2(1)=0$. Let $f=f_1^{n-i}\times f_2^i$. The function $h=c\circ f$ is a morphism from $Q$ to $N$. Let $h'$ be the function obtained from $h$ by applying $\alpha$ to the values on $\{0,1\}^{n-i}\times 1^{i}$.

It is easy to see from our assumption that $h'$ is also a morphism to $N$.
On the other hand by lemma \ref{simpglue} the restriction of $h'$ to $\{0,1\}^{n-i}\times\{-1,1\}^i$ is a morphism to $N$. This restriction is equal to $\alpha^F(c)$.
\end{proof}

Note that definition \ref{transdef} implies that translations preserve cubes.
Recall that two cubes in $C^n(N)$ are called equivalent if they are two opposite faces of a cube in $C^{n+1}(N)$.
It is clear that a map $\alpha$ is a translation if and only if $\alpha(c)$ is equivalent with $c$ for every cube $c\in C^n(N)$. The next lemma shows a strengthening of this fact for $k$-step nilspaces.

\begin{lemma}\label{transchar2} Let $N$ be a $k$-step nilspace. An arbitrary map $\alpha:N\rightarrow N$ is a in $\aff_i(N)$ if and only if for every $c\in C^k(N)$ we have that $(c,\alpha(c))_i\in C^{k+i}(N)$.
\end{lemma}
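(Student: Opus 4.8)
The plan is to read both directions through Lemma \ref{transmorph}, which says that $\alpha\in\aff_i(N)$ if and only if the map $h(x)=(x,\alpha(x))$ is a morphism of $N$ into the $i$-th arrow space; unwinding the definition of that arrow space, this is equivalent to the assertion $(\dagger)$: $(c,\alpha(c))_i\in C^{n+i}(N)$ for every $n$ and every $c\in C^n(N)$. The forward direction of the lemma is then immediate, since if $\alpha\in\aff_i(N)$ then $(\dagger)$ holds in particular for $n=k$, which is exactly the stated condition. All the work is in the converse: upgrading the hypothesis ``$(c,\alpha(c))_i\in C^{k+i}(N)$ for $c\in C^k(N)$'' to the full statement $(\dagger)$.

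First I would record two elementary reductions that together yield the auxiliary property $(\star)$: $(c,\alpha(c))_j\in C^{m+j}(N)$ for all $m\le k$, all $1\le j\le i$, and all $c\in C^m(N)$. To lower the height from $i$ to $j$, restrict the cube $(c,\alpha(c))_i\in C^{k+i}(N)$ to the face obtained by freezing the last $i-j$ arrow coordinates to $1$; a direct check shows this restriction equals $(c,\alpha(c))_j$, and it is a cube since restriction to a face is composition with a cube morphism. To lower the base dimension from $k$ to $m$, write $c=c_0\circ p$ with $p\colon\{0,1\}^k\to\{0,1\}^m$ the coordinate projection (so $c\in C^k(N)$), observe that $(c_0,\alpha(c_0))_j=(c_0\circ p,\alpha(c_0\circ p))_j\circ s$ for the section $s(v,w)=((v,0^{k-m}),w)$, and invoke the composition axiom. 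Both $p$ and $s$ are cube morphisms, so these steps are routine.

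The heart of the proof is the case $n>k$, where I would invoke the characterization of cubes in a $k$-step nilspace, Lemma \ref{cubechar}. Since $n+i\ge k+2$, the function $g=(c,\alpha(c))_i\colon\{0,1\}^{n+i}\to N$ lies in $C^{n+i}(N)$ as soon as each of its restrictions to a $(k+1)$-dimensional face $F$ having a point with $0$ in the last coordinate is a cube. I would classify these faces by the behaviour of the $i$ arrow coordinates on $F$. If some frozen arrow coordinate equals $0$, then $g$ is constant in the arrow directions on $F$, so $g|_F$ is a base cube precomposed with a projection, hence a cube. The only remaining faces are those on which all frozen arrow coordinates equal $1$ while the last arrow coordinate is free; if $j\ge 1$ arrow coordinates are free there, a computation identifies $g|_F$ with $(c',\alpha(c'))_j$ for a base cube $c'\in C^{\,k+1-j}(N)$, and since $k+1-j\le k$ and $j\le i$, this is a cube by $(\star)$. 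Together with $(\star)$ in the case $n\le k$, this gives $(\dagger)$, and Lemma \ref{transmorph} concludes the argument.

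The step I expect to be the main obstacle is the face bookkeeping in the last paragraph: one must verify that every $(k+1)$-dimensional face meeting the hypothesis of Lemma \ref{cubechar} falls into one of the two described types, and in particular that the faces on which the $\alpha$-part of $g$ genuinely appears are exactly the ones landing in a lower arrow construction $(c',\alpha(c'))_j$ with $j\le i$. The condition ``$F$ contains a point with $0$ in the last coordinate'' is precisely what excludes the faces lying entirely over $w=1^i$ (where $g$ would restrict to the unverified $\alpha\circ c'$), so it is essential that Lemma \ref{cubechar} singles out the last coordinate in this way; matching the codimension $j$ of the face to the index $j$ of the arrow construction is the one place where the combinatorics must be done with care.
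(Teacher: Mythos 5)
Your proof is correct and follows essentially the same route as the paper's: Lemma \ref{transmorph} reduces the problem to showing $(c,\alpha(c))_i\in C^{n+i}(N)$ for every $n$ and $c\in C^n(N)$, and Lemma \ref{cubechar} reduces that to checking restrictions to $(k+1)$-dimensional faces meeting the last-coordinate condition. The paper dismisses this face check as following ``immediately'' from the hypothesis; your auxiliary property $(\star)$ and the explicit face classification are precisely the details it omits, so your write-up is a faithful (and more complete) version of the same argument.
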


\begin{proof} Let $c\in C^n(N)$ be an arbitrary cube and let $c'=(c,\alpha(c))_i$. By lemma \ref{transmorph} it is enough to prove that $c'\in C^{n+i}(N)$. formed by $c$ and $\alpha(c)$ as two faces. Using lemma \ref{cubechar} it is enough to show that $c'$ restricted to $k+1$ dimensional faces in $\{0,1\}^n$ with at least one point with $0$ in the last coordinate are cubes. This follows immediately from the condition of the lemma.
\end{proof}

\begin{lemma}\label{loctrans2} Let $N$ be a $k$-step nilspace. Then translations restricted to $\sim_{k-1}$ classes are local translations.
\end{lemma}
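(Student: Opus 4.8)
The plan is to treat an arbitrary $\alpha\in\aff_1(N)$, which suffices because $\aff_i(N)\subseteq\aff_1(N)$ for every $i\geq 1$, so any translation is in particular a height-$1$ translation. First I would check that $\alpha$ carries each $\sim_{k-1}$ class into a single $\sim_{k-1}$ class. Indeed, if $x\sim_{k-1}y$ then by lemma \ref{sim1} there is a cube $c\in C^k(N)$ with $c(0^k)=y$ and $c(v)=x$ for all $v\neq 0^k$; since translations preserve cubes, $\alpha\circ c\in C^k(N)$, and it takes the value $\alpha(y)$ at $0^k$ and $\alpha(x)$ elsewhere, so lemma \ref{sim1} gives $\alpha(x)\sim_{k-1}\alpha(y)$. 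Fixing a class $F$ and writing $F'$ for the class containing $\alpha(F)$, the corollary following lemma \ref{kabelian} shows that both $F$ and $F'$ are copies of $\mathcal{D}_k(A_k)$, where $A_k$ is the top structure group of $N$. It then makes sense to ask whether $\alpha|_F$ is a local translation, i.e. whether $\alpha(x+a)=\alpha(x)+a$ for all $x\in F$ and $a\in A_k$.

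The core of the argument is a single cube. Fix $x\in F$ and $a\in A_k$ and let $c_1:\{0,1\}^k\to F$ be the function equal to $x$ off $1^k$ with $c_1(1^k)=x+a$. Since $F$ is $k$-fold ergodic (every map $\{0,1\}^k\to F$ is a cube, exactly as in the proof of theorem \ref{bundec}), we have $c_1\in C^k(N)$. Applying lemma \ref{transchar2} with $i=1$ yields $(c_1,\alpha(c_1))_1\in C^{k+1}(N)$. I would then observe that this is precisely the cube used to describe the arrow-space relation $\sim$ in the proof of theorem \ref{bundec}: its bottom face is $c_1$ (value $x$ off $1^k$, value $x+a$ at $1^k$) and its top face is $\alpha\circ c_1$ (value $\alpha(x)$ off $1^k$, value $\alpha(x+a)$ at $1^k$). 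Hence $(x,x+a)\sim(\alpha(x),\alpha(x+a))$.

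Finally I would invoke the two facts about $\sim$ established in the proof of theorem \ref{bundec}: the canonical action of $A_k$ satisfies $(x,x+a)\sim(\alpha(x),\alpha(x)+a)$, and given the first three entries there is a unique fourth entry $y_2$ with $(x,x+a)\sim(\alpha(x),y_2)$. Comparing the two relations forces $\alpha(x+a)=\alpha(x)+a$. As this holds for all $x\in F$ and $a\in A_k$, it says exactly that $\alpha|_F=\phi_{x_0,\alpha(x_0)}$ for any fixed $x_0\in F$, the local translation of lemma \ref{loctrans}.

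I expect the main obstacle to be the bookkeeping in the second step: one must match the vertex and face conventions of the $i=1$ arrow construction $(c_1,\alpha(c_1))_1$ with those of the witnessing cube defining $\sim$, and in particular confirm that the same distinguished vertex $1^k$ (respectively $1^{k+1}$) is singled out on both sides, so that the two descriptions genuinely coincide. Once that identification is secured, the uniqueness built into the bundle decomposition closes the argument at once.
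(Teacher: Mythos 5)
Your proof is correct and takes essentially the same approach as the paper: first class preservation via lemma \ref{sim1}, then the $(k+1)$-cube with bottom face $c_1$ and top face $\alpha\circ c_1$, with uniqueness of completion in the $k$-step nilspace forcing $\alpha(x+a)=\alpha(x)+a$. The only cosmetic difference is that the paper cites lemma \ref{loctrans} (whose defining corner is closed by exactly your cube $(c_1,\alpha(c_1))_1$) for the last step, whereas you route the same uniqueness argument through the relation $\sim$ and the $A_k$-action established in the proof of theorem \ref{bundec}.
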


\begin{proof} It follows from lemma \ref{sim1} that if $x\sim_{k-1} y$ then $\alpha(x)\sim_{k-1}\alpha(y)$. Lemma \ref{loctrans} shows that if the $\sim_{k-1}$ classes of $x$ and $\alpha(x)$ are $F_1$ and $F_2$ then $\alpha(x+a)=\alpha(x)+a$ for an arbitrary element $a$ in the structure group $A_k$.
\end{proof}

\begin{lemma} If $N$ is a $k$-step nilspace then $\aff_1(N)$ (and thus also $\aff_i(N)$ for every $i\geq 1$) is a group.
\end{lemma}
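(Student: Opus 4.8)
The plan is to exhibit $\aff_1(N)$ as a monoid under composition in which every element is invertible. The monoid structure is immediate: for any self-maps $\alpha,\beta$ of $N$ and any face $F$ one checks directly from the definition of $\alpha^F$ that $\alpha^F\circ\beta^F=(\alpha\beta)^F$, so if $\alpha,\beta\in\aff_1(N)$, $c\in C^n(N)$ and $F$ is an $(n-1)$-dimensional face, then $\beta^F(c)\in C^n(N)$ and hence $(\alpha\beta)^F(c)=\alpha^F(\beta^F(c))\in C^n(N)$; the identity map visibly lies in $\aff_1(N)$. The real content is that each $\alpha\in\aff_1(N)$ is a bijection whose inverse is again a translation, and this I would prove by induction on the step $k$, the case $k=0$ (where $N$ is a point) being trivial.

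For the inductive step, note first that by Lemma \ref{loctrans2} (which invokes Lemma \ref{sim1}) a translation $\alpha$ preserves $\sim_{k-1}$, hence descends to a well-defined map $\bar\alpha$ on the $(k-1)$-step nilspace $\mathcal{F}_{k-1}(N)$; lifting cubes through the factor map (Lemma \ref{lifting}) shows $\bar\alpha\in\aff_1(\mathcal{F}_{k-1}(N))$, which by the inductive hypothesis is a group, so $\bar\alpha$ is a bijection. Combined with Lemma \ref{loctrans2}, which says $\alpha$ restricted to each $\sim_{k-1}$ class is a local translation and therefore (Lemma \ref{loctrans}) a bijection onto another class, this forces $\alpha$ itself to be a bijection of $N$: both surjectivity and injectivity follow by first solving the problem modulo $\sim_{k-1}$ using bijectivity of $\bar\alpha$, and then inside the relevant fiber using bijectivity of the local translation.

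It remains to see that $\beta:=\alpha^{-1}$ is a translation, and I claim first that $\beta$ preserves $k$-cubes. Given $c\in C^k(N)$, its image $\beta\circ c$ projects to $\bar\beta\circ\bar c\in C^k(\mathcal{F}_{k-1}(N))$, because $\bar\beta=\bar\alpha^{-1}$ is a translation of the factor by induction and translations preserve cubes. Choose, via Theorem \ref{bundec}, a cube $c''\in C^k(N)$ lifting $\bar\beta\circ\bar c$; then $\beta\circ c$ and $c''$ agree modulo $\sim_{k-1}$, so their fibrewise difference is a map $a:\{0,1\}^k\to A_k$. Since $\mathcal{D}_k(A_k)$ is $k$-fold ergodic, \emph{every} such $a$ lies in $C^k(\mathcal{D}_k(A_k))$, and the degree-$k$ bundle description of the cubes over a fixed base projection (Theorem \ref{bundec}) gives $\beta\circ c=c''+a\in C^k(N)$. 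With this in hand Lemma \ref{transchar2} finishes the case $i=1$: to prove $\beta\in\aff_1(N)$ it suffices that $(c,\beta(c))_1\in C^{k+1}(N)$ for every $c\in C^k(N)$; writing $e=\beta\circ c\in C^k(N)$ we have $c=\alpha\circ e$, and since the automorphism of $\{0,1\}^{k+1}$ flipping the last coordinate interchanges the two opposite faces, $(c,\beta(c))_1=(\alpha\circ e,e)_1$ is a cube precisely when $(e,\alpha\circ e)_1$ is, which holds by Lemma \ref{transchar2} applied to $\alpha$.

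The main obstacle is exactly this last claim that $\beta$ preserves cubes; its resolution rests on the bundle decomposition together with the $k$-fold ergodicity of $\mathcal{D}_k(A_k)$, which makes the fibrewise correction term automatically a cube and so avoids any extra degree-$k$ condition. Finally, for the parenthetical assertion: since $\aff_i(N)\subseteq\aff_1(N)$ and the identity $\alpha^F\circ\beta^F=(\alpha\beta)^F$ holds for faces of every dimension, each $\aff_i(N)$ is a submonoid of the group $\aff_1(N)$, and the same inductive scheme, now phrased through the $i$-th arrow space (Lemma \ref{transmorph}) and Lemma \ref{transchar2}, shows that the inverse of a height-$i$ translation is again of height $i$, so that each $\aff_i(N)$ is a subgroup of $\aff_1(N)$.
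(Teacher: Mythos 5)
Your proof of the main assertion, that $\aff_1(N)$ is a group, is correct and follows essentially the same route as the paper's own proof: bijectivity of a translation by induction on $k$ via Lemma \ref{loctrans2}, cube-preservation of the inverse $\beta=\alpha^{-1}$ by projecting to $\mathcal{F}_{k-1}(N)$ and correcting fibrewise, and the conclusion via Lemma \ref{transchar2} with $i=1$ together with the last-coordinate flip. The only cosmetic difference is that where you invoke Theorem \ref{bundec} plus the $k$-fold ergodicity of $\mathcal{D}_k(A_k)$, the paper quotes Corollary \ref{sim2cor}; these express the same fact.

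The step that does not survive scrutiny is your final sentence on the parenthetical case $i\geq 2$. Your $i=1$ argument hinges on the flip: $(c,\beta(c))_1$ and $(\beta(c),c)_1$ differ by a cube automorphism. For $i\geq 2$ this fails: $(e,\alpha(e))_i$ takes the value $e(v)$ on $2^i-1$ of the parallel $k$-dimensional faces and $\alpha(e(v))$ on just one, while $(\alpha(e),e)_i$ does the opposite, and one checks easily that no cube morphism of $\{0,1\}^{k+i}$ carries one configuration to the other. So ``the same inductive scheme'' does not show that $\beta$ has height $i$, and this needs a different mechanism. One correct route: induct on $k$ to show that $\alpha^F$ is surjective on $C^n(N)$ for every codimension-$i$ face $F$. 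Given $c'\in C^n(N)$, lift the cube $\bar{\beta}^{\bar{F}}(\bar{c}')\in C^n(\mathcal{F}_{k-1}(N))$ to some $d\in C^n(N)$; then $c'$ and $\alpha^F(d)$ have the same projection to $\mathcal{F}_{k-1}(N)$, hence $c'=\alpha^F(d)+a$ with $a\in C^n(\mathcal{D}_k(A_k))$ by Theorem \ref{bundec}; finally, since $\alpha$ commutes with the action of $A_k$ (Lemma \ref{loctrans2}), one gets $\beta^F(c')=d+a\in C^n(N)$. In fairness, the paper's own proof also treats only $i=1$ and disposes of $i\geq 2$ with the word ``thus,'' so your proposal matches the paper's level of detail everywhere except that your explicit appeal to the flip for general $i$ rests on a step that is actually false rather than merely omitted.
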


\begin{proof} By induction on $k$ and using lemma \ref{loctrans2} we get that translations are invertible transformations. We need to show that the inverse of a translation $\alpha$ is again a translation. We go by induction on $k$. Assume that we have the statement for  $k-1$. Then in particular we have that the image of a $k$ dimensional cube $c$ under $\alpha^{-1}$ is a cube modulo $\sim_{k-1}$. This means by lemma \ref{sim2cor} that $\alpha^{-1}(c)$ is also in $C^k(N)$. Since $\alpha(\alpha^{-1}(c))=c$ we obtain that $(\alpha^{-1}(c),c)\in C^{k+1}(N)$. By lemma \ref{transchar2} applied with $i=1$ the proof is complete.
\end{proof}

\subsection{Translation bundles}

Let $N$ be a $k$-step nilspace and let $\alpha$ be an element in $\aff_i(\mathcal{F}_{k-1}(N))$.
We say that $\alpha$ can be lifted to $\aff_i(N)$ if there is an element $\alpha'\in\aff_i(N)$ such that $\pi_{k-1}(\alpha'(n))=\alpha(\pi_{k-1}(n))$ holds for every $n\in N$.
Recall that $\pi_{k-1}$ is the projection to $\mathcal{F}_{k-1}(N)$.
Our goal is to understand when can $\alpha$ be lifted this way.
We introduce a nilspace whose algebraic properties decide if there is such a lift or not.

\medskip

Let $\mathcal{T}=\mathcal{T}(\alpha,N,i)$ be the set of pairs $(x,y)\in N^2$ where $\alpha(\pi_{k-1}(x))=\pi_{k-1}(y)$. We interpret $\mathcal{T}$ as a subset of the $i$-th arrow space over $N$.
It is easy to see that if $k\geq i+1$ then $\mathcal{T}$ is an ergodic nilspace with the inherited cubic structure.

We define $\mathcal{T}^*$ as $\mathcal{F}_{k-1}(\mathcal{T})$.
We will use the next two algebraic properties of $\mathcal{T}^*$.

\begin{enumerate}
\item The group $A_k\times A_k$ acts on the space $\mathcal{T}$ by $$(x,y)\mapsto(x+a_1,y+a_2).$$ This action induces an action of $A_k$ on $\mathcal{T}^*$. For $a_1,a_2\in A_k$ we have that $(x+a_1,y+a_2)\sim_{k-1}(x,y)$ if and only if $a_1=a_2$. It follows that the elements of $\mathcal{T}^*$ represent local translations $\phi:F_1\rightarrow F_2$ where $F_1,F_2$ are $\sim_{k-1}$ classes in $N$ with $\alpha(F_1)=F_2$.
\item The map $(x,y)\mapsto x$ creates a map $\mathcal{T}\rightarrow N$. It induces a map $\gamma:\mathcal{T}^*\rightarrow \mathcal{F}_{k-1}(N)$.
\end{enumerate}

Combining these two facts one can see easily that $\mathcal{T}^*$ is a degree $k-i$ extension of $\mathcal{F}_{k-1}(N)$ by $A_k$.

\begin{proposition}\label{transext} Let $N$ be a $k$-step nilspace and $\alpha\in\aff_i(\mathcal{F}_{k-1}(N))$. If $\mathcal{T}^*=\mathcal{T}^*(\alpha,N,i)$ is a split extension then $\alpha$ lifts to an element $\beta\in\aff_i(N)$.
\end{proposition}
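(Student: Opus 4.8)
The plan is to use the splitting of $\mathcal{T}^*$ to manufacture the lift $\beta$ directly, and then to verify the translation property through Lemma \ref{transchar2}. By Definition \ref{kdegext}, a splitting of the extension $\gamma:\mathcal{T}^*\to\mathcal{F}_{k-1}(N)$ is a cube-preserving morphism $m:\mathcal{F}_{k-1}(N)\to\mathcal{T}^*$ with $\gamma\circ m=\mathrm{id}$. By the first listed property of $\mathcal{T}^*$, each value $m(p)$ is a local translation from the $\sim_{k-1}$ class $F_p$ lying over $p$ to the class $F_{\alpha(p)}$, the condition $\gamma\circ m=\mathrm{id}$ guaranteeing that the source class is exactly $F_p$. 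I would therefore define $\beta:N\to N$ by $\beta(x)=m(\pi_{k-1}(x))(x)$. This is well defined because $x$ lies in $F_{\pi_{k-1}(x)}$, and by construction $\pi_{k-1}(\beta(x))=\alpha(\pi_{k-1}(x))$, so $\beta$ is a genuine lift of $\alpha$.

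It then remains to show $\beta\in\aff_i(N)$. Since $N$ is $k$-step, Lemma \ref{transchar2} reduces this to proving $(c,\beta\circ c)_i\in C^{k+i}(N)$ for every $c\in C^k(N)$. First I would observe that the map $v\mapsto(c(v),\beta(c(v)))$ takes values in $\mathcal{T}$ and that its image under the factor projection $\mu:\mathcal{T}\to\mathcal{T}^*$ equals $m\circ\pi_{k-1}\circ c$, because $\mu(c(v),\beta(c(v)))$ is by definition the local translation $m(\pi_{k-1}(c(v)))$. As $m$ is a morphism and $\pi_{k-1}\circ c$ is a cube of $\mathcal{F}_{k-1}(N)$, this composite is a cube of $\mathcal{T}^*=\mathcal{F}_{k-1}(\mathcal{T})$, hence lifts to some $\hat{c}=(\hat{c}_1,\hat{c}_2)\in C^k(\mathcal{T})$ with $\mu\circ\hat{c}=m\circ\pi_{k-1}\circ c$. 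By the definition of the $i$-th arrow space structure on $\mathcal{T}$ we have $(\hat{c}_1,\hat{c}_2)_i\in C^{k+i}(N)$, and restricting to the faces $\{0,1\}^k\times\{0^i\}$ and $\{0,1\}^k\times\{1^i\}$ shows $\hat{c}_1,\hat{c}_2\in C^k(N)$.

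The crux, and the step I expect to be the main obstacle, is to pass from the auxiliary cube $(\hat{c}_1,\hat{c}_2)_i$ to the desired cube $(c,\beta\circ c)_i$. Reading off $\gamma$ from $\mu\circ\hat{c}=m\circ\pi_{k-1}\circ c$ gives $\pi_{k-1}(\hat{c}_1(v))=\pi_{k-1}(c(v))$, so $a(v):=c(v)-\hat{c}_1(v)$ is a well-defined $A_k$-valued function; by the degree-$k$ bundle structure of $N$ (Theorem \ref{bundec}), $c$ and $\hat{c}_1$ having equal projection to $\mathcal{F}_{k-1}(N)$ forces $a\in C^k(\mathcal{D}_k(A_k))$. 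Because local translations commute with the $A_k$-action, $\beta(c(v))=m(\pi_{k-1}(c(v)))(\hat{c}_1(v)+a(v))=\hat{c}_2(v)+a(v)$. Hence $(c,\beta\circ c)_i=(\hat{c}_1,\hat{c}_2)_i+a'$, where $a'(v,w)=a(v)$ is the pullback of $a$ along the cube morphism projecting $\{0,1\}^{k+i}$ onto its first $k$ coordinates; the composition axiom gives $a'\in C^{k+i}(\mathcal{D}_k(A_k))$. Since adding an element of $A_k$ does not change the projection to $\mathcal{F}_{k-1}(N)$, the defining property of the degree-$k$ bundle (Theorem \ref{bundec}) yields $(\hat{c}_1,\hat{c}_2)_i+a'\in C^{k+i}(N)$, that is $(c,\beta\circ c)_i\in C^{k+i}(N)$. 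By Lemma \ref{transchar2}, $\beta\in\aff_i(N)$, completing the lift.
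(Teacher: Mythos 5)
Your proof is correct, and its skeleton coincides with the paper's: you define $\beta$ from the splitting exactly as the paper does (the paper calls your $m$ by the name $\gamma'$), reduce the translation property to Lemma \ref{transchar2}, and pass through a cube lift $\hat{c}\in C^k(\mathcal{T})$ of the $\mathcal{T}^*$-cube $m\circ\pi_{k-1}\circ c$. The only divergence is in the final step, which you correctly single out as the crux. The paper finishes in one stroke: the map $v\mapsto(c(v),\beta(c(v)))$ is pointwise $\sim_{k-1}$-equivalent in $\mathcal{T}$ to the cube $\hat{c}$ (both project to $m(\pi_{k-1}(c(v)))$ at each vertex), so Corollary \ref{sim2cor}, applied inside the nilspace $\mathcal{T}$ with the relation $\sim_{k-1}$, shows it is itself a cube of $\mathcal{T}$, which is precisely $(c,\beta\circ c)_i\in C^{k+i}(N)$. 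You instead make the discrepancy explicit: you write $a=c-\hat{c}_1$ as an $A_k$-valued function, deduce $a\in C^k(\mathcal{D}_k(A_k))$ from the bundle property of Theorem \ref{bundec}, pull it back to $a'\in C^{k+i}(\mathcal{D}_k(A_k))$ along the coordinate projection, and apply the bundle property once more to conclude. Both arguments are sound and rest on the same underlying mechanism (Corollary \ref{sim2cor} is what drives the bundle decomposition in the first place); the paper's finish is shorter, while yours unwinds the bookkeeping that the corollary encapsulates and has the mild virtue of exhibiting the exact $\mathcal{D}_k(A_k)$-cube by which the two lifts of $m\circ\pi_{k-1}\circ c$ differ, including the explicit use of the fact that local translations commute with the $A_k$-action, which the paper's one-line conclusion absorbs silently.
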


\begin{proof} Let $\gamma':\mathcal{F}_{k-1}(N)\rightarrow\mathcal{T}^*$ be a morphism such that $\gamma\circ\gamma'$ is the identity map. The element $\gamma'(\pi_{k-1}(x))$ in $\mathcal{T}^*$ represents a local translation from the $\sim_{k-1}$ class $F_1$ of $x$ to the class $\alpha(F_1)$. Let $\beta(x)$ denote the image of $x$ under this local translation.
We claim that the map $\beta$ is in $\aff_i(N)$.
Let $h:N\rightarrow N\times N$ be the map defined by $h(n)=(n,\beta(n))$.
According to lemma \ref{transchar2} it is enough to show that for every $c\in C^k(N)$ we have that $h\circ c$ is a cube in the $i$-th arrow space on $N\times N$. Since $\gamma'$ is a morphism we have that $\gamma'(\pi_{k-1}(c))$ is in $C^k(\mathcal{T}^*)$. By lemma \ref{sim2cor} we obtain that any lift of $\gamma(\pi_{k-1}(c))$ to $\mathcal{T}$ is in $C^k(\mathcal{T})$. The pairs $\{(c(v),\beta(c(v)))|v\in\{0,1\}^k\}$ form such a lift. This shows that $h\circ c$ in a cube in $\mathcal{T}$.
\end{proof}

The condition of lemma \ref{transext} holds for $\alpha$ if and only if $\mathcal{T}_0(\alpha,N)$ is a split extension.
A way of checking the condition is to show that the cocycle describing $\mathcal{T}_0(\alpha,N)$ as an extension of $\mathcal{F}_{k-1}$ by $A_k$ is a coboundary.

\subsection{Nilpotency}

Let $N$ be a $k$-step nilspace. In this part we investigate the properties of the groups $\aff_i(N)$.
The main idea is borrowed from the paper \cite{HKr} by Host and Kra.

\begin{lemma} We have that $[\aff_i(N),\aff_j(N)]\subseteq\aff_{i+j}(N)$.
\end{lemma}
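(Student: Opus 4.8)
The plan is to prove the stronger pointwise statement that a single commutator $\gamma=\alpha\beta\alpha^{-1}\beta^{-1}$ lies in $\aff_{i+j}(N)$ whenever $\alpha\in\aff_i(N)$ and $\beta\in\aff_j(N)$; since the previous lemma guarantees that $\aff_{i+j}(N)$ is a group, it then automatically contains the whole commutator subgroup $[\aff_i(N),\aff_j(N)]$ generated by such elements. Note first that $\aff_i(N),\aff_j(N)\subseteq\aff_1(N)$, and that these are groups, so $\alpha,\beta$ and their inverses $\alpha^{-1}\in\aff_i(N)$, $\beta^{-1}\in\aff_j(N)$ are all bijections of $N$ preserving the relevant faces. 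I would work directly with the face operations $\alpha^F$ introduced before Definition \ref{transdef}.

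Fix $n\ge i+j$ and split the last $i+j$ coordinates of $\{0,1\}^n$ into a block $B_1$ of size $j$ and a block $B_2$ of size $i$. Set $G_\alpha=\{v:v\equiv 1\text{ on }B_2\}$, a face of codimension $i$, and $G_\beta=\{v:v\equiv 1\text{ on }B_1\}$, a face of codimension $j$, so that $F:=G_\alpha\cap G_\beta$ is the codimension-$(i+j)$ face where $B_1\cup B_2$ is all ones. Because $\alpha\in\aff_i(N)$ and $\beta\in\aff_j(N)$, each of $\alpha^{G_\alpha}$ and $\beta^{G_\beta}$ sends cubes to cubes, and hence so do the two compositions $c_1=\beta^{G_\beta}\alpha^{G_\alpha}(c)$ and $c_2=\alpha^{G_\alpha}\beta^{G_\beta}(c)$ for any $c\in C^n(N)$.

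The heart of the argument is a short bookkeeping computation. The labellings $c_1$ and $c_2$ agree at every vertex outside $F$ (on $G_\alpha\setminus G_\beta$ both equal $\alpha(c(v))$, on $G_\beta\setminus G_\alpha$ both equal $\beta(c(v))$, and off $G_\alpha\cup G_\beta$ both equal $c(v)$), while on $F$ one has $c_1(v)=\beta\alpha(c(v))$ and $c_2(v)=\alpha\beta(c(v))$. Since $\gamma\bigl(\beta\alpha(c(v))\bigr)=\alpha\beta\alpha^{-1}\beta^{-1}\beta\alpha(c(v))=\alpha\beta(c(v))$, this says precisely that $\gamma^F(c_1)=c_2$, so $\gamma^F$ carries the cube $c_1$ to the cube $c_2$. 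To conclude that $\gamma^F$ preserves \emph{all} of $C^n(N)$, I use invertibility: given an arbitrary $d\in C^n(N)$, the cube $c:=(\alpha^{-1})^{G_\alpha}(\beta^{-1})^{G_\beta}(d)$ satisfies $\beta^{G_\beta}\alpha^{G_\alpha}(c)=d$, because the face operations for $\alpha$ and $\alpha^{-1}$ on the common face $G_\alpha$ cancel, and likewise for $\beta$ on $G_\beta$. Thus every cube is of the form $c_1$, and therefore $\gamma^F(d)\in C^n(N)$ for all $d$.

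Finally, to upgrade from the single face $F$ to every codimension-$(i+j)$ face, I would use the functoriality identity $(\gamma^F(c))\circ\sigma=\gamma^{\sigma^{-1}(F)}(c\circ\sigma)$ for a cube automorphism $\sigma$ of $\{0,1\}^n$, together with the composition axiom: any codimension-$(i+j)$ face is the image of $F$ under a permutation-and-flip automorphism, and conjugating by such a $\sigma$ transports the statement to that face. This yields $\gamma\in\aff_{i+j}(N)$. The genuinely delicate points, which I would check carefully rather than the geometry, are getting the commutator identity and the order of composition exactly right so that $\gamma^F(c_1)=c_2$ (rather than, say, needing $\gamma^{-1}$ or the opposite face), and the surjectivity step, which is precisely where the group property of $\aff_i(N)$ and $\aff_j(N)$ — hence the preceding lemma — is indispensable; the rest is routine symmetry and functoriality.
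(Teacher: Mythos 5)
Your proof is correct and takes essentially the same approach as the paper: the paper writes an arbitrary codimension-$(i+j)$ face as $F=F_1\cap F_2$ with $F_1,F_2$ of codimensions $i$ and $j$ and records the operator identity $[\alpha^{F_1},\beta^{F_2}]=[\alpha,\beta]^F$, which is exactly what your vertex-by-vertex computation $\gamma^F(c_1)=c_2$ plus the invertibility step (both using the group property of $\aff_i(N)$, $\aff_j(N)$ from the preceding lemma) unpacks. The only cosmetic difference is that the paper treats an arbitrary face directly, while you fix one coordinate-block face and transport to the others by cube automorphisms.
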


\begin{proof} Let $F$ be a face in $\{0,1\}^n$ of codimension $i+j$. Then $F=F_1\cap F_2$ where $F_1$ is a face of codimension $i$ and $F_2$ is a face of codimension $j$.
Assume that $\alpha_1\in\aff_i(N)$ and $\alpha_2\in\aff_j(N)$. Then $[\alpha_1^{F_1},\alpha_2^{F_2}]=[\alpha_1,\alpha_2]^F$. This implies that if $c\in C^n(N)$ then $[\alpha_1,\alpha_2]^F(c)\in C^n(N)$.
\end{proof}

\begin{corollary} The group $\aff(N)$ is $k$-nilpotent and $\{\aff_i(N)\}_{i=1}^{k+1}$ is a central series in it.
\end{corollary}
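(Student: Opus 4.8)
The plan is to check directly that the descending chain $\aff_1(N)\supseteq\aff_2(N)\supseteq\dots\supseteq\aff_{k+1}(N)$ satisfies the defining conditions of a central series, and then read off nilpotency. Three ingredients are already in hand: each $\aff_i(N)$ is a group (shown earlier for $k$-step nilspaces), the inclusions $\aff_i(N)\supseteq\aff_{i+1}(N)$ are noted right after Definition \ref{transdef}, and the commutator estimate $[\aff_i(N),\aff_j(N)]\subseteq\aff_{i+j}(N)$ is the immediately preceding lemma. Specializing the latter to $j=1$ gives $[\aff(N),\aff_i(N)]\subseteq\aff_{i+1}(N)$, which is exactly the central series condition. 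Since $\aff_{i+1}(N)\subseteq\aff_i(N)$, this inclusion also shows that each $\aff_i(N)$ is normal in $\aff(N)$: for $g\in\aff(N)$ and $h\in\aff_i(N)$ we have $ghg^{-1}=[g,h]\,h\in\aff_{i+1}(N)\,\aff_i(N)\subseteq\aff_i(N)$.

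The one genuinely new point, and the step I expect to be the main obstacle, is to prove that the chain actually reaches the trivial group, i.e. $\aff_{k+1}(N)=\{1\}$. Here I would use the unique closing property of $k$-step nilspaces. Let $\alpha\in\aff_{k+1}(N)$ and fix $x\in N$. The constant map $c\equiv x$ is in $C^{k+1}(N)$ (constant cubes are always cubes, by composition with the morphism collapsing $\{0,1\}^{k+1}$ to a point), and for $n=k+1$ a face of codimension $k+1$ is a single vertex $v$. By the definition of a height-$(k+1)$ translation, $\alpha^{\{v\}}(c)$ is again in $C^{k+1}(N)$; this cube agrees with $c$ everywhere except at $v$, where its value is $\alpha(x)$. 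Deleting $v$ leaves the constant corner with value $x$, and by the symmetry of cubes under $\mathrm{aut}(\{0,1\}^{k+1})$ we may regard $v$ as the vertex $1^{k+1}$. The unique closing property then says this corner has a unique completion, so the two completions $c$ and $\alpha^{\{v\}}(c)$ must coincide; hence $\alpha(x)=x$. As $x$ was arbitrary, $\alpha$ is the identity, and so $\aff_{k+1}(N)=\{1\}$.

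Finally, having a central series $\aff(N)=\aff_1(N)\supseteq\dots\supseteq\aff_{k+1}(N)=\{1\}$ with $[\aff(N),\aff_i(N)]\subseteq\aff_{i+1}(N)$, a routine induction bounds the lower central series $\gamma_1=\aff(N)$, $\gamma_{i+1}=[\gamma_i,\aff(N)]$ by the given series: assuming $\gamma_i\subseteq\aff_i(N)$, one has $\gamma_{i+1}=[\gamma_i,\aff(N)]\subseteq[\aff_i(N),\aff(N)]\subseteq\aff_{i+1}(N)$. In particular $\gamma_{k+1}(\aff(N))\subseteq\aff_{k+1}(N)=\{1\}$, which is precisely the assertion that $\aff(N)$ is $k$-nilpotent in the sense of the opening definition. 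This completes the plan.
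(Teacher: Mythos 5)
Your proof is correct and takes essentially the same route as the paper: the paper states this corollary with no proof at all, treating it as an immediate consequence of the preceding lemma $[\aff_i(N),\aff_j(N)]\subseteq\aff_{i+j}(N)$ together with the chain of inclusions noted after Definition \ref{transdef}. The one substantive detail you supply that the paper leaves implicit --- that $\aff_{k+1}(N)=\{1\}$, proved by modifying a constant cube at a single vertex (a codimension-$(k+1)$ face of $\{0,1\}^{k+1}$) and invoking the unique closing property of $k$-step nilspaces --- is exactly the missing ingredient needed for $k$-nilpotency, and your argument for it is correct.
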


\begin{lemma} if $k\geq i$ then the action of $A_k$ is in $\aff_i(N)$.
\end{lemma}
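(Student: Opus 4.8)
The plan is to reduce the statement to a purely combinatorial fact about the degree-$k$ structure $\mathcal{D}_k(A_k)$ via the bundle decomposition. Write $\alpha$ for the map $x\mapsto x+a$ given by the action of a fixed element $a\in A_k$. Fix $n\geq i$, an $(n-i)$-dimensional face $F\subseteq\{0,1\}^n$ and a cube $c\in C^n(N)$; by the symmetries of cubes it suffices to treat the standard face cut out by fixing $i$ coordinates to prescribed values. The goal is to show $\alpha^F(c)\in C^n(N)$.

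First I would observe that the action of $A_k$ moves points only within their $\sim_{k-1}$ classes (this is exactly how the action is produced in the proof of Theorem \ref{bundec}), so $\pi_{k-1}\circ\alpha^F(c)=\pi_{k-1}\circ c$. Consequently $\alpha^F(c)$ and $c$ lie pointwise in the same fibers, and their difference $d:=\alpha^F(c)-c$ is a well-defined map $\{0,1\}^n\to A_k$, equal to $a$ on $F$ and to $0$ off $F$. By the defining property of a degree-$k$ bundle (Theorem \ref{bundec}, applied with the index $k-1$, so that $T_{k}=N$ and the relevant structure group is $A_k$), a map with the same $\pi_{k-1}$-projection as $c$ is a cube in $N$ precisely when its difference with $c$ lies in $C^n(\mathcal{D}_k(A_k))$. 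Thus the whole problem collapses to showing that $d=a\cdot\mathbf{1}_F$ belongs to $C^n(\mathcal{D}_k(A_k))$.

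The remaining step, which is the real content, is the combinatorial verification that $a\cdot\mathbf{1}_F\in C^n(\mathcal{D}_k(A_k))$ whenever $F$ has codimension $i\leq k$. By definition of $\mathcal{D}_k(A_k)$ I must check that $w(d\circ\phi)=0$ for every cube morphism $\phi:\{0,1\}^{k+1}\to\{0,1\}^n$, with $w$ the alternating weight of \eqref{weight}. Now $d\circ\phi$ takes the value $a$ exactly on $\phi^{-1}(F)$ and $0$ elsewhere, so $w(d\circ\phi)=a\sum_{u\in\phi^{-1}(F)}(-1)^{h(u)}$. Since $F$ is carved out by fixing $i$ coordinates and each coordinate of $\phi$ is one of $0,1,u_m,1-u_m$, the preimage $\phi^{-1}(F)$ is either empty or a subface of $\{0,1\}^{k+1}$ obtained by fixing at most $i$ of the coordinates $u_m$. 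As $i\leq k<k+1$, such a subface has dimension at least $(k+1)-i\geq 1$, and the alternating weight over any positive-dimensional subface factors as a product containing a vanishing factor $\sum_{u_m\in\{0,1\}}(-1)^{u_m}=0$ coming from a free coordinate; hence the sum is $0$.

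I expect the main obstacle to be the bookkeeping in this last step: one must check carefully that $\phi^{-1}(F)$ really is a coordinate subface, handling the constant cases where a coordinate of $\phi$ is $0$ or $1$ (giving either a vacuous or an infeasible constraint) and the possibility that two constraints on the same $u_m$ conflict and force $\phi^{-1}(F)=\varnothing$; and one must confirm that fixing $i$ coordinates of $F$ fixes at most $i$ of the coordinates $u_m$, so that the strict inequality $i<k+1$ always leaves a free coordinate. Everything else is the formal reduction through Theorem \ref{bundec}. Once $a\cdot\mathbf{1}_F\in C^n(\mathcal{D}_k(A_k))$ is established, we get $\alpha^F(c)=c+d\in C^n(N)$ for every $n\geq i$, every codimension-$i$ face $F$, and every cube $c$, which is exactly the assertion that the action of $a$ lies in $\aff_i(N)$.
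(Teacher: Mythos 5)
Your proof is correct and is exactly the argument the paper intends: the paper's entire proof is the one-line citation ``It follows directly from theorem \ref{bundec},'' and your reduction---using the degree-$k$ bundle property to write $\alpha^F(c)=c+a\cdot\mathbf{1}_F$ and then checking that $a\cdot\mathbf{1}_F\in C^n(\mathcal{D}_k(A_k))$ because every cube morphism $\phi:\{0,1\}^{k+1}\to\{0,1\}^n$ pulls the codimension-$i$ face $F$ back to either the empty set or a subface of dimension at least $k+1-i\geq 1$, over which the alternating weight vanishes---is precisely the content that citation compresses. Nothing is missing; you have simply supplied the details the paper leaves to the reader.
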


\begin{proof} It follows directly from theorem \ref{bundec}.
\end{proof}

\begin{definition} We say that two cubes $c_1,c_2\in C^n(N)$ are translation equivalent if $c_2$ can be obtained from $c_1$ be a sequence applications of operations $\alpha^F$ where $\alpha\in\aff_i(N)$ and $F$ is a face in $\{0,1\}^n$ of codimension $i$. Note that the number $i$ can be different in the above operations.
A cube is called translation cube if it is translation equivalent with a constant cube.
\end{definition}

\section{Compact nilspaces}

In this part of the paper we study compact topological versions of nilspaces.

\begin{definition} A nilspace $N$ is called {\bf compact} if it is a second countable, compact, Hausdorff topological space and $C^n(N)$ is a closed subset of $N^{\{0,1\}^n}$ for every $n\in\mathbb{N}$. 
\end{definition}

An important consequence of compactness is that $\mathcal{F}_k(N)$ is compact for every $k\in\mathcal{N}$. Furthermore all the abelian groups occurring in theorem \ref{bundec} are compact abelian groups. This is proved in the following section.

When checking the compactness of a $k$-step nilspace the next lemma is useful.

\begin{lemma}\label{kcompchar} A $k$-step nilspace $N$ is compact if and only if $N$ is a compact, Hausdorff, second countable topological space and $C^{k+1}(N)$ is a closed subset of $N^{\{0,1\}^{k+1}}$. 
\end{lemma}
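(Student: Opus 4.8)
The forward direction is immediate: if $N$ is a compact nilspace in the full sense, then by definition it is compact, Hausdorff, second countable, and in particular $C^{k+1}(N)$ is closed. So the whole content is in the converse, where we assume $N$ is compact Hausdorff second countable with $C^{k+1}(N)$ closed and must deduce that $C^n(N)$ is closed for \emph{every} $n$. The plan is to split into two regimes, $n \le k+1$ and $n \ge k+2$, and in each case exhibit $C^n(N)$ as the preimage of the (closed) set $C^{k+1}(N)$ under a continuous map, so that closedness follows for free.

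For $n \le k+1$ I would use the composition axiom together with a projection and a section. Let $\pi:\{0,1\}^{k+1}\to\{0,1\}^n$ be the projection onto the first $n$ coordinates and $\iota:\{0,1\}^n\to\{0,1\}^{k+1}$ the inclusion $v\mapsto(v,0^{k+1-n})$; both are cube morphisms, and $\pi\circ\iota=\mathrm{id}$. If $f\in C^n(N)$ then $f\circ\pi\in C^{k+1}(N)$ by the composition axiom; conversely, if $f\circ\pi\in C^{k+1}(N)$ then $(f\circ\pi)\circ\iota=f\in C^n(N)$, again by composition. Hence
$$C^n(N)=\{\,f\in N^{\{0,1\}^n}\mid f\circ\pi\in C^{k+1}(N)\,\}.$$
The pullback map $f\mapsto f\circ\pi$ from $N^{\{0,1\}^n}$ to $N^{\{0,1\}^{k+1}}$ is continuous, since its component at $w\in\{0,1\}^{k+1}$ is just the coordinate projection to $f(\pi(w))$. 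Therefore $C^n(N)$ is the preimage of the closed set $C^{k+1}(N)$ under a continuous map, hence closed. (The degenerate cases $n=0,1$ are subsumed here.)

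For $n \ge k+2$ I would invoke Lemma \ref{cubechar}, which is exactly the tool that uses the $k$-step hypothesis. It characterizes $C^n(N)$ as the set of functions whose restrictions to the (finitely many) $(k+1)$-dimensional faces of $\{0,1\}^n$ having a point with $0$ in the last coordinate all lie in $C^{k+1}(N)$. Each such restriction map $N^{\{0,1\}^n}\to N^{\{0,1\}^{k+1}}$ is a continuous coordinate projection, so the condition for a single face cuts out a closed set, and $C^n(N)$ is the finite intersection of these closed sets, hence closed.

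Combining the two regimes gives that $C^n(N)$ is closed for all $n$, which together with the assumed topological hypotheses is precisely the statement that $N$ is a compact nilspace. The only genuinely nontrivial ingredient is Lemma \ref{cubechar} in the high-dimensional regime; the low-dimensional regime is a soft consequence of functoriality. I do not expect a serious obstacle, the main point being simply to recognize that the $k$-step structure lets one test cubes of all dimensions against the single closed set $C^{k+1}(N)$ through continuous maps.
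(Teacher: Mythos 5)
Your proposal is correct, and in the substantive regime $n\ge k+2$ it coincides with the paper's proof: both invoke Lemma \ref{cubechar} to write $C^n(N)$ as a finite intersection of sets of the form $\{f : f|_F \in C^{k+1}(N)\}$, each closed because face restriction is a continuous coordinate projection. The only divergence is in the easy regime $n<k+1$ (the case $n=k+1$ being vacuous). The paper handles it in one line by noting that $C^n(N)$ is the \emph{image} of $C^{k+1}(N)$ under the restriction map to a face, hence closed as the continuous image of a compact set in a Hausdorff space; this implicitly uses that every $n$-cube extends to a $(k+1)$-cube (so that the restriction map is onto $C^n(N)$) and that $C^{k+1}(N)$ is compact, not merely closed. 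You instead exhibit $C^n(N)$ as the \emph{preimage} of $C^{k+1}(N)$ under the continuous pullback $f\mapsto f\circ\pi$ along the coordinate projection $\pi:\{0,1\}^{k+1}\to\{0,1\}^n$, using the section $\iota$ with $\pi\circ\iota=\mathrm{id}$ to get the reverse inclusion. Your variant is marginally more robust: it needs only closedness of $C^{k+1}(N)$ and pure functoriality (the composition axiom), with no appeal to compactness or to an extension statement, whereas the paper's is shorter but leans on those two facts. Either way the lemma follows.
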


\begin{proof} First assume that $n>k+1$. For every $k+1$ dimensional face $F$ let $Q_F$ denote the set of functions $f:\{0,1\}^n\rightarrow N$ whose restriction to $F$ is a cube. By lemma \ref{cubechar} we have that $C^n(N)$ is the intersection of the sets $Q_F$ where $F$ runs through all the $k+1$ dimension faces of $\{0,1\}^n$.
If $n<k+1$ then $C^n(N)$ is the projection of $C^{k+1}(N)$ to $N^{\{0,1\}^n}$ and thus it is closed. 
\end{proof}

\subsection{Continuity of the bundle decomposition}\label{contbundec}

\medskip

In the previous chapters we associated a number of structures to an abstract nilspace. Here we study compactness and continuity in these structures.
By an abuse of language we will use the term \emph{compact space} for a compact, Hausdorff, second countable topological space. We can immediately observe that if $N$ is a compact nilspace then the arrow space, the higher degree arrow spaces and $\partial_x N$ are all naturally compact. Similarly if $A$ is a compact abelian group then $\mathcal{D}_k(A)$ is a compact nilspace since it is defined by linear equations.
Other constructions, especially those involving taking quotients by equivalence relations, will require a more subtle treatment. 
For those we will use the following elementary facts.

\begin{enumerate}
\item If $f:X\rightarrow Y$ is a map between compact spaces. Then $f$ is continuous if and only if its graph is closed. 
\item Compact spaces are metrizable.
\item If $X$ is a compact space an $\sim$ is a closed equivalence relation on $X$ (i.e. $\{(x,y)|x\sim y\}\subset X\times X$ is closed) then $X/\sim$ with the quotient topology is also compact.  
\end{enumerate}

As a first demonstration we show the following.

\begin{lemma}\label{topkfolderg} If $N$ is a $k$-step $k$-fold ergodic compact nilspace then $N$ is isomorphic as a compact nilspace to $\mathcal{D}_k(A)$ for some compact abelian group $A$. 
\end{lemma}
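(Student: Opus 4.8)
The plan is to upgrade the abstract isomorphism of Lemma \ref{kabelian} to a topological one rather than redo it. Lemma \ref{kabelian} already produces an abelian group $A$ whose underlying set \emph{is} that of $N$: it arises as the one-step nilspace $\partial_e^{k-1}N$ for a fixed $e\in N$, which lives on the same point set, and the lemma shows that as abstract cubespaces $N=\mathcal{D}_k(A)$ with the identity map as the isomorphism. I transport the topology of $N$ to $A$ along this identity. Then $A$ is automatically compact, Hausdorff and second countable, and $C^n(\mathcal{D}_k(A))=C^n(N)$ is closed for every $n$; so once we know that $A$ is a \emph{topological} abelian group, $\mathcal{D}_k(A)$ is a compact nilspace (it is cut out by linear equations) and the identity is an isomorphism of compact nilspaces with continuous inverse. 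Thus the whole content of the lemma is to prove that addition and negation on $A$ are continuous.

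The key tool is a closed-graph argument for unique completions. Since $N$ is $k$-fold ergodic, every function $\{0,1\}^{k+1}\setminus\{1^{k+1}\}\to N$ restricts to a cube on each $k$-dimensional face containing $0^{k+1}$ (all functions on $\{0,1\}^k$ are cubes), hence is a corner; and since $N$ is $k$-step it has a unique completion to an element of $C^{k+1}(N)$. This defines a completion map $\kappa:N^{\{0,1\}^{k+1}\setminus\{1^{k+1}\}}\to C^{k+1}(N)$. I claim $\kappa$ is continuous: its graph $\{(f,g)\mid g\in C^{k+1}(N),\ g|_{\{0,1\}^{k+1}\setminus\{1^{k+1}\}}=f\}$ is closed, because $C^{k+1}(N)$ is closed and restriction is continuous, and as this is a map between compact spaces the first of the elementary facts recalled above yields continuity. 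In particular the completed value $g(1^{k+1})$ depends continuously on the corner $f$.

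It then remains to express the group operations through $\kappa$. Unfolding the definition of $A=\partial_e^{k-1}N$, a two-dimensional cube in $\partial_e^{k-1}N$ is the same as a $(k+1)$-dimensional cube $F$ in $N$ that carries the two-dimensional cube on the face $\{0,1\}^2\times\{1^{k-1}\}$ and equals $e$ at every vertex $(v,w)$ with $w\neq 1^{k-1}$. Feeding in the one-step addition corner $f(0,0)=e,\ f(1,0)=x,\ f(0,1)=y$ of Lemma \ref{onestep} shows that $x+y$ is exactly the value at $1^{k+1}$ of $\kappa$ applied to the corner that is $e$ at every vertex except $x$ at $(1,0,1^{k-1})$ and $y$ at $(0,1,1^{k-1})$. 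The assignment sending $(x,y)$ to this corner is visibly continuous $N\times N\to N^{\{0,1\}^{k+1}\setminus\{1^{k+1}\}}$, so composing with $\kappa$ and evaluating at $1^{k+1}$ proves addition continuous. The same unfolding applied to the negation corner $f(0,0)=x,\ f(1,0)=e,\ f(0,1)=e$ shows $x\mapsto -x$ is continuous. Hence $A$ is a compact abelian group and the proof is complete.

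The main obstacle is the continuity of $\kappa$, but once it is phrased as a closed-graph statement between compact spaces it is immediate from the standing facts; the only genuinely fiddly part is the bookkeeping identifying the one-step operations on $\partial_e^{k-1}N$ with completions of the explicit $(k+1)$-dimensional corners in $N$ displayed above.
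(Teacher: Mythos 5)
Your proposal is correct and takes essentially the same approach as the paper: the paper likewise reduces everything to continuity of the group operations on $A=\partial_e^{k-1}N$ (recovered as in Lemma \ref{onestep}) and proves this by the closed-graph criterion for maps between compact spaces, using that the relevant cube sets are closed. Your completion map $\kappa$ on $(k+1)$-corners is an explicit unfolding of the paper's shorter argument, which notes that $\partial_e^{k-1}N$ inherits a compact $1$-step nilspace structure and that the graph of addition is the closed set $\{(x_1,x_2,x_3,x_4)\in C^2~|~x_1=e\}$ there.
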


\begin{proof} We start with the one step case. We use the notation of lemma \ref{onestep}. We need to ceck the continuity of addition and inverse.
The graph of addition can be written as 
$$\{(x_1,x_2,x_3,x_4)\in C^2(N)~|~x_1=e\}$$
and the graph of the inverse is similarly defined by the equation $x_1=x_4=e$.
These are closed sets so the operations are continuous.

To see the general case recall that the group structure on $N$ in the abstract case was recovered from the one step nilspace $\partial_e^{k-1}(N)$ which is now compact.
\end{proof}

\begin{lemma} Let $N$ be a compact nilspace and $k\in\mathbb{N}$. Then the factor $\mathcal{F}_k(N)$ with the quotient topology is a compact nilspace.
\end{lemma}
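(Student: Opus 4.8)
The plan is to verify the two topological requirements in the definition of a compact nilspace, since we already know from the congruence lemma above that $\mathcal{F}_k(N)=N/\sim_k$ satisfies the nilspace axioms with its induced cubespace structure. Concretely, I must check that (i) $N/\sim_k$ with the quotient topology is compact, Hausdorff and second countable, and (ii) $C^n(\mathcal{F}_k(N))$ is a closed subset of $\mathcal{F}_k(N)^{\{0,1\}^n}$ for every $n$.

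For (i) the key step is to show that $\sim_k$ is a \emph{closed} equivalence relation, i.e. that $\{(x,y)\mid x\sim_k y\}$ is closed in $N\times N$; the conclusion then follows at once from the elementary fact that a compact space modulo a closed equivalence relation is again compact (retaining the Hausdorff and second countability properties). To prove closedness I would use the characterization in Lemma \ref{sim1}. Define the map $\Phi:N\times N\to N^{\{0,1\}^{k+1}}$ sending $(x,y)$ to the function $c_{x,y}$ with $c_{x,y}(0^{k+1})=y$ and $c_{x,y}(v)=x$ for all $v\neq 0^{k+1}$. Each coordinate of $\Phi$ is one of the two projections $N\times N\to N$, so $\Phi$ is continuous. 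Lemma \ref{sim1} says precisely that $x\sim_k y$ if and only if $c_{x,y}\in C^{k+1}(N)$, so $\{(x,y)\mid x\sim_k y\}=\Phi^{-1}(C^{k+1}(N))$. Since $\Phi$ is continuous and $C^{k+1}(N)$ is closed by the compactness hypothesis on $N$, this preimage is closed, as required.

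For (ii) recall that by definition of the induced cubespace structure the cubes of the factor are the images of the cubes of $N$ under the projection $\pi:N\to\mathcal{F}_k(N)$, that is, $C^n(\mathcal{F}_k(N))=\pi_\ast(C^n(N))$, where $\pi_\ast:N^{\{0,1\}^n}\to\mathcal{F}_k(N)^{\{0,1\}^n}$ applies $\pi$ coordinatewise. The map $\pi_\ast$ is continuous because the quotient map $\pi$ is, and $C^n(N)$ is a closed subset of the compact space $N^{\{0,1\}^n}$, hence itself compact. Therefore $\pi_\ast(C^n(N))$ is compact, and since $\mathcal{F}_k(N)^{\{0,1\}^n}$ is Hausdorff (a finite product of Hausdorff spaces) it is closed. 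This establishes (ii) for every $n$.

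The only genuinely non-formal step is the closedness of $\sim_k$, and even that reduces to recognizing that Lemma \ref{sim1} exhibits the relation as the pullback of the closed set $C^{k+1}(N)$ along the continuous map $\Phi$; everything else is the standard interplay of compactness, continuity and the Hausdorff property. I therefore expect no real obstacle, provided the elementary facts listed just above this lemma — in particular that a closed equivalence relation on a compact (compact, Hausdorff, second countable) space yields a compact quotient — are taken as given.
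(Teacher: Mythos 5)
Your proof is correct and follows essentially the same route as the paper: the key point in both is that $\sim_k$ is a closed equivalence relation, established exactly as you do by viewing the relation as the preimage of the closed set $C^{k+1}(N)$ under the continuous map $(x,y)\mapsto c_{x,y}$ coming from Lemma \ref{sim1}, and then invoking the elementary fact about quotients by closed equivalence relations. Your part (ii), showing $C^n(\mathcal{F}_k(N))=\pi_\ast(C^n(N))$ is closed as a compact subset of a Hausdorff space, is a detail the paper leaves implicit, and it is a worthwhile addition.
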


\begin{proof} The proof boils down to checking that $\sim_k$ is a closed equivalence relation. This is true because the set of cubes satisfying the condition in lemma \ref{sim1} is closed.
\end{proof}

In the remaining part of this chapter we explain how to turn abstract nilspace theory into compact nilspace theory.
We start with the definition of a compact abelian bundle. This is an abstract $A$-bundle $T$ with base $S$ in which $A$, $T$ and $S$ are compact spaces such that $A$ is a compact group and the action $\alpha:A\times T\rightarrow T$ is continuous. Furthermore $S$ has the quotient topology.
Notice that we are not assuming that the bundle is locally trivial but if $A$ is finite dimensional then this automatically holds \cite{Gl}.
From this definition it is clear how to define a $k$-fold compact abelian bundle.
The definition of a compact degree-$k$ bundle is as in the abstract case but with a compact $k$-fold abelian bundle that is a compact cubespace for the same topology.

\begin{lemma}\label{topbundec} A compact cubespace $N$ is a compact degree-$k$ bundle if and only if $N$ is a $k$-step compact nilspace. 
\end{lemma}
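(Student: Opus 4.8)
The plan is to prove the lemma by upgrading the purely algebraic Theorem \ref{bundec} to the compact setting, the point being that when $N$ is compact the structure groups and bundle maps produced abstractly are automatically compact and continuous. The direction ``compact degree-$k$ bundle $\Rightarrow$ $k$-step compact nilspace'' is the easy one, so I would dispatch it first and concentrate on the converse.

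For the easy direction, suppose $N$ is a compact degree-$k$ bundle. As a compact cubespace its cube sets $C^n(N)$ are closed, and by Theorem \ref{bundec} (forgetting the topology) $N$ is a $k$-step nilspace; together with the standing hypothesis that $N$ is a compact, Hausdorff, second countable space this is exactly the definition of a $k$-step compact nilspace. Should one prefer not to read closedness of all $C^n$ into ``compact cubespace'', one reduces via Lemma \ref{kcompchar} to the closedness of $C^{k+1}(N)$, which follows by induction on the layers from the continuity of the projections $\pi_i$ and the fact that $C^n(\mathcal{D}_{i}(A_i))$ is cut out by the closed linear equations $w(f\circ\phi)=0$.

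For the converse I would induct on $k$, the case $k=0$ being trivial. Given a $k$-step compact nilspace $N$, the factor $\mathcal{F}_{k-1}(N)$ is again a compact nilspace, and being $(k-1)$-step it is a compact degree-$(k-1)$ bundle by the inductive hypothesis; this installs the lower factors $T_i=\mathcal{F}_i(N)$ and structure groups $A_1,\dots,A_{k-1}$ with their compact topologies and continuous actions. It remains to realise the top layer $\pi_{k-1}\colon N\to\mathcal{F}_{k-1}(N)$ as a compact $A_k$-bundle. Each fibre of $\pi_{k-1}$ is a $\sim_{k-1}$ class, which is closed (hence compact) in $N$ and is a $k$-step $k$-fold ergodic compact nilspace, so by Lemma \ref{topkfolderg} it is isomorphic to $\mathcal{D}_k(A_F)$ for a compact abelian group $A_F$. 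To get a single compact group acting globally I would recover $A_k$ as in the proof of Theorem \ref{bundec}: set $M=\{(x,y)\in N^2 : x\sim_{k-1}y\}$ and let $\sim$ be the relation on $M$ used there. Since $\sim_{k-1}$ is a closed relation, $M$ is closed in $N^2$ and hence compact; since $\sim$ is defined by membership of an explicit function in $C^{k+1}(N)$, it is a closed equivalence relation, so $A_k=M/\!\sim$ is compact. The group operations are induced from the closed partial composition $(x_1,x_2),(x_2,x_3)\mapsto(x_1,x_3)$ and the flip $(x,y)\mapsto(y,x)$, whose graphs are closed, so $A_k$ is a compact abelian group. The action $A_k\times N\to N$, written $a+x$, has closed graph because ``$z=x+a$'' is the condition $(x,z)\in M$ together with $[(x,z)]_{\sim}=a$, i.e. the graph is the pullback of the (closed) graph of the quotient map $M\to A_k$; by the closed-graph criterion the action is continuous. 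Finally $\mathcal{F}_{k-1}(N)$ carries the quotient topology because $\pi_{k-1}$ is a continuous surjection between compact Hausdorff spaces and is therefore a closed quotient map.

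The heart of the matter, and the step I expect to fight with, is precisely this passage from fibrewise to global data: Lemma \ref{topkfolderg} only produces a compact group $A_F$ on each fibre separately, and one must check that the abstract identifications of the various $A_F$ into one group $A_k$ coming from Theorem \ref{bundec} are topologically coherent and that the resulting action is jointly (not merely fibrewise) continuous. The device that makes this work is the realisation $A_k=M/\!\sim$ together with the closed-graph criterion for maps between compact spaces: every relevant map is forced to be continuous once one observes that $\sim_{k-1}$ and the defining cube relation $c\in C^{k+1}(N)$ are closed conditions. I would also note that no local triviality of the bundle need be verified, since the definition of a compact abelian bundle does not demand it, and that second countability of all the pieces is inherited from that of $N$.
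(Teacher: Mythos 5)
Your proof is correct, but the way you handle the decisive step differs genuinely from the paper's. Both arguments dispose of the easy direction via Theorem \ref{bundec}, induct on $k$ so that only the top structure group $A_k$ needs attention, and use closedness of the $\sim_{k-1}$ classes together with Lemma \ref{topkfolderg} for the fibrewise structure. But to pass from fibrewise to global data the paper fixes one class $F$, takes the compact topology on $A_k$ furnished by the isomorphism $F\cong\mathcal{D}_k(A_k)$ of Lemma \ref{topkfolderg}, and then proves joint continuity of the action by identifying its graph $\{(a,y,y+a)\}$ with (essentially) the closed set $\{(u,v)\in N'\times N'\ :\ u\sim_{k-1}v,\ u\in\{e\}\times N\}$ in the square of the arrow space $N'$, with $e\in F$ fixed; the topological coherence of the identifications between different fibers is thus carried by $\sim_{k-1}$ on $N'$. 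You instead topologize $A_k$ intrinsically as the quotient $M/\!\sim$ of the compact pair space $M=\{(x,y):x\sim_{k-1}y\}$ by the relation $\sim$ from the proof of Theorem \ref{bundec}, which is closed because it is a membership condition in the closed set $C^{k+1}(N)$. This buys canonicity (no choice of $F$ or base point $e$, so coherence of the fiber identifications is automatic) and makes continuity of the action nearly formal, since its graph is just the transpose of the graph of the quotient map $q:M\to A_k$, closed by the closed-graph criterion. What it costs is the verification that $M/\!\sim$ is a compact Hausdorff \emph{topological group}: your one-line claim that the operations have closed graphs is correct but deserves expansion --- for instance, the graph of addition in $A_k^3$ is the image, under the closed map $M^3\to A_k^3$, of the set of triples of pairs admitting composable representatives, and that set is closed (hence compact) in $M^3\times N^3$ precisely because $\sim$ and $\sim_{k-1}$ are closed, so its projection and image are closed. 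Finally, the paper's proof also establishes uniqueness of the compact topology on $A_k$ (it is homeomorphic to the induced topology on any orbit); your construction does not address uniqueness, but the lemma as stated does not require it.
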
  

\begin{proof} It is clear that a compact degree-$k$ bundle is a $k$-step compact nilspace since as it was shown in the proof of theorem \ref{bundec} it satisfies the nilspace axioms. 
For the other direction we need to verify that the structure groups $\{A_i\}_{i=1}^k$ can be (uniquely) given the structure  of compact topological groups such that the action of $A_i$ on $\mathcal{F}_i(N)$ is continuous.   
Using induction on $k$ it is enough to check this statement for $A_k$.
Let $F$ be a $\sim_{k-1}$ class of $N$. We learned from the proof of theorem \ref{bundec} that the cubic structure restricted to $F$ is a $k$-fold ergodic $k$-step nilspace. Lemma \ref{sim1} implies that $F$ is closed and so by lemma \ref{topkfolderg} we obtain that there is a compact abelian group structure on $A_k$ such that $F$ is isomorphic to $\mathcal{D}_k(A_k)$ as a compact nilspace. The continuity of the action of $A_k$ is equivalent with the fact that its graph $\{(a,y,y+a)~|~a\in A,y\in N\}$ is closed in $A\times N^2$.  
On the other hand this set can be described (as seen in the proof of theorem \ref{bundec}) using the arrow space $N'$ of $N$; it is essentially the set $\{(u,v) \in N' \times N' ~|~ u \sim_{k-1} v, u \in \{e\} \times N \subset N' \}$ where $e \in F$ is fixed. Note that $u \sim_{k-1} v$ implies the first coordinate of $v$ lies in $F$, and thus can be written as $e+a$ for some $a\in A$. Then, as shown in the proof of theorem \ref{bundec}, the second coordinates of $u$ and $v$ are of the form $y$ and $y+a$.
Now that we know that the action is continuous, the uniqueness of the topology on $A_k$ follows since it is homeomorphic to the induced topology on any orbit of the action.
\end{proof}

The notions of sub-bundle, bundle-morphism, totally surjective bundle morphism, and kernel from chapter \ref{subbundchap} can be easily transfered to the setting of compact $k$-fold abelian bundles with the following additions. First of all we require that all the maps used in the definitions are continuous. To see that the kernel $K$ of a bundle morphism is compact observe that $K_i$ is a closed subset of $T_i\times T_k'$.

We continue with lemma \ref{restbund}. In the topological version we assume that $N$ is a compact nilspace and $P$ is a finite cubespace. Note that the restricted homomorphism set $\Hom_f(P,N)$ is a closed subset of $N^P$ and thus it is compact. To see that $\Hom_f(P,N)$ is a $k$-fold bundle in the topological sense we only need to observe that $\Hom_{X\rightarrow 0}(P,\mathcal{D}_i(A_i))$ is a closed subgroup in $A_i^P$. 

\subsection{Haar measure on abelian bundles and nilspaces}\label{haar}

Compact $k$-step nilspaces are generalizations of compact abelian groups. It will be important to generalize the normalized Haar measure to them. Recall that the normalized Haar measure is a shift invariant Borel probability measure. Such measures always exist on compact groups and they are unique.

First we define the Haar measure for compact abelian bundles. Let $T$ be a compact $A$-bundle  over a space $S$ and action $\alpha:A\times T\rightarrow T$.  Assume that $S$ has a Borel probability measure $\mu_S$. Then we introduce the extension $\mu$ of $\mu_S$ as the unique Borel probability measure on $T$ which is $A$ invariant.
The measure $\mu$ can be defined through the property that
\begin{equation}\label{mesdef}
\mu(H)=\int_{s\in S}\mu_A(\pi_S^{-1}(s)\cap H)~d\mu
\end{equation}
where $H$ is a Borel set of $T$, $\pi$ is the projection to $S$ and $\mu_A$ is the Haar measure on $A$ and any fiber of $\pi$.

We define the Haar measure on a compact $k$-fold abelian bundle iteratively. If it is already defined for $k-1$ fold bundles then we use (\ref{mesdef}) to extend it from the factor $T_{k-1}$ to $T_k$.
We use theorem \ref{bundec} to define (normalized) Haar measures for $k$-step nilspaces.

By abusing the notation we will always denote the Haar measure by $\mu$. Since we never define two different measure on one structure it will not cause any problem.

The following fact is well known for compact abelian groups.

\begin{lemma}\label{abpres} Surjective continuous (affine) homomorphisms between compact abelian groups are measure preserving.
\end{lemma}

We will need a generalization of this fact for $k$-fold compact abelian bundles.

\begin{lemma}\label{bundpres} Let $\phi:T\rightarrow T'$ be a totally surjective continuous map between two compact $k$-fold abelian bundles. Then $\phi$ preserves the Haar measure. This means that for an arbitrary Borel set $H\subset T'$ we have $\mu(H)=\mu(\phi^{-1}(H))$.
\end{lemma}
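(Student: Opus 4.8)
The plan is to induct on $k$ using the iterated description of the Haar measure supplied by equation~(\ref{mesdef}). For $k=0$ both bundles are one-point spaces carrying the point mass, so there is nothing to prove. Assume the result for $(k-1)$-fold bundles and regard $\phi$ as a totally surjective continuous bundle morphism with structure morphisms $\alpha_1,\dots,\alpha_k$. By the first axiom of Definition~\ref{bundmorphdef}, $\phi$ induces a map $\phi_{k-1}:T_{k-1}\to T'_{k-1}$, which is again a totally surjective continuous bundle morphism (its structure morphisms are $\alpha_1,\dots,\alpha_{k-1}$, all surjective). By the induction hypothesis $\phi_{k-1}$ preserves the Haar measure, i.e. $(\phi_{k-1})_*\mu_{T_{k-1}}=\mu_{T'_{k-1}}$.

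The heart of the argument is the behaviour of $\phi$ on a single fiber. Fix $s\in T_{k-1}$. By the first axiom $\phi$ carries the fiber $\pi_{k-1}^{-1}(s)$ into the fiber $(\pi'_{k-1})^{-1}(\phi_{k-1}(s))$. Choosing a base point $x_0$ in the source fiber identifies it with $A_k$ as an $A_k$-torsor, and $\phi(x_0)$ identifies the target fiber with $A'_k$; in these coordinates the second axiom of Definition~\ref{bundmorphdef} says that $\phi$ restricts to the homomorphism $\alpha_k:A_k\to A'_k$, which is continuous and surjective. Since the fiber Haar measures are exactly the transported Haar measures of $A_k$ and $A'_k$ (translation invariance makes them independent of the chosen base points), Lemma~\ref{abpres} shows that this fiber restriction is measure preserving. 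Consequently, for any Borel set $H\subseteq T'$,
$$\mu_{A_k}\bigl(\pi_{k-1}^{-1}(s)\cap\phi^{-1}(H)\bigr)=\mu_{A'_k}\bigl((\pi'_{k-1})^{-1}(\phi_{k-1}(s))\cap H\bigr)=g(\phi_{k-1}(s)),$$
where $g(s')=\mu_{A'_k}\bigl((\pi'_{k-1})^{-1}(s')\cap H\bigr)$.

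It remains to integrate over the base. Applying equation~(\ref{mesdef}) on $T$ and then the fiberwise identity above gives
$$\mu(\phi^{-1}(H))=\int_{s\in T_{k-1}}\mu_{A_k}\bigl(\pi_{k-1}^{-1}(s)\cap\phi^{-1}(H)\bigr)\,d\mu_{T_{k-1}}(s)=\int_{s\in T_{k-1}}g(\phi_{k-1}(s))\,d\mu_{T_{k-1}}(s).$$
Because $\phi_{k-1}$ preserves the base measure, the last integral equals $\int_{s'\in T'_{k-1}}g(s')\,d\mu_{T'_{k-1}}(s')$, which by equation~(\ref{mesdef}) on $T'$ is exactly $\mu(H)$. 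This closes the induction.

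The main point requiring care is not any single identity but the measure-theoretic bookkeeping that makes them legitimate: one must know that $s'\mapsto g(s')$ and the fiber measures depend measurably on the base point, so that the iterated integral of equation~(\ref{mesdef}) is well defined, and that pushing the integral forward along $\phi_{k-1}$ is justified by $(\phi_{k-1})_*\mu_{T_{k-1}}=\mu_{T'_{k-1}}$. These are the standard Fubini-type facts already underlying the very definition of the Haar measure on an abelian bundle, so once the fiberwise statement from Lemma~\ref{abpres} is in hand the assembly is routine.
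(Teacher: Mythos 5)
Your proof is correct and follows essentially the same route as the paper's: induction on $k$, using the induction hypothesis for the induced base map $\phi_{k-1}$, Lemma~\ref{abpres} for the (affine) fiber maps identified with $\alpha_k$, and the defining integral formula~(\ref{mesdef}) to assemble the two. The paper states this in three terse sentences; you have merely filled in the fiberwise identification and the change-of-variables step explicitly.
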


\begin{proof} The proof is an induction using lemma \ref{abpres}. The map $\phi$ induces a map $\phi'$ from $T_{k-1}$ to $T'_{k-1}$. If we know the statement for $k-1$ then $\phi'$ is measure preserving. On the other hand it is measure preserving on the fibers so the integral in (\ref{mesdef}) is preserved.
\end{proof}

The next lemma follow directly form lemma \ref{bundpres} and lemma \ref{morphbund}

\begin{lemma} Continuous fiber surjective morphisms between $k$-step nilspaces are measure preserving.
\end{lemma}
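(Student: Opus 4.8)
The plan is to reduce the statement to lemma \ref{bundpres} by passing from the nilspaces to their associated $k$-fold abelian bundles. First I would recall that by theorem \ref{bundec} each of the $k$-step nilspaces $N$ and $N'$ carries a canonical degree-$k$ bundle structure, and that, by the definition given in section \ref{haar}, the Haar measure on a $k$-step nilspace is \emph{defined} to be the Haar measure of this underlying $k$-fold abelian bundle. Consequently, it suffices to show that the given continuous fiber surjective morphism $\phi:N\rightarrow N'$ is measure preserving when viewed as a map between the associated compact $k$-fold abelian bundles $T$ and $T'$.

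Next I would invoke lemma \ref{morphbund}: the morphism $\phi$ induces a bundle morphism $\psi:T\rightarrow T'$ between the corresponding $k$-fold abelian bundles, with structure morphisms $\alpha_i:A_i\rightarrow A_i'$. The crucial point is to verify that $\psi$ is \emph{totally surjective}, i.e.\ that each $\alpha_i$ is onto. This is exactly where the fiber surjectivity hypothesis enters: since $\phi$ maps every $\sim_i$ class of $N$ onto a $\sim_i$ class of $N'$, and each $\alpha_i$ records how $\phi$ acts on the group of differences within such a class (as in the construction of lemma \ref{morphbund}), surjectivity of $\phi$ on $\sim_i$ classes forces surjectivity of $\alpha_i$. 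This observation is already recorded in the remarks following the definition of fiber surjective morphisms, so I would simply cite it.

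Finally, continuity is immediate: $\phi$ is continuous by hypothesis, and the induced maps $\psi_i$ on the factors $\mathcal{F}_i$, together with the structure morphisms $\alpha_i$, inherit continuity from $\phi$ and from the continuity of the bundle decomposition established in section \ref{contbundec}. Hence $\psi$ is a totally surjective continuous map between compact $k$-fold abelian bundles, and lemma \ref{bundpres} applies directly, yielding that $\psi$ preserves Haar measure; since the nilspace Haar measure is the bundle Haar measure, $\phi$ preserves it as well.

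I expect no genuinely hard step here: the real content has been front-loaded into lemma \ref{bundpres} (which itself rests on the classical lemma \ref{abpres} for compact abelian groups) and into lemma \ref{morphbund}. The only place requiring any care is the identification of fiber surjectivity as precisely the condition guaranteeing total surjectivity of the induced bundle morphism, so that the hypothesis of lemma \ref{bundpres} is genuinely met; everything else is the routine bookkeeping of matching the nilspace Haar measure with the bundle Haar measure.
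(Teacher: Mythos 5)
Your proof is correct and follows exactly the route the paper takes: the paper derives this lemma directly from lemma \ref{bundpres} and lemma \ref{morphbund}, using the remark (stated after the definition of fiber surjective morphisms) that fiber surjectivity makes the induced bundle morphism totally surjective. Your write-up just makes explicit the bookkeeping the paper leaves implicit.
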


\subsection{Continuous systems of measures}\label{csmchap}

In section \ref{haar} we constructed a measure on a compact space with an Abelian
bundle structure, this measure combined some given measure on the base of the bundle with
the Haar measure on each fiber. We will need later on that these measure space fibers vary
continuously in a certain sense.To express this, we will use the concept of a \emph{continuous
system of measures (CSM)}, which consists of
\begin{enumerate}
  \item A continuous map $\pi : X \to Y$ between compact Hausdorff topological spaces, and
  \item a family of measures $\{ \lambda_y \}_{y \in Y}$ such that
  \begin{enumerate}
    \item the measure $\lambda_y$ is concentrated on the fiber $\pi^{-1}(y)$, and
    \item\label{contprop} for any continuous real-valued function $f$ on $X$, the function
    \[ y \mapsto \int_{\pi^{-1}(y)} f \; d\lambda_y \] is continuous.
  \end{enumerate}
\end{enumerate}

Here, condition \ref{contprop} is the main one, as it expresses the continuity of the family
$\{\lambda_y\}$ of measures. A good reference for continuous systems of measures is
\cite{AD}, which develops them in slightly greater generality (using locally compact spaces,
rather than compact ones, for which \ref{contprop} is only taken to hold for compactly
supported functions), as well as developing the notion of Borel systems of measures, an
analogue for Borel maps $\pi : X \to Y$. That paper also contains references to earlier
literature on these and related notions.

Given a continuous systems of measures $\{\lambda_y\}$ for a map $\pi : X \to Y$ and
a measure $\mu$ on the base $Y$, one can define a measure on $X$ by $\lambda(E) = \int_Y \lambda_y(E \cap \pi^{-1}(y)) d\mu$, just as we did earlier for the special case of an Abelian bundle. To show this really is a special case, we have the following result:

\begin{lemma} Given any compact  Abelian $A$-bundle $T$ over a base $S$, the family of measures consisting of copies of the Haar measure on $A$ is a CSM.
\end{lemma}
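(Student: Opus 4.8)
We must show that for a compact abelian $A$-bundle $T \to S$, the assignment $s \mapsto \lambda_s$, where each $\lambda_s$ is the copy of the normalized Haar measure on the fiber $\pi^{-1}(s)$, forms a continuous system of measures. Conditions (1), (2a) are immediate: $\pi$ is continuous by hypothesis, and $\lambda_s$ is by construction concentrated on $\pi^{-1}(s)$. The entire content is the continuity condition (2b): for every continuous $f : T \to \mathbb{R}$, the function $F(s) = \int_{\pi^{-1}(s)} f \, d\lambda_s$ is continuous on $S$.

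**Reducing to a uniform statement via the fiberwise integral formula.** Since each fiber is an affine copy of $A$, fix a point and identify $\pi^{-1}(s)$ with $A$ via the free action; then $\int_{\pi^{-1}(s)} f \, d\lambda_s = \int_A f(x_s + a)\, d\mu_A(a)$ where $x_s$ is any chosen element of the fiber and $\mu_A$ is the Haar measure on $A$. The value does not depend on the choice of $x_s$ because $\mu_A$ is translation invariant, so $F(s)$ is well defined. The plan is to prove continuity of $F$ directly from the continuity of the action $\alpha : A \times T \to T$ together with compactness, without needing local triviality of the bundle. The key point is that $f$, being continuous on the compact (hence metrizable) space $T$, is \emph{uniformly} continuous, and the action is continuous on the compact space $A \times T$.

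**The main argument.** I would argue by sequences, using that $S$ is metrizable. Let $s_n \to s$ in $S$; I want $F(s_n) \to F(s)$. Choose lifts $x_n \in \pi^{-1}(s_n)$ and $x \in \pi^{-1}(s)$. The obstacle is that the lifts need not converge, so one cannot simply pass $\int_A f(x_n + a)\,d\mu_A(a)$ to the limit termwise inside the integrand. To get around this, pass to a subsequence along which $x_n$ converges in the compact space $T$ to some limit $x' $; by continuity of $\pi$ we have $\pi(x') = s$, so $x'$ and $x$ lie in the same fiber, hence $x' = x + a_0$ for some $a_0 \in A$. Then by continuity of $\alpha$ the maps $a \mapsto f(x_n + a)$ converge uniformly in $a \in A$ to $a \mapsto f(x' + a)$ (uniform convergence follows from joint continuity of $(n\text{-indexed } x_n, a) \mapsto f(x_n+a)$ on a set with compact $A$-factor together with $x_n \to x'$), and therefore
\[
\int_A f(x_n + a)\, d\mu_A(a) \longrightarrow \int_A f(x' + a)\, d\mu_A(a) = \int_A f(x + a_0 + a)\, d\mu_A(a) = \int_A f(x + a)\, d\mu_A(a),
\]
the last equality by translation invariance of $\mu_A$. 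Thus $F(s_n) \to F(s)$ along this subsequence; since every subsequence of $(s_n)$ has a further subsequence on which $F$ converges to the same limit $F(s)$, the full sequence converges, proving continuity.

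**Where the difficulty lies.** The only genuine subtlety is precisely the non-canonical choice of lift $x_n$ and the failure of these lifts to converge a priori, which is why local triviality is tempting but not available in general; the subsequence-extraction trick using compactness of $T$ is what replaces it. Everything else (well-definedness, the translation-invariance manipulations, and uniform continuity of $f$) is routine given that $T$ is compact and the action is continuous. Once continuity of $F$ is established for all continuous $f$, the defining conditions of a CSM are satisfied, and comparing the formula $\lambda(E) = \int_S \lambda_s(E \cap \pi^{-1}(s))\, d\mu_S$ with equation (\ref{mesdef}) shows this CSM reproduces exactly the bundle measure constructed in Section \ref{haar}.
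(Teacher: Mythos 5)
Your proof is correct, but it takes a genuinely different route from the paper's. The paper uses the hypothesis (part of the definition of a compact abelian bundle, and in any case automatic, since a continuous surjection from a compact space onto a Hausdorff space is a closed map and hence a quotient map) that $S$ carries the quotient topology: it therefore suffices to check that $g\circ\pi$ is continuous on $T$, where $g(s)=\int_{\pi^{-1}(s)}f\,d\mu_A$. Writing $g(\pi(t))=\int_{a\in A}f(t+a)\,d\mu_A$ and noting that $(t,a)\mapsto f(t+a)$ is continuous on $T\times A$, the paper reduces everything to the claim that for compact $X,Y$ and a Borel probability measure on $Y$, integrating out the $Y$-variable takes continuous functions on $X\times Y$ to continuous functions on $X$; this is proved by Stone--Weierstrass, approximating $f$ uniformly by linear combinations of rank-one functions $f_1(x)f_2(y)$. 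Your argument instead stays down on $S$ and exploits second countability (hence metrizability and sequential compactness): you confront head-on the genuine difficulty, namely that the lifts $x_n\in\pi^{-1}(s_n)$ are non-canonical and need not converge, and you dispose of it by extracting a subsequence convergent in the compact space $T$, identifying the limit as $x+a_0$ inside the fiber over $s$, and then combining uniform continuity of $(t,a)\mapsto f(t+a)$ with translation invariance of $\mu_A$ and the standard subsequence principle. What the paper's route buys: it never needs metrizability, it sidesteps the lift problem entirely by pulling the integral back to $T$, and the product-space claim it isolates is reused later (the rank-one approximation reappears in the proof of lemma \ref{contact}). What your route buys: it is elementary (no Stone--Weierstrass), it makes explicit exactly where the bundle axioms (fibers are single orbits) and translation invariance of Haar measure enter, and it does not invoke the quotient-topology property of $\pi$ at all.
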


\begin{proof}
We just need to prove the continuity property. Let $f : T \to \mathbb{R}$ be a continuous
function. We need to show that the function $g : S \to \mathbb{R}$,
$g(s) =\int_{\pi^{-1}(s)} f(t) \; d\mu_A$
is continuous (here, $\mu_A$ denotes Haar measure on $A$ and any fiber of $T \to S$).
Since $S$ has the quotient topology it is enough to show that $g \circ \pi$ is continuous.
Now, $g(\pi(t)) = \int_{a\in A} f(t+a) \; d\mu_A$ and the function $T \times A \to \mathbb{R}$
given by $(t,a) \mapsto f(t+a)$ is continuous, so the result follows from the following simple
claim:

\begin{claim} Let $X$ and $Y$ be compact spaces and $\mu$ be a Borel probability
 measure on $Y$.
Then the family of measures on $X\times Y$ consisting of copies of $\mu$ is a CSM.
\end{claim}

Let $f : X \times Y \to \mathbb{R}$ be a continuous function. Then $f$ can be approximated
arbitrarily well in $L^\infty(X \times Y)$ by linear combinations of rank 1 functions, i.e.,
functions of the form $(x,y) \mapsto f_1(x)f_2(y)$. For a rank 1 function, it is clear that
integrating out $y$ leaves a continuous function of $x$, so we get that $\int_{y \in Y} f(x,y) \; d\mu$
is a uniform limit of continuous functions and thus continuous itself.
\end{proof}

\begin{lemma}\label{relcsm} If $T$ is a relative compact $k$-fold Abelian bundle then the Haar measures on the fibers form a CSM. 
\end{lemma}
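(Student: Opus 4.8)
The plan is to prove the statement by induction on the number $k$ of bundle layers, the decisive tool being that continuous systems of measures compose along iterated fibrations. The base case $k=0$ is trivial, since then $T_0 \to T_0$ is the identity and the fiber measures are point masses, whose integrals against a continuous $f$ give back $f$ itself. For $k=1$ the assertion is exactly the CSM property already established for a single compact Abelian $A$-bundle, applied fiberwise over the (now arbitrary) base $T_0$.

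First I would isolate the composition principle as a general fact: if $\pi : X \to Y$ carries a CSM $\{\lambda_y\}_{y \in Y}$ and $\rho : Y \to Z$ carries a CSM $\{\nu_z\}_{z \in Z}$, then the composite $\rho \circ \pi : X \to Z$ carries the CSM $\{\kappa_z\}_{z \in Z}$ defined by
\[ \int_X f \, d\kappa_z = \int_Y \Bigl( \int_{\pi^{-1}(y)} f \, d\lambda_y \Bigr) \, d\nu_z(y). \]
Each $\kappa_z$ is concentrated on $(\rho\circ\pi)^{-1}(z) = \pi^{-1}(\rho^{-1}(z))$ because $\nu_z$ lives on $\rho^{-1}(z)$ and each $\lambda_y$ on $\pi^{-1}(y)$. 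For the continuity condition, given a continuous $f : X \to \mathbb{R}$ the function $g(y) = \int_{\pi^{-1}(y)} f \, d\lambda_y$ is continuous on $Y$ by the CSM property of $\pi$; then $z \mapsto \int_X f \, d\kappa_z = \int_{\rho^{-1}(z)} g \, d\nu_z$ is continuous by the CSM property of $\rho$ applied to the continuous function $g$. This chaining of the two continuity hypotheses is the only step requiring any care.

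For the inductive step I would factor the projection $\pi : T_k \to T_0$ of a relative $k$-fold bundle as $T_k \xrightarrow{\pi_{k-1}} T_{k-1} \xrightarrow{\pi_0} T_0$, where the first map is a single compact Abelian $A_k$-bundle and the second exhibits $T_{k-1}$ as a relative $(k-1)$-fold bundle over $T_0$. The $A_k$-Haar measures on the fibers of $\pi_{k-1}$ form a CSM by the single-bundle case, while the Haar measures on the fibers of $\pi_0$ form a CSM by the induction hypothesis. Composing these two systems as above yields a CSM for $T_k \to T_0$.

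It remains to identify the composite measures with the Haar measures on the fibers of $T_k \to T_0$, and here I would simply unwind the definitions. The fiber $F = \pi^{-1}(t_0) \subset T_k$ is an (absolute) $k$-fold Abelian bundle whose Haar measure is, by the iterative definition (\ref{mesdef}), the extension of the Haar measure $\nu_{t_0}$ on the fiber $F' = \pi_0^{-1}(t_0) \subset T_{k-1}$ by the $A_k$-Haar measure on each $\pi_{k-1}$-fiber. Since $\nu_{t_0}$ is precisely the measure supplied by the induction hypothesis and the $A_k$-Haar measures are the $\lambda_y$ of the single-bundle CSM, the composite $\kappa_{t_0}$ is exactly $\mu_F$. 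Thus the family of fiber Haar measures is a CSM, completing the induction. I do not expect any serious obstacle beyond the bookkeeping in the composition lemma, whose verification is the only genuinely technical point.
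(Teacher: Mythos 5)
Your proof is correct and takes essentially the same approach as the paper: both factor the relative bundle through its tower of factors $T_0, T_1, \ldots, T_k$ and obtain the fiberwise Haar measures as the composition of the single-bundle CSMs supplied by the previous lemma, identifying the result with the iterative definition of Haar measure. The only difference is that you verify the composition principle directly (correctly chaining the two continuity hypotheses), whereas the paper simply cites \cite{AD} for that fact.
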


\begin{proof} Notice that each fiber of the relative bundle is a $k$-fold compact Abelian bundle in its own right and thus has a Haar measure as defined in chapter \ref{haar}. 
Let $T_0, T_1, T_2, \ldots, T_k = T$ be the factors of $T$. By the previous lemma, each $T_i$ is a CSM over $T_{i-1}$. The CSM we want to define on $T$ over $T_0$ is just the composition of these in the following sense:

Let $\{\lambda_y\}_{y \in Y}$ be a CSM on $\pi : X \to Y$ and let $\{\mu_z\}_{z \in Z}$ be another CSM on $\tau : Y \to Z$.  The composition is a CSM on $\tau \circ \pi : X \to Z$ with measures given by
\[ \nu_z(E) = \int_{y \in \tau^{-1}(z)} \lambda_y(E \cap \pi^{-1}(y)) \; d\mu_z. \]
For a proof that this defines a CSM see \cite{AD}.
\end{proof}

\medskip

Let $S,T$ be a pair of compact spaces and assume that $S$ is a probability space.  We denote by $L(S,T)$ the set of Borel measurable functions up to $0$ measure change. Let $\{\mu_y\}_{y\in Y}$ be a CSM on the fibers of $\pi:X\to Y$. Let $\mathcal{L}(X,T)=\cup_y L(\pi^{-1}(y),T)$. The projection $\tilde{\pi}:\mathcal{L}(X,T)\rightarrow Y$ is defined by $\tilde{\pi}(f)=y$ if $f\in L(\pi^{-1}(y),T)$. We define a topology on $\mathcal{L}(X,T)$ as the weakest topology in which the following functions are continuous:
$$f\mapsto \int_{x\in \pi^{-1}(\tilde{\pi}(f))}F_1(f(x))F_2(x)~d\mu_{\tilde{\pi}(f)}$$
where $F_1:T\rightarrow\mathbb{C}$ and $F_2:X\rightarrow\mathbb{C}$ are continuous functions. Notice that $F_2$ is defined and continuous on the whole space $X$, this is what ties the fibers together. With this topology $\mathcal{L}(X,T)$ becomes a Polish space.

Note that the space $\mathcal{L}(X,T)$ is a generalization of the $L^1$ topology on bounded measurable functions. To be more precise take $T$ to be the closed unit ball in $\mathbb{C}$. Then for every fixed $y\in Y$, convergence of functions in $L(\pi^{-1}(y),T)$ (as a subspace of $\mathcal{L}(X,T)$) is the same as convergence in the usual $L^1$ topology.
The implication that $L^1$ convergence implies convergence in $\mathcal{L}(X,T)$ is left to the reader. 
To see the other direction let $\{f_i\}_{i=1}^\infty$ be a sequence converging in $\mathcal{L}(X,T)$ to $f\in L(\pi^{-1}(y),T)$. It is clear from the definitions that the weak limit of $\{f_i\}_{i=1}^\infty$ is $f$ in the Hilbert space $L^2(\pi^{-1}(y),\mu_y)$. Furthermore $\lim_{i\to\infty}\|f_i\|_2=\|f\|_2$.
 It is well known that these two facts imply that $f$ is the $L^2$ limit of $\{f_i\}_{i=1}^\infty$. Since the $L^2$ topology is equivalent with the $L^1$ topology for functions with values in $T$ the proof is complete.

The space $T$ in $\mathcal{L}(X,T)$ will most often be a compact abelian group. In that case $\mathcal{L}(X,T)$ has an action of $T$ by translation. Then next lemma says that this action is continuous.

\begin{lemma}\label{contact} Let $A$ be a compact abelian group. For any continuous sytem of measures the space $\mathcal{L}(X,A)$ defined above has a continuous $A$ action $\alpha:A\times\mathcal{L}(X,A)\rightarrow\mathcal{L}(X,A)$ defined by $\alpha(g, f)(x) = f(x) + g$.
\end{lemma}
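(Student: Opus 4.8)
The plan is to exploit the fact that the topology on $\mathcal{L}(X,A)$ is by construction the initial (weakest) topology making all the test functions
$$\Phi_{F_1,F_2}(f)=\int_{\pi^{-1}(\tilde{\pi}(f))}F_1(f(x))F_2(x)\,d\mu_{\tilde{\pi}(f)}$$
continuous, where $F_1:A\to\mathbb{C}$ and $F_2:X\to\mathbb{C}$ range over continuous functions. By the universal property of the initial topology, the map $\alpha$ into $\mathcal{L}(X,A)$ is continuous if and only if $\Phi_{F_1,F_2}\circ\alpha$ is continuous on $A\times\mathcal{L}(X,A)$ for every choice of $F_1,F_2$. Since translating by $g$ does not change the fiber, $\tilde{\pi}(\alpha(g,f))=\tilde{\pi}(f)$, and a direct computation gives
$$\Phi_{F_1,F_2}(\alpha(g,f))=\int_{\pi^{-1}(\tilde{\pi}(f))}F_1(f(x)+g)F_2(x)\,d\mu_{\tilde{\pi}(f)}.$$
Thus the whole lemma reduces to proving that this expression is jointly continuous in $(g,f)$.

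To do this I would fix a point $(g_0,f_0)$ and estimate the difference $\Phi_{F_1,F_2}(\alpha(g,f))-\Phi_{F_1,F_2}(\alpha(g_0,f_0))$ by inserting the intermediate term $\Phi_{F_1,F_2}(\alpha(g_0,f))$ and applying the triangle inequality. The second resulting difference, $\Phi_{F_1,F_2}(\alpha(g_0,f))-\Phi_{F_1,F_2}(\alpha(g_0,f_0))$, is harmless: introducing the continuous function $G_{g_0}:=F_1(\,\cdot\,+g_0)$ on $A$, it equals $\Phi_{G_{g_0},F_2}(f)-\Phi_{G_{g_0},F_2}(f_0)$, which is small whenever $f$ is close to $f_0$, simply because $\Phi_{G_{g_0},F_2}$ is itself one of the defining test functions of the topology. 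This step handles the $f$-variable essentially for free.

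The first difference, $\Phi_{F_1,F_2}(\alpha(g,f))-\Phi_{F_1,F_2}(\alpha(g_0,f))$, is the heart of the matter, since here the $g$-variable must be controlled uniformly in $f$. I would bound its absolute value by
$$\bigl\|F_1(\,\cdot\,+g)-F_1(\,\cdot\,+g_0)\bigr\|_\infty\cdot\|F_2\|_\infty\cdot\sup_{y}\mu_y(X).$$
The supremum of the total fiber masses is finite: applying the continuity axiom of the CSM to the constant function $1$ shows that $y\mapsto\mu_y(X)$ is continuous, hence bounded on the compact base. The crucial point is that $A$ is a compact group, so $F_1$ is uniformly continuous, and consequently $\bigl\|F_1(\,\cdot\,+g)-F_1(\,\cdot\,+g_0)\bigr\|_\infty\to 0$ as $g\to g_0$. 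This control is uniform in $f$, which is precisely what is needed to combine the two estimates into joint continuity.

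I expect the uniform-continuity step to be the main (though not deep) obstacle. Separate continuity in $f$ and in $g$ is immediate, but passing to joint continuity requires the uniform estimate on the $g$-translates of $F_1$, and this is exactly what compactness of $A$ supplies. Since $\mathcal{L}(X,A)$ is Polish and $A$ is metrizable, one could equally phrase everything sequentially, but the initial-topology formulation above makes even that unnecessary.
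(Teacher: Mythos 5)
Your proof is correct. Its first step---reducing continuity of $\alpha$, via the universal property of the initial topology, to joint continuity of $(g,f)\mapsto\int_{\pi^{-1}(\tilde{\pi}(f))}F_1(f(x)+g)F_2(x)\,d\mu_{\tilde{\pi}(f)}$---is exactly the paper's reduction, but the core argument differs. The paper approximates $(g_1,g_2)\mapsto F_1(g_1+g_2)$ in $L^\infty(A\times A)$ by linear combinations of rank-one functions $H_1(g_1)H_2(g_2)$ (a Stone--Weierstrass step, reusing the device from the claim in the product-CSM lemma); for each rank-one term the integral splits as $H_1(g)$ times one of the defining test functionals evaluated at $f$, so each approximant is jointly continuous and the uniform limit is too. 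You instead argue directly: split the difference by the triangle inequality, absorb the $f$-variation into the test functional with kernel $F_1(\,\cdot\,+g_0)$ (which is itself one of the maps defining the topology), and control the $g$-variation uniformly in $f$ by uniform continuity of $F_1$ on the compact group $A$ together with boundedness of the fiber masses. Both proofs use compactness of $A$ at the same spot---yours through uniform continuity of translates, the paper's through uniform rank-one approximation---and both need $\sup_y\mu_y(X)<\infty$, which you justify explicitly (the CSM continuity axiom applied to the constant function $1$, plus compactness of the base) while the paper leaves it tacit. Your route is slightly more elementary and self-contained; the paper's keeps its toolkit uniform, since the same rank-one approximation recurs in its other continuity arguments.
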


\begin{proof}
It is enough to show that for arbitrary continuous functions $F_1 : A \to \mathbb{C}$ and $F_2 : X \to \mathbb{C}$, the function $$(g, f) \mapsto \int_{x \in \pi^{-1}(\tilde{\pi}(f))} F_1(f(x)+g) F_2(x) \; d\mu_{\tilde{\pi}(f)}$$ is continuous. We again approximate by rank one functions. Consider the function $A \times A \to \mathbb{C}$ defined by $(g_1,g_2) \mapsto F_1(g_1 + g_2)$, and approximate it in $L^\infty(A \times A)$ by linear combinations of functions of the form $(g_1, g_2) \mapsto H_1(g_1)H_2(g_2)$ where $H_1$ and $H_2$ are continuous. For such a rank one function, the continuity follows from the definition of CSM. 
\end{proof}

To state the next lemma we need some notation. Assume that $\{\mu_y\}_{y\in Y}$ is a CSM on the fibers of $\pi:X\rightarrow Y$. We denote by $X\times_Y X$ the compact space $\{(a,b)|a,b\in X,\pi(a)=\pi(b)\}$. There is a natural projection $\pi':X\times_Y X\rightarrow Y$ defined by $\pi'(a,b)=\pi(a)=\pi(b)$ and a CSM structure defined by $\{\mu_y\times\mu_y\}_{y\in Y}$ on $X\times_Y X$. 

\begin{lemma}\label{diffcont} Let $\{\mu_y\}_{y\in Y}$ be a CSM on the fibers of $\pi:X\to Y$ and let $A$ be a compact abelian group,  Let $\mathcal{E}:\mathcal{L}(X,A)\rightarrow\mathcal{L}(X\times_Y X,A)$ denote the operator which maps a function $g$ to the function $\mathcal{E}(g)(a,b)=g(a)-g(b)$. Then $\mathcal{E}$ is a quotient map from $\mathcal{L}(X,A)$ to its image.
\end{lemma}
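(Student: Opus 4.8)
The plan is to exhibit $\mathcal{E}$ as the quotient map for a compact group action and then do the real work of identifying the subspace topology on the image with the quotient topology. First I would record that $\mathcal{E}$ is continuous, by the same rank-one approximation used in Lemma \ref{contact}: pulling back a generating functional $h\mapsto\int G_1(h(a,b))G_2(a,b)\,d(\mu_y\times\mu_y)$ along $\mathcal{E}$ gives $g\mapsto\iint G_1(g(a)-g(b))G_2(a,b)\,d\mu_y(a)\,d\mu_y(b)$, and approximating $(s,t)\mapsto G_1(s-t)$ in $L^\infty(A\times A)$ and $G_2$ in $C(X\times_Y X)$ by rank-one functions writes this as a uniform limit of finite sums of products of the defining functionals of $\mathcal{L}(X,A)$. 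Next, a short computation shows the fibres of $\mathcal{E}$ are exactly the orbits of the $A$-action of Lemma \ref{contact}: $\mathcal{E}(g_1)=\mathcal{E}(g_2)$ iff $g_1(a)-g_2(a)=g_1(b)-g_2(b)$ for a.e.\ $(a,b)$, i.e.\ $g_2=g_1+c$ for a single $c\in A$. Thus $\mathcal{E}$ factors as $\mathcal{L}(X,A)\xrightarrow{q}\mathcal{L}(X,A)/A\xrightarrow{\bar{\mathcal{E}}}\mathrm{im}(\mathcal{E})$ with $\bar{\mathcal{E}}$ a continuous bijection; since $A$ is a compact group acting continuously, $q$ is a closed map (extract a convergent net from $A$) and hence a quotient map, so it remains only to prove that $\bar{\mathcal{E}}$ is a homeomorphism, i.e.\ that image-convergence $\mathcal{E}(g_n)\to\mathcal{E}(g_0)$ forces orbit-convergence $q(g_n)\to q(g_0)$.

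For this I would work in the coordinates $c_{\chi,F}(g)=\int\chi(g(x))F(x)\,d\mu_{\tilde\pi(g)}$, $\chi\in\hat{A}$, $F\in C(X)$, which generate the topology of $\mathcal{L}(X,A)$. The crucial—and initially worrying—observation is that the image only records the \emph{same-character} Hermitian products: expanding $G_1$ into characters gives $\Psi_{\chi,F\otimes\overline{F'}}(\mathcal{E}(g))=c_{\chi,F}(g)\,\overline{c_{\chi,F'}(g)}$, and no cross-character term ever appears. Given $\mathcal{E}(g_n)\to\mathcal{E}(g_0)$ these products converge. Fixing for each $\chi$ a reference $F_0$ with $c_{\chi,F_0}(g_0)\neq0$ (such $F_0$ exists because $|\chi\circ g_0|\equiv1$, so $\chi\circ g_0\neq0$ in $L^2$), an elementary scalar computation produces phases $\theta_n^{(\chi)}\in S^1$ with $\theta_n^{(\chi)}c_{\chi,F}(g_n)\to c_{\chi,F}(g_0)$ for all $F$. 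Passing to a subnet along which each $\theta_n^{(\chi)}\to\theta_\chi$ (possible by diagonalisation, $\hat{A}$ being countable) yields $c_{\chi,F}(g_n)\to\phi_\chi\,c_{\chi,F}(g_0)$ with $\phi_\chi=\theta_\chi^{-1}$, for all $\chi,F$; equivalently $\chi\circ g_n\to\phi_\chi\,\chi\circ g_0$ weakly in $L^2$.

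The hard part, and the real content of the lemma, is to promote these per-character phases into a single group element. Because each character has constant modulus, $\|\chi\circ g_n\|_2^2$ equals the total fibre mass, which varies continuously, so weak convergence upgrades to \emph{strong} $L^2$ convergence by the weak-plus-norm criterion already established for $\mathcal{L}(X,A)$. Strong convergence is precisely what lets one multiply: from $\chi_1\circ g_n\to\phi_{\chi_1}\chi_1\circ g_0$, $\chi_2\circ g_n\to\phi_{\chi_2}\chi_2\circ g_0$ and the pointwise identity $(\chi_1\chi_2)\circ g_n=(\chi_1\circ g_n)(\chi_2\circ g_n)$ one reads off $\phi_{\chi_1\chi_2}=\phi_{\chi_1}\phi_{\chi_2}$, so $\chi\mapsto\phi_\chi$ is a character of $\hat{A}$ and hence, by Pontryagin duality, equals $\chi\mapsto\chi(a)$ for some $a\in A$. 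Then $\chi\circ g_n\to\chi\circ(g_0+a)$ in $L^2$ for every $\chi$, i.e.\ $g_n\to g_0+a$ in $\mathcal{L}(X,A)$, whence $q(g_n)\to q(g_0+a)=q(g_0)$; as every subnet admits such a sub-subnet and the limit is always $q(g_0)$, the full net converges. This gives continuity of $\bar{\mathcal{E}}^{-1}$ and completes the proof. The single point demanding care is exactly this collapse: a priori $\mathcal{E}$ controls $g$ only modulo the large group $\prod_{\chi}S^1$ of independent phases, and it is the genuine-function constraint (multiplicativity of characters), usable only after the weak-to-strong upgrade, that cuts this down to the diagonal $A$-action.
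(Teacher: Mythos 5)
Your proposal is correct and takes essentially the same route as the paper's own proof: extract a per-character phase from the Hermitian products $c_{\chi,F}\overline{c_{\chi,F'}}$ that the image determines, diagonalize over the countable dual $\hat{A}$, use multiplicativity of the phases (justified by the weak-plus-norm upgrade to strong convergence) to see that $\chi\mapsto\phi_\chi$ is a character of $\hat{A}$, and apply Pontryagin duality to obtain a single shift $a\in A$ with $g_n\to g_0+a$. The paper packages the same computation via the auxiliary multiplicative operator $\mathcal{E}'(g)(a,b)=g(a)\overline{g(b)}$ and continuous approximants $\chi_\delta$ of $\chi\circ f$ (which is how it handles the fact that the $g_n$ live on varying fibers), but the substance and the key collapse from $\prod_\chi S^1$ to the diagonal $A$-action are identical.
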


\begin{proof} The continuity of $\mathcal{E}$ is easy and it is left to the reader.
To check the quotient map property it is enough to show that if for a sequence of functions $\{f_i\}_{i=1}^\infty$ in $\mathcal{L}(X,A)$ the sequence $\{\mathcal{E}(f_i)\}_{i=1}^\infty$ is convergent then there is a convergent subsequence  $\{f_{m_i}\}_{i=1}^\infty$.

Assume that $\{\mathcal{E}(f_i)\}_{i=1}^\infty$ has limt $\mathcal{E}(f)$ for some $f\in\mathcal{L}(X,A)$. Furthermore assume that $f_i$ is defined on $\pi^{-1}(y_i)$ and $f$ is defined on $\pi^{-1}(y)$ where $\{y_i\}_{i=1}^\infty$ is a sequence in $Y$ converging to $y$. Let $\mathcal{C}$ denote the closed unit ball in $\mathbb{C}$. For a character $\chi\in\hat{A}$ and $\delta>0$ let $\chi_\delta:X\rightarrow\mathcal{C}$ be a continuous function on $X$ whose restriction to $\pi^{-1}(y)$ is at most $\delta$-far from $\chi\circ f$ in $L^1$ according to $\mu_y$.

We claim that for every $\chi\in\hat{A}$ and $\epsilon>0$ there are values $\delta>0$ and $n_{\chi,\epsilon}\in\mathbb{N}$ such that if $j>n_{\chi,\epsilon}$ then the function $(\chi\circ f_j)\overline{\chi_\delta}$ is at most $\epsilon$ far in $L^1$ from some constant function on $\pi^{-1}(y_j)$.

Let $\mathcal{E}':\mathcal{L}(X,\mathcal{C})\rightarrow\mathcal{L}(X\times_Y X,\mathcal{C})$ denote the operator with $\mathcal{E}'(g)(a,b)=g(a)\overline{g(b)}$. It is clear that $\mathcal{E}'(g_1g_2)=\mathcal{E}'(g_1)\mathcal{E}'(g_2)$ and that $\mathcal{E}'(\chi\circ g)=\chi\circ\mathcal{E}(g)$ for every $\chi\in\hat{A}$.
If $\delta$ is small enough then $\mathcal{E}'((\chi\circ f)\overline{\chi_\delta})$ is arbitrarily close to the constant $1$ function. It follows from the assumption of the lemma that if $j$ is big enough that $\mathcal{E}'((\chi\circ f_j)\overline{\chi_\delta})$ is also close to the constant $1$ function. Clearly, it is only possible if $(\chi\circ f_j)\overline{\chi_\delta}$ is close to a constant function (and this constant has absolute value close to one). 

Since there are at most countably many elements in $\hat{A}$, from a standard (iterated) diagonalization argument (using the above claim with smaller and smaller $\epsilon$ for each character) it follows that one can chose a growing sequence $\{m_i\}_{i=1}^\infty$ in $\mathbb{N}$ such that for every $\chi\in\hat{A}$ there is a constant $c_\chi\in\mathbb{C}$ of length $1$ such that the sequence $\{\chi\circ f_{m_i}\}_{i=1}^\infty$ converges to $c_\chi (\chi\circ f)$. Furthermore it is clear that the function $\chi\mapsto c_\chi$ is a homomorphism. It follows that there is an element $t\in A$ such that $\chi(t)=c_\chi$ holds for every $\chi\in\hat{A}$. We obtain that $\{f_{m_i}\}_{i=1}^\infty$ converges to $f+t$.
\end{proof}

We will need the following technical lemma.

\begin{lemma}\label{csmtechnical} Let $\{\mu_y\}_{y\in Y}$ be a CSM on the fibers of $\pi:X\to Y$. Let $K$ be a compact space with Borel measure $\nu$. Assume that $f:X\rightarrow K$ is continuous and that the restriction of $f$ to the fibre $\pi^{-1}(y)$ is measure preserving for every $y\in Y$.  Let $T$ be a compact space and $g:K\rightarrow T$ be a Borel function. 
Let $h:Y\rightarrow\mathcal{L}(X,T)$ be the map such that $h(y)$ is the restriction of $g\circ f$ to $\pi^{-1}(y)$.
Then $h$ is continuous. 
\end{lemma}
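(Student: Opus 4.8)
The plan is to verify the continuity of $h$ directly against the definition of the topology on $\mathcal{L}(X,T)$. By that definition, a map into $\mathcal{L}(X,T)$ is continuous precisely when its composite with each of the generating functions is continuous; so $h$ is continuous if and only if for every pair of continuous functions $F_1:T\to\mathbb{C}$ and $F_2:X\to\mathbb{C}$ the composite
$$y\mapsto\int_{x\in\pi^{-1}(y)}F_1(h(y)(x))F_2(x)\,d\mu_y$$
is continuous on $Y$. Since $h(y)(x)=g(f(x))$ for $x\in\pi^{-1}(y)$, setting $G=F_1\circ g$ (a bounded Borel function on $K$, bounded because $T$ is compact and $F_1$ continuous) this composite becomes $y\mapsto\int_{\pi^{-1}(y)}G(f(x))F_2(x)\,d\mu_y$. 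It therefore suffices to prove that this is continuous for an arbitrary bounded Borel function $G:K\to\mathbb{C}$.

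First I would dispose of the continuous case: if $G$ happens to be continuous, then $G\circ f$ is continuous on $X$, hence so is the complex function $(G\circ f)\cdot F_2$; splitting into real and imaginary parts and applying the defining continuity property of a CSM to each part shows that $y\mapsto\int_{\pi^{-1}(y)}G(f(x))F_2(x)\,d\mu_y$ is continuous.

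The main step is to reduce a general Borel $G$ to the continuous case by approximation, and this is exactly where the measure-preserving hypothesis is used. Since $K$ is a compact metric space and $\nu$ is a Borel measure, continuous functions are dense in $L^1(K,\nu)$, so I would pick continuous $G_n:K\to\mathbb{C}$ with $\int_K|G-G_n|\,d\nu\to 0$. The assumption that $f$ restricted to each fiber $\pi^{-1}(y)$ is measure preserving means precisely that $f$ pushes $\mu_y$ forward to $\nu$ for \emph{every} $y$; consequently
$$\Bigl|\int_{\pi^{-1}(y)}(G-G_n)(f(x))F_2(x)\,d\mu_y\Bigr|\le\|F_2\|_\infty\int_{\pi^{-1}(y)}|(G-G_n)\circ f|\,d\mu_y=\|F_2\|_\infty\int_K|G-G_n|\,d\nu.$$
The right-hand side is independent of $y$ and tends to $0$, so the function $y\mapsto\int_{\pi^{-1}(y)}G(f(x))F_2(x)\,d\mu_y$ is a \emph{uniform} limit over $Y$ of the continuous functions associated to the $G_n$, and is therefore continuous.

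I expect the only point requiring genuine care to be the correct reading of the measure-preserving assumption as the fiberwise pushforward identity $f_*\mu_y=\nu$: it is precisely the fact that the target measure $\nu$ is the same for all fibers that decouples the error estimate above from $y$ and upgrades the $L^1$-approximation into a uniform one. Everything else is the routine combination of ``approximate a Borel function by continuous ones'' with the CSM continuity property applied to the resulting continuous integrands.
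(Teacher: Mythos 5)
Your proposal is correct and follows essentially the same route as the paper's own proof: both reduce continuity of $h$ to continuity of the generating integrals of the $\mathcal{L}(X,T)$ topology, approximate $F_1\circ g$ in $L^1(K,\nu)$ by a continuous function on $K$, and use the fiberwise measure-preserving property $f_*\mu_y=\nu$ to turn the $L^1$ error into a bound uniform in $y$, concluding by uniform convergence of continuous functions. The only difference is presentational (you phrase the approximation as a sequence $G_n$ and the paper as an $\epsilon$-argument), so there is nothing to add.
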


\begin{proof} Let $F_1:T\rightarrow\mathbb{C}$ and $F_2:X\rightarrow\mathbb{C}$ be continuous functions.
Let $q:Y\rightarrow\mathbb{C}$ be the function defined by
$$y\mapsto\int_{x\in\pi^{-1}(y)}F_1(g(f(x)))F_2(x)~d\mu_y.$$
We have to show that $q$ is continuous.
Let $\epsilon>0$ be arbitrary. It is well known that there is a continuous approximation $F_3:K\rightarrow\mathbb{C}$ of $F_1\circ g$ such that $\|F_1\circ g-F_3\|_1\leq\epsilon$.
Let $q':Y\rightarrow\mathbb{C}$ be the function define by 
$$y\mapsto\int_{x\in\pi^{-1}(y)}F_3(f(x))F_2(x)~d\mu_y.$$
By the measure preserving property of $f$ we obtain that $|q(y)-q'(y)|\leq\epsilon\|F_2\|_\infty$ holds for every $y\in Y$.
Since $q'$ is continuous and we have such an approximation for every $\epsilon>0$ the proof is complete.
\end{proof}

\subsection{Probability spaces of morphisms}\label{morpro}

Lemma \ref{restbund} says that morphism sets and more generally restricted morphism sets are often iterated bundles and thus in the compact case they have a probability space structure coming from Haar measure. A few concrete examples of such probability spaces will be crucial in our arguments.

\begin{definition} Let $P$ be a cube space and assume that $X,Y$ are two subsets in $P$. We say that $X,Y$ is a good pair if $X$ and $X\cap Y$ have the extension property (with the inherited cubespace structure) and every morphism $f:Y\rightarrow\mathcal{D}_k(A)$ with $f|_{X\cap Y}=0$ extends to a morphism $f:P\rightarrow\mathcal{D}_k(A)$ with $f|_X=0$.
\end{definition}

\begin{lemma}\label{pairlem} Let $P\subseteq\{0,1\}^n$ be a sub-cubespace with the extension property and $X,Y\subset P$ be a good pair in $P$. Then for every nilspace $N$ and morphism $f:X\rightarrow N$ we have that the restriction map to $Y$
$$\phi:\Hom_f(P,N)\rightarrow\Hom_{f|_{X\cap Y}}(Y,N)$$ is a totally surjective bundle morphism.
\end{lemma}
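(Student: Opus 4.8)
The plan is to induct on the step $k$ of the nilspace $N$, leveraging the fact that $\phi$ is literally the restriction map $g \mapsto g|_Y$. Because restriction commutes both with postcomposition by the factor maps $N \to \mathcal{F}_i(N)$ and with the fibrewise addition of an $A_i$-valued morphism, the two axioms of Definition \ref{bundmorphdef} hold essentially for free once the two sides are known to be $k$-fold abelian bundles: the induced maps $\phi_i$ are again restriction maps $\Hom_{\bar f}(P, \mathcal{F}_i(N)) \to \Hom_{\bar f|_{X\cap Y}}(Y, \mathcal{F}_i(N))$, and the structure morphisms are the restriction maps
$$\alpha_i : \Hom_{X\to 0}(P, \mathcal{D}_i(A_i)) \to \Hom_{(X\cap Y)\to 0}(Y, \mathcal{D}_i(A_i)).$$
So the real content is (i) that source and target are bundles with these structure groups, and (ii) that each $\alpha_i$ is surjective.

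For the source, Lemma \ref{restbund} directly gives that $\Hom_f(P, N)$ is a sub-bundle of $N^P$ with structure groups $\Hom_{X\to 0}(P, \mathcal{D}_i(A_i))$, using that $X$ has the extension property in $P$ and $P$ in $\{0,1\}^n$. The surjectivity of $\alpha_i$ in (ii) is exactly the second clause of the good-pair hypothesis applied with the group $A_i$ in degree $i$: a morphism $Y \to \mathcal{D}_i(A_i)$ vanishing on $X\cap Y$ extends to one on $P$ vanishing on $X$. Thus total surjectivity comes for free once the bundle structures are in place.

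The main obstacle is establishing that the target $\Hom_{f|_{X\cap Y}}(Y, N)$ is itself a $k$-fold bundle with the claimed groups, since $Y$ is \emph{not} assumed to have the extension property and so Lemma \ref{restbund} does not apply to it directly. This is where I would use the induction. Writing $N_{k-1} = \mathcal{F}_{k-1}(N)$ and $\bar f = \pi_{k-1}\circ f$, the inductive hypothesis says that $\bar\phi \colon \Hom_{\bar f}(P, N_{k-1}) \to \Hom_{\bar f|_{X\cap Y}}(Y, N_{k-1})$ is a totally surjective bundle morphism, and in particular $\bar\phi$ is onto. That $\Hom_{(X\cap Y)\to 0}(Y, \mathcal{D}_k(A_k))$ acts freely on $\Hom_{f|_{X\cap Y}}(Y, N)$ with orbits equal to the fibres of the projection to $\Hom_{\bar f|_{X\cap Y}}(Y, N_{k-1})$ follows from Theorem \ref{bundec}: any two lifts of a fixed $\bar g$ have the same image in $N_{k-1}$, so their difference is a morphism $Y \to \mathcal{D}_k(A_k)$ vanishing on $X\cap Y$. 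The one nontrivial point is surjectivity of that projection, i.e.\ that every $\bar g \in \Hom_{\bar f|_{X\cap Y}}(Y, N_{k-1})$ lifts to $\Hom_{f|_{X\cap Y}}(Y, N)$. I would obtain such a lift by routing through $P$: by the inductive surjectivity of $\bar\phi$ choose $\bar G \in \Hom_{\bar f}(P, N_{k-1})$ with $\bar G|_Y = \bar g$, lift $\bar G$ to $G \in \Hom_f(P, N)$ along the bundle projection of the source (Lemma \ref{restbund}), and set $g = G|_Y$; then $\pi_{k-1}\circ g = \bar g$, as required.

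Finally I would assemble these facts: the induced factor maps satisfy $\pi_{k-1}\circ\phi = \bar\phi\circ\pi_{k-1}$ because restriction commutes with the factor map, the bundle-morphism axioms hold as noted above, and surjectivity of every $\alpha_i$ (the good-pair clause for $i=k$, the induction for $i<k$) yields total surjectivity. The base case $k=0$ is trivial. The crux is the target bundle structure, and specifically the lifting step, which is precisely where the pairing of $Y$ with the larger space $P$ (which \emph{does} have the extension property) is used to compensate for the fact that $Y$ need not have it.
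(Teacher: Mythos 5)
Your proposal is correct and follows essentially the same route as the paper's proof: induction on the step $k$ of $N$, Lemma \ref{restbund} to get the bundle structure on $\Hom_f(P,N)$, and the second clause of the good-pair definition to get surjectivity of the structure morphism $\Hom_{X\rightarrow 0}(P,\mathcal{D}_k(A_k))\rightarrow\Hom_{X\cap Y\rightarrow 0}(Y,\mathcal{D}_k(A_k))$. The only difference is that you explicitly establish the $k$-fold bundle structure on the target $\Hom_{f|_{X\cap Y}}(Y,N)$ (by lifting through $P$, using the inductive surjectivity of the map one level down), a point the paper's very terse proof leaves implicit; this is a filling-in of detail rather than a different approach.
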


\begin{proof} Assume that $N$ is a $k$-step nilspace and by induction assume that the statement is verified for $k-1$ step nilspaces. By lemma \ref{restbund} It is enough to show that the map $\Hom_{X\rightarrow 0}(P,\mathcal{D}_k(A_k))\rightarrow\Hom_{X\cap Y\rightarrow 0}(Y,N)$ is surjective. This follows directly form the definition of good pairs.
\end{proof}

Note that lemma \ref{bundpres} implies that if $N$ is a compact finite step nilspace then the restriction map in lemma \ref{pairlem} is measure preserving.

\begin{lemma}\label{pairlem2} If $P\subseteq\{0,1\}^n$ has the extension property and $X,Y\subset P$ is a good pair of sub-cubespaces then $X\cup Y$ (with the union of their cubic structures) has the property that any morpism $f:X\cup Y\rightarrow N$ into a finite step nilspace $N$ extends to a morphism $f':P\rightarrow N$.
\end{lemma}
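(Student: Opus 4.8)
The plan is to deduce this directly from Lemma \ref{pairlem}, which is essentially designed for this purpose. The first thing I would note is that, since the cubespace structure on $X\cup Y$ is by hypothesis the union of those of $X$ and $Y$, giving a morphism $f:X\cup Y\to N$ is the same as giving a pair of morphisms $f_X=f|_X:X\to N$ and $f_Y=f|_Y:Y\to N$ that agree on the overlap, i.e. with $f_X|_{X\cap Y}=f_Y|_{X\cap Y}$ (both being restrictions of the single map $f$). So the task becomes: produce a single morphism $f':P\to N$ that restricts to $f_X$ on $X$ and simultaneously to $f_Y$ on $Y$.

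To do this, I would form the restricted morphism set $\Hom_{f_X}(P,N)$ of those morphisms $P\to N$ that restrict to $f_X$ on $X$; this is nonempty because $X$ has the extension property (part of being a good pair), so $f_X$ extends to $P$. By Lemma \ref{pairlem} (whose hypotheses are exactly that $P$ has the extension property and $X,Y$ is a good pair, and which applies since $N$ is finite step), restriction to $Y$ gives a totally surjective bundle morphism
$$\phi:\Hom_{f_X}(P,N)\rightarrow\Hom_{f_X|_{X\cap Y}}(Y,N).$$
Since $f_Y$ is a morphism $Y\to N$ whose restriction to $X\cap Y$ equals $f_X|_{X\cap Y}$, it lies in the target $\Hom_{f_X|_{X\cap Y}}(Y,N)$. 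Choosing any preimage $f'\in\phi^{-1}(f_Y)$ then yields a morphism $f':P\to N$ with $f'|_X=f_X$ and $f'|_Y=f_Y$, hence $f'|_{X\cup Y}=f$, which is the required extension.

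The substantive content is entirely packaged inside Lemma \ref{pairlem}, so there is no real obstacle here; the lemma is almost a corollary. The only point that needs a word of care is upgrading ``totally surjective bundle morphism'' to honest set-level surjectivity of $\phi$, which is what I actually use. I expect this to be the single mild gap to fill, and it is routine: arguing by induction on the number of steps, one lifts a target point through the induced map $\phi_{k-1}$ on the $(k-1)$-step factors, lifts that to the top level, and then corrects within the top fiber using that the structure morphism $\alpha_k$ is onto and that a bundle projection acts transitively on its fibers, exactly as in the surjectivity check used to show the kernel projections $\pi_{i-1}:K_i\to K_{i-1}$ are bundles. Everything else is formal.
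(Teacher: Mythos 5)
Your proof is correct, but it takes a genuinely different route from the paper's. The paper proves Lemma \ref{pairlem2} by a self-contained induction on the number of steps of $N$, never invoking Lemma \ref{pairlem}: one extends $f$ modulo $\sim_{k-1}$ using the induction hypothesis, lifts that extension to $N$ via Lemma \ref{lifting}, and then corrects the lift twice by $\mathcal{D}_k(A_k)$-valued morphisms --- first on $X$ (using the extension property of $X$) so that the lift agrees with $f$ on $X$, then on $Y$ (using the second good-pair axiom: morphisms $Y\rightarrow\mathcal{D}_k(A_k)$ vanishing on $X\cap Y$ extend to $P$ vanishing on $X$) so that it also agrees with $f$ on $Y$ without disturbing the agreement on $X$. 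Your argument instead treats Lemma \ref{pairlem} as a black box and reduces everything to the fact that a totally surjective bundle morphism is surjective at the level of sets; that fact is true, your sketch of it is the right one (induct up the factors, use that the induced maps commute with the projections, that fibers of $\pi_{i-1}$ are full orbits of the structure groups, and that the $\alpha_i$ are onto), and it is indeed the same computation as the paper's proof that $\pi_{i-1}:K_i\rightarrow K_{i-1}$ is a $\ker(\alpha_i)$-bundle, although the paper never isolates it as a standalone statement. What your route buys is economy: \ref{pairlem2} becomes a formal corollary of \ref{pairlem}, plus a reusable set-surjectivity lemma. What the paper's direct proof buys is independence from \ref{pairlem} and transparency about exactly where each good-pair axiom enters; in fact its two correction steps are precisely your surjectivity induction unwound in this special case. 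Your observation that $\mathrm{Hom}_{f_X}(P,N)$ is nonempty by the extension property of $X$ is also needed and correctly placed.
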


\begin{proof} The proof is an induction on the number of steps of $N$. The statement is trivial for $0$ step nilspaces. Assume that it is true for $k-1$ step nilspaces and let $N$ be a $k$-step nilspace.
Let $f:X\cup Y\rightarrow N$ be a morphism and let $f_2:P\rightarrow\mathcal{F}_{k-1}(N)$ be an extension of $f$ modulo $\sim_{k-1}$. Using lemma \ref{lifting} we can find a morphism $f_2':P\rightarrow N$ such that $f_2'\equiv f_2$ modulo $\sim_{k-1}$. Let $g=f'_2|_X-f|_X$. Acording to our assumption there is an extension $f_3:P\rightarrow\mathcal{D}_k(A_k)$ of $g$. Let $g_2=f_2'-f_3$. We have that $g_2|_X=f|_X$. Now let $g_3$ be an extension of $g_2|_Y-f|_Y$ to $P$ with $g_3|_X=0$. Then $f'=g_2-g_3$ is an extension of $f$ to $P$.
\end{proof}

\noindent{\bf Construction 1.}~~Let $N$ be a $k$-step nilspace and $f:N\rightarrow N'$ be a fiber surjective morphism into another $k$-step nilpace. According to lemma \ref{restbund} if $n\in\mathbb{N}$ then $C^n(N)=\Hom(\{0,1\}^n,N)$ is a probability space. We call this distribution the uniform distribution on $C^n(N)$. Furthermore by lemma \ref{collection} the map $f$ induces a measure preserving map from $C^n(N)$ to $C^n(N')$. The fibers of this map also have a $k$-fold bundle structure and so they are all probability spaces.  
It is trivial that every one element set in a cubespace has the extension property. As a consequence we have that if $x\in N$ is an arbitrary element then we can view $C_x^n(N)=\Hom_{0^n\rightarrow x}(\{0,1\}^n,N)$ as a probability space. Moreover, the measures on the $C_x^n(N)$ vary continuously with $x$.

\begin{lemma} Let $\psi_0 : C^n(N) \to N$ be the restriction map defined by $\psi_0(c) = c(0)$. Then the Haar measures on the fibers $\psi_0^{-1}(x) = C_x^n(N)$ form a CSM. 
\end{lemma}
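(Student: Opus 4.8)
The plan is to realize the evaluation map $\psi_0 : C^n(N) \to N$ as a relative compact $k$-fold abelian bundle and then invoke Lemma \ref{relcsm}, which produces a CSM from the fibrewise Haar measures of any such bundle. Continuity of $\psi_0$ is immediate, since it is the restriction to the closed set $C^n(N) \subseteq N^{\{0,1\}^n}$ of a coordinate projection. The fibre over $x$ is exactly $C_x^n(N) = \Hom_{0^n \to x}(\{0,1\}^n, N)$, which by the topological version of Lemma \ref{restbund} is a compact $k$-fold abelian bundle with structure groups $\Hom_{0^n \to 0}(\{0,1\}^n, \mathcal{D}_i(A_i))$ and which carries precisely the Haar measure named in the statement. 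So everything reduces to fitting these fibres into a single relative bundle over $N$.

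To build the factors, write $T_j = \mathcal{F}_j(N)$ and let $\pi_j : N \to T_j$ be the projection. For $1 \le j \le k$ set
$$R_j = \{(x,c) : x \in N,\ c \in C^n(T_j),\ c(0^n) = \pi_j(x)\},$$
the fibre product $N \times_{T_j} C^n(T_j)$, and put $R_0 = N$. Since $c(0^n) = x$ forces the $N$-coordinate, $R_k$ is canonically $C^n(N)$ and the composite $R_k \to R_0$ is $\psi_0$. The bundle map $R_j \to R_{j-1}$ sends $(x,c)$ to $(x, \pi_{j-1} \circ c)$ (with $R_1 \to R_0$ being $(x,c) \mapsto x$); by Theorem \ref{bundec} the lifts of a fixed cube of $T_{j-1}$ to $T_j$ form a torsor over $C^n(\mathcal{D}_j(A_j))$, so after imposing the value $\pi_j(x)$ at $0^n$ the fibre of $R_j \to R_{j-1}$ is a torsor over the compact group $\Hom_{0^n \to 0}(\{0,1\}^n, \mathcal{D}_j(A_j))$. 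Reading the fibre over $x \in R_0$ back off this tower recovers the iterated bundle $C_x^n(N)$, so the relative bundle's fibrewise Haar measures are exactly the measures of the statement.

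It remains to verify that this tower is a relative bundle in the topological sense, and this is where the real (if routine) work lies. Each $R_j$ is compact, being the closed subset of the compact space $N \times C^n(T_j)$ cut out by the equation $c(0^n) = \pi_j(x)$ between continuous maps into the Hausdorff space $T_j$ (here $T_j$ is compact and $C^n(T_j)$ is closed in $T_j^{\{0,1\}^n}$). The action of $\Hom_{0^n \to 0}(\{0,1\}^n, \mathcal{D}_j(A_j))$ on $R_j$ is free and fibrewise transitive by the previous paragraph, and it is continuous because the $A_j$-action on $T_j$ is continuous by Lemma \ref{topbundec}; finally each $R_j \to R_{j-1}$ is a continuous surjection of compact Hausdorff spaces, hence a quotient map, so $R_{j-1}$ carries the quotient topology. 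Thus $R_k = C^n(N) \to R_0 = N$ is a relative compact $k$-fold abelian bundle, and Lemma \ref{relcsm} finishes the proof. The main obstacle is purely this topological bookkeeping together with the identification of the fibrewise Haar measures with the ones on $C_x^n(N)$; equivalently, one may bypass the abstract relative-bundle language and argue by induction on $k$, factoring $\psi_0$ through $N \times_{T_{k-1}} C^n(T_{k-1})$ and composing the fibrewise-Haar CSM of the top structure-group bundle with the pullback along $\pi_{k-1}$ of the inductively given CSM for $C^n(T_{k-1}) \to T_{k-1}$, using the composition of continuous systems of measures recalled in the proof of Lemma \ref{relcsm}.
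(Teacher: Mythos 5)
Your proof is correct and takes essentially the same route as the paper: the tower $R_j = N \times_{T_j} C^n(T_j)$ that you construct by hand is precisely the kernel of the totally surjective bundle morphism $\psi_0$ that the paper obtains from lemma \ref{collection}, the kernel construction and remark \ref{kerident}, and both arguments conclude by applying lemma \ref{relcsm} to this relative compact $k$-fold abelian bundle. The only difference is that you verify the bundle axioms, the torsor structure of the fibres and the topological conditions explicitly, where the paper cites its general machinery.
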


\begin{proof}
According to lemma \ref{collection}, the map $\psi_0$ is a totally surjective bundle morphism between the $k$-fold Abelian bundle structures on $C^n(N)$ on $N$. Then, the kernel of $\psi_0$ is compact relative $k$-fold bundle whose fibers are the $C_x^n(N)$ by remark \ref{kerident}. Finally lemma \ref{relcsm} gives the required CSM.
\end{proof}

\medskip

\noindent{\bf Construction 2.}~~In the three cube $T_n$ let $X=\omega(\{0,1\}^n)=\{1,-1\}^n$. Then we claim that $X$ has the extension property.  Indeed, if $f:X\rightarrow N$ is a morphism to some nilspace and $h:\{-1,0,1\}\rightarrow\{0,1\}$ is the map with $h(-1)=-1,h(0)=1,h(1)=1$ then $f'=f\circ h^n$ is an extension of $f$ to $T_n$. 

\medskip

\noindent{\bf Construction 3.}~~In the three cube $T_n$ let $Y=\Psi_{0^n}(\{0,1\}^n)=\{0,1\}^n$. Then we claim that $Y$ has the extension property. Indeed, if $f:Y\rightarrow N$ is a morphism to some nilspace and $h:\{-1,0,1\}\rightarrow\{0,1\}$ is the map with $h(-1)=1,h(0)=0,h(1)=1$ then $f'=f\circ h^n$ is an extension of $f$ to $T_n$. As a consequence, using the symmetries of $T_n$, we obtain that $\Psi_v(\{0,1\}^n)$ also has the extension property for every $v\in\{0,1\}^n$. By lemma \ref{restbund} it follows that $\Hom_f(T_n,N)$ is a probability space.

\medskip

\noindent{\bf Construction 4.}~~Let $T_n$ be the three cube, $X=\{1,-1\}^n$ and $Y=\{0,1\}^n$. We also assume that $\omega\in C^n(P)$. By lemma \ref{extvert} this modification does not change the homomorphism set of $T_n$ into any nilspace.
We show that $X,Y$ is a good pair in $T_n$. Since $X\cap Y=\{1^n\}$ is a single point the extension property is clear in this case. According to construction 3 the set $X$ has the extension property.
Let $f:Y\rightarrow\mathcal{D}_k(A)$ be a morphism with $f(1^n)=0$. Let $h(-1)=1,h(0)=0,h(1)=1$. Then $f'=f \circ h^n$ is an extension of $f$ with $f|_X=0$. This shows that $X,Y$ is a good pair in $T_n$. By lemma \ref{pairlem} we have that if $f:X\rightarrow N$ is a morpism to a finite step nilspace then the restriction map from $\Hom_f(X,N)$ to $\Hom_{1^n\mapsto f(1^n)}(Y,N)$ is measure preserving.

\medskip

\noindent{\bf Construction 5.}~~Let $T_n$ be the three cube for some $n\in\mathbb{N}$. Let $x\in N$ be an element in a nilspace $N$. Then by lemma \ref{restbund} the set $Q_x=\Hom_{1^n\mapsto x}(T_n,N)$ is a probability space. Let $v\in\{0,1\}^n$ such that $v\neq 0^n$. We claim that if $t$ is a random element of $Q_x$ then $t\circ\Psi_v$ is a uniformly random element of $C^n(N)$. To see this let $X=\{1^k\}$ and $Y=\Psi_v(\{0,1\}^n)$. We have that $X\cap Y=\emptyset$. Since $X$ has one element, it has the extension property in $T_n$. Assume that $f:Y\rightarrow\mathcal{D}_k(A)$ is a morphism. By construction 3 the map $f$ has an extension $f_2:T_n\rightarrow\mathcal{D}_k(A)$. Since $1^k\notin Y$ there is a face of $T_n$ of the form $F=\{-1,0,1\}^a\times\{1\}\times\{-1,0,1\}^b$ with $a+b=n-1$ such that $F\cap Y=\emptyset$. Let $f':T_n\rightarrow\mathcal{D}_k(A)$ be the function obtained from $f_2$ by subtracting $f_2(1^n)$ from the values on $F$. It is easy to see that $f'$ is a morphism which extends $f$ and $f|_X=0$.
Now by lemma \ref{pairlem} the claim is proved. 

\medskip

\subsection{Measurable cocycles}\label{chapmeas}

\bigskip

From now on we will always assume that $N$ is a compact $n$-step nilspace and $A$ is a compact abelian group. We will only consider measurable cocycles on $N$. After developing some formalism we will see that every $A$ valued measurable cocycle defines a compact $A$-bundle over $N$ which can again be given a compact nilspace structure. In other words, a measurable cocycle defines a continuous extension of $N$ by $A$.

Recall that for $x\in N$ we denote by $C_x^k(N)$ the set of restricted cubes in which $0$ is mapped to $x$.
 The spaces $C_x^k(N)$ are the fibers of the map $\psi_0:C^k(N)\rightarrow N$ defined by $\psi_0(c)=c(0)$.
Each space $C_x^k(N)$ is a $k$-fold abelian bundle  and consequently has its own probability space structure (see construction 1 in chapter \ref{morpro}). We denote the probability measure on $C_x^k(N)$ by $\mu_x$. As we have seen the measures $\{\mu_x\}_{x\in N}$ form a CSM. 
Let $\rho:C^k(N)\rightarrow A$ be a measurable function. We denote by $\rho_x$ its restriction to $C^k_x(N)$.
We define $\mathcal{L}_k(N,A)$ as $\mathcal{L}(C^k(N),A)$ using the CSM with projection $\psi_0:C^k(N)\rightarrow N$.
Recall that $\mathcal{L}$ was defined in chapter \ref{csmchap}.

\begin{proposition}\label{meascont} Let $\rho:C^k(N)\rightarrow A$ be a measurable cocycle of degree $k-1$. Then $$M=\{\rho_x+a~|~x\in N, a\in A\}\subset\mathcal{L}_k(N,A)$$ is a compact $A$ bundle over $N$ with projection $\pi$.  
\end{proposition}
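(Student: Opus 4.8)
The plan is to realize $\pi$ as the restriction to $M$ of the projection $\tilde\pi:\mathcal{L}_k(N,A)\to N$ and to deduce all the bundle axioms from the continuity machinery of this chapter. First I would record the easy structural facts. The translation action of $A$ on $\mathcal{L}_k(N,A)$, continuous by Lemma \ref{contact}, preserves $M$ since $(\rho_x+a)+b=\rho_x+(a+b)$, and its restriction to $M$ is again continuous. This action is free: if $\rho_x+a=\rho_x$ as elements of $L(C^k_x(N),A)$, then $\rho(c)+a=\rho(c)$ for $\mu_x$-almost every $c$, and since $\mu_x$ is a probability measure this forces $a=0$. Because every element $\rho_x+a$ lies in $L(C^k_x(N),A)=L(\psi_0^{-1}(x),A)$, we have $\tilde\pi(\rho_x+a)=x$; hence the $A$-orbit of $\rho_x$ is exactly $M\cap\tilde\pi^{-1}(x)$, and $\pi:=\tilde\pi|_M$ sets up a bijection between the $A$-orbits in $M$ and the points of $N$. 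The map $\tilde\pi$ itself is continuous (taking $F_1\equiv 1$ in the defining functionals, the maps $f\mapsto\int_{\psi_0^{-1}(\tilde\pi(f))}F_2\,d\mu_{\tilde\pi(f)}$ are continuous and, as $F_2$ ranges over $C(C^k(N))$, separate the points of the compact Hausdorff space $N$), so $\pi$ is continuous as well.

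It remains to prove that $M$ is compact; granting this, $\pi$ is a continuous surjection from a compact space onto the Hausdorff space $N$, hence a closed, and therefore quotient, map, so $N$ automatically carries the quotient topology and $M$ is a compact $A$-bundle. Since $\mathcal{L}_k(N,A)$ is Polish, compactness of $M$ is equivalent to sequential compactness, and here I would use the difference operator $\mathcal{E}$ of Lemma \ref{diffcont} attached to the CSM $\psi_0:C^k(N)\to N$. The crucial observation is that $\mathcal{E}$ annihilates constants, so $\mathcal{E}(\rho_x+a)=\mathcal{E}(\rho_x)$ depends only on $x$ and not on $a$; explicitly $\mathcal{E}(\rho_x)(c,c')=\rho(c)-\rho(c')$ on the fibre of $C^k(N)\times_N C^k(N)$ over $x$.

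Granting that the map $x\mapsto\mathcal{E}(\rho_x)$ is continuous, its image $\{\mathcal{E}(\rho_x):x\in N\}$ is compact, and I would finish as follows. Take any sequence $\rho_{x_i}+a_i$ in $M$; passing to a subsequence we may assume $x_i\to x$ in the compact space $N$, whence $\mathcal{E}(\rho_{x_i}+a_i)=\mathcal{E}(\rho_{x_i})\to\mathcal{E}(\rho_x)$. By the quotient-map property established in the proof of Lemma \ref{diffcont}, convergence of the $\mathcal{E}$-images forces a further subsequence of $\rho_{x_i}+a_i$ to converge, necessarily to an element of the form $\rho_x+t$ with $t\in A$, which lies in $M$. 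Thus $M$ is sequentially compact, which is what we wanted. Note that I deliberately route through $\mathcal{E}$ rather than trying to prove $x\mapsto\rho_x$ continuous directly: since $\rho$ is only measurable, its fibrewise restriction may carry a base-point-dependent additive ambiguity that is measurable but not continuous, and $\mathcal{E}$ is exactly the device that discards it.

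The whole argument therefore reduces to the continuity of $x\mapsto\mathcal{E}(\rho_x)$, equivalently to the statement that the fibrewise differences $(c,c')\mapsto\rho(c)-\rho(c')$ vary continuously with the base point, and I expect this to be the main obstacle. The difficulty is precisely that $\rho$ is only measurable, so its pointwise values cannot be controlled along convergent sequences of cubes; what saves the difference is the cocycle structure. The concatenation axiom forces $\rho$ to vanish on degenerate cubes and to behave additively under gluing, so that $\rho(c)-\rho(c')$ should be governed by continuous incidence data rather than by the irregular values of $\rho$ itself. I would aim to make this precise by exhibiting the difference $(c,c')\mapsto\rho(c)-\rho(c')$ as a composite $g\circ f$ with $f$ continuous and fibrewise measure preserving (for the product CSM $\{\mu_x\times\mu_x\}$ on $C^k(N)\times_N C^k(N)$) and $g$ Borel, and then applying Lemma \ref{csmtechnical} to conclude continuity of $x\mapsto\mathcal{E}(\rho_x)$. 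Producing the correct such factorization, i.e.\ using the cocycle axioms to absorb the measurable-but-not-continuous part of $\rho$ into the base point, is the technical heart of the proposition.
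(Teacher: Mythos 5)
Your outer framework reproduces the paper's proof almost exactly: reduce everything to compactness of $M$, observe that $\mathcal{E}$ kills the $A$-ambiguity so that $\mathcal{E}(\rho_x+a)=\mathcal{E}(\rho_x)$ depends only on $x$, and then combine continuity of $x\mapsto\mathcal{E}(\rho_x)$ with the quotient-map property of Lemma \ref{diffcont} to conclude. (The paper phrases the conclusion as $M=\mathcal{E}^{-1}(g'(N))$ with $g'(N)$ compact rather than via sequences, but that is the same argument, and your preliminary observations about freeness of the action and continuity of $\pi$ are fine.) The problem is that you stop exactly where the real proof begins: you yourself flag continuity of $x\mapsto\mathcal{E}(\rho_x)$ as the technical heart, and for it you offer only a plan, not an argument. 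That is a genuine gap, and it is not a small one --- this step is essentially the entire content of the paper's proof of the proposition.

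Moreover, the plan you sketch would not work as stated. Lemma \ref{csmtechnical} needs a factorization $g\circ f$ in which $f$ is continuous on the total space and its restriction to \emph{every} fibre is measure preserving onto one \emph{fixed} measure space $(K,\nu)$; no such factorization of $(c,c')\mapsto\rho(c)-\rho(c')$ exists on $Z=C^k(N)\times_N C^k(N)$ itself, because the fibre measures $\mu_x\times\mu_x$ push forward to measures concentrated on the varying fibres $Z_x$, never to a single fixed $\nu$. The paper's device is to change the ambient space: it works on $Q=\Hom(T_k,N)$ with fibres $Q_x=\Hom_{1^k\to x}(T_k,N)$, where by Construction 5 each single coordinate map $t\mapsto t\circ\Psi_v$ (for $v\neq 0^k$) \emph{is} fibrewise measure preserving onto the fixed probability space $C^k(N)$, even though the joint distribution of these coordinates varies with $x$. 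Lemma \ref{csmtechnical} then makes each $g_v:x\mapsto\bigl(t\mapsto\rho(t\circ\Psi_v)\bigr)\big|_{Q_x}$ continuous, the cocycle identity of Lemma \ref{threecsum} identifies the signed sum $\sum_{v\neq 0^k}(-1)^{h(v)}g_v(x)$ with $t\mapsto\rho_x(t\circ\omega)-\rho_x(t\circ\Psi_{0^k})$, and Construction 4 (the good pair in $T_k$) transfers this continuity from $Q$ to $Z$, yielding continuity of $x\mapsto\mathcal{E}(\rho_x)$. This detour through the three-cube is precisely the mechanism that, in your words, ``absorbs the measurable part of $\rho$ into the base point,'' and it is what your proposal is missing.
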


\begin{proof} The action of $A$ on $M$ given by $f\rightarrow f+a$ is continuous by lemma \ref{contact}. It is enough to prove that $M$ is a compact subset in $\mathcal{L}_k(N,A)$. 

\medskip

Let $Q=\Hom(T_k,N)$ and $Q_x=\Hom_{1^k\rightarrow x}(T_k,N)$. Recall that $T_k$ is the three-cube $\{-1,0,1\}^k$.  We have by construction 5 in chapter \ref{morpro} that if $v\neq 0$ then $t\circ\Psi_v$ is uniformly random in $C^k(N)$ as $t$ is randomly chosen from $Q_x$. Let $g_v:N\rightarrow\mathcal{L}(Q,A)$ denote the function which maps $x\in N$ to the restriction of $t\mapsto \rho(t\circ\Psi_v)$ to $Q_x$. By lemma \ref{csmtechnical} we obtain that $g_v$ is continuous for every $v\neq 0^k$. Let $$g=\sum_{v\in\{0,1\}^k\setminus 0^k}(-1)^{h(v)}g_v.$$ We have that $g:N\rightarrow\mathcal{L}(Q,A)$ is continuous.  We have by lemma \ref{threecsum} that for $x$ in $N$ the value of $g(x)$ is equal to the function $t\mapsto \rho_x(t\circ\omega)-\rho_x(t\circ\Psi_0)$ (defined on $Q_x$). Now let $Z=C^k(N)\times_N C^k(N)$ and $g':N\rightarrow\mathcal{L}(Z,A)$ be the function defined by $g'(x)=\mathcal{E}(\rho_x)$ (using the notation of lemma \ref{diffcont}). The continuity of $g$ and construction 4 in chapter \ref{morpro} imply that $g'$ is continuous and so $g'(N)$ is compact in $\mathcal{L}(Z,A)$. Using that $M=\mathcal{E}^{-1}(g'(N))$ and lemma \ref{diffcont} we obtain that $M$ is homeomorphic to a continuous $A$ bundle over $N$ and so it is compact.
\end{proof}

\medskip

Now we define cubes of dimension $k$ on the compact topological space $M$.
Let $f:\{0,1\}^k\rightarrow M$ be a function. We have for every $v\in\{0,1\}^k$ that $\rho_{\pi(f(v))}=f(v)+a(v)$ for some element $a(v)$ in $A$. We say that $f$ is in $C^k(M)$ if $\pi\circ f\in C^k(N)$ and
$$\sum_{v\in\{0,1\}^k}a(v)(-1)^{h(v)}=\rho(\pi\circ f).$$ 
Lemma \ref{threecsum} shows that this is equivalent with the requirement that
\begin{equation}\label{zecubesum}
\sum_{v\in\{0,1\}^k}f(v)(t\circ\Psi_v)(-1)^{h(v)}=0
\end{equation}
for some (and thus for every) $t\in\Hom_{\pi\circ f\circ\omega^{-1}}(T_k,N)$. (Recall that $f(v)$ is a $A$ valued function on $C^k_{\pi(f(v))}$.)
In general $f$ is in $C^n(M)$ if $\pi\circ f\in C^n(N)$ and every $k$-dimensional face restriction of $f$ is in $C^k(M)$.
It follows from (\ref{zecubesum}) that the nilspace structure on $M$ depends only on the set $M$ itself. On the other hand if $\rho$ and $\rho'$ differ by a coboundary of degree $k-1$ then they define the same set $M$. This means that the nilspace $M$ depends only on the element in $H_{k-1}(N,A)$ represented by $\rho$.
The next lemma is crucial.

\begin{lemma}\label{miscompact} $M$ is a compact nilspace.
\end{lemma}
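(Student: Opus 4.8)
The plan is to show that $M$, with the cube structure just defined, is a compact degree-$(k-1)$ extension of $N$ by $A$ in the sense of Definition \ref{kdegext}: the nilspace axioms will then follow exactly as in the extension step of the proof of Theorem \ref{bundec}, while compactness of $M$ as a topological space is already supplied by Proposition \ref{meascont}, which realizes $M$ as a compact $A$-bundle over $N$ inside the Polish space $\mathcal{L}_k(N,A)$ (so $M$ is automatically compact, Hausdorff and second countable). Thus only two things remain to check: the nilspace axioms, and the closedness of $C^n(M)$ in $M^{\{0,1\}^n}$.

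For the algebraic axioms I would use the natural cross section $s(x)=\rho_x$ to write every element of $M$ as $\rho_x+a$, and reparametrize a candidate cube $f$ lying over $c=\pi\circ f$ by its $A$-coordinate $b$, so that (\ref{zecubesum}) becomes the weight condition $w(b|_F)=\rho(c|_F)$ on every $k$-dimensional face $F$ (using Lemma \ref{threecsum} to identify $\beta(t,\rho)$ with $\rho(t\circ\omega)$). Two facts then make the axioms fall out. First, a cocycle vanishes on any cube that is constant in some coordinate: applying the concatenation axiom of Definition \ref{cocycle} to such a cube concatenated with itself gives $\rho(g)=2\rho(g)$, and the $A$-valued weight $w$ vanishes on functions constant in a coordinate by pairing $v$ with $v+e_j$. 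Together with the equivariance of both $\rho$ and $w$ under cube automorphisms, this yields the composition axiom, since the restriction of a cube morphism to a $k$-face is either a face isomorphism (handled by equivariance) or drops rank (both sides vanish). Condition 2 of Definition \ref{kdegext} is then immediate: subtracting the weight conditions for two lifts of one base cube shows their difference $b$ satisfies $w(b|_F)=0$ on all $k$-faces, which is precisely the defining property of $\mathcal{D}_{k-1}(A)$. Granting condition 1 (existence of lifts), the gluing axiom follows verbatim from the extension argument in the proof of Theorem \ref{bundec}: glue the projected corner in $N$, subtract a lift to land in $\mathcal{D}_{k-1}(A)$, glue there, and add back.

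To establish condition 1, that $\pi(C^n(M))=C^n(N)$, I would first treat $n=k$ directly: given $c\in C^k(N)$, extend $c\circ\omega^{-1}$ to a morphism $t_0:T_k\to N$ (the outer vertices have the extension property by Construction 2), put $f(v)=\rho_{c(v)}$ for $v\neq 1^k$, and solve (\ref{zecubesum}) for the single free parameter $a\in A$ in $f(1^k)=\rho_{c(1^k)}+a$; Lemma \ref{threecsum} identifies the constant term as $\rho(t_0\circ\omega)=\rho(c)$, so $a$ is uniquely determined. The case $n<k$ is trivial (no $k$-faces, so $f=s\circ c$ works), and the general case $n>k$ follows by induction on $n$, lifting over the corner by means of the simplicial extension property (Lemma \ref{simpglue}) and closing up the last vertex via the reduction to $\mathcal{D}_{k-1}(A)$ just described.

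The main obstacle is the closedness of the cube sets, because $\rho$ is only measurable, so neither (\ref{zecubesum}) read pointwise nor the map $c\mapsto\rho(c)$ is visibly continuous — this is exactly the difficulty the embedding into $\mathcal{L}_k(N,A)$ is designed to repair. By Lemma \ref{kcompchar} it suffices to prove $C^k(M)$ closed, since for $n>k$ the set $C^n(M)$ is the intersection of $\{f:\pi\circ f\in C^n(N)\}$ with the preimages of $C^k(M)$ under the finitely many $k$-face restriction maps, all of which are continuous. To see $C^k(M)$ is closed I would interpret the left side of (\ref{zecubesum}), \emph{as a function of} $t$, as an element of $\mathcal{L}(Q,A)$, and check that $f\mapsto\sum_{v}f(v)(t\circ\Psi_v)(-1)^{h(v)}$ is a continuous map $M^{\{0,1\}^k}\to\mathcal{L}(Q,A)$; this is precisely the continuity packaged in the proof of Proposition \ref{meascont} (the maps $g_v$ are continuous by Lemma \ref{csmtechnical}, and pullback along the fiberwise measure-preserving maps $t\mapsto t\circ\Psi_v$ of Construction 5 is continuous in the $\mathcal{L}$-topology). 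Then $C^k(M)$ is the intersection of the preimage of $C^k(N)$, closed because $N$ is a compact nilspace, with the closed zero set of this continuous map, completing the argument.
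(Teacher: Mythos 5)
Your skeleton is the same as the paper's: compactness of the topology is taken from Proposition \ref{meascont}, the nilspace axioms are obtained by exhibiting $M$ as a degree extension (hence, via its bundle structure, a degree bundle) and invoking Theorem \ref{bundec}, the problem is reduced to the closedness of $C^k(M)$, and that closedness is proved exactly as in the paper: the left-hand side of (\ref{zecubesum}), viewed as a function of $t$, defines a continuous map from $\{f:\pi\circ f\in C^k(N)\}$ into $\mathcal{L}(\Hom(T_k,N),A)$, and $C^k(M)$ is the zero set of the resulting (constant-valued) quantity. So the parts of your argument that parallel the paper's actual proof are correct.

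There is, however, one genuine flaw, and it sits in the extra material you added (the paper itself only writes ``Theorem \ref{bundec} shows that $M$ satisfies the nilspace axioms'' and leaves the cubespace verification implicit). Your case analysis for the composition axiom asserts that the restriction of a cube morphism to a $k$-face ``is either a face isomorphism (handled by equivariance) or drops rank (both sides vanish).'' This dichotomy is false: a cube morphism $\{0,1\}^k\rightarrow\{0,1\}^n$ can be injective without its image being a face, for instance the diagonal-type morphism $(x_1,x_2,\dots,x_k)\mapsto(x_1,x_1,x_2,\dots,x_k)$. For such a morphism $\psi$ neither side of the required identity vanishes, and $f\circ\psi$ is not a face restriction composed with an automorphism, so neither of your two cases applies. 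The repair uses an ingredient you already recorded but did not deploy here: such a diagonal restriction is the concatenation of two genuine face restrictions (in the example above, of the faces $\{y_1=0\}$ and $\{y_2=1\}$ of $\{0,1\}^{k+1}$), the projection $\pi\circ f\circ\psi$ is a cube of $N$ by the concatenation lemma, and both the cocycle $\rho$ (axiom 2 of Definition \ref{cocycle}) and the weight $w$ are additive under concatenation, the middle terms cancelling because the two cubes agree on the shared face. Hence membership in $C^k(M)$ is preserved under concatenation, and combining this with your automorphism and rank-dropping cases (by induction on the number of repeated variables) completes the composition axiom. A similar caution applies to your lifting argument for $n>k$ in condition 1 of Definition \ref{kdegext}: lifts of the two $(n-1)$-faces of a cube of $N$ need not be compatible on the spanning $k$-faces, and making them compatible again requires the cocycle identities, not just Lemma \ref{simpglue}.
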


\begin{proof}
Theorem \ref{bundec} shows that $M$ satisfies the nilspace axioms. Proposition \ref{meascont} defines a compact topology on $M$. By lemma \ref{kcompchar} it is enough to show that $C^k(M)$ is a closed subset of $M^{\{0,1\}^k}$.

We will use the probability space $\Hom(T_k,N)$ which can be looked at as a CSM with fibers $\Hom_{c\circ\omega^{-1}}(T_k,N)$ where $c\in C^k(N)$.
Let $Q^k(M)$ denote the set of all functions $f:\{0,1\}^k\rightarrow M$ such that $c=\pi\circ f \in C^k(N)$.
It is clear that $Q^k(M)$ is a closed subset of $M^{\{0,1\}^k}$. Now we define a map $\phi:Q^k(M)\rightarrow\mathcal{L}(\Hom(T_k,N),A)$ by
$$\phi(f)(g)=\sum_{v\in\{0,1\}^k}f(v)(g\circ\Psi_v)(-1)^{h(v)}$$
where $g\in\Hom_{c\circ\omega^{-1}}(T_k,N)$. This definition implies that $\phi(f)(g)$ is always a constant function. Let us denote this constant by $\phi_2(f)\in G$. Formula (\ref{zecubesum}) implies that $C^k(M)=\phi_2^{-1}(0)$.
Since the map $\phi$ is continuous we obtain that $C^k(M)$ is closed.
\end{proof}

\medskip

\begin{lemma} Let $M$ be a compact nilspace which is a degree-$k$ extension of a compact nilspace $N$ by a compact abelian group $A$. Then there is a measurable cross-section for this extension and therefore a measurable cocycle.
\end{lemma}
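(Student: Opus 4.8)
The plan is to reduce the statement to a standard Borel selection result and then feed the resulting section into the cross-section-to-cocycle construction already described earlier in this chapter. By Proposition \ref{meascont} the projection $\pi : M \to N$ is a continuous surjection of compact metric spaces (both $M$ and $N$ are second countable, compact and Hausdorff, hence metrizable), whose fibers are exactly the orbits of the free continuous action of $A$; in particular every fiber $\pi^{-1}(y)$ is a nonempty compact set, and $N$ carries the quotient topology. So the whole content of the lemma is the existence of a Borel cross-section, after which the ``therefore a measurable cocycle'' part is immediate.

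First I would produce a Borel cross-section $x : N \to M$ with $\pi \circ x = \mathrm{id}_N$. Since we do not assume the bundle is locally trivial, a \emph{continuous} section need not exist, so rather than patching local sections I would appeal to measurable selection. Abstractly this is the Federer--Morse theorem (or the Kuratowski--Ryll-Nardzewski selection theorem applied to the compact-valued multifunction $y \mapsto \pi^{-1}(y)$), which yields a Borel set $E \subseteq M$ on which $\pi$ restricts to a bijection onto $N$ with Borel inverse; that inverse is the desired $x$. For a self-contained version I would fix a countable family $\{f_n\}$ of continuous real functions on $M$ separating points (available since $M$ is compact metric) and, for each $y$, select the unique point of $\pi^{-1}(y)$ obtained by successively minimizing $f_1, f_2, \ldots$ over the fiber; the separation property forces this nested intersection of nonempty compacta to be a single point $x(y)$.

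Given such a section, I would define $f : M \to A$ by $f(m) = m - x(\pi(m))$, noting that $m$ and $x(\pi(m))$ lie in the same fiber so the difference is defined, and that $f$ is Borel because $x$ is Borel, $\pi$ is continuous, and the fiberwise difference map of the $A$-bundle is continuous. Then, exactly as in the cross-section construction preceding this lemma, I would set $\rho(c) = \sum_{v \in \{0,1\}^{k+1}} f(c(v))(-1)^{h(v)}$ for $c \in C^{k+1}(M)$; this descends to a function on $C^{k+1}(N)$ and satisfies the two cocycle axioms by the computation already carried out there. Measurability of $\rho$ follows at once from that of $f$ together with continuity of evaluation at the vertices.

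The main obstacle is the very first step: obtaining a genuinely Borel (not merely continuous-almost-everywhere) section in the absence of local triviality, and in the explicit construction justifying that $x$ is Borel. The technical heart of that justification is that the auxiliary functions $y \mapsto \min_{\pi^{-1}(y)} f_n$, taken over the successively minimized compact subsets, are Borel; this I would deduce from the upper semicontinuity of $y \mapsto \pi^{-1}(y)$ (a consequence of the continuity and properness of $\pi$) together with the continuity of the $f_n$. Everything downstream of a valid section---the passage to the cocycle and its measurability---is routine and reuses the formalism already in place, so I would concentrate the write-up on a clean invocation or short construction of the selection.
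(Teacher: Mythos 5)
Your proof is correct, but it takes a genuinely different route from the paper's. The paper never invokes a topological selection theorem: it applies the measure-theoretic uniformization result (Corollary 18.7 of \cite{Ke}), which yields a Borel cross-section as soon as one exhibits a Borel family of probability measures $y \mapsto \mu_y$ on $M$ giving positive mass to each fiber $\pi^{-1}(y)$; the Haar measures on the fibers, which form a CSM by the results of section \ref{csmchap}, serve exactly this purpose. You instead use purely descriptive-set-theoretic selection (Federer--Morse, or Kuratowski--Ryll-Nardzewski applied to the compact-valued fiber multifunction), relying only on the fact that $\pi : M \to N$ is a continuous surjection of compact metrizable spaces. Both are valid: your route is more elementary in that it needs none of the measure/CSM machinery and would apply verbatim to any continuous surjection of compact metric spaces, while the paper's route gets the selection for free from structure it has already built and reuses elsewhere. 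Two small caveats on your write-up: the continuity and surjectivity of $\pi$ come from the definition of a compact abelian bundle ($N$ carries the quotient topology), not from Proposition \ref{meascont}, which constructs an extension \emph{from} a cocycle rather than analyzing a given extension; and in your self-contained successive-minimization sketch, upper semicontinuity of the fibers only makes the \emph{first} min function lower semicontinuous --- at later stages the argmin multifunctions need not be upper semicontinuous, so establishing Borelness there requires the standard but nontrivial machinery behind Federer--Morse, which is why citing that theorem (or Kuratowski--Ryll-Nardzewski) outright is the cleaner option.
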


\begin{proof}
Let $\pi : M \to N$ be the projection of the extension. Consider the set $P = \{ (y,x) \in N \times M ~|~ \pi(x) = y \}$. A cross-section for $\pi$ is a subset of $P$ which happens to be the graph of a function $N \to M$. Corollary 18.7 of \cite{Ke} says that a sufficient condition for a Borel cross-section to exist is that for some Borel function $\mu : N \to \mathcal{P}(M)$, we have $\mu_y(P \cap (\{y\} \times M)) >0$. The measures $\mu_y$ from the CSM structure of $M$ satisfy this, and the map $y \to \mu_y$ is not only Borel but continuous by definition. 
\end{proof}

\begin{theorem}\label{mmorphiscont} Let $\phi:N\rightarrow M$ be a Borel measurable morphism between two finite step compact nilspaces. Then $\phi$ is continuous.
\end{theorem}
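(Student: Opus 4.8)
The plan is to induct on the number of steps $k$ of the two nilspaces. For $k\le 1$ a one-step nilspace is an affine abelian group (Lemma \ref{onestep}) and a morphism is an affine homomorphism, so the statement reduces to the classical fact that a Borel measurable (affine) homomorphism between compact abelian groups is automatically continuous. I recall the mechanism, since it will reappear in the inductive step: for a character $\chi$ of the target, $\chi\circ\phi$ is a measurable homomorphism into $\mathbb T$, and comparing Fourier coefficients of $x\mapsto(\chi\circ\phi)(x+y)=(\chi\circ\phi)(y)\,(\chi\circ\phi)(x)$ forces $\chi\circ\phi$ to agree almost everywhere, hence everywhere, with a single continuous character; as the dual group separates points and defines the topology, $\phi$ is continuous.

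For the inductive step I assume the result for $(k-1)$-step nilspaces and let $N,M$ be $k$-step. Since $\phi$ preserves each relation $\sim_i$ (Lemma \ref{sim1}), it descends to a morphism $\bar\phi:\mathcal F_{k-1}(N)\to\mathcal F_{k-1}(M)$; composing with a Borel section of the continuous factor projection shows $\bar\phi$ is Borel, so by the inductive hypothesis $\bar\phi$ is continuous. By Theorem \ref{bundec} and Lemma \ref{morphbund}, $\phi$ is a bundle morphism whose top structure morphism $\alpha_k:A_k\to B_k$ (between the compact structure groups of $N$ and $M$) is a homomorphism independent of the chosen $\sim_{k-1}$-class. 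Reading $\alpha_k$ off $\phi$ through the continuous fiberwise difference map of the $A_k$-bundle $N$ shows $\alpha_k$ is measurable, hence continuous by the base case. Consequently the fiberwise difference $(x,x')\mapsto\phi(x)-\phi(x')=\alpha_k(x-x')$ on $N\times_{\mathcal F_{k-1}(N)}N$ is continuous: within each fiber $\phi$ is rigid.

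It remains to upgrade $\phi$ from measurable to continuous in the transverse, ``base'' direction, and this is the main obstacle, because the bundle structure alone permits wild measurable sections; what saves us is the CSM machinery together with the morphism (cube) structure. Pulling $M$ back along the now-continuous $\bar\phi$, I reduce to an $\alpha_k$-equivariant measurable map $N\to P$ of compact $A_k$- and $B_k$-bundles over the common compact base $S=\mathcal F_{k-1}(N)$ covering the identity. For each character $\chi\in\hat B_k$ put $g_\chi=\chi\circ\phi$; then $g_\chi$ is measurable, has modulus one, and satisfies $g_\chi(x+a)=\chi(\alpha_k(a))\,g_\chi(x)$, so on every fiber it is an eigenfunction for the single continuous character $\chi\circ\alpha_k$ and is therefore continuous along fibers. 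The Haar measures on the fibers of $N\to S$ form a CSM (Construction 1 of Section \ref{morpro}), and I intend to use Lemmas \ref{csmtechnical}, \ref{contact} and \ref{diffcont} to show that the assignment sending $x$ to the restriction of $g_\chi$ to the fiber through $x$ varies continuously in $\mathcal L(N,\mathbb T)$; combined with the fiberwise rigidity this should pin $g_\chi$ down to a genuinely continuous function, exactly as a measurable character is forced onto a continuous one in the base case, now carried out parametrically over $S$.

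Finally, since $\hat B_k$ separates the points of $B_k$ and, together with the continuous $\bar\phi$, recovers the topology of $M$, continuity of all the $g_\chi$ yields continuity of $\phi$; equivalently one may phrase the conclusion by checking that the graph of $\phi$ is closed in $N\times M$ and invoking the criterion that a map between compact spaces is continuous if and only if its graph is closed. The delicate point to get right is the parametric Fourier step of the previous paragraph: one must ensure the fiberwise eigenfunction data assemble into a continuous object on all of $N$ rather than merely fiber by fiber, and it is precisely here that the compatibility of $\phi$ with cubes, and not just with the group actions on the fibers, has to be invoked.
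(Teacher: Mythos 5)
Your reductions are individually sound (the base case via automatic continuity of Borel characters, the descent to $\bar\phi$ through a Borel cross section, the continuity of the structure morphism $\alpha_k$, the fiberwise rigidity), but the proof has a genuine gap at exactly the point you yourself flag as delicate, and the gap is not cosmetic. The claim that $x\mapsto g_\chi|_{\pi^{-1}(\pi(x))}$ varies continuously in $\mathcal{L}(N,\mathbb{T})$ does not follow from lemmas \ref{csmtechnical}, \ref{contact} and \ref{diffcont} in the setup you have arranged. Lemma \ref{csmtechnical} requires the Borel data to be of the form $g\circ f$ where $f$ is continuous on the \emph{total space} of the CSM and measure preserving on each fiber into a \emph{fixed} measure space $K$; in your reduction the total space is $N$ itself, fibered by $\pi:N\rightarrow\mathcal{F}_{k-1}(N)$, and the Borel function is $\phi$ (or $g_\chi$) itself, for which no such factorization is available. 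Indeed, a Borel map that is affine on every fiber can have wildly discontinuous dependence on the base point, and the only thing that excludes this is compatibility with cubes of dimension $\geq 2$ transverse to the fibers --- which your argument never actually invokes. (A further unaddressed point: your ``pinning down'' step implicitly wants local sections of the $A_k$-bundle $N\rightarrow\mathcal{F}_{k-1}(N)$, but Gleason's local triviality needs Lie fibers, and here $A_k$ is an arbitrary compact abelian group.)

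The paper fills precisely this hole by changing the fibration: instead of $\pi:N\rightarrow\mathcal{F}_{k-1}(N)$ it uses the CSM attached to $\psi_0:C^{k+1}(N)\rightarrow N$, $\psi_0(c)=c(0^{k+1})$, with fibers $C^{k+1}_x(N)$. For each $v\neq 0^{k+1}$ the vertex evaluation $\psi_v:C^{k+1}(N)\rightarrow N$ \emph{is} a continuous map on the total space whose restriction to every fiber is measure preserving onto $N$, so lemma \ref{csmtechnical} applies verbatim to the Borel map $\phi$: each $f_v:x\mapsto(\phi\circ\psi_v)|_{C^{k+1}_x(N)}$ is continuous into $\mathcal{L}(C^{k+1}(N),M)$. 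Then the cube structure enters through the unique closing property: since $\phi$ is a morphism, for every $c\in C^{k+1}_x(N)$ the values $\{\phi(c(v))\}_{v\neq 0^{k+1}}$ form a corner in $M$ whose unique completion is $\phi(x)$, and because $M$ is a compact $k$-step nilspace this completion depends continuously on the corner (the cube sets are closed and completion is unique). Hence $\phi(x)$ is recovered continuously from the system $\{f_v(x)\}_{v\neq 0^{k+1}}$, and $\phi$ is continuous --- in one stroke, with no induction on $k$, no Fourier analysis, and no bundle trivialization. Note also that your inductive scaffolding does not actually reduce the difficulty: after the descent to $\bar\phi$ and the continuity of $\alpha_k$ you still face the same transverse problem for $\phi$, and the cube-fibration argument above resolves it just as easily for the original $\phi$ as for the reduced one, which is why the paper dispenses with the induction entirely.
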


\begin{proof} Assume that $M$ is a $k$ step nilspace. We consider the CSM defined by $\psi_0:C^{k+1}(N)\rightarrow N$. For every $0\neq v\in\{0,1\}^{k+1}$ we define the function $f_v:N\rightarrow\mathcal{L}(C^{k+1}(N),M)$ so that $f_v(x)$ is the restriction of $\phi\circ\psi_v$ to the space $C_x^{k+1}(N)$.
Using the fact that $\psi_v$ is measure preserving and lemma \ref{csmtechnical} we obtain that $f_v$ is continuous for every $0\neq v\in\{0,1\}^{k+1}$. Let $f:N\rightarrow\mathcal{L}(C^{k+1}(N),M)$ be the function which maps $x\in N$ to the constant function with value $\phi(x)$ on $C_x^{k+1}(N)$. The compact nilspace structure on $M$ guarantees that $f$ depends continuously on the system $\{f_v\}_{0\neq v\in\{0,1\}^{k+1}}$ and thus $f$ is continuous. It follows that $\phi$ is continuous.  
\end{proof}

\begin{lemma} Let $N$ be a compact finite step nilspace. Let $K$ be a degree $k$-extension of $N$ by a compact abelian group $A$. Let $S:N\rightarrow K$ be measurable cross-section and $\rho$ be the associated measurable cocycle. Then $K$ is isomorphic as a compact nilspace to the extension $M$ constructed in proposition \ref{meascont}.
\end{lemma}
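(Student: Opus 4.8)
The plan is to write down the evident candidate isomorphism determined by the cross-section, verify it is an $A$-bundle isomorphism over $N$, check it matches the two cube structures by reducing both to a single cocycle identity, and finally promote measurability to continuity via Theorem \ref{mmorphiscont}.

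Let $\pi : K \to N$ be the projection of the extension and, for $m \in K$, write $f(m) = m - S(\pi(m)) \in A$ for the affine coordinate of $m$ relative to the cross-section $S$, so that $f(m+a) = f(m) + a$ and, by construction, $\rho(c) = \sum_{v \in \{0,1\}^{k+1}} f(c(v))(-1)^{h(v)}$ for $c \in C^{k+1}(K)$. I would define $\Phi : K \to M$ by $\Phi(m) = \rho_{\pi(m)} - f(m)$; this lies in $M$ because $\rho_{\pi(m)} \in M$ and $M$ is $A$-invariant. Immediately $\pi_M \circ \Phi = \pi$ and $\Phi(m+a) = \Phi(m) - a$, and since $A$ acts simply transitively on the fibres of both $K \to N$ and $M \to N$, the map $\Phi$ is a fibrewise bijection, hence a bijection of $A$-bundles over $N$ intertwining the two actions through the automorphism $a \mapsto -a$ of $A$.

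The heart of the argument is a cocycle description of the cubes of an arbitrary degree-$k$ extension. Given $c : \{0,1\}^n \to K$ with $d = \pi \circ c \in C^n(N)$, property (1) of Definition \ref{kdegext} supplies a genuine lift $c_0 \in C^n(K)$ of $d$, and writing $c = c_0 + g$ with $g : \{0,1\}^n \to A$, property (2) says $c \in C^n(K)$ iff $g \in C^n(\mathcal{D}_k(A))$, i.e. $w(g \circ \phi) = 0$ for every cube morphism $\phi : \{0,1\}^{k+1} \to \{0,1\}^n$. Since $f$ is affine and $c_0 \circ \phi \in C^{k+1}(K)$ gives $\sum_v f(c_0(\phi(v)))(-1)^{h(v)} = \rho(d \circ \phi)$, one computes $w(g \circ \phi) = \sum_v f(c(\phi(v)))(-1)^{h(v)} - \rho(d \circ \phi)$. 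Hence $c \in C^n(K)$ iff $d \in C^n(N)$ and $\sum_{v} f(c(\phi(v)))(-1)^{h(v)} = \rho(d \circ \phi)$ for every such $\phi$ (equivalently, for every $(k+1)$-dimensional face, the weight automatically vanishing on non-injective $\phi$). Unwinding $\Phi$ and comparing with the defining condition (\ref{zecubesum}) of $C^n(M)$ shows this is precisely the condition $\Phi \circ c \in C^n(M)$ --- the sign in the definition of $\Phi$ is chosen exactly so that the two identities coincide. Thus $\Phi$ and $\Phi^{-1}$ preserve cubes in all dimensions, so $\Phi$ is an isomorphism of abstract nilspaces.

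Finally, $\Phi$ is assembled from Borel data --- the measurable cross-section $S$ (hence $f$) together with the map $x \mapsto \rho_x$, which is Borel by measurability of $\rho$ and the CSM structure on $M$ --- so $\Phi$ is a Borel measurable morphism between the finite-step compact nilspaces $K$ and $M$ (the latter compact by Proposition \ref{meascont} and Lemma \ref{miscompact}). By Theorem \ref{mmorphiscont} it is continuous, and a continuous bijection of compact Hausdorff spaces is a homeomorphism; therefore $\Phi$ is an isomorphism of compact nilspaces. The main obstacle is the middle step: correctly deriving the cocycle characterization $\sum_v f(c(\phi(v)))(-1)^{h(v)} = \rho(d \circ \phi)$ from Definition \ref{kdegext} and matching it, with the right sign and the face-versus-all-morphisms reduction for $\mathcal{D}_k(A)$, to formula (\ref{zecubesum}).
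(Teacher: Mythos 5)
Your proposal is correct and follows essentially the same route as the paper: the paper's proof likewise writes down the explicit map determined by the cross-section, namely $\theta(x)=\rho_{\pi(x)}+(x-S(\pi(x)))$, asserts that it is an isomorphism of abstract nilspaces ``by the definition of the cubic structure on $M$'', and then uses measurability of $\theta$ together with Theorem \ref{mmorphiscont} and compactness of $K$ and $M$ to conclude it is a homeomorphism, exactly as you do. The only discrepancy is the sign: your $\Phi(m)=\rho_{\pi(m)}-f(m)$ is in fact the choice consistent with the paper's convention $\rho_{\pi(f(v))}=f(v)+a(v)$ in the definition of $C^k(M)$ (the paper's $+$ sign would require $2\rho=0$), so your version corrects a small sign slip and, in the middle step, supplies the cube-by-cube verification that the paper leaves implicit.
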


\begin{proof} The isomorphism is given by the map $\theta:K\to M$ defined by $\theta(x)=\rho_{\pi(x)}+(x-S(\pi(x)))$. By the definition of the cubic structure on $M$ the map $\theta$ is an isomorphism of abstract nilspaces.
Since both $K$ and $M$ are compact Hausdorff spaces if we show $\theta$ is continuous it will automatically be a homeomorphism.  It is clear that $\theta$ is measurable and so theorem \ref{mmorphiscont} finishes the proof.
\end{proof}

\begin{corollary}\label{classrep} The isomorphism class (as a topological nilspace) of every degree $k$ extension of $N$ by $A$ is represented by some element in $H_k(N,A)$ as described in proposition \ref{meascont} and lemma \ref{miscompact}.
\end{corollary}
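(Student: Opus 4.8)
The plan is to treat this corollary as the assembly of the three results that immediately precede it, since all of the analytic difficulty has already been discharged there. First I would start with an arbitrary degree-$k$ extension $K$ of $N$ by $A$ and invoke the cross-section lemma just above to produce a measurable cross-section $S : N \to K$ together with its associated measurable cocycle $\rho$. By the construction of the cocycle generated by a cross-section (in the section on extensions and cohomology), $\rho$ is a measurable cocycle of degree $k$, i.e.\ a measurable function on $C^{k+1}(N)$; hence it determines a class $[\rho] \in H_k(N,A)$.

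The next step is to identify $K$ with the extension built from $\rho$. The lemma immediately preceding this corollary states exactly this: $K$ is isomorphic, as a compact nilspace, to the extension $M$ obtained by feeding $\rho$ into proposition \ref{meascont} and lemma \ref{miscompact} (with the cube dimension shifted by one, so that the relevant ambient space is $\mathcal{L}_{k+1}(N,A)$ and the degree bookkeeping matches: a degree-$k$ cocycle yields a degree-$k$ extension). Thus up to isomorphism of compact nilspaces, $K$ is the extension $M = M(\rho)$.

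It remains only to observe that $M(\rho)$ depends on $\rho$ solely through its cohomology class. This was already recorded in the discussion following lemma \ref{miscompact}: the underlying set $M$ is cut out by equation (\ref{zecubesum}), which is unchanged when $\rho$ is modified by a coboundary of degree $k$, and the cubic structure of $M$ depends only on the set $M$ itself. Consequently the isomorphism class of $M(\rho)$, and therefore of $K$, is determined by $[\rho] \in H_k(N,A)$, giving the asserted representative.

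The point to be careful about is that the statement asks only for \emph{some} representing element, so I do not need to establish a canonical correspondence: in particular I need not verify that distinct cross-sections produce the same class (though this does follow from the coboundary description of the ambiguity in $\rho$). The genuinely substantial steps — compactness of $M$ in proposition \ref{meascont} and lemma \ref{miscompact}, and the upgrading of the abstract isomorphism $\theta$ to a homeomorphism via theorem \ref{mmorphiscont} — are already available, so the main (and essentially only) obstacle is the indexing bookkeeping that aligns ``degree-$k$ extension'' with the degree-$k$ cocycle on $C^{k+1}(N)$ and the group $H_k(N,A)$; once that is in place the corollary follows by chaining the three preceding results.
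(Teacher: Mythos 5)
Your proposal is correct and follows exactly the route the paper intends: the paper offers no separate argument for this corollary, treating it as the immediate concatenation of the cross-section lemma (producing a measurable degree-$k$ cocycle $\rho$, hence a class in $H_k(N,A)$), the lemma identifying $K$ with $M(\rho)$ as compact nilspaces via $\theta$ and theorem \ref{mmorphiscont}, and the remark after lemma \ref{miscompact} that $M(\rho)$ depends only on the cohomology class of $\rho$. Your explicit handling of the degree/index bookkeeping (a degree-$k$ cocycle lives on $C^{k+1}(N)$, so proposition \ref{meascont} is applied with its parameter shifted by one) is a point the paper leaves implicit, and is handled correctly.
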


\subsection{Finite rank nilspaces and averaging}

Let $N$ be a compact $k$-step nilspace. We have from theorem \ref{bundec} that $N$ is a degree $k$-bundle with structure groups $A_1,A_2,\dots,A_k$. The compactness of $N$ implies that the structure groups are compact abelian groups. We define the rank $\rk(N)$ by
$$\rk(N)=\sum_{i=1}^k \rk(\hat{A_i})$$
where $\hat{A_i}$ is the Pontrjagin dual of $A_i$ and $\rk(\hat{A_i})$
is the minimal number of generators of $\hat{A_i}$.

A result of Gleason \cite{Gl} implies that if a compact Lie group $G$
acts freely and continuously on a completely regular topological space
$X$, then the quotient map $X \to X/G$ is automatically a locally
trivial bundle. Because of this, we have that finite rank
nilspaces are iterated locally trivial fibrations of finite
dimensional compact abelian groups. Topologically, they are finite
dimensional manifolds.

Finite rank abelian groups are direct products of finite dimensional tori's and finite abelian groups.
There is a natural way of metrizing them. For two elements $x,y\in\mathbb{R}^n/\mathbb{Z}^n=\mathbb{T}_n$ we define their distance $d_2(x,y)$ as the minimal possible Euclidean distance between a preimage of $x$ and a preimage of $y$ under the map $\mathbb{R}^n\rightarrow\mathbb{R}^n/\mathbb{Z}^n$. If the abelian group is not connected then points in different connected components have infinite distance.

Let $X_1$ and $X_2$ be two Borel random variables taking values in a finite rank compact abelian group $A$.
In general there is no natural way of defining their expected values. However if they take values in small diameter sets in $A$ then there is a canonical way of defining their expected value and it will satisfy $\mathbb{E}(X_1+X_2)=\mathbb{E}(X_1)+\mathbb{E}(X_2)$.

Let $a\in \mathbb{T}^n$ be an element and $B_r(a)$ be the open ball of radius $r$ around $a$. Let $a'\in\mathbb{R}^n$ be an arbitrary preimage of $a$ under the homomorphism $\mathbb{R}^n\rightarrow\mathbb{T}_n$.
If a Borel random variable $X$ takes all its values in $B_{1/4}(a)$ then there is a unique way of lifting $X$ to a random variable $X'$ on $\mathbb{R}^n$ in a way that the values are closer than $1/4$ to $a'$. We define $\mathbb{E}(X)$ as the image of $\mathbb{E}(X')$ under the map $\mathbb{R}^n\rightarrow\mathbb{T}_n$.
It is easy to see that $\mathbb{E}(X)$ does not depend on the choice of $a$.
If $m$ random variables take their values in sets of diameter at most $1/5n$ then the additivity of the expected value is guaranteed.
The next lemma is an easy application of averaging.

\begin{lemma}\label{smallco} Let $N$ be an $l$-step nilspace and $A$ be a finite rank abelian group. Then there is an $\epsilon$ such that every Borel measurable cocycle $\sigma:C^k(N)\rightarrow A$ of degree $k-1$ with $d_2(\sigma(c),0)\leq\epsilon$ for every $c\in C^k(M)$ is a coboundary.
\end{lemma}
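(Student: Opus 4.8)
The plan is to produce an explicit primitive $g\in Y_{-1}(N,A)$, i.e.\ a function $g:N\to A$, with $\partial^{k}g=\sigma$; since $\partial^{k}Y_{-1}(N,A)$ is by definition the group of degree $k-1$ coboundaries, this proves the lemma. Recall from the computation of $\partial$ that for any $g:N\to A$ and any $c\in C^{k}(N)$ one has
$$\partial^{k}g(c)=w(g\circ c)=\sum_{v\in\{0,1\}^{k}}(-1)^{h(v)}g(c(v)),$$
so the goal is to find $g$ with $w(g\circ c)=\sigma(c)$ for every cube $c$. First I would reduce to the case $A=\mathbb{T}^{n}$: writing the finite rank group as $\mathbb{T}^{n}\times F$ with $F$ finite, the discreteness of the metric on $F$ forces the $F$-component of $\sigma$ to vanish identically once $\epsilon$ is smaller than the minimal distance in $F$, and the zero cocycle is trivially a coboundary; so it suffices to treat the torus part, all of whose values lie in the ball $B_{\epsilon}(0)\subset\mathbb{T}^{n}$.

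The primitive is defined by averaging. For $x\in N$ set
$$g(x)=\mathbb{E}_{c\in C_{x}^{k}(N)}\bigl[\sigma(c)\bigr],$$
the expectation being taken with respect to the measure $\mu_{x}$ on the space $C_{x}^{k}(N)=\Hom_{0^{k}\mapsto x}(\{0,1\}^{k},N)$ of cubes whose $0^{k}$ vertex is pinned at $x$ (Construction 1). Because every value $\sigma(c)$ lies in $B_{\epsilon}(0)$ with $\epsilon\le 1/4$, the small-set expectation $\mathbb{E}$ of the previous subsection is well defined; moreover $g$ is Borel (indeed continuous) in $x$, since the $\mu_{x}$ form a CSM and fibrewise integration is continuous.

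To check $w(g\circ c)=\sigma(c)$ for a fixed $c=d\in C^{k}(N)$, I would realise $d$ as the outer cube of a three-cube. By Construction 2 the set $X=\omega(\{0,1\}^{k})=\{1,-1\}^{k}$ of outer vertices has the extension property in $T_{k}$, so the morphisms $t:T_{k}\to N$ with $t\circ\omega=d$ form a probability space. For such $t$, Lemma \ref{threecsum} gives
$$\sigma(d)=\sigma(t\circ\omega)=\beta(t,\sigma)=\sum_{v\in\{0,1\}^{k}}(-1)^{h(v)}\sigma(t\circ\Psi_{v}).$$
Now $t\circ\Psi_{v}$ is a cube whose $0^{k}$ vertex equals $t(\omega(v))=d(v)$, and since $X\cap\Psi_{v}(\{0,1\}^{k})=\{\omega(v)\}$ is a single point, $X$ and $\Psi_{v}(\{0,1\}^{k})$ form a good pair exactly as in Construction 4 (using the symmetries of $T_{k}$); hence the restriction map is a measure preserving (Lemma \ref{pairlem}, Lemma \ref{bundpres}) surjection onto the cubes with $0^{k}$ vertex pinned at $d(v)$, with image measure $\mu_{d(v)}$. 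Consequently, averaging over $t$ and using that $\sigma(d)$ is independent of $t$,
$$\sigma(d)=\sum_{v\in\{0,1\}^{k}}(-1)^{h(v)}\mathbb{E}_{t}\bigl[\sigma(t\circ\Psi_{v})\bigr]=\sum_{v\in\{0,1\}^{k}}(-1)^{h(v)}g(d(v))=w(g\circ d),$$
which is exactly $\partial^{k}g=\sigma$.

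The only point requiring care, and the reason for the smallness hypothesis, is the interchange of $\mathbb{E}_{t}$ with the alternating sum in the last display. The operation $\mathbb{E}$ is additive only for random variables taking values in a common small set; with $2^{k}$ summands in $\mathbb{T}^{n}$ this is guaranteed provided every partial sum stays within a set of diameter at most $1/5n$, so I would fix $\epsilon$ at the outset with $2^{k}\epsilon\le 1/5n$ (and $\epsilon$ below the minimal distance in $F$). Granting this, the final computation is literally linearity of expectation. The main obstacle is thus bookkeeping rather than conceptual: one must confirm the good-pair statement that makes $t\circ\Psi_{v}$ equidistributed (with its pinned vertex) precisely according to $\mu_{d(v)}$, which is the same mechanism underlying Constructions 4 and 5, and then verify that the chosen $\epsilon$ keeps all intermediate averages inside a region where the torus behaves like $\mathbb{R}^{n}$.
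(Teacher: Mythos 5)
Your proposal is correct and is essentially the paper's own proof: both define $g(x)=\mathbb{E}_{c\in C^k_x(N)}\bigl(\sigma(c)\bigr)$ by averaging over cubes pinned at $x$, then realise a given cube as the outer cube of a three-cube, apply Lemma \ref{threecsum}, and use the measure-preservation of Construction 4 (extended to all $\Psi_v$ by the symmetries of $T_k$) to interchange the alternating sum with the expectation, yielding $\sigma=\partial^k g$. Your additional bookkeeping (splitting $A\cong\mathbb{T}^n\times F$ and requiring $2^k\epsilon$ small so the partial-sum lifts stay in a fundamental domain) just makes explicit the details the paper leaves implicit in its small-diameter averaging convention.
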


\begin{proof}
Let $m$ be an element in $N$. By lemma \ref{restbund} the set $$\Omega_m=\Hom_{0^k\mapsto m}(\{0,1\}^k,N)$$ is a sub-bundle in $N^{\{0,1\}^k}$ and so the Haar measure gives a probability space on $\Omega_m$. Let $c$ be a random element in $\Omega_m$. 
We define $g(m)=\mathbb{E}_{\Omega_m}(\sigma(c))$.
The expected value makes sense because $\sigma$ is always close to $0$.
We claim that $\sigma$ is a coboundary corresponding to the function $g:N\rightarrow A$.

Let $f\in C^n(N)$ be an arbitrary element.
Let $T_n$ be the three-cube. For an arbitrary function $f:\{0,1\}^n\rightarrow N$ in $C^n(N)$ we define the probability space $\Omega=\Hom_{f\circ\omega^{-1}}(T_n,N)$.
Let $c$ be a random element in $\Omega$ (see construction 2 in chapter \ref{morpro}). We have by lemma \ref{threecsum} that
$$\sigma(f)=\sum_{v\in\{0,1\}^n}\sigma(c\circ\Psi_v)(-1)^{h(v)}.$$
According to construction 4 in chapter \ref{morpro}, the distribution of $c\circ\Psi_v$ is given by $\Omega_{f(v)}$.
This means by taking the expected value of both sides that
$$\sigma(f)=\sum_{v\in\{0,1\}^n}g(f(v))(-1)^{h(v)}.$$
\end{proof}

\subsection{The number of finite rank nilspaces}

The main result of this chapter is the following.

\begin{theorem}\label{countably} There are countably many finite rank compact nilspaces up to isomorphism.
\end{theorem}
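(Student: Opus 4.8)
The plan is to induct on the number of steps $k$, combining the cohomological classification of extensions (Corollary \ref{classrep}) with the rigidity supplied by Lemma \ref{smallco}. The base case $k\le 1$ is immediate: a $1$-step finite rank nilspace is an affine finite rank compact abelian group, and every finite rank compact abelian group is a product $\mathbb{T}^n\times F$ of a torus with a finite abelian group, of which there are only countably many up to isomorphism.

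For the inductive step, suppose the statement holds for $(k-1)$-step nilspaces and let $N$ be a $k$-step finite rank compact nilspace. By Theorem \ref{bundec} its factor $B=\mathcal{F}_{k-1}(N)$ is a finite rank $(k-1)$-step nilspace and $N$ is a degree-$k$ extension of $B$ by the finite rank compact abelian group $A=A_k$. By the induction hypothesis there are countably many possibilities for $B$ up to isomorphism, and countably many for $A$; since a countable union of countable sets is countable, it suffices to show that, for fixed $B$ and $A$, there are only countably many degree-$k$ extensions up to isomorphism. By Corollary \ref{classrep} every such extension is represented by an element of $H_k(B,A)$, so the whole theorem reduces to the claim that \emph{$H_k(B,A)$ is countable whenever $B$ is a finite rank compact nilspace and $A$ a finite rank compact abelian group.}

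To prove the claim I would first topologize cocycles. By the bundle structure on homomorphism sets (Lemma \ref{restbund}, see also Lemma \ref{collection}) the cube set $C^{k+1}(B)=\Hom(\{0,1\}^{k+1},B)$ is an iterated bundle whose structure groups $\Hom(\{0,1\}^{k+1},\mathcal{D}_i(A_i))$ are again finite rank; hence $C^{k+1}(B)$ is topologically a compact finite dimensional manifold with only finitely many connected components. Consequently the group of continuous $A$-valued cocycles, viewed inside $C(C^{k+1}(B),A)$ with the topology of uniform convergence, is separable: the torus part yields $C(C^{k+1}(B),\mathbb{T}^n)$, which is separable, while the finite part $F$ contributes only finitely many continuous functions because $C^{k+1}(B)$ has finitely many clopen subsets. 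The continuous coboundaries form a subgroup, and I claim it is open. Indeed, by Lemma \ref{smallco} there is $\epsilon>0$ such that any cocycle $\sigma$ with $d_2(\sigma(c),0)\le\epsilon$ for all $c$ is a coboundary, and the witnessing function is the average $m\mapsto\mathbb{E}_{\Omega_m}(\sigma)$, which is continuous when $\sigma$ is, since the family $\{\Omega_m\}$ is a continuous system of measures. Thus the uniform $\epsilon$-ball about $0$ lies inside the continuous coboundaries, so this subgroup is open; an open subgroup of a separable topological group has at most countably many cosets (they are disjoint open sets), so the continuous cohomology is countable.

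It remains to pass from continuous to measurable cohomology, that is, to show every class in $H_k(B,A)$ (defined via measurable cocycles, as used in Corollary \ref{classrep}) contains a continuous representative; this is the main obstacle, precisely because Lemma \ref{smallco} controls only the supremum norm, whereas the uniform topology fails to be separable on measurable cocycles. I would handle it by regularity: given a measurable cocycle $\rho$, form the associated extension $M$ (Proposition \ref{meascont}), a compact finite rank nilspace, so that $M\to B$ is a locally trivial principal $A$-bundle by Gleason's theorem \cite{Gl}. Using local trivializations together with the continuity furnished by the continuous-system-of-measures framework and Theorem \ref{mmorphiscont}, one replaces $\rho$, modulo a measurable coboundary, by a cocycle built from continuous local data, hence cohomologous to a genuinely continuous cocycle. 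This exhibits $H_k(B,A)$ as a quotient of the countable continuous cohomology, proving the claim and completing the induction.
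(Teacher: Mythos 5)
Your overall skeleton is sound and close to the paper's: induction on $k$, reduction to counting degree-$k$ extensions of a fixed finite rank $B$ by a fixed finite rank $A$ via Corollary \ref{classrep}, and a countability mechanism based on Lemma \ref{smallco}. Your open-subgroup argument for \emph{continuous} cocycles is also correct: by Lemma \ref{smallco} the uniform $\epsilon$-ball around $0$ consists of coboundaries, so the coboundaries form an open subgroup of the separable metrizable group of continuous cocycles, which therefore has countably many cosets. The genuine gap is the step you yourself flag as the main obstacle: the claim that every \emph{measurable} cocycle class contains a \emph{continuous} representative. The justification you offer --- Gleason local triviality of $M\to B$ plus CSM continuity yields ``a cocycle built from continuous local data, hence cohomologous to a genuinely continuous cocycle'' --- is a non sequitur. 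Local trivializations only produce cross sections that are continuous on the pieces of a finite partition of $B$, so the associated cocycle is only piecewise continuous; nothing in the paper, and nothing in your sketch, glues such piecewise data into a globally continuous cocycle modulo coboundaries. Note that $M\to B$ can be a topologically nontrivial principal $A$-bundle (e.g.\ the Heisenberg nilmanifold over $\mathbb{T}^2$), so no continuous global cross section exists; while that does not by itself preclude a continuous cocycle, it shows that passing from local to global continuity is exactly where substantive work is needed, and no lemma in the paper supplies it.

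The paper's proof is engineered precisely to avoid this step: it never produces a continuous representative. It fixes a finite system of disjoint Euclidean pieces $U_1,\dots,U_r$ covering $B$ up to measure zero, chooses cross sections continuous on each $U_i$, and obtains cocycles that are continuous on each piece $T_i$ of the full-measure set $T\subset C^{k+1}(B)$ of cubes with all vertices in $U=\cup_i U_i$. The cocycles arising this way, for the \emph{fixed} partition, form a separable family in the uniform metric on $T$, so among an uncountable family of pairwise non-isomorphic extensions two would have cocycles uniformly $\epsilon$-close \emph{almost everywhere}; then Lemma \ref{smallco2} --- the almost-everywhere version of small-cocycle rigidity, which your argument never invokes and which exists in the paper precisely for this purpose --- shows their difference is a coboundary, contradicting non-isomorphism via the results of the measurable-cocycle chapter. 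Your argument can be repaired by substituting, throughout, ``cocycles continuous on each piece of a fixed finite partition (mod null sets)'' for ``continuous cocycles'': with the pieces chosen suitably (say compact sets over which the bundle trivializes) this family is separable, Lemma \ref{smallco2} makes its coboundary subgroup open in the essential-sup metric, and every extension admits such a representative by the local-trivialization construction. That repaired argument is exactly the paper's pigeonhole proof recast in your open-subgroup language.
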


Recall that a nilspace is of finite rank if the dual groups of its structure groups are all finitely generated. A compact nilspace is of finite rank if and only if it is finite dimensional. It is clear that there are at least countably many finite rank nilspaces so it remains to show the upper bound.
We will need the following two lemmas.

\begin{lemma}\label{almostzero} Let $A$ be a finite rank compact abelian group and $N$ be a compact $n$-step nilspace. Let $\rho:C^k(N)\rightarrow A$ be a Borel measurable cocycle such that $\rho=0$ for almost every element in $C^k(N)$. Then $\rho$ is a coboundary.
\end{lemma}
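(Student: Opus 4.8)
The plan is to show that $\rho$ vanishes almost everywhere \emph{fiberwise} (not merely on the total measure space $C^k(N)$), and then to manufacture the primitive $g$ by the same three-cube averaging used in the proof of lemma \ref{smallco}. Write $\psi_0 : C^k(N)\to N$, $\psi_0(c)=c(0^k)$, for the CSM from construction 1 of section \ref{morpro}, whose fibers $C^k_x(N)$ carry Haar measures $\mu_x$, and recall $\mathcal{L}_k(N,A)=\mathcal{L}(C^k(N),A)$. Since $\rho$ is a measurable cocycle of degree $k-1$, proposition \ref{meascont} packages it into a compact $A$-bundle $M=\{\rho_x+a\mid x\in N,\ a\in A\}\subseteq\mathcal{L}_k(N,A)$ over $N$, where $\rho_x\in L(C^k_x(N),A)$ is the class of the restriction of $\rho$ to the fiber. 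Applying the same proposition to the (trivially cocyclic) zero function produces a second compact $A$-bundle $M_0=\{0_x+a\mid x\in N,\ a\in A\}$, the trivial split extension, where $0_x$ is the zero element of $L(C^k_x(N),A)$; the section $x\mapsto 0_x$ is continuous because for continuous $F_1,F_2$ the defining integrals $\int_{C^k_x(N)}F_1(0)F_2\,d\mu_x=F_1(0)\int_{C^k_x(N)}F_2\,d\mu_x$ vary continuously in $x$ by the continuity property of a CSM.

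The heart of the argument, and the step I expect to be the main obstacle, is to upgrade the hypothesis ``$\rho=0$ almost everywhere'' from the total measure on $C^k(N)$ to a statement valid on every fiber. Disintegrating the Haar measure of $C^k(N)$ through this CSM shows that for $\mu_N$-almost every $x$ the restriction of $\rho$ to $C^k_x(N)$ vanishes $\mu_x$-almost everywhere, i.e. $\rho_x=0_x$; hence $M$ and $M_0$ agree over a set of full $\mu_N$-measure, which is dense since $\mu_N$ has full support. As $M$ is closed and the continuous section $x\mapsto 0_x$ has dense image in $M_0$, we get $M_0\subseteq M$. Since $M$ is an $A$-bundle, each fiber $\pi^{-1}(x)\cap M=\rho_x+A$ is a single $A$-orbit, so the inclusion of orbits $0_x+A\subseteq\rho_x+A$ forces $0_x+A=\rho_x+A$; thus $M=M_0$ and, for \emph{every} $x\in N$, there is a constant $a_x\in A$ with $\rho(c)=a_x$ for $\mu_x$-almost every $c\in C^k_x(N)$.

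Finally I set $g:N\to A$, $g(x)=a_x$, and verify $\rho=\partial^k g$, where $\partial^k g(c)=\sum_{v\in\{0,1\}^k}(-1)^{h(v)}g(c(v))$; this exhibits $\rho$ as a coboundary of degree $k-1$. Fix $c\in C^k(N)$ and let $t$ range over the probability space $\Omega=\Hom_{c\circ\omega^{-1}}(T_k,N)$, which is nonempty by construction 2 of section \ref{morpro}. By lemma \ref{threecsum} the identity
$$\rho(c)=\rho(t\circ\omega)=\sum_{v\in\{0,1\}^k}(-1)^{h(v)}\,\rho(t\circ\Psi_v)$$
holds for every $t$. The base point of $t\circ\Psi_v$ is $t(\Psi_v(0^k))=t(\omega(v))=c(v)$, and by the measure-preserving restriction of construction 4 in section \ref{morpro} together with the symmetries of $T_k$, the cube $t\circ\Psi_v$ is distributed as $\mu_{c(v)}$ on $C^k_{c(v)}(N)$ as $t$ is chosen uniformly from $\Omega$. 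Since $\rho=a_{c(v)}$ holds $\mu_{c(v)}$-almost everywhere, for almost every $t\in\Omega$ we have $\rho(t\circ\Psi_v)=a_{c(v)}=g(c(v))$ simultaneously for all $v$ (a finite intersection of full-measure conditions); substituting into the displayed identity, whose left side is independent of $t$, gives $\rho(c)=\sum_{v}(-1)^{h(v)}g(c(v))=\partial^k g(c)$. As $c$ was arbitrary, $\rho=\partial^k g$ is a coboundary. Note that $g$ need not be continuous, which is harmless since $Y_{-1}(N,A)$ consists of \emph{all} $A$-valued functions on $N$.
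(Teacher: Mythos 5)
Your proposal is correct and takes essentially the same route as the paper's own proof: fiberwise disintegration giving $\rho_x=0$ a.e.\ on a full-measure (hence dense) set of base points, compactness of the bundle $M$ from Proposition \ref{meascont} to force \emph{every} restriction $\rho_x$ to be a.e.\ constant, and the three-cube identity of Lemma \ref{threecsum} together with Constructions 2 and 4 of Section \ref{morpro} to exhibit $\rho$ as the coboundary of those constants. The only differences are presentational: the paper subtracts the coboundary of $f$ first and shows the remaining cocycle vanishes identically, whereas you verify $\rho=\partial^k g$ directly, and you spell out (via the trivial bundle $M_0$ and the orbit comparison) the density-plus-compactness step that the paper states tersely.
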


\begin{proof} We use the notation from Proposition \ref{meascont}. Let $S\subseteq N$ be the set of points $x$ such that $\rho_x=0$ almost surely on $C_x^k(N)$. It is clear that the measure of $S$ is $1$. For every $x$ in $S$ the set $\{\rho_x+a~|~a \in A\} \subset\mathcal{L}_k(N,A)$ is the set of constant functions. Usin proposition \ref{meascont} and the fact that $S$ is dense in $N$ we obtain that for every $x$ the function $\rho_x$ is almost surely equal to a constant $f(x)$.
By subtracting the coboundary corresponding to $f$ from $\rho$ we get a cocycle $\rho'$ with the property that $\rho'_x=0$ for every $x\in N$ almost surely on $C_x^k(N)$.   We claim that $\rho'=0$ everywhere. 
Let $c\in C^k(N)$ be an arbitrary cube and let $f=c\circ\omega^{-1}$ be defined on $X\subset T_n$ as in construction 2 from chapter \ref{morpro}. Let $t$ be a random element from $\Hom_f(T_n,N)$. Then by lemma \ref{threecsum} we have that $\rho'(c)=\beta(t,\rho')$. This together with the fact that $\beta(t,\rho')=0$ shows the claim.
\end{proof} 

\begin{lemma} \label{smallco2} Let $N$ be $l$-step nilspace and $A$ be a finite rank abelian group. Then there is an $\epsilon$ such that every Borel measurable cocycle $\sigma:C^k(N)\rightarrow A$ of degree $k-1$ with $d_2(\sigma(c),0)\leq\epsilon$ for almost every $c\in C^k(M)$ is a coboundary.
\end{lemma}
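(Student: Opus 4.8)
The plan is to reduce this almost-everywhere statement to the two cases already settled: the everywhere-small case of Lemma~\ref{smallco} and the vanishing-almost-everywhere case of Lemma~\ref{almostzero}. I take $\epsilon$ to be the constant furnished by Lemma~\ref{smallco}, let $\sigma$ be a degree $k-1$ cocycle with $d_2(\sigma(c),0)\le\epsilon$ for almost every $c\in C^k(N)$, and write $B\subseteq C^k(N)$ for the null ``bad set'' on which smallness fails. I will build the same averaged primitive $g:N\to A$ as in the proof of Lemma~\ref{smallco}, show that the resulting coboundary identity now holds only for almost every cube, and then absorb the difference using Lemma~\ref{almostzero}.

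First I would define $g(m)=\mathbb{E}_{\Omega_m}(\sigma(c))$ at each $m\in N$ for which $\sigma$ is small $\mu_m$-almost everywhere on the fiber $\Omega_m=\Hom_{0^k\mapsto m}(\{0,1\}^k,N)$, and set $g(m)=0$ on the remaining set of $m$. Since $\{\mu_m\}_{m\in N}$ is a CSM and $B$ has measure zero, a Fubini argument along this system shows that $\sigma$ is small a.e. on $\Omega_m$ for almost every $m$, so $g$ is defined off a null set; one checks that $g$ is Borel, so that the coboundary $\partial^k g$, given on a cube $f$ by $\sum_{v}g(f(v))(-1)^{h(v)}$, is a genuine measurable $A$-valued coboundary of degree $k-1$.

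Next I would verify that $\sigma$ agrees almost everywhere with $\partial^k g$, following the averaging in Lemma~\ref{smallco}. For a cube $f$ I choose a random $c\in\Omega=\Hom_{f\circ\omega^{-1}}(T_k,N)$; Lemma~\ref{threecsum} gives the deterministic identity $\sigma(f)=\sum_v\sigma(c\circ\Psi_v)(-1)^{h(v)}$ for every such $c$, while construction~4 of chapter~\ref{morpro} identifies the law of $c\circ\Psi_v$ with $\mu_{f(v)}$ on $\Omega_{f(v)}$. Taking expectations and using additivity of the finite-rank expected value then produces $\sigma(f)=\sum_v g(f(v))(-1)^{h(v)}=\partial^k g(f)$ for almost every $f$ (those whose vertices $f(v)$ are all ``good'' points, a full-measure condition since each evaluation $f\mapsto f(v)$ is measure preserving onto $N$). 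Finally $\sigma-\partial^k g$ is a cocycle that vanishes almost everywhere, so Lemma~\ref{almostzero} shows it is a coboundary, whence $\sigma=\partial^k g+(\sigma-\partial^k g)$ is a coboundary as well.

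The main obstacle is justifying the expectation step, since additivity of the expected value in the finite-rank group $A$ is only available when all summands $\sigma(c\circ\Psi_v)$ lie in a common small set, which here holds only almost surely rather than surely. I would secure this by a second Fubini argument on $\Hom(T_k,N)$: by construction~5 each pushforward $t\mapsto t\circ\Psi_v$ is measure preserving onto $C^k(N)$, so the preimage of $B$ is null; taking the union over the finitely many $v$ and disintegrating along the fibration $t\mapsto t\circ\omega$ shows that for almost every $f$, almost every $c$ in the fiber $\Omega$ has all $c\circ\Psi_v\notin B$. On this full-measure set the simultaneous lift of the values $\sigma(c\circ\Psi_v)$ to $\mathbb{R}^{\dim A}$ is legitimate and additivity applies, which is exactly what makes the averaged identity valid almost everywhere.
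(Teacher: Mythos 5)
Your proof is correct and follows essentially the same route as the paper: both define the averaged primitive $g$ on the full-measure set of points $m$ where $\sigma$ is almost surely small on the fiber $C_m^k(N)$ (setting $g=0$ elsewhere), show that the coboundary of $g$ agrees with $\sigma$ almost everywhere, and then invoke Lemma \ref{almostzero} to absorb the a.e.-vanishing difference. Your Fubini and measure-preservation justifications merely spell out steps the paper leaves implicit under ``the same argument shows.''
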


\begin{proof} As the statement is very similar, the proof is also very similar to the proof of lemma \ref{smallco}. By using the notation from the proof of lemma \ref{smallco} we only explain the difference between the proofs. Let $S$ be the set of elements $m\in N$ for which $\sigma$ restricted to $C_m^k(N)$ is almost surely $\epsilon$ close to $0$. By the condition of the lemma the set $S$ has measure $1$. Inside $S$ we can define the function $g$ as in the proof of lemma \ref{smallco}. We set $g(m)=0$ if $m\neq S$. The same argument shows that the coboundary $\sigma_2$ corresponding to $g$ is almost everywhere equal to $\sigma$. By lemma \ref{almostzero} the difference $\sigma-\sigma_2$ is a coboundary. This implies that $\sigma$ is a coboundary.
\end{proof}

\medskip

\noindent{\it Proof of theorem \ref{countably}:}~~By induction on $k$ we prove that there are countably many finite rank $k$-step nilspaces. The statement is trivial for $k=0$. Assume that it is true for $k-1$. It is enough to show that for every finite rank $k-1$ step nilspace $N$ and finite rank abelian group $A$ there are at most countably many non isomorphic degree $k$ extensions of $N$ by $A$.  By contradiction assume that $\mathcal{S}$ is an uncountable family of non isomorphic such extensions.

Since $N$ is finite dimensional we can find a finite system $U_1,U_2,\dots,U_r$ of disjoint open sets each homeomorphic to a (possibly zero dimensional) Euclidean space such that $U=\cup_{i=1}^r U_i$ has measure $1$ in $N$.
Let $M$ be a degree $k$-extension of $N$ by $A$. Since every $A$ fibration of a Euclidean space is trivial, we can choose a Borel representative system $x:N\rightarrow M$ for the $A$ fibers over $N$ such that $x$ restricted to each $U_i$ is continuous. Let $T\subseteq C^{k+1}(N)$ denote the set of cubes whose vertices are all in $U$. We can decompose $T=\cup_{i=1}^l T_i$ according to the index sets of $U_i$'s containing the vertices. The cocycle $\rho:C^{k+1}(N)\rightarrow A$ on $N$ which is given by $x$ is clearly continuous on each set $T_i\subset T$. Since there is a separable $L^\infty$ dense system of continuous functions of the form $T_i\rightarrow A$ on each $T_i$ we get that in the uncountable family $\mathcal{S}$ there are two extensions $M_1$ and $M_2$ such that the above defined cocycles $\rho_1$ and $\rho_2$ are at most $\epsilon>0$ close (for an arbitrary $\epsilon$) on $T$. Then lemma \ref{smallco2} shows that $\rho_1-\rho_2$ is a coboundary which is a contradiction by the results in chapter \ref{chapmeas}.

\subsection{The Inverse limit theorem}

In this chapter we develop an iterative method of finding finite rank nilspace factors of compact nilspaces. Using this, our main result will establish compact nilspaces as inverse limits of finite rank ones. As a preparation we start with 
a definition and a lemma which show how to produce fiber surjective factors from apropriate cross sections in $k$-step nilspaces.

\begin{definition}\label{crossfactdef} Let $N$ be a $k$-step compact
  nilspace. Let $M=\mathcal{F}_{k-1}(N)$ with projection
  $\pi:N\rightarrow M$ and let $A$ be the $k$-th structure group of
  $N$. Let $\psi:M\rightarrow M_2$ be a fiber surjective factor of
  $M$. Assume furthermore that $S:M\rightarrow N$ is a measurable
  cross section such that the corresponding cocycle
  $\rho:C^{k+1}(M)\rightarrow A$ satisfies
  $\rho(c_1)=\rho(c_2)$ for every pair $c_1,c_2\in C^{k+1}(M)$
  with $\psi\circ c_1=\psi\circ c_2$. Then we say that the cross
  section $S$ is consistent with the factor $M_2$ of $M$.
\end{definition}

\begin{lemma}\label{crossfactor} Let us use the notation and assumptions from definition \ref{crossfactdef}. Let $\sim$ be the equivalence relation on $N$ defined by $x\sim y$ if and only if $\psi(\pi(x))=\psi(\pi(y))$ and $x-S(\pi(x))=y-S(\pi(y))$.
Then $\sim$ defines a fiber surjective factor $N_2$ of $N$ which an extension of $M_2$ by $A$. 
\end{lemma}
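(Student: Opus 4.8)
The plan is to identify the quotient $N/\sim$ with the degree-$k$ extension of $M_2$ by $A$ attached to a cocycle obtained by pushing $\rho$ down along $\psi$. Throughout write $r(x)=x-S(\pi(x))\in A$ for the ``residue'' of $x\in N$ relative to the cross section, so that the two conditions defining $\sim$ read $\psi(\pi(x))=\psi(\pi(y))$ and $r(x)=r(y)$; recall that $\rho$ is recovered from $r$ via $\rho(\pi\circ\tilde c)=\sum_{v\in\{0,1\}^{k+1}}r(\tilde c(v))(-1)^{h(v)}$ for any cube $\tilde c\in C^{k+1}(N)$. The consistency hypothesis says exactly that $\rho$ is constant on the $\psi$-fibres of $C^{k+1}(M)$, so it factors as $\rho(c)=\bar\rho(\psi\circ c)$ for a well-defined $\bar\rho:C^{k+1}(M_2)\to A$ (here I use that $\psi$, being fiber surjective, induces a surjection $C^{k+1}(M)\to C^{k+1}(M_2)$ by lemma \ref{lifting2}). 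The cocycle axioms for $\bar\rho$ descend from those of $\rho$, since automorphisms and concatenations of cubes in $M_2$ lift to the same operations on cube-lifts in $M$; I expect this descent to be routine. Thus $\bar\rho$ is a measurable degree-$k$ cocycle (compose $\rho$ with a measurable section of $C^{k+1}(M)\to C^{k+1}(M_2)$), and by proposition \ref{meascont} and lemma \ref{miscompact} it defines a compact degree-$k$ extension $E$ of $M_2$ by $A$. Realising $E$ as $M_2\times A$ with the zero section, a map $c=(c_0,c_A):\{0,1\}^n\to M_2\times A$ lies in $C^n(E)$ iff $c_0\in C^n(M_2)$ and $w(c_A\circ\theta)=\bar\rho(c_0\circ\theta)$ for every cube morphism $\theta:\{0,1\}^{k+1}\to\{0,1\}^n$.

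The heart of the argument is the comparison map $q:N\to M_2\times A=E$, $q(x)=(\psi(\pi(x)),r(x))$. It is measurable, surjective (given $(m_2,a)$, pick $m$ with $\psi(m)=m_2$ and take $x=S(m)+a$), and satisfies $q(x)=q(y)\iff x\sim y$, so as sets $E=N/\sim$. For the forward inclusion, if $\tilde c\in C^n(N)$ then $q\circ\tilde c=(\psi\circ\pi\circ\tilde c,\,r\circ\tilde c)$ has first component a cube in $M_2$, and for any $(k+1)$-face morphism $\theta$ one computes $w((r\circ\tilde c)\circ\theta)=\sum_v r(\tilde c(\theta(v)))(-1)^{h(v)}=\rho(\pi\circ\tilde c\circ\theta)=\bar\rho(\psi\circ\pi\circ\tilde c\circ\theta)$, which is precisely the cube condition for $E$; hence $q$ is a morphism. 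For the converse, given $c=(c_0,c_A)\in C^n(E)$, lift $c_0$ to a cube $c_0^{M}\in C^n(M)$ (lemma \ref{lifting2}) and set $\tilde c(v)=S(c_0^{M}(v))+c_A(v)$, so that $q\circ\tilde c=c$ and $\pi\circ\tilde c=c_0^{M}$. To see $\tilde c\in C^n(N)$, compare it with a genuine cube lift $\tilde c'\in C^n(N)$ of $c_0^{M}$ (which exists since $N$ is a bundle over $M$): the difference is $A$-valued with $(\tilde c-\tilde c')(v)=c_A(v)-r(\tilde c'(v))$, and for every $(k+1)$-face $\theta$ one gets $w((\tilde c-\tilde c')\circ\theta)=w(c_A\circ\theta)-\rho(c_0^{M}\circ\theta)=\bar\rho(c_0\circ\theta)-\bar\rho(c_0\circ\theta)=0$. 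Hence $\tilde c-\tilde c'\in C^n(\mathcal{D}_k(A))$, so $\tilde c\in C^n(N)$ by theorem \ref{bundec}, and every cube of $E$ is the $q$-image of a cube of $N$.

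These two inclusions show that $q$ is a morphism whose image cube structure is exactly that of $E$, i.e. $E$ \emph{is} the cubespace $N/\sim$; in particular $\sim$ is a congruence and $N_2:=E$ is a degree-$k$ extension of $M_2$ by $A$. Since $q$ is a measurable morphism between compact nilspaces, theorem \ref{mmorphiscont} makes it continuous, so $N_2$ is genuinely the compact factor $N/\sim$. Finally $q$ is fiber surjective: it carries each $\pi$-fibre $\pi^{-1}(m)$ bijectively onto the $A$-fibre $\{\psi(m)\}\times A$ of $N_2$, and for $j\le k-1$ a $\sim_j$ class of $N$ is a union of $\pi$-fibres lying over a $\sim_j$ class $G$ of $M$, which $q$ sends onto the union of $A$-fibres over the $\sim_j$ class $\psi(G)$ of $M_2$ (using fiber surjectivity of $\psi$); for $j\ge k$ the classes are points and the statement is trivial.

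The only genuinely delicate points I anticipate are the two weight computations in the second paragraph, where one must track differences in the $A$-torsors correctly and repeatedly invoke that $\rho$ depends only on the projection to $M$, together with the routine-but-fiddly verification in the first paragraph that the cocycle axioms really do descend to $\bar\rho$ (which needs adjacent cubes in $M_2$ to be lifted to adjacent cubes in $M$).
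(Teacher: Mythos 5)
Your proposal is correct and takes essentially the same route as the paper: push $\rho$ down to a well-defined cocycle $\bar\rho$ on $C^{k+1}(M_2)$ using the consistency hypothesis, build the compact extension via Proposition \ref{meascont} and Lemma \ref{miscompact}, map $N$ into it by $x\mapsto(\psi(\pi(x)),\,x-S(\pi(x)))$, and invoke Theorem \ref{mmorphiscont} for continuity. The paper's map $f(x)=x-S(\pi(x))+\rho'_{\psi(\pi(x))}$ is exactly your $q$ under the identification $(m,a)\leftrightarrow\bar\rho_m+a$; you merely spell out the cube-lifting, congruence, and fiber-surjectivity verifications that the paper dismisses as ``clear from the definitions.''
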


\begin{proof} Let $\rho':C^{k+1}(M_2)\rightarrow A$ be the well defined cocycle computed on a cube $c$ as the value of $\rho$ on the preimage of $c$ under $\psi$.  Let $N_2$ be the nilspace obtained from $M_2$ by the extension defined by $\rho'$. The elements of $M_2$ can be represented as in proposition \ref{meascont}. Recall that elements of $M_2$ are shifts of restricted cocycles of type $\rho'_x$. Let $f:N\rightarrow N_2$ be the map defied by $f(x)=x-S(\pi(x))+\rho'_{\psi(\pi(x))}$. It is clear from the definitions that $f$ is a fiber surjective morphism and that $f(x_1)=f(x_2)$ if and only if $x_1\sim x_2$. Furthermore by theorem \ref{mmorphiscont} we obtain that $f$ is a (continuous) isomorphism between compact nilspaces.
\end{proof}

\begin{theorem}[Inverse limit theorem]\label{invlim} Every $k$-step compact nilspace is an inverse limit of finite rank nilspaces. The maps used in the inverse system are all fiber surjective morphisms.
\end{theorem}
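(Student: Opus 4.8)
The plan is to induct on the number of steps $k$. For $k\le 1$ the nilspace $N$ is affinely a compact abelian group, and the statement reduces to the classical fact that a compact abelian group is the inverse limit of its finite rank (equivalently, finite dimensional Lie) quotients; dually, $\hat N$ is the directed union of its finitely generated subgroups, and the annihilators of these give a cofinal family of finite rank quotients with trivial intersection. For the inductive step write $M=\mathcal{F}_{k-1}(N)$ and let $A=A_k$ be the top structure group, so that by Theorem \ref{bundec} the space $N$ is a degree-$k$ extension of the $(k-1)$-step compact nilspace $M$ by the compact abelian group $A$. By the inductive hypothesis $M=\varprojlim_j M_j$ through fiber surjective morphisms $\psi_j\colon M\to M_j$ onto finite rank nilspaces, and these maps (and the induced maps on cube spaces) are measure preserving by Lemma \ref{bundpres}.

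Using the measurable cross-section lemma I would fix a Borel cross section $S\colon M\to N$ with associated measurable cocycle $\rho\colon C^{k+1}(M)\to A$, so that by Corollary \ref{classrep} the isomorphism type of $N$ over $M$ is the class of $\rho$ in cohomology. The two sources of infiniteness are then handled separately. On the group side, the closed subgroups $B\le A$ with $A/B$ of finite rank form a directed family with $\bigcap_B B=\{0\}$; write $q_B\colon A\to A/B$ and $\rho_B=q_B\circ\rho$, and recall that $N/B$ is a fiber surjective factor of $N$ that is a degree-$k$ extension of $M$ by $A/B$ with cocycle $\rho_B$. On the base side, since $C^{k+1}(M)=\varprojlim_j C^{k+1}(M_j)$ as probability spaces, functions pulled back from a finite level are $L^1$-dense. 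The goal is to combine these: for each $B$ produce a level $j$ and a cross section of $N/B\to M$ that is \emph{consistent} with the factor $M_j$ in the sense of Definition \ref{crossfactdef}, so that Lemma \ref{crossfactor} yields a genuine finite rank fiber surjective factor $N_{j,B}$ of $N$, namely a degree-$k$ extension of the finite rank nilspace $M_j$ by the finite rank group $A/B$.

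The engine for this is the coboundary-correction machinery. Since $A/B$ is finite rank, I would approximate $\rho_B$ by a cocycle $\tau$ that factors through $C^{k+1}(M_j)$ for $j$ large, with $\rho_B-\tau$ small in $d_2$ almost everywhere; Lemma \ref{smallco2} then shows $\rho_B-\tau$ is a coboundary (Lemma \ref{almostzero} absorbing the part supported on the null complement), so $\rho_B$ is cohomologous to the finite-level cocycle $\tau$. Consistency of $\tau$ with $M_j$ lets Lemma \ref{crossfactor} realize $N/B$ as the pullback along $\psi_j$ of a finite rank extension $N_{j,B}$ of $M_j$, whence the composite $N\to N/B\to N_{j,B}$ is a fiber surjective morphism onto a finite rank nilspace. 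Running over all pairs $(j,B)$ (ordered by $j'\ge j$ and $B'\le B$) gives a directed system; because the $M_j$ separate points of $M$ while $\bigcap_B B=\{0\}$ separates the $A$-fibers, the induced map $N\to\varprojlim_{(j,B)}N_{j,B}$ is a continuous bijection between compact Hausdorff spaces, hence an isomorphism of compact nilspaces.

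The hard part will be the approximation-up-to-coboundary step: turning an $L^1$ approximation of the measurable cocycle $\rho_B$ by finite-level functions into an approximation by an actual finite-level \emph{cocycle} whose error is small almost everywhere, all while preserving the symmetry and concatenation-additivity axioms of a cocycle and the consistency condition. The natural route is to average $\rho_B$ over the measure-preserving fibers of $C^{k+1}(M)\to C^{k+1}(M_j)$, using the finite rank local-averaging of the preceding section (or, on the finite dimensional pieces, continuous approximation as in the proof of Theorem \ref{countably}) to make sense of the average in $A/B$, and then to charge the residual fiber-dependence and any failure of the cocycle identity to a coboundary via Lemmas \ref{almostzero} and \ref{smallco2}. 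Verifying that this average is again a cocycle consistent with $M_j$, and that for $j$ large the residual is genuinely small in $d_2$ so that the correction lemmas apply, is the technical heart of the argument.
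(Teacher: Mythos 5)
Your overall strategy is the same as the paper's: induct on $k$, combine the inductive inverse limit $M=\varprojlim_j M_j$ with a chain of finite-rank quotients $A_k/B$ of the top structure group, pass to cross sections and measurable cocycles, average the cocycle over the fibers of the induced map $C^{k+1}(M)\to C^{k+1}(M_j)$, correct by a coboundary via the small-cocycle lemmas, and invoke Lemma \ref{crossfactor}. However, the step you flag as the ``technical heart'' is not just unfinished; the route you propose for it would fail. The averaging of an $A/B$-valued quantity is only defined (in the sense of the section on finite rank nilspaces and averaging) when \emph{all} values involved lie in a set of small diameter, and for an \emph{arbitrary} Borel cross section $S$ the cocycle $\rho_B$ has no reason to be nearly constant on the fibers of $\beta_{k+1}$. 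Your $L^1$-density argument cannot repair this: $L^1$-smallness of $\rho_B-\tau$ only gives smallness outside a set of small positive measure, whereas Lemma \ref{smallco2} requires $d_2$-smallness \emph{almost everywhere}, and the fiber average is not even well defined without uniform small-diameter control. The paper's solution is not to approximate a given cross section but to \emph{engineer} the cross section itself: using Gleason's local triviality \cite{Gl}, compactness, and the fact that the topology of $M$ is generated by the pullbacks of the topologies of the $M_i$, it builds $S$ so that $S$ is continuous on each fiber $\tau_t^{-1}(v)$ and $S(\tau_t^{-1}(v))$ has diameter at most $\epsilon$. This yields the \emph{everywhere} bound $d_2(\rho(c_1)-\rho(c_2))\leq\epsilon_2$ on $\beta_{k+1}$-fibers, which simultaneously makes the average $\rho'$ well defined, makes $\rho'$ consistent with the finite level, and makes $\rho''=\rho'-\rho$ everywhere small so that Lemma \ref{smallco} applies. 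This geometric construction of $S$ is the missing idea.

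There is a second gap, which you do not acknowledge: your family $\{N_{j,B}\}$ is not shown to be an inverse system. Each $N_{j,B}$ is produced from an independently chosen cross section, so the equivalence relations they induce on $N$ need not be nested, and there are no connecting morphisms $N_{j',B'}\to N_{j,B}$ for $(j',B')\geq (j,B)$; without these, the object $\varprojlim_{(j,B)}N_{j,B}$ is simply undefined, and the separation-of-points argument has nothing to apply to. Moreover, the theorem requires the connecting maps to be fiber surjective morphisms. The paper resolves this by building a genuine chain recursively: the cross section used at stage $m+1$ is required to be congruent modulo $B_m$ to (the pullback of) the one used at stage $m$ (the third property of $S$ in the construction), and this coherence is exactly what makes $N_m$ a factor of $N_{m+1}$ and produces the fiber surjective connecting maps $\psi'_{m+1}$. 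Any correct write-up along your lines must either impose such a compatibility between the cross sections chosen for different pairs $(j,B)$, or supply a separate argument that fiber surjective images of $N$ onto finite rank nilspaces can be co-directed; neither is automatic.
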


\begin{proof}

We prove the theorem by induction on $k$. If $k=0$ then there is nothing to prove.
Assume that it is true for $k-1$.
Let $N$ be a $k$-step nilspace with structure groups $A_1,A_2,\dots,A_k$.
By induction $M=\mathcal{F}_{k-1}(N)$ is the inverse limit of a system $M_1\leftarrow M_2\leftarrow\dots$ where the maps are all fiber surjective morphisms. Let us denote by $\tau_i$ the projection to $M_i$ and let $\pi$ be the projection $N\rightarrow M$. Let $\mathcal{Q}_i$ denote the collection of open sets of the form $\tau_i^{-1}(U)$ where $U$ is open in $M_i$. Since $M$ is a compact Hausdorff space, its topology is generated by the system $\{\mathcal{Q}_i\}_{i=1}^\infty$.

Since $A_k$ is a compact abelian group we have that $A_k$ is the inverse limit of finite rank compact abelian groups. This implies that there is a descending chain $A_k=B_0>B_1>\dots$ of closed subgroups with trivial intersection such that each factor $A_k/B_i$ is of finite rank. The nilspace $N$ is the inverse limit of the nilspaces $N/B_i$ and all the maps $N\rightarrow N/B_i$ are fiber surjective.

Our goal is to create a sequence of fiber surjective maps $\psi_i:N/B_i\rightarrow N_i$ and $\psi_i':N_i\rightarrow N_{i-1}$ for some finite rank nilspaces $\{N_i\}_{i=0}^\infty$ such that 
\begin{enumerate}
\item For every $i$, the restriction of $\psi_i$ to each $\sim_{k-1}$ class of $N/B_i$ is injective. In other words, the $k$-th structure group of $N_i$ is $A_k/B_i$. 
\item $\psi'_i\circ\psi_i=\psi_{i-1}$ holds for every $i=1,2,\dots$.
\item There is a strictly increasing sequence of natural numbers $\{h_i\}_{i=1}^\infty$ such that $\mathcal{F}_{k-1}(N_i)\simeq M_{h_i}$ and $\psi_i$ composed with the projection to $\mathcal{F}_{k-1}(N_i)$ is equal to the composition of the projection $N/B_i\rightarrow M$ and $\tau_{h_i}:M\rightarrow M_{h_i}$.
\end{enumerate} 
By abusing the notation we will denote the projection from $N_i$ to $M_{h_i}$ by $\pi$.
It is clear from these properties that every two points in $N$ are separated by a map $\psi_i$ for some number $i$. This implies that $N$ is the inverse limit of the system $\{\psi'_i\}_{i=1}^\infty$.
We construct this sequence recursively.

Define $N_0$ as the one point nilspace. Assume that
$\{N_i,\psi_i,\psi'_i,h_i\}_{i=1}^m$ are already constructed.  Let
$\{W_i\}_{i=1}^r$ be a system of closed subsets in $M_{h_m}$ such that
$\pi^{-1}(W_i)\subset N_m$ is a trivial $A_k/B_m$ bundle over $W_i$
for every $1\leq i\leq r$.  The existence of $\{W_i\}_{i=1}^r$ follows
easily from compactness and Gleason's previously mentioned result
\cite{Gl} that implies that any free action of a compact Lie group on
a compact space is automatically locally trivial. For each $1\leq
i\leq r$ let $\theta_i:W_i\rightarrow N_m$ be a continuous cross
section (which means that $\theta_i$ is continuous and its composition
by $\pi$ is the identity map of $W_i$).

For every $1\leq a\leq r$ let $W'_a$ denote the preimage of the set $\{\theta_i(x):x\in W_a\}$ under the composition of $N/B_{m+1}\rightarrow N/B_m$ and $\psi_m$. It is clear that $W'_a$ is a $B_{m+1}/B_m$ bundle over $\tau_{h_m}^{-1}(W_a)\subset M$. 
From Gleason's result we have that $W_a'$ as a $B_m/B_{m+1}$-bundle is locally trivial. Let $d$ be a metrization of $N/B_{m+1}$. For an arbitrary epsilon and every point $p\in \tau_{h_m}^{-1}(W_a)$ we can choose an open neighborhood $U_p$ of $p$ with the following three properties.

\begin{enumerate}
\item  there is a continuous cross section $S_p:U_p\rightarrow N/{B_{m+1}}$ above $U_p$
\item $S_p(U_p)$ has diameter at most $\epsilon$
\item $U_p\in\mathcal{Q}_{t(p)}$ for some $t(p)\in\mathbb{N}$.
\end{enumerate}

It is clear that we can guarantee the first two properties. The last property follows from the fact that the topology on $M$ is generated by the topologies on $M_i$.
 The compactness of $M$ implies that there are finitely many points $p_1,p_2,\dots,p_n$ such that $\{U_{p_i}\}_{i=1}^n$ is a covering system of $\tau_{h_m}^{-1}(W_a)$. Let $t_a=\max\{t(p_i)\}_{i=1}^n$. We have that every set in $\{U_{p_i}\}_{i=1}^n$ is in $\mathcal{Q}_{t_a}$.

 Now we can create a Borel measurable cross section $S_a:\tau_{h_m}^{-1}(W_a)\rightarrow N/B_{m+1}$ with the following properties.

 \begin{enumerate}
 \item $S_a$ is continuous on every preimage $\tau_{t_a}^{-1}(v)$ where $v\in M_{t_a}$
 \item The diameter of $S_a(\tau_{t_a}^{-1}(v))$ is at most $\epsilon$ for every $v\in M_{t_a}$.
 \end{enumerate}

 This can be constructed by dividing $\tau_{h_m}^{-1}(W_a)$ into the atoms of the Boolean algebra generated by $\{U_{p_i}\}_{i=1}^n$ and then using one type of cross section for each atom.
Let $t$ be tha maximum of the numbers $\{t_a\}_{a=1}^r$ and $h_m+1$. Using these partial cross section we create a global cross section $S:M\rightarrow N/B_{m+1}$ with the following properties.
 
 \begin{enumerate}
 \item $S$ is continuous on every preimage $\tau_t^{-1}(v)$ where $v\in M_t$
 \item The diameter of $S(\tau_t^{-1}(v))$ is at most $\epsilon$ for every $v\in M_t$.
 \item If $\tau_{h_m}(v_1)=\tau_{h_m}(v_2)$ for some $v_1,v_2\in M$ then $\psi_m(S(v_1))=\psi_m(S(v_2))$ 
 \end{enumerate}
 The last property expresses the fact that $S$ modulo $B_m$ is the pre image of a cross section in $N_m$.
 The cross section $S$ can again be constructed by dividing $M$ into the atoms of the Boolean algebra generated by the sets $\tau_{h_m}^{-1}(W_a)$ and then using one type of cross section for each atom.

We denote by $\rho:C^{k+1}(M)\rightarrow A_k/B_{m+1}$ the cocycle given by $S$ on $M$.
If $\epsilon$ is small enough than we can guarantee that for any two
cubes $c_1,c_2\in C^{k+1}(M)$ with $c_1 \circ \tau_t = c_2 \circ \tau_t$ we have
\begin{equation}\label{rhoclose}
d_2(\rho(c_1)-\rho(c_2))\leq\epsilon_2.
\end{equation}
Furthermore by the third property of $S$ we have that if $c_1,c_2\in
C^{k+1}(M)$ satisfy $\tau_{h_m} \circ c_1 = \tau_{h_m}\circ c_2$ then $\rho(c_1)$ is congruent with $\rho(c_2)$ modulo $B_m$.

We have by lemma \ref{collection} that the map
$\beta_i:\Hom(\{0,1\}^i,M)\rightarrow\Hom(\{0,1\}^i,M_t)$ induced by $\tau_t$ is totally surjective and preimages of elements in $\Hom(\{0,1\}^i,M_t)$ are $(k-1)$-fold sub-bundles of $\Hom(\{0,1\}^i,M)$. See also construction 1 in chapter \ref{morpro}.
We define the function $\rho':C^{k+1}(M)\rightarrow A_k/B_{m+1}$ by
$$\rho'(c)=\mathbb{E}_{c'\in\beta_{k+1}^{-1}(\beta_{k+1}(c))}(\rho(c')).$$
It makes sense to use the expected value because (\ref{rhoclose}) implies that $\{\rho(c')|c'\in\beta_{k+1}^{-1}(\beta_{k+1}(c))\}$ has small diameter if $\epsilon_2$ is small enough. Note that we take the expected value according to the Haar measure on $\beta_{k+1}^{-1}(\beta_{k+1}(c))$.
Since $t>h_m$ we have that $\rho'$ is congruent with $\rho$ modulo $B_m$.

\medskip

We claim that $\rho'$ is a cocycle on $M$. This follows basically from the fact that the cocycle axioms are all linear equations on cubes of dimension $k+1$ and $k+2$ and expected value is additive. To be more precise we need to check the axioms for automorphisms and concatenations of cubes. Since a set of the form $\beta_{k+1}^{-1}(\beta_{k+1}(c))$ is automorphism invariant the first axiom trivially holds for $\rho'$. To see the second axiom we take two concatenated cubes $c_1,c_2\in C^{k+1}(M)$ with concatenation $c_3$ and embed them into a $k+2$ dimensional cube $c\in C^{k+2}(M)$ as restrictions to two adjacent $k+1$-dimensional faces $F_1$ and $F_2$ in $\{0,1\}^{k+2}$.
The concatenation of $F_1$ and $F_2$ is a diagonal subcube $F_3$. We have that the concatenation of $c_1$ and $c_2$ is the restriction of $c$ to $F_3$. Note that the existence of $c$ is guaranteed by lemma \ref{simpglue}.
We use the third point of lemma \ref{collection} to view $\Omega=\beta_{k+2}^{-1}(\beta_{k+2}(c))$ as a probability space.
As the fourth point in lemma \ref{collection} says, the probability spaces $\beta_{k+1}^{-1}(\beta_{k+1}(c_i))$, $i=1,2,3$ are faithfully embedded as factors and coupled in the big probability space $\Omega$. Using the concatenation property for $\rho$ in $\Omega$ (when the random cube is restricted to $F_1,F_2,F_3$) and linearity of expectation we obtain that $\rho'(c_3)=\rho'(c_1)+\rho'(c_2)$.

\medskip

Now we have that $\rho'$ is a cocycle and so $\rho''=\rho'-\rho$ is also a cocycle.
Note that $\rho''$ takes values in $B_m/B_{m+1}$.
We have by (\ref{rhoclose}) that $d_2(\rho''(c),0)\leq\epsilon_2$ holds for every $c\in C^{k+1}(M)$.
By lemma \ref{smallco} we get that $\rho''$ is a coboundary.

 Since the difference of $\rho$ and $\rho'$ is a coboundary corresponding to a function $g:M\rightarrow B_m/B_{m+1}$ we have that by adding $g$ to our cross section $S$ we get a new cross section $S'$ such that the cocycle corresponding to $S'$ is equal to $\rho'$. This means that on $N/B_{m+1}$ the cross section $S'$ is consistent with the factor $M_t$.
 The way we produced $g$ and $S'$ (see the proof of lemma \ref{smallco}) guarantees that it is continuous on the preimages of points in $M_t$ under $\tau_t$. 
 Using lemma \ref{crossfactor} we obtain that $S'$ defines a factor $N_{m+1}$ of $N/_{B_{m+1}}$. Furthermore we also have that $S'$ is congruent to $S$ modulo $B_m$. This implies that $N_m$ factors through $N_{m+1}$. 
\end{proof}

\subsection{Rigidity of morphisms}

Let $N$ and $M$ be compact $k$-step nilspaces and let $d$ be a metric
on $M$ (metrizing its topology). We say that a map $\phi:N\rightarrow
M$ is an $\epsilon$-almost morphism if for an arbitrary $c\in
C^{k+1}(N)$ there is $c'\in C^{k+1}(N)$ such that $d(\phi\circ c,c')\leq\epsilon$ point wise.

An $\epsilon$ modification of a map $\phi:N\rightarrow M$ is another map $\phi'$ satisfying $d(\phi(x),\phi'(x))\leq\epsilon$ for every $x\in N$.

\begin{theorem} For every finite rank $k$-step nilspace $M$ with metric $d$ there is a function $f:\mathbb{R}^+\rightarrow\mathbb{R}^+$ with $\lim_{x\to 0}f(x)=0$ and $\epsilon_0>0$ such that if $\phi:N\rightarrow M$ is a Borel $\epsilon$-almost morphism with $\epsilon<\epsilon_0$ from a compact $k$-step nilspace $N$ to $M$ then it can be $f(\epsilon)$-modified to a (continuous) morphism $\phi'$.
\end{theorem}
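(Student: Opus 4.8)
The plan is to induct on the number of steps $k$. For $k=0$ both $N$ and $M$ are one-point nilspaces and there is nothing to prove, so I take $f_0\equiv 0$. For the inductive step I would first push $\phi$ down to the $(k-1)$-step factors, correct the induced map there by the inductive hypothesis, and then repair the top-degree part by measuring the failure of a lifted Borel section to be a morphism with a small cocycle and killing it with a small coboundary via Lemma \ref{smallco2}.

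First I would descend. Write $M'=\mathcal{F}_{k-1}(M)$ and $N'=\mathcal{F}_{k-1}(N)$ with projections $\pi_M,\pi_N$. I claim $\pi_M\circ\phi$ is approximately constant on $\sim_{k-1}$-classes. Indeed, if $x\sim_{k-1}y$ then by Lemma \ref{sim1} there is a $k$-cube equal to $x$ on every vertex but the origin, where it is $y$; pulling this back along the coordinate projection $\{0,1\}^{k+1}\to\{0,1\}^k$ produces a $(k+1)$-cube to which the $\epsilon$-almost morphism hypothesis applies, so its image under $\phi$ is $\epsilon$-close to a genuine cube $c'\in C^{k+1}(M)$. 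Projecting $c'$ to $M'$ and restricting to a bottom $k$-face yields a genuine $k$-cube of the $(k-1)$-step nilspace $M'$ whose $2^k-1$ non-origin vertices lie within $\epsilon$ of $\pi_M\phi(x)$; since in a $(k-1)$-step nilspace $\sim_{k-1}$ is trivial, the only exact cube with $2^k-1$ equal vertices is constant (Lemma \ref{sim1}), and the finite rank of $M$ then forces a modulus bound $d(\pi_M\phi(x),\pi_M\phi(y))\le\omega(\epsilon)$ with $\omega(\epsilon)\to 0$. Thus there is a well-defined Borel map $\bar\phi:N'\to M'$ with $\bar\phi\circ\pi_N\approx\pi_M\circ\phi$, and it is an $\omega(\epsilon)$-almost morphism into the finite rank $(k-1)$-step nilspace $M'$. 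By induction $\bar\phi$ can be $f_{k-1}(\omega(\epsilon))$-modified to a genuine morphism $\psi:N'\to M'$, so $\psi\circ\pi_N:N\to M'$ is a morphism with $d(\pi_M\circ\phi,\,\psi\circ\pi_N)$ small.

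Next I would correct the top degree. Because $M$ is finite rank, the $A_k$-bundle $M\to M'$ is locally trivial by Gleason's theorem (as used in the finite rank discussion), so for $\epsilon$ small each $\phi(n)$ projects along $A_k$ to the unique nearest point $\phi_1(n)$ of the fiber $\pi_M^{-1}(\psi\pi_N(n))$; this $\phi_1$ is Borel, satisfies $\pi_M\circ\phi_1=\psi\circ\pi_N$ exactly, and lies within a small distance of $\phi$. Viewing $M$ as the degree-$k$ extension of $M'$ with structure cocycle $\sigma\in Y_k(M',A_k)$ coming from Theorem \ref{bundec}, the section $\phi_1$ generates a cocycle $\rho_{\phi_1}\in Y_k(N,A_k)$, and I set $\tau:=\rho_{\phi_1}-(\psi\pi_N)^\ast\sigma$, which is again a cocycle in the sense of Definition \ref{cocycle} since both summands are. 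By the definition of the cube structure on the extension, $\phi_1\circ c\in C^{k+1}(M)$ exactly when $\tau(c)=0$, and since $\phi_1\approx\phi$ is an almost morphism the values $\tau(c)$ lie within $O(\epsilon)$ of $0$ for almost every $c\in C^{k+1}(N)$. As $N$ is finite step and $A_k$ is finite rank, Lemma \ref{smallco2} applies once $\epsilon<\epsilon_0$ and gives $\tau=\partial^{k+1}g$, where the averaging in its proof produces $g:N\to A_k$ with $d_2(g(n),0)\le\sup_c d_2(\tau(c),0)=O(\epsilon)$. Setting $\phi'(n):=\phi_1(n)-g(n)$ then makes the discrepancy cocycle vanish, so $\phi'\circ c\in C^{k+1}(M)$ for every $c\in C^{k+1}(N)$. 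Since $\phi'$ projects to the morphism $\psi\circ\pi_N$ and preserves $(k+1)$-cubes, Lemma \ref{cubechar} (cubes in $k$-step nilspaces are determined by their $(k+1)$-faces) shows $\phi'$ is a morphism; it is Borel, hence continuous by Theorem \ref{mmorphiscont}, and $d(\phi,\phi')\le d(\phi,\phi_1)+d(\phi_1,\phi')=O(\epsilon)$, yielding the required $f(\epsilon)=f_k(\epsilon)\to 0$.

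The hard part will be the quantitative bookkeeping needed to guarantee $f(\epsilon)\to 0$ with moduli that are uniform in the (possibly infinite rank) domain $N$. Two places deserve care. In the descent, I must extract the modulus $\omega$ purely from the rigidity of exact cubes in the finite rank nilspace $M'$ and check that $\bar\phi$ is genuinely an $\omega(\epsilon)$-almost morphism, not merely an almost-well-defined map. In the correction, I must confirm that $\tau$ is honestly an exact $A_k$-valued cocycle even though $\phi_1$ is only Borel, and that the coboundary $g$ delivered by Lemma \ref{smallco2} is controlled in sup-norm by $\sup_c d_2(\tau(c),0)$, so that the final modification stays $O(\epsilon)$. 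The finite rank hypothesis on $M$ is used essentially in both: for the nearest-point fiber projection defining $\phi_1$ and for the rigidity bound $\omega$.
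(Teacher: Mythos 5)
Your proposal is correct and follows essentially the same route as the paper: induct on $k$, correct the induced map on the $(k-1)$-step factor by the inductive hypothesis, produce a Borel lift of the corrected map using Gleason's local triviality of the $A_k$-bundle, and repair the remaining top-degree discrepancy by recognizing it as a small measurable $A_k$-valued cocycle and killing it with a small coboundary. The only organizational difference is that you invoke Lemma \ref{smallco2} and Theorem \ref{mmorphiscont} as black boxes, whereas the paper inlines exactly the same averaging (its corrected map $\phi_4(x)=\mathbb{E}_{\gamma}(Q(\gamma))$ coincides with your $\phi_1(x)-g(x)$, since $g$ in the proof of the small-cocycle lemma is the average of the discrepancy over cubes rooted at $x$) and verifies cube-preservation and continuity by hand via three-cubes and CSMs.
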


In the rest of this chapter we prove this theorem.

We go by induction on $k$. For $k=0$ there is nothing to prove. Assume that we have the statement for $k-1$.
The metric $d$ induces another metric $d'$ on $\mathcal{F}_{k-1}(M)$ such that $$d'(x',y')=\min\{d(x,y)|x,y\in M,~\pi_{k-1}(x)=x',\pi_{k-1}(y)=y'\}.$$

The assumption that $\phi$ is an $\epsilon$-morphism trivially implies that $\phi'=\pi_{k-1}\circ\phi$ is an $\epsilon$-morphism into $\mathcal{F}_{k-1}(M)$. By induction we can $f'(\epsilon)$-modify $\phi'$ to get a morphism $\phi_2:N\rightarrow\mathcal{F}_{k-1}(M)$.

We claim that there is a Borel measurable lift $\phi_3:N\rightarrow M$
of $\phi_2$ (where lift means that $\pi_{k-1}\circ\phi_3=\phi_2$) 
such that $d(\phi_3(x),\phi(x))\leq f'(\epsilon)+\epsilon$ and in particular
$\phi_3$ is an $\epsilon_2=f'(\epsilon)+2\epsilon$ almost-morphism.
To see this let $$G=\{(x,y):\pi_{k-1}(y)=\phi_2(x)\}\subset N\times M.$$
It is clear that $G$ is a compact set and it is an $A_k$ bundle over $N$ where $A_k$ is the $k$-th structure group of $M$. 
Let $p\in N$ be an arbitrary point. By Gleason's automatic local
triviality result \cite{Gl}, there is an open neighborhood $U_p\subset N$ of $p$ such that the $A_k$ bundle over $U_p$ in $G$ is trivial. This means that there is a continuous function $\tau:U_p\rightarrow N$ such that $\pi_{k-1}(\tau(x))=\phi_2(x)$ for every $x\in U_p$. By the definition of the $d'$ metric there is a constant $c\in A_k$ depending on $p$ such that $d(\phi(p),\tau(p)+c)<f'(\epsilon)+\epsilon$.
Since $\tau+c$ is a continuous function we have on some neighborhood $U'_p\subset U_p$ of $p$ that 
$d(\phi(x),\tau(x)+c)<f'(\epsilon)+\epsilon$ holds for every $x\in U'_p$. Let $f_p=\tau(x)+c$. The function $f_p$ is a local lift of $\psi_2$ satisfying our requirement. To finish the proof of the claim we choose a finite covering system of $N$ by sets of the form $U'_{p_i}$ and then on each atom of the finite boolean algebra generated by them we choose one function $f_{p_j}$ which is defined on it. The union of these functions is Borel measurable and satisfies our requirement.

Now we introduce an averaging process to get a function $\phi_4$ in the following way.
Let $P_2=\{0,1\}^{k+1}\setminus\{0^{k+1}\}$ be the corner of the $k+1$ dimensional cube $P$.
Using corollary \ref{sim2cor} and the fact that $\phi_2$ is a morphism we get that $\phi_3$ takes $k$-dimensional cubes in $N$ into $k$-dimensional cubes in $M$. This means that for every morphism $\gamma:P\rightarrow N$ the composition $\phi_3\circ\gamma|_{P_2}$ is a morphism of the corner $P_2$. For a morphism $\gamma:P\rightarrow N$ We denote by $Q(\gamma)\in M$ the unique completion of $\phi_3\circ\gamma|_{P_2}$ in $M$.

Now we define
$$\phi_4(x)=\mathbb{E}_{\gamma\in\Hom_f(P,N)}(Q(\gamma))$$
where $f$ maps the point $0^{k+1}$ to $x$.
From the fact that $\phi_2$ is a morphism we get that for every $\gamma\in\Hom_f(P,N)$ the element $Q(\gamma)$ is in the $\sim_{k-1}$ class of $\phi_3(x)$. This class is an affine copy of the $k$-th structure group $A_k$ of $M$. To show that the above averaging makes sense, we need to show that if $\epsilon_2$ is small enough than the values of $Q(\gamma)$ are in a small neighborhood in $A_k$. In fact we will show that $Q(\gamma)$ is close to $\phi_3(x)$ which will be also important later.
Using that $\phi_3$ is an almost morphism we get that $\phi_3\circ\gamma$ is close to an element in $C^{k+1}(N)$. The continuity of the cube structure shows that $\phi_3\circ\gamma$ is also close to a cube $q$ whose composition with $\pi_{k-1}$ is equal to $\pi_{k-1}(\gamma(P))$.
Let us write $\phi_3\circ\gamma$ as $q+r$ where $r:P\rightarrow A_k$ is some map. The fact that $\phi_3\circ\gamma$ is an almost cube translates to the fact that $$\sum_{v\in\{0,1\}^{k+1}}r(v)(-1)^{h(v)}=z$$ is close to $0$. On the other hand theorem \ref{bundec} shows that $Q(\gamma)=\phi_3(x)-z$.

\medskip

The next step is to show that $\phi_4$ is cube preserving.
According to lemma \ref{cubechar} we need to show that $k+1$ dimensional cubes map to cubes under $\phi_4$.
Let $T_{k+1}$ be the $3$-cube with subset $X$ as in construction 2 in chapter \ref{morpro}.
Let $B=T_{k+1}\setminus X$.
By abusing the notation a cube $c\in C^{k+1}(N)$ can be interpreted as a function $c:X\rightarrow N$.
For every element $\kappa\in\Hom_c(T_{k+1},N)$ we denote by
$Q(\kappa)\in C^{k+1}(M)$ the cube obtained by first taking the unique
extension of $\phi_3 \circ \kappa|_B$ to a morphism $T_{k+1}\rightarrow M$ and then restricting it to $X$.
Now
$$c_2=\mathbb{E}_{\kappa\in\Hom_c(T_{k+1},N)}Q(\kappa)$$
makes sense if $\epsilon_4$ is small enough and by theorem \ref{bundec} it will be a cube.
On the other hand  By construction 4 in chapter \ref{morpro} we obtain
that $c_2=\phi_4\circ c$ evaluated at the point $1^{k+1}
\in X$. The symmetries of $T_n$ guarantee that $c_2=\phi_4\circ c$.

\medskip

The last step is to show that $\phi_4$ is continuous. This is rooted
in the type of averaging which produces $\phi_4$.  We use a similar
argument as in chapter \ref{chapmeas}. The probability spaces
$C^{k+1}_x(N)$ are forming a CSM in $C^{k+1}(N)$.  Let
$\psi_v:C^{k+1}(N)\rightarrow N$ be the coordinate function defined by
$\psi(f)=f(v)$.  Let $\psi_{v,x}$ be the restriction of $\psi_v$ to
$C_x^{k+1}(N)$. The fact that $N$ is a topological nilspace shows that
for fixed $v$, the function $N \to \mathcal{L}(C^{k+1}(N),N)$ given by
$x \mapsto \psi_{v,x}$ is continuous with respect to the topology on
$\mathcal{L}(C^{k+1}(N),N)$ defined in section \ref{contbundec}. More
generally, it is not hard to see that, if $\alpha$ is an arbitrary
measurable function from $N$ to some Borel space $T$, then the
function $N \to \mathcal{L}(C^{k+1}(N),T)$ given by $x \mapsto
\alpha \circ \psi_{v,x}$ is continuous. (It is obviously enough to show
this for functions $\alpha:N\rightarrow\{0,1\}$ which are
characteristic functions of Borel sets in $N$, which is easy to see.)
In particular we have that $\phi_3\circ\psi_{v,x}$ depends
continuously on $x$. The average defining $\phi_4(x)$ is on the space
$C_x^{k+1}(N)$ using the functions $\phi_3\circ\psi_{v,x}$ in a
continuous way. This shows that claim.

\subsection{Nilspaces as nilmanifolds}

Let $N$ be a compact $k$-step nilspace. By abusing the notation we denote by $\aff(N)$ the set of translations which are continuous  functions from $N$ to $N$.
Note that continuous translations are fiber surjective automorphisms of $N$ and so they are measure preserving.

Let $d$ be a metrization of the topology on $N$. This induces a metric $t$ on $\aff(N)$ defined by
\begin{equation}\label{poldist}
t(g,h)=\max_{x\in N}d(g(x),h(x)).
\end{equation}
It is easy to see that $\aff(N)$ is a Polish group with this metrization.
Similarly we will denote by $\aff_i(N)$ the set of continuous translations of height $i$.
Note that by theorem \ref{mmorphiscont} the set of Borel translations of height $i$ is the same as the set of continuous translations of height $i$.

From now on we assume that $N$ is a finite rank $k$-step nilspace. Our goal is to show that $\aff(N)$ is a $k$-nilpotent Lie group which acts transitively on the connected components of $N$. We will also show that if $N$ is connected then it is a nilmanifold obtained from $\aff(N)$ whose nilspace structure is given by the filtration $\{\aff_i(N)\}_{i=1}^k$.

From lemma \ref{loctrans2} we obtain that $\sim_{k-1}$ classes are imprimitivity domains of $\aff(N)$.  This means that the action on $\sim_{k-1}$ classes induces a homomorphism $h:\aff(N)\rightarrow\aff(\mathcal{F}_{k-1}(N))$. It is clear that $h(\aff_i(N))\subseteq\aff_i(\mathcal{F}_{k-1}(N))$.
Let $M=\mathcal{F}_{k-1}(N)$ and let $d'$ be the metric on $M$ defined as the Hausdorff distance $d'(x,y)=d(\pi_{k-1}^{-1}(x),\pi_{k-1}^{-1}(y))$. Let us denote by $t'$ the metric on $\aff(M)$ defined similarly as in (\ref{poldist}) from $d'$.

\begin{lemma}\label{smalltrans} Let $i$ be a natural number. There is a positive number $\epsilon>0$ such that if $\alpha\in\aff_i(M)$ satisfies $t'(\alpha,1)\leq\epsilon$ then there is $\beta\in\aff_i(N)$ with $h(\beta)=\alpha$.
\end{lemma}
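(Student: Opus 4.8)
The plan is to reduce the statement to a splitting assertion about the translation bundle and then invoke that a uniformly small cocycle is a coboundary. Recall from the discussion preceding Proposition \ref{transext} that to $\alpha \in \aff_i(M)$, with $M = \mathcal{F}_{k-1}(N)$ and $A_k$ the $k$-th structure group of $N$, one associates the nilspace $\mathcal{T}^* = \mathcal{T}^*(\alpha,N,i)$, a degree $k-i$ extension of $M$ by $A_k$, and that by Proposition \ref{transext} it suffices to show that $\mathcal{T}^*$ is a split extension: a splitting yields a lift $\beta \in \aff_i(N)$ with $\pi_{k-1}\circ\beta = \alpha\circ\pi_{k-1}$, i.e.\ $h(\beta)=\alpha$. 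Since $N$ has finite rank, so does $A_k$, and thus Lemma \ref{smallco} applies to $M$ (an $(k-1)$-step nilspace) and $A_k$: there is a threshold $\epsilon_1=\epsilon_1(M,A_k)>0$ so that every Borel cocycle of degree $k-i$ on $M$ valued in $A_k$ that is everywhere $d_2$-within $\epsilon_1$ of $0$ is a coboundary. We may assume $1\le i\le k-1$, so that $\mathcal{T}$ is an ergodic nilspace; the remaining range is degenerate and handled directly. It therefore suffices to exhibit, for $\alpha$ close enough to the identity, a cross section of $\mathcal{T}^*$ whose cocycle is cohomologous to a uniformly $\epsilon_1$-small one.

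First I would build a uniformly small Borel lift of $\alpha$. The hypothesis $t'(\alpha,1)\le\epsilon$ says exactly that for every $m\in M$ the fibers $\pi_{k-1}^{-1}(\alpha(m))$ and $\pi_{k-1}^{-1}(m)$ are within Hausdorff distance $\epsilon$ in the metric $d$. Hence for each $p\in N$ the set of points of $\pi_{k-1}^{-1}(\alpha(\pi_{k-1}(p)))$ lying within $\epsilon$ of $p$ is nonempty and closed, and a measurable selection theorem (as in \cite{Ke}) produces a Borel map $\beta_0:N\to N$ with $\pi_{k-1}\circ\beta_0 = \alpha\circ\pi_{k-1}$ and $d(\beta_0(p),p)\le\epsilon$ for all $p$. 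Fixing also a Borel cross section $\sigma:M\to N$ of $\pi_{k-1}$, the rule $m\mapsto [\sigma(m),\beta_0(\sigma(m))]$, the $\sim_{k-1}$-class in $\mathcal{T}$ of the pair $(\sigma(m),\beta_0(\sigma(m)))$ (which lies in $\mathcal{T}$ because $\pi_{k-1}(\beta_0(\sigma(m)))=\alpha(m)$), defines a Borel cross section $s:M\to\mathcal{T}^*$ of $\gamma$ whose two coordinates are everywhere $\epsilon$-close.

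The heart of the matter is to control the cocycle $\rho\in Y_{k-i}(M,A_k)$ determined by $s$. Using the $i$-th arrow description of $\mathcal{T}$ together with the degree-$k$ bundle structure of $N$ (Theorem \ref{bundec}), I would compute $\rho(c)$, for $c\in C^{k-i+1}(M)$, as the $A_k$-valued defect measuring the failure of the $(k+1)$-dimensional array $(\sigma\circ c,\ \beta_0\circ\sigma\circ c)_i$ to be a cube of $N$; this defect is a well-defined element of $A_k$ precisely because, over the fixed $M$-cube $\pi_{k-1}$ of the array, the genuine cubes form a coset of $C^{k+1}(\mathcal{D}_k(A_k))$. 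Because $\beta_0$ is $\epsilon$-close to the identity, this array is pointwise $\epsilon$-close to the reference array $(\sigma\circ c,\ \sigma\circ c)_i$; since cube completion in the $k$-step nilspace $N$ is continuous (closedness of $C^{k+1}(N)$, together with Lemma \ref{transchar2}), the defect map is uniformly continuous on the compact set of arrays whose $\pi_{k-1}$-image is a cube, so the two defects differ by at most some $\eta(\epsilon)$ with $\eta(\epsilon)\to 0$. The reference defects assemble into the cocycle of the section $\sigma$ for the split extension $\mathcal{T}^*(1,N,i)$, hence into a coboundary. This continuity estimate, turning the $t'$-smallness of $\alpha$ into a uniform $d_2$-bound on $\rho$ minus a coboundary, \emph{uniformly over all cubes $c$ and independently of the arbitrary Borel choices of $\sigma$ and $\beta_0$}, is the step I expect to be the main obstacle.

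Finally, choosing $\epsilon$ so small that $\eta(\epsilon)\le\epsilon_1$, Lemma \ref{smallco} shows that $\rho$ minus the reference coboundary, and hence $\rho$ itself, is a coboundary, so $\mathcal{T}^*$ is a split extension. Proposition \ref{transext} then yields a translation $\beta\in\aff_i(N)$ lifting $\alpha$; arranging the splitting morphism to be Borel makes $\beta$ Borel, and Theorem \ref{mmorphiscont} upgrades it to a continuous translation. By construction $\pi_{k-1}\circ\beta=\alpha\circ\pi_{k-1}$, so $h(\beta)=\alpha$, completing the proof.
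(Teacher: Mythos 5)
Your proposal is correct and takes essentially the same route as the paper: both reduce the statement to splitness of the translation bundle $\mathcal{T}^*(\alpha,N,i)$ viewed as a degree $k-i$ extension of $M$ by $A_k$, construct a Borel cross section whose representing pairs $(x,y)\in\mathcal{T}$ are $d$-close when $t'(\alpha,1)$ is small, deduce that the associated cocycle is (cohomologous to) a uniformly small one, and then conclude via Lemma \ref{smallco} and Proposition \ref{transext}. The step you single out as the main obstacle --- the uniform continuity of the completion defect turning $d$-closeness of the section into $d_2$-smallness of the cocycle --- is exactly what the paper compresses into the phrase ``a standard compactness argument,'' and your elaboration of it is sound.
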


\begin{proof}  The translation bundle $\mathcal{T}^*=\mathcal{T}^*(\alpha,N,i)$ is a $k-i$ degree extension of $M$ by $A_k$. Our goal is to show if $\epsilon$ is small enough then the cocycle describing the extension is a coboundary.
If $\epsilon$ is small enough then we can choose a Borel representative system $S$ for the fibers of the map $\mathcal{T}^*\rightarrow M$ such that $(x,y)\in\mathcal{T}$ represents an element in $S$ then $d(x,y)\leq\epsilon_2$. A standard compactness argument shows that if $\epsilon_2$ is small enough then the cocycle corresponding to $S$ is also small.  Then lemma \ref{smallco} and lemma \ref{transext} finish the proof.

\end{proof}

\begin{lemma}\label{kerchar1} Assume that $i>k$. Then $${\rm ker}(h)\cap\aff_i(N)=\hom(M,\mathcal{D}_{k-i}(A_k)).$$
\end{lemma}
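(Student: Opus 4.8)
The plan is to turn the statement into a computation about a single function $M \to A_k$: I would first identify $\ker(h)\cap\aff(N)$ with functions $\phi:M\to A_k$, then show that for $i>k$ the height-$i$ condition forces $\phi\equiv 0$, while simultaneously the right-hand side collapses because $\mathcal{D}_{k-i}(A_k)$ is a one-point nilspace when $k-i<0$. For the identification: if $\beta\in\ker(h)$ then $\beta$ fixes every $\sim_{k-1}$ class setwise, and by Lemma \ref{loctrans2} its restriction to a class $F$ (an affine copy of $A_k$ with the $\mathcal{D}_k$ structure) is a local translation of $F$ to itself, hence is translation by a single element of $A_k$. So there is a well-defined $\phi:M\to A_k$ with $\beta(x)=x+\phi(\pi_{k-1}(x))$, and $\beta\mapsto\phi$ is exactly the identification under which $\hom(M,\mathcal{D}_{k-i}(A_k))$ is read as a subset of $\aff_i(N)$. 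The identity translation corresponds to $\phi\equiv 0$ and always lies in $\ker(h)\cap\aff_i(N)$, so the reverse inclusion is automatic; the content is the forward one.

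Next I would translate membership in $\aff_i(N)$ into a cube condition. By Lemma \ref{transchar2}, $\beta\in\aff_i(N)$ iff $(c,\beta(c))_i\in C^{k+i}(N)$ for every $c\in C^k(N)$. I compare $(c,\beta(c))_i$ with the cube $c_0(v,w)=c(v)$, which lies in $C^{k+i}(N)$ since it is $c$ precomposed with the coordinate projection $\{0,1\}^{k+i}\to\{0,1\}^k$. The two maps agree except at the top face $w=1^i$, where they differ by $\phi(\pi_{k-1}(c(v)))\in A_k$; since the $A_k$-action preserves $\sim_{k-1}$ classes, $\pi_{k-1}\circ(c,\beta(c))_i=\pi_{k-1}\circ c_0$. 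Theorem \ref{bundec} makes $N$ a degree-$k$ bundle, and the defining property of such a bundle at the top level $A_k$ (two maps with the same image in $\mathcal{F}_{k-1}(N)=M$, one of them a cube, are simultaneously cubes iff their difference lies in $C^{k+i}(\mathcal{D}_k(A_k))$) shows that $\beta\in\aff_i(N)$ iff the difference $d$ lies in $C^{k+i}(\mathcal{D}_k(A_k))$ for all $c$. Here $d(v,w)=\phi(\pi_{k-1}(c(v)))$ for $w=1^i$ and $d(v,w)=0$ otherwise.

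Now I would use $i>k$, i.e.\ $i\ge k+1$. Fix $m\in M$, choose $x\in\pi_{k-1}^{-1}(m)$, and take $c\equiv x$ constant, so $d(v,w)=\phi(m)$ for $w=1^i$ and $0$ otherwise. Since $i\ge k+1$ I can pick a cube morphism $\Phi=(\alpha,\gamma):\{0,1\}^{k+1}\to\{0,1\}^k\times\{0,1\}^i$ with $\gamma_j(u)=u_j$ for $1\le j\le k+1$ and $\gamma_j\equiv 1$ for $j>k+1$ (and $\alpha$ arbitrary); then $\gamma(u)=1^i$ holds exactly at the single vertex $u=1^{k+1}$. Evaluating the defining alternating sum $\sum_{u}d(\Phi(u))(-1)^{h(u)}=0$ for $\mathcal{D}_k(A_k)$ leaves only that vertex and yields $(-1)^{k+1}\phi(m)=0$, hence $\phi(m)=0$. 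As $m$ is arbitrary, $\phi\equiv 0$ and $\beta$ is the identity, so $\ker(h)\cap\aff_i(N)$ is the trivial group. Finally, $k-i\le -1$ forces $\mathcal{D}_{k-i}(A_k)$ to be the one-point nilspace: its cube condition, applied to $0$-cubes, kills every nonzero value, so any morphism $M\to\mathcal{D}_{k-i}(A_k)$ is $0$. Thus $\hom(M,\mathcal{D}_{k-i}(A_k))$ is also trivial, and the two sides agree.

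The only slightly delicate point is the reduction in the middle step: verifying that $(c,\beta(c))_i$ and $c_0$ are congruent modulo $\sim_{k-1}$ so that the bundle property of Theorem \ref{bundec} applies, and that their difference is precisely the $A_k$-valued function supported on $w=1^i$. The essential mechanism, and the place to be careful, is the choice of the test morphism $\Phi$: for $i>k$ there are enough coordinates to isolate the corner $1^{k+1}$ in the last $i$ coordinates, which is exactly what forces $\phi$ to vanish. (For $i\le k$ no such collapse occurs, and the same setup instead yields the nontrivial groups $\hom(M,\mathcal{D}_{k-i}(A_k))$, explaining the shape of the formula.)
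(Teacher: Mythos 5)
Your proposal proves the statement as literally printed, but the literal statement is degenerate and is not what the paper's proof establishes or what the paper later uses; the hypothesis ``$i>k$'' is evidently a typo for $k\ge i$. Three pieces of evidence: the paper's own proof never invokes $i>k$ and derives the identification $\alpha\mapsto\alpha'$ with $\hom(M,\mathcal{D}_{k-i}(A_k))$ for general $i$; Lemma \ref{kerchar2} then applies this lemma with $i=1$ to split ${\rm ker}(h)=F\times A_k$, which requires the nondegenerate identification ${\rm ker}(h)=\hom(M,\mathcal{D}_{k-1}(A_k))$; and for $i>k$ the left-hand side is trivial with no work at all, since $\aff_i(N)\subseteq\aff_{k+1}(N)$ and $\aff_{k+1}(N)=\{1\}$ for any $k$-step nilspace by unique closing (apply Definition \ref{transdef} with $n=k+1$ and $F$ a single vertex). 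Your argument that the left side is trivial is correct (it is a more roundabout form of this observation), but your treatment of the right side has a real flaw: the paper never defines $\mathcal{D}_j$ for $j<0$, and under its literal definition the case $j\le -2$ quantifies over morphisms from a negative-dimensional cube, which is vacuous, so \emph{every} map would be a cube and $\hom(M,\mathcal{D}_{k-i}(A_k))$ would be all of $A_k^M$, making the literal statement false for $i\ge k+2$; your ``one-point nilspace'' reading is consistent with the definition only when $i=k+1$. So the actual content of the lemma --- the case $i\le k$ --- is exactly what your final parenthetical gestures at but does not prove.

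The good news is that your middle step is the right machinery and is precisely what the paper waves off as ``easy to see,'' so your argument can be completed. Your reduction via Lemma \ref{loctrans2}, Lemma \ref{transchar2} and Theorem \ref{bundec} shows that $\beta\in{\rm ker}(h)\cap\aff_i(N)$ if and only if for every $c\in C^k(N)$ the function $d_c$, equal to $\phi(\pi_{k-1}(c(v)))$ on the face $w=1^i$ and $0$ elsewhere, lies in $C^{k+i}(\mathcal{D}_k(A_k))$. Instead of testing $d_c$ only against your vertex-isolating morphism $\Phi$, test it against an arbitrary morphism $\psi=(\psi_1,\psi_2):\{0,1\}^{k+1}\to\{0,1\}^k\times\{0,1\}^i$: the set $\psi_2^{-1}(1^i)$ is a face $F$ of $\{0,1\}^{k+1}$ of some dimension $d\ge k+1-i$ (possibly empty), the alternating sum collapses to $\pm\, w\bigl(\phi\circ\pi_{k-1}\circ c\circ\psi_1|_F\bigr)$, and every cube of $M$ of dimension $k+1-i$ arises as $\pi_{k-1}\circ c\circ\psi_1|_F$ for suitable $c$ and $\psi$ (lift a cube of $M$ to $N$, extend it to a $k$-cube by a coordinate projection, and let $\psi_1|_F$ be a face inclusion). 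Since the conditions coming from faces of dimension $>k+1-i$ follow from those in dimension exactly $k+1-i$, the condition over all $c$ and $\psi$ is equivalent to $w(\phi\circ c')=0$ for every $c'\in C^{k-i+1}(M)$, i.e.\ to $\phi\in\hom(M,\mathcal{D}_{k-i}(A_k))$. With this face analysis your proposal becomes a complete and more detailed proof of the lemma as intended (for $k\ge i$); as written, it proves only the vacuous case, and even that only under a convention the paper does not supply.
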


\begin{proof} The elements of ${\rm ker}(h)$ are those translations which stabilize every $\sim_{k-1}$ class in $N$. It follows that if $\alpha\in{\rm ker}(h)$ then the map $\alpha':x\mapsto \alpha(x)-x$ can be viewed as a map from $M$ to $A_k$.
Lemma \ref{transchar2} implies that $\alpha'$ arises this way if it is a homomorphism of $M$ to $\mathcal{D}_{k-1}(A_k)$. It is easy to see that if in addition $\alpha'\in\aff_i(N)$ then it is a morphism to $\mathcal{D}_{k-i}(A_k)$.
\end{proof}

\begin{lemma}\label{variation1} Let $k,r\geq 1$ be two natural numbers and $A,B$ two compact abelian groups. Assume that $B$ is finite dimensional. Then there is a constant $\epsilon=\epsilon(r,B)>0$ such that if $\phi\in\Hom(\mathcal{D}_k(A),\mathcal{D}_r(B))$ satisfies $d(\phi(x),\phi(y))\leq\epsilon$ for every $x,y\in A$ then $\phi$ is a constant function.
\end{lemma}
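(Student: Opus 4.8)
The plan is to reduce the statement to a one-variable question about finite differences and then exploit that a bounded polynomial sequence into a finite dimensional torus must be constant. First I would feed $\phi$ the simplest cubes of $\mathcal{D}_k(A)$, namely the affine ones. Fix $a \in A$ and an integer $m$, and consider the map $f : \{0,1\}^{r+1} \to A$ given by $f(v) = (m + h(v))\,a$, where $h(v) = \sum_i v_i$. Since $f$ is the restriction of an affine homomorphism $\mathbb{Z}^{r+1} \to A$, it is a linear (hence degree-$1$, hence degree-$k$) cube, so $\phi \circ f \in C^{r+1}(\mathcal{D}_r(B))$. Applying the defining equation $w(\phi\circ f) = 0$ of $\mathcal{D}_r(B)$ (taking the identity as the relevant morphism $\{0,1\}^{r+1}\to\{0,1\}^{r+1}$) and grouping the $2^{r+1}$ vertices according to the value $j = h(v)$ gives
\[ \sum_{j=0}^{r+1} (-1)^j \binom{r+1}{j}\, \phi\bigl((m+j)a\bigr) = 0 . \]
In other words, writing $g(n) = \phi(na)$, the sequence $g : \mathbb{Z} \to B$ has vanishing $(r+1)$-st finite difference, for every fixed $a$ simultaneously. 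This is the only place the cube structure is used.

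Next I would turn this into a statement over $\mathbb{R}^d$. Because $B$ is finite rank and the image of $\phi$ has $d_2$-diameter at most $\epsilon$, choosing $\epsilon$ smaller than the (positive) distance between distinct connected components of $B$ forces the whole image into a single component, which is a torus $\mathbb{T}^d = \mathbb{R}^d/\mathbb{Z}^d$; so I may assume $B = \mathbb{T}^d$. Taking also $\epsilon < 1/4$, all the values $g(n)$ lie in a ball of radius $\epsilon$ and lift to a sequence $\tilde g : \mathbb{Z} \to \mathbb{R}^d$ taking values in a ball of the same radius around a fixed lift of $\phi(0)$. For each $m$ the vector $\sum_{j=0}^{r+1} (-1)^j \binom{r+1}{j}\, \tilde g(m+j)$ projects to $0$ in $\mathbb{T}^d$, hence lies in $\mathbb{Z}^d$; but since $\sum_j (-1)^j \binom{r+1}{j} = 0$ I may subtract $\tilde g(m)$ from every term, and each difference has norm at most $2\epsilon$, giving the bound $2^{r+2}\epsilon$ on its norm. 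Choosing $\epsilon < 2^{-(r+2)}$ forces this integer vector to vanish, so $\tilde g$ has vanishing $(r+1)$-st finite difference over $\mathbb{R}^d$ and is therefore a genuine polynomial sequence of degree at most $r$. A polynomial $\mathbb{Z} \to \mathbb{R}^d$ with bounded image is constant, so $\tilde g$, hence $g$, is constant. In particular $\phi(a) = g(1) = g(0) = \phi(0)$, and as $a$ was arbitrary, $\phi$ is constant.

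The constant is then $\epsilon = \epsilon(r,B)$ equal to the minimum of the inter-component distance of $B$ and $2^{-(r+2)}$, depending only on $r$ and $B$ as required. I expect the only delicate point to be the second paragraph: one must verify that the smallness of the image lets the mod-$\mathbb{Z}^d$ relation $\Delta^{r+1} g = 0$ be promoted to an honest relation $\Delta^{r+1}\tilde g = 0$ over $\mathbb{R}^d$, and this is exactly where both hypotheses are used — finite dimensionality of $B$ (so that the relevant component is a torus $\mathbb{T}^d$ covered by $\mathbb{R}^d$, making the ``bounded polynomial is constant'' principle available) together with the quantitative diameter bound. The degree $k$ of the source plays no role beyond guaranteeing $k \ge 1$, so that affine lines are cubes.
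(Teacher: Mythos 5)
Your proof is correct, and it takes a genuinely different route from the paper's. The paper also runs a finite-difference argument, but in the group variable rather than along orbits: for $t\in A$ it forms $\Delta_t\phi(x)=\phi(x)-\phi(x+t)$, observes that $\Delta_t$ maps $\Hom(\mathcal{D}_1(A),\mathcal{D}_i(B))$ into $\Hom(\mathcal{D}_1(A),\mathcal{D}_{i-1}(B))$, and passes to the last non-constant iterate $\phi'=\Delta_{t_1,\dots,t_i}\phi$ with $i<r$; this $\phi'$ is then a non-constant \emph{affine homomorphism} $A\to B$, and the proof closes by asserting that any such homomorphism into a finite rank group has variation at least some $c(B)>0$ (a fact about closed subgroups of $\mathbb{T}^d\times F$, stated there without proof), while each differencing at most doubles the variation, so $\epsilon<2^{-r}c(B)$ suffices. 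You instead restrict $\phi$ to cyclic orbits $n\mapsto\phi(na)$, extract the vanishing of the $(r+1)$-st finite difference directly from the cube equation of $\mathcal{D}_r(B)$, and close by lifting to the universal cover $\mathbb{R}^d$ and using that bounded polynomial sequences $\mathbb{Z}\to\mathbb{R}^d$ are constant. Your individual steps check out: the affine maps $v\mapsto(m+h(v))a$ do lie in $C^{r+1}(\mathcal{D}_k(A))$ for every $k\geq 1$, since the alternating sum of an affine function over any face of dimension at least $2$ vanishes; the identity is a legitimate choice of morphism $\{0,1\}^{r+1}\to\{0,1\}^{r+1}$ in the defining condition of $\mathcal{D}_r(B)$; and the lifting step is sound because two lifts of the same point differ by a lattice vector of norm at least $1$. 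As for what each approach buys: yours is self-contained (it avoids the unproved subgroup-geometry claim) and gives an explicit constant --- indeed, with the paper's convention that distinct connected components of $B$ are at infinite distance, your $\epsilon=2^{-(r+2)}$ depends only on $r$, which slightly strengthens the statement; the paper's argument is shorter modulo its closing claim and is structurally more informative, exhibiting the obstruction to constancy as a non-constant affine character hidden inside $\phi$. Both arguments lose the same exponential-in-$r$ factor in the constant.
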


\begin{proof} Using that $\Hom(\mathcal{D}_k(A),\mathcal{D}_r(B))\subseteq\Hom(\mathcal{D}_1(A),\mathcal{D}_r(B))$ we can assume that $k=1$. Let $\phi$ be an arbitrary non-constant morphism from $\mathcal{D}_1(A)$ to $\mathcal{D}_r(B)$.

For any $t\in A$ and function $f:A\rightarrow B$ we denote by $\Delta_t f$ the function $x\rightarrow f(x)-f(x+t)$. With this notation we have that if $f\in\Hom(\mathcal{D}_1(A),\mathcal{D}_i(B))$ then
$\Delta_t f\in\Hom(\mathcal{D}_1(A),\mathcal{D}_{i-1}(B)).$
for every $t\in A$.
It follows that $\Delta_{t_1,t_2,\dots,t_r}\phi$ is constant for every $r$-tuple of elements $t_1,t_2,\dots,t_r$ in $A$.
We obtain that there is a number $i<r$ and elements $t_1,t_2,\dots,t_i\in A$ such that $\phi'=\Delta_{t_1,t_2,\dots,t_i}\phi$ is non-constant but $\Delta_t \phi'$ is constant for every $t\in A$.
It follows that $\phi'$ is a non-constant affine group homomorphism from $A$ to $B$.
In particular there is a constant $c$ depending only on $B$ such that there are $x,y\in A$ with $d(\phi'(x),\phi'(y))\geq c$.
We get that if the variation $\max_{x,y}d(\phi(x),\phi(y))$ is too small this is impossible.
In other words there is a non-zero lower bound (depending only on $B$ and $r$) for the variation of $\phi$.
\end{proof}

\begin{corollary}\label{variation2} Let $r\geq 1$ be a natural number and $B$ a compact finite dimensional abelian group. Let $N$ be a $k$-step compact nilspace. Then there is a constant $\epsilon=\epsilon(r,B)>0$ such that if $\phi\in\Hom(N,\mathcal{D}_r(B))$ satisfies $d(\phi(x),\phi(y))\leq\epsilon$ for every $x,y\in N$ then $\phi$ is a constant function.
\end{corollary}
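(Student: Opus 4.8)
The plan is to reduce everything to Lemma \ref{variation1} by induction on the number of steps $k$, exploiting the bundle decomposition of Theorem \ref{bundec}. The essential point to monitor throughout is that the threshold $\epsilon(r,B)$ produced by Lemma \ref{variation1} depends only on $r$ and $B$; I expect to be able to use this \emph{same} constant at every stage of the induction, so that no loss accumulates. In effect the entire analytic content of the statement is already contained in Lemma \ref{variation1}, and the corollary is a bootstrapping argument that transports that single-fiber estimate up the bundle tower.

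The base case $k=0$ is trivial, since then $N$ is a single point. For the inductive step I assume the result for $(k-1)$-step compact nilspaces with the constant $\epsilon(r,B)$, and I take $\phi \in \Hom(N,\mathcal{D}_r(B))$ with global variation $\max_{x,y} d(\phi(x),\phi(y)) \le \epsilon(r,B)$. First I would restrict $\phi$ to a single $\sim_{k-1}$ class $F$ of $N$. By the corollary following Lemma \ref{kabelian}, $F$ carries the structure $\mathcal{D}_k(A_k)$, and the variation of $\phi|_F$ is bounded by the global variation, hence is at most $\epsilon(r,B)$. Lemma \ref{variation1} then forces $\phi|_F$ to be constant. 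As this holds for every $\sim_{k-1}$ class, $\phi$ is constant on each fiber of the projection $\pi_{k-1}:N\to\mathcal{F}_{k-1}(N)$, so it factors as $\phi=\bar\phi\circ\pi_{k-1}$ for a map $\bar\phi$ defined on the $(k-1)$-step compact nilspace $\mathcal{F}_{k-1}(N)$.

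It remains to check that $\bar\phi$ is again a morphism with the same small variation, after which the inductive hypothesis closes the argument. Since the cubespace structure on $\mathcal{F}_{k-1}(N)=N/\!\sim_{k-1}$ is the induced factor structure, every cube $\bar c\in C^n(\mathcal{F}_{k-1}(N))$ is of the form $\pi_{k-1}\circ c$ for some cube $c\in C^n(N)$; then $\bar\phi\circ\bar c=\phi\circ c$ is a cube in $\mathcal{D}_r(B)$, so $\bar\phi$ is a morphism. Moreover $\pi_{k-1}$ is surjective, so the variation of $\bar\phi$ equals that of $\phi$ and is still at most $\epsilon(r,B)$; the inductive hypothesis gives that $\bar\phi$ is constant, hence so is $\phi$. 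The only delicate point, and the reason the induction loses nothing, is precisely the uniformity of the constant: Lemma \ref{variation1} yields a threshold independent of the structure group $A_k$ and of $k$, and passing to the factor $\mathcal{F}_{k-1}(N)$ preserves the variation exactly, so a single $\epsilon(r,B)$ propagates unchanged through every step.
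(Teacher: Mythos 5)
Your proposal is correct and follows essentially the same route as the paper: induction on $k$, applying Lemma \ref{variation1} on each $\sim_{k-1}$ class (which carries the $\mathcal{D}_k(A_k)$ structure), then descending to $\mathcal{F}_{k-1}(N)$ with the variation and the constant $\epsilon(r,B)$ unchanged. The only difference is that you spell out the verification that the induced map on the factor is again a morphism, a step the paper leaves implicit.
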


\begin{proof} Assume that $d(\phi(x),\phi(y))<\epsilon$ for every $x,y\in N$ where $\epsilon=\epsilon(r,B)$ is the constant from lemma \ref{variation1}. We prove by induction on $k$ that $\phi$ is constant.

If $k=1$ then $N$ is abelian and lemma \ref{variation1} finishes the proof.
Assume that the statement holds for $k-1$. We get from lemma \ref{variation1} that $\phi$ is constant on the $\sim_{k-1}$ classes of $N$. This means that $\phi$ can be regarded as a function on $\mathcal{F}_{k-1}(N)$. Then our assumption finishes the proof.
\end{proof}

\begin{lemma}\label{kerchar2} The group ${\rm ker}(h)$ is a Lie group.
\end{lemma}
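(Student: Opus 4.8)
The plan is to reduce the statement to the observation that a finite rank compact abelian group sits inside $\ker(h)$ as an \emph{open} subgroup, and then to invoke the classical principle that a topological group possessing an open Lie subgroup is itself a Lie group. First I would set up the identification of $\ker(h)$ already implicit in the proof of Lemma \ref{kerchar1}. An element $\alpha\in\ker(h)$ is a continuous translation fixing every $\sim_{k-1}$ class, so $\alpha(x)\sim_{k-1}x$ and $\alpha'(\pi_{k-1}(x)):=\alpha(x)-x\in A_k$ is a well defined function on $M=\mathcal{F}_{k-1}(N)$. The assignment $\alpha\mapsto\alpha'$ injects $\ker(h)$ into $\Hom(M,\mathcal{D}_{k-1}(A_k))$, and since $\overline{\beta(x)}=\bar x$ for $\beta\in\ker(h)$ a one-line computation gives $(\alpha\beta)'=\alpha'+\beta'$ pointwise, so it is a group isomorphism onto its image. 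Under this identification the constant maps correspond exactly to the translations by fixed elements of $A_k$; recalling that the $A_k$-action lies in $\aff_k(N)\subseteq\aff(N)$ and evidently lies in $\ker(h)$, we see that $A_k$ embeds into $\ker(h)$ as this subgroup of constant translations.

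The crux is to prove that this copy of $A_k$ is open in $\ker(h)$. Here I would apply Corollary \ref{variation2} with $r=k-1$ and $B=A_k$ (legitimate since $M$ is $(k-1)$-step and $A_k$ is finite dimensional) to obtain $\epsilon>0$ such that any $\phi\in\Hom(M,\mathcal{D}_{k-1}(A_k))$ with $d(\phi(\bar x),\phi(\bar y))\le\epsilon$ for all $\bar x,\bar y$ is constant. It then remains to check that smallness of $\alpha$ in the metric $t$ forces $\alpha'$ to have small variation: because the action $A_k\times N\to N$ is continuous and free and $N$ is compact, recovering $a$ from the pair $(x,x+a)$ is uniformly continuous in $x$, so $\max_{x}d(\alpha(x),x)<\delta$ forces $\max_{\bar x}d_{A_k}(\alpha'(\bar x),0)<\epsilon/2$, whence the variation of $\alpha'$ is below $\epsilon$ once $\delta$ is chosen small enough. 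For such $\alpha$, Corollary \ref{variation2} makes $\alpha'$ constant, i.e. $\alpha\in A_k$. Thus the open ball $\{\alpha:t(\alpha,\mathrm{id})<\delta\}$ is contained in the subgroup $A_k$, so $A_k=\bigcup_{a\in A_k}(a+W)$ is open in $\ker(h)$.

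To finish, $A_k$ is a finite rank compact abelian group, hence a direct product of finite dimensional tori and a finite group, and in particular a Lie group; being open in $\ker(h)$, it contains the identity component, which therefore coincides with that of $A_k$, and it provides a neighborhood of the identity on which the group operations of $\ker(h)$ agree with those of the Lie group $A_k$. Hence $\ker(h)$ is locally isomorphic to a Lie group and is itself a Lie group. The main obstacle I anticipate is precisely the comparison of the two topologies in the second step, namely verifying that closeness to the identity in the sup-metric $t$ on $\aff(N)$ translates into uniform smallness of $\alpha'$ as an $A_k$-valued function; this rests entirely on the uniform continuity of the free $A_k$-action on the compact space $N$, after which Corollary \ref{variation2} carries out the essential work.
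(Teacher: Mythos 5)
Your proof is correct and follows essentially the same route as the paper: both rest on the identification of ${\rm ker}(h)$ with morphisms $M\rightarrow\mathcal{D}_{k-1}(A_k)$ coming from Lemma \ref{kerchar1}, together with Corollary \ref{variation2}, which forces any element of ${\rm ker}(h)$ sufficiently close to the identity to be a constant translation, i.e.\ to lie in $A_k$. The only difference is in the packaging: the paper phrases this as the splitting ${\rm ker}(h)=F\times A_k$ with $F$ (the stabilizer of a point) discrete, whereas you phrase it as openness of $A_k$ in ${\rm ker}(h)$ plus the standard fact that a topological group containing an open Lie subgroup is a Lie group, and along the way you supply the metric-comparison step (uniform continuity of recovering $a$ from $(x,x+a)$) that the paper leaves implicit.
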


\begin{proof} Let $x\in N$ be an arbitrary element and let $F$ be the stabilizer of $x$ in ${\rm ker}(h)$. Then by lemma \ref{kerchar1} we obtain that ${\rm ker}(h)=F\times A_k$.
It follows from corollary \ref{variation2} that $F$ is discrete and since $A_k$ is a Lie-group the proof is complete.
\end{proof}

If $L$ is a topological group then we denote the connected component of the unit element by $L^0$. 

\begin{theorem}\label{transurj} Let $i$ be a natural number. Then the following statements hold.
\begin{enumerate}
\item $\aff_i(N)$ and $\aff_i(N)^0$ are Lie groups,
\item $h(\aff_i(N)^0)=\aff_i(M)^0$.
\end{enumerate}
\end{theorem}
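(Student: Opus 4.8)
The plan is to induct on the nilpotency degree $k$, with the trivial case $k=0$ (where $N$ is a single point) as the base. For the inductive step set $M=\mathcal{F}_{k-1}(N)$, a $(k-1)$-step finite rank nilspace, so that by induction both $\aff_i(M)$ and $\aff_i(M)^0$ are Lie groups and the analogue of part (2) holds one level down. The homomorphism $h:\aff(N)\to\aff(M)$ is $1$-Lipschitz for the metrics $t,t'$ (since $\pi_{k-1}$ can only decrease distances and $d'$ is the induced Hausdorff metric), hence continuous, and it restricts to a continuous homomorphism $\aff_i(N)\to\aff_i(M)$. The defining conditions for translations of height $i$ are closed, so $\aff_i(N)$ is a closed, hence Polish, subgroup of $\aff(N)$. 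Its kernel $K_i=\ker(h)\cap\aff_i(N)$ is, by Lemmas \ref{kerchar1} and \ref{kerchar2} (via the identification $\alpha\mapsto(x\mapsto\alpha(x)-x)$ with a subgroup of $\Hom(M,\mathcal{D}_{k-i}(A_k))$), a Lie group whose subspace topology in $\aff_i(N)$ agrees with this Lie structure.

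From here I would establish that $G:=\aff_i(N)$ is a Lie group by verifying the two hypotheses of the Gleason--Montgomery--Zippin solution of Hilbert's fifth problem, that a locally compact group with no small subgroups is a Lie group. The no-small-subgroups property is the easy half: choose neighborhoods $U_Q\ni 1$ in $\aff_i(M)$ and $U_K\ni 1$ in $K_i$ containing no nontrivial subgroup (these exist since $\aff_i(M)$ and $K_i$ are Lie), pick an open $\tilde U\ni 1$ in $G$ with $\tilde U\cap K_i\subseteq U_K$, and set $W=h^{-1}(U_Q)\cap\tilde U$; any subgroup inside $W$ maps into $U_Q$ hence is killed by $h$, so it lies in $K_i\cap\tilde U\subseteq U_K$ and is therefore trivial.

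The local compactness is where I expect the real work to lie. The idea is to produce a single compact neighborhood of the identity by an Arzel\`a--Ascoli argument, exploiting the rigidity of translations: viewing $\aff_i(N)\subseteq C(N,N)$ with the uniform topology, a subset is relatively compact precisely when it is equicontinuous. I would show that the ball $B_\delta=\{\alpha:t(\alpha,1)\le\delta\}$ is equicontinuous for small $\delta$. On the base, $\{h(\alpha):\alpha\in B_\delta\}$ lies in a small ball of the Lie group $\aff_i(M)$, which is relatively compact and hence equicontinuous; on each $\sim_{k-1}$ fibre, Lemma \ref{loctrans2} forces $\alpha$ to act as an $A_k$-translation, i.e.\ isometrically; and in the local trivializations of $N\to M$ furnished by Gleason's theorem, the ``twist'' of $\alpha$ lives in a small ball of the finite rank group $\Hom(M,\mathcal{D}_{k-i}(A_k))$. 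Combining these three equicontinuity inputs (the metric bookkeeping between $d$, $d'$ and the fibre metric being routine) gives equicontinuity of $B_\delta$, so its closure in $\aff_i(N)$ is compact. With $G$ locally compact and without small subgroups it is a Lie group, and its identity component $\aff_i(N)^0$ is then automatically a Lie group as well, proving part (1).

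For part (2), continuity of $h$ gives that $h(\aff_i(N)^0)$ is connected and contains $1$, so $h(\aff_i(N)^0)\subseteq\aff_i(M)^0$. For the reverse inclusion I invoke Lemma \ref{smalltrans}: there is $\epsilon>0$ so that every $\alpha\in\aff_i(M)$ with $t'(\alpha,1)\le\epsilon$ admits a lift $\beta\in\aff_i(N)$, and the construction there (a cross-section with fibre displacement at most $\epsilon_2$) yields $\beta$ with $t(\beta,1)$ small; since $G$ is now known to be a Lie group, such a $\beta$ lies in a connected neighborhood of $1$ and hence in $\aff_i(N)^0$. Thus $h(\aff_i(N)^0)$ contains a neighborhood of $1$ in the connected group $\aff_i(M)^0$, and a connected group is generated by any neighborhood of its identity, so $h(\aff_i(N)^0)\supseteq\aff_i(M)^0$; equality follows.
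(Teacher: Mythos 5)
Your strategy for part (1) is genuinely different from the paper's: you want to verify local compactness and no-small-subgroups and then invoke Gleason--Montgomery--Zippin, whereas the paper never proves local compactness at all. The paper first shows $\aff_i(M)^0\subseteq h(\aff_i(N))$ by subdividing a path in the Lie group $\aff_i(M)$ into pieces small enough for Lemma \ref{smalltrans} to apply and multiplying the lifts; it then observes that $\aff_i(N)$ is a Polish group which is an extension of a Lie group ($h(\aff_i(N))$, which contains $\aff_i(M)^0$ and hence is open in $\aff_i(M)$) by the Lie group $\ker(h)\cap\aff_i(N)$ of Lemmas \ref{kerchar1} and \ref{kerchar2}, and cites the appendix of \cite{HKr} for the fact that a Polish extension of a Lie group by a Lie group is a Lie group. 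Your no-small-subgroups argument is fine, but the local compactness step --- which you yourself flag as the real work --- contains a genuine gap.

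The gap is the claim that, in a local trivialization of $N\to M$, the ``twist'' of a translation $\alpha\in B_\delta$ lives in a small ball of the finite rank group $\Hom(M,\mathcal{D}_{k-i}(A_k))$. That identification is false. Lemma \ref{kerchar1} identifies with $\Hom(M,\mathcal{D}_{k-i}(A_k))$ only the elements of $\ker(h)\cap\aff_i(N)$, i.e.\ the \emph{differences} of two lifts of the same base translation. The twist of a single $\alpha$ relative to a chosen trivialization --- the function $\sigma_\alpha$ with $\alpha(u,a)=(h(\alpha)(u),\,a+\sigma_\alpha(u))$, which exists by Lemma \ref{loctrans2} --- is merely a continuous $A_k$-valued function attached to that trivialization; it is not a nilspace morphism and carries no a priori modulus of continuity. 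The hypothesis $t(\alpha,1)\le\delta$ bounds only $\sup_u d\bigl(\sigma_\alpha(u),0\bigr)$, and a uniformly small family of continuous functions need not be equicontinuous. So equicontinuity of $B_\delta$ does not follow from your three inputs, and without it Arzel\`a--Ascoli gives nothing. (Equicontinuity of a ball of translations is, in effect, the continuity of the action of the locally compact group you are trying to construct; it is true a posteriori once the theorem is known, but proving it directly is essentially the content of the theorem, which is exactly why the paper routes around it through the extension result.)

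There is also a secondary gap in your part (2): Lemma \ref{smalltrans} produces \emph{some} lift $\beta\in\aff_i(N)$ of a small $\alpha$, but its statement gives no bound on $t(\beta,1)$, and your argument needs $\beta$ close to the identity in order to place it in $\aff_i(N)^0$. The paper's argument needs no such bound: from the lifting lemma it concludes only $\aff_i(M)^0\subseteq h(\aff_i(N))$, and then recovers part (2) by the component bookkeeping $h(\aff_i(N)^0)=h(\aff_i(N))^0\supseteq\aff_i(M)^0$, using continuity of $h$ and the Lie group structure already established in part (1).
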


\begin{proof}  We prove the statements by induction on $k$. If $k=1$ then $N$ is an abelian Lie-group and all statements are clear. Assume that the statements hold for $k-1$. In particular we have that $\aff_i(M)$ is a Lie-group.

First we show that
\begin{equation}\label{afftart}
\aff_i(M)^0\subseteq h(\aff_i(N))
\end{equation}

To see this we use that $\aff_i(M)$ is a Lie group and so every element $\alpha\in\aff_i(M)^0$ is connected with the unit element with a continuous path $p:[0,1]\rightarrow\aff_i(M)$ with $p(0)=1$ and $p(1)=\alpha$.
Let $n\in\mathbb{N}$ be sufficiently big and let $\alpha_i=p((i-1)/n)^{-1}p(i/n)$. Then $\alpha=\prod_{i=1}^n\alpha_i$.
Lemma \ref{smalltrans} implies that if $n$ is big enough then for every $\alpha_i$ there is $\beta_i\in\aff_i(N)$ with $h(\beta_i)=\alpha_i$. Let $\beta=\prod_{i=1}^n\beta_i$. We have that $h(\beta)=\alpha$.

The see the first statement we observe that (\ref{afftart}) implies that $h(\aff_i(N))$ is a Lie-group.
It follows from lemma \ref{kerchar2} that $\aff_i(N)$ is an extension of a Lie-group by another Lie-group. Since $\aff_i(N)$ is a Polish group and $\aff_i(N)\cap\ker (h)$ is a closed subgroup we get that $\aff_i(N)$ is a Lie-group (See the appendix of \cite{HKr}).

Now we show the second statement. Since $h$ is continuous we have that $h(\aff_i(N)^0)=h(\aff_i(N))^0$. Equation (\ref{afftart}) implies that $\aff_i(M)^0\subseteq h(\aff_i(N))^0$ and so $\aff_i(M)^0\subseteq h(\aff_i(N)^0)$. The other containment is trivial.
\end{proof}

\begin{corollary} The action $\aff(N)^0$ is transitive on the connected components of $N$.
\end{corollary}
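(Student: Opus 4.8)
The plan is to prove the sharper statement that every orbit of $G:=\aff(N)^0$ is open in $N$, and then invoke the elementary topological fact that orbits of a \emph{connected} group are connected: since the orbits partition $N$ and each is open, each is also closed, hence clopen, and being a continuous image of the connected group $G$ each orbit is connected; a nonempty connected clopen subset of $N$ is exactly a connected component. Thus ``$G$ is transitive on each connected component'' is equivalent to ``every orbit of $G$ is open'', and it is the latter that I will establish, by induction on the number of steps $k$. For $k=1$ the space $N$ is a finite rank abelian Lie group $\mathcal{D}_1(A_1)$, the identity component $A_1^0\subseteq G$ acts by translation, and each of its orbits is a coset of the open subgroup $A_1^0$, hence open.

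For the inductive step write $M=\mathcal{F}_{k-1}(N)$ and let $h:\aff(N)\to\aff(M)$ be the homomorphism induced by the action on $\sim_{k-1}$ classes. By Theorem~\ref{transurj} both $G$ and $G_M:=\aff(M)^0$ are Lie groups and $h(G)=G_M$, while by the inductive hypothesis $G_M$ is transitive on the (open) connected components of $M$. Fix $x\in N$, put $m_1=\pi_{k-1}(x)$ and let $C_M$ be the connected component of $m_1$; standard facts about transitive continuous actions of Lie groups give that the orbit map $G_M\to C_M$ is open, so $G_M\cdot m_1=C_M$ fills a neighborhood of $m_1$. Two families of motions of $x$ inside $G\cdot x$ are available. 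In the fibre direction the identity component $A_k^0$ of the $k$-th structure group lies in $G$ (the $A_k$-action is continuous and lands in $\aff_k(N)\subseteq\aff(N)$, so the connected subgroup $A_k^0$ maps into $G$), and it sweeps out the $A_k^0$-coset of $x$, which is open in the fibre $\pi_{k-1}^{-1}(m_1)\cong A_k$. In the base direction, since $h:G\to G_M$ is a surjective homomorphism of Lie groups it is a principal $\ker h$-bundle, hence admits a continuous local section $s$ near the identity with $s(e)=e$, and $\pi_{k-1}(s(\alpha)\cdot x)=\alpha\cdot m_1$ because $A_k\subseteq\ker h$.

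To combine these I will use Gleason's local triviality result to choose a trivialization $\pi_{k-1}^{-1}(V)\cong V\times A_k$ on a neighborhood $V$ of $m_1$, and the crucial commutation relation from Lemma~\ref{loctrans2}, namely $\beta(z+a)=\beta(z)+a$ for every translation $\beta$ and every $a\in A_k$. This relation makes the map $\Phi(\alpha,a)=s(\alpha)\cdot x+a$ take the coordinate form $(\alpha,a)\mapsto(\alpha\cdot m_1,\ \eta(\alpha)+a)$ in the trivialization, where $\eta$ is continuous with $\eta(e)=0$. Since $A_k^0$ is open in $A_k$ we have $\eta(\alpha)\in A_k^0$ for $\alpha$ near $e$, so the fibre coordinate already covers all of $A_k^0$; combined with the openness of $\alpha\mapsto\alpha\cdot m_1$ this shows that the image of $\Phi$, and a fortiori $G\cdot x$, contains a full neighborhood $(\text{nbhd of }m_1)\times A_k^0$ of $x$. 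Hence $G\cdot x$ is open, completing the induction.

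The main obstacle is exactly the fibre direction: one cannot simply translate $x$ to another point of its fibre by the naive difference element of $A_k$, because that element need not lie in the identity component $A_k^0$ (think of a connected total space lying over a circle with finite fibre). The resolution is to move in the base and in the fibre simultaneously, and the coherence of this combined motion rests on the commutation law $\beta(z+a)=\beta(z)+a$ of Lemma~\ref{loctrans2} together with the local section of $h$; checking that these fit together to yield an honest neighborhood of $x$ is the heart of the argument, whereas the reduction to openness of orbits and the base case are routine.
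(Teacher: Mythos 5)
Your proof is correct, but it reaches the conclusion by a genuinely different mechanism than the paper's. The paper also inducts on $k$ and uses the same main ingredients (Theorem~\ref{transurj}, the inclusion $A_k\subseteq\aff(N)$), but its inductive step is global and group-theoretic: lifting translations of $M=\mathcal{F}_{k-1}(N)$ through $h$ and correcting fibrewise by an element of $A_k$ shows that the possibly larger group $T=\langle A_k,\aff(N)^0\rangle$ is transitive on each component of $N$; then, since $A_k^0\subseteq\aff(N)^0$ and $A_k^0$ has finite index in $A_k$, the subgroup $\aff(N)^0$ has finite index in $T$, and the paper concludes that $\aff(N)^0$ must already be transitive. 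Your argument is instead local: you show every $\aff(N)^0$-orbit is open by combining a local section of $h$ (base direction) with the $A_k^0$-action (fibre direction) inside an equivariant Gleason trivialization, and then identify open orbits with components. What your route buys is precisely the step the paper leaves implicit: descending from the finite-index overgroup $T$ to $\aff(N)^0$ on a connected component is not purely formal (a connected space \emph{can} be partitioned into finitely many connected orbits), and it needs exactly the openness --- or a Baire/Effros-type argument --- that you establish explicitly, so your write-up is in this respect more complete. Conversely, the paper's route is shorter and avoids your two extra analytic inputs: the open-mapping theorem for the transitive action of $G_M$ on $C_M$ (which you could dispense with by strengthening your induction hypothesis to ``orbit maps are open'') and the existence of local sections of the Lie group surjection $h$. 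One small misattribution, harmless to correctness: your coordinate formula $\Phi(\alpha,a)=(\alpha\cdot m_1,\eta(\alpha)+a)$ needs only the $A_k$-equivariance of the trivialization and the fact that $t_a\circ s(\alpha)\in\aff(N)^0$; the commutation law of Lemma~\ref{loctrans2} is not actually what makes it work.
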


\begin{proof}
By induction $\aff(M)^0$ acts transitively on the connected components of $M$ and furthermore $A_k\subseteq\aff(N)$. By theorem \ref{transurj} $\aff(M)^0=h(\aff(N)^0)$. It follows that the group $T$ generated by $A_k$ and $\aff(N)^0$ is transitive on the connected components of $N$.  Since $A_k^0$ is a finite index subgroup in $A_k$ we have that $\aff(N)^0$ is of finite index in $T$. This is only possible if $\aff(N)^0$ is already transitive on the connected components.
\end{proof}

\begin{definition} A $k$-step nilspace is called {\bf torsion free} if all the structure groups $A_i$ have torsion free dual groups.
\end{definition}

Note that a compact finite dimensional abelian group $A$ has torsion free dual group if and only if $A$ is isomorphic to $(\mathbb{R}/\mathbb{Z})^n$ for some natural number $n$.

\begin{theorem} If $N$ is finite rank torsion free $k$-step nilspace then $N$ is a nilmanifold with structure corresponding to the central series $\{\aff_i(N)^0\}_{i=1}^k$ in $\aff(N)^0$.
\end{theorem}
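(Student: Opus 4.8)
The plan is to realize $N$ as a homogeneous space of the Lie group $L=\aff(N)^0$ and then to check that the resulting nilmanifold, equipped with the filtration $L_i=\aff_i(N)^0$, carries exactly the given cube structure. Since $N$ is torsion free, each structure group $A_i$ is a torus, and because finite rank bundles are locally trivial (by Gleason's theorem, as already used), the iterated bundle $T_0,T_1,\dots,T_k=N$ has connected total space; thus $N$ is connected. By the corollary preceding the theorem, $\aff(N)^0$ acts transitively on the connected components of $N$, hence transitively on $N$ itself, and by the results of the previous section $L=\aff(N)^0$ is a $k$-nilpotent Lie group. Fixing a basepoint $x_0\in N$ and letting $\Gamma$ be its stabilizer (a closed subgroup), the orbit map descends to a continuous bijection $L/\Gamma\to N$; since $L$ is a $\sigma$-compact locally compact group acting continuously and transitively on the compact Hausdorff (hence Baire) space $N$, this map is open and therefore a homeomorphism.

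The next step is to prove that $\Gamma$ is discrete, which I would do by induction on $k$ together with the homomorphism $h:L\to\aff(M)^0$, where $M=\mathcal{F}_{k-1}(N)$. By Theorem \ref{transurj}, $h$ is surjective; its kernel inside $L$ has identity component $A_k$ (using Lemmas \ref{kerchar1} and \ref{kerchar2}), and $A_k$ acts freely on the fibers of $N\to M$. The image $h(\Gamma)$ lies in the stabilizer of $\pi(x_0)$ in $\aff(M)^0$, which is discrete by the inductive hypothesis, while $\Gamma\cap\ker(h)$ injects into the component group of $\ker(h)\cap L$ (which is discrete) because $\Gamma\cap A_k=\{1\}$ by freeness. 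An extension of a discrete group by a discrete group is discrete, so $\Gamma$ is discrete, and it is cocompact since $L/\Gamma=N$ is compact. Establishing in the same induction that $\aff_{k+1}(N)^0=\{1\}$ (again via Lemma \ref{kerchar1} with $i>k$) shows that $\{L_i=\aff_i(N)^0\}_{i=1}^{k+1}$ is a central series with $[L_i,L_j]\subseteq L_{i+j}$, so $N=L/\Gamma$ is a nilmanifold in the sense of Definition 1.1, carrying the required filtration.

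It remains to identify the nilspace cubes of $N$ with the cubes produced by the filtration $\{L_i\}$ as in the section ``Nilmanifolds as nilspaces''. One inclusion is immediate: the filtration cubes are generated from the constant cube by multiplying codimension-$i$ faces by elements of $L_i=\aff_i(N)^0$, and by the very definition of height-$i$ translations (Definition \ref{transdef}) each such operation sends a nilspace cube to a nilspace cube. For the reverse inclusion I would induct on $k$. Projecting a nilspace cube $c\in C^n(N)$ to $M$ gives a cube $\pi\circ c$ which by induction is a filtration cube of $M$; using that $h(L_i)=\aff_i(M)^0$ I can lift it to a filtration cube of $N$ projecting onto it. By Theorem \ref{bundec} the nilspace cubes lying over a fixed projection form a coset of $C^n(\mathcal{D}_k(A_k))$, and the filtration construction restricted to the fiber direction $A_k\subseteq L_k$ generates exactly this same group $\mathcal{D}_k(A_k)$ of differences; since the filtration cubes surject onto $C^n(M)$ under $\pi$, the two cosets coincide and $C^n(N)$ agrees with the filtration cube set.

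The main obstacle I expect is this last identification of the two cube structures: one must check both that every $M$-cube lifts to a \emph{filtration} cube (not merely to some function), and that the fiberwise differences generated by $A_k\subseteq L_k$ are precisely $\mathcal{D}_k(A_k)$, matching the degree-$k$ bundle description of Theorem \ref{bundec}. A secondary technical point, needed to invoke Definition 1.1 in its compact (rather than purely group-theoretic) form, is the cocompactness of each $L_i\cap\Gamma$ in $L_i$; this should follow from the closedness of the orbits $L_i\cdot x_0$ in the compact space $N$ together with standard facts about lattices in nilpotent Lie groups.
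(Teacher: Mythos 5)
Your proposal is correct and follows essentially the same route as the paper: induction on $k$, taking $\Gamma$ to be the stabilizer of a point in $\aff(N)^0$ (cocompact by compactness and transitivity, the latter using connectedness of $N$ from torsion-freeness), and then identifying the nilspace cubes with the filtration cubes by projecting to $\mathcal{F}_{k-1}(N)$, lifting via $h(\aff_i(N)^0)=\aff_i(\mathcal{F}_{k-1}(N))^0$, and absorbing the fiberwise difference, which lies in $C^n(\mathcal{D}_k(A_k))$ by the bundle decomposition theorem, into face-translations by $A_k\subseteq\aff_k(N)^0$. The additional points you verify (the homeomorphism $L/\Gamma\cong N$, discreteness of $\Gamma$, cocompactness of $L_i\cap\Gamma$) are worthwhile details that the paper leaves implicit or does not require under its definition of nilmanifold, but they do not change the argument's structure.
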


\begin{proof} We prove the statement by induction on $k$. If $k=1$ then $N$ is an abelian group and the statement is trivial. Assume that it is true for $k-1$. Let $x\in N$ be any fixed point and let $\Gamma$ be the stabilizer of $x$ in $\aff(N)^0$. It is clear from the compactness of $N$ and transitivity of $\aff(N)^0$ that $\Gamma$ is a co-compact subgroup in $\aff(N)^0$. 
It follows that $N$ is a nilmanifold. We have to determine the nilspace structure on $N$.

From theorem \ref{transurj} and our induction hypothesis it follows that for every cube $c\in C^n(N)$ there is a cube $c'\in C^n(N)$ such that $c'$ is translation equivalent with the constant $x$ cube and $\pi_{k-1}(c)=\pi_{k-1}(c')$.
It follows from theorem \ref{bundec} that $c-c'\in C^n(\mathcal{D}_k(A_k))$. Since $A_k\subset\aff_k(N)$ it is easy to that $c$ is translation equivalent with $c'$ with translations from $A_k$.
\end{proof}


\begin{thebibliography}{99}

\bibitem{AD}A Censor, D. Grandini,~{\it Borel and Continuous Systems of Measures},~arXiv:1004.3750

\bibitem{ESz}
G. Elek, B. Szegedy: {\it A measure-theoretic approach to the theory of dense hypergraphs},~arXiv:0810.4062

\bibitem{Gl}
A. M. Gleason, {\it Spaces With a Compact Lie Group of Transformations},
Proc. Amer. Math. Soc., 1 (1950), no. 1, 35--43

\bibitem{Gow}
T. Gowers, {\it A new proof of Szemer\'edi's theorem},
Geom. Funct. Anal. 11 (2001), no 3, 465-588

\bibitem{Gow2}
T. Gowers, {\it Fourier analysis and Szemer\'edi's theorem}, Proceedings of the International Congress of Mathematics, Vol. I (Berlin 1998).

\bibitem{GrTao} B. Green, T. Tao,~ {\it An inverse theorem for the Gowers $U_3(G)$ norm} Proc. Edinb. Math. Soc. (2) 51 (2008), no. 1, 73--153.

\bibitem{GrTao2} B. Green, T. Tao,~{\it Linear Equations in Primes},~~Annals of Mathematics, 171 (2010), 1753-1850

\bibitem{GTZ} B. Green, T. Tao, T. Ziegler,~{\it An inverse theorem for the Gowers $U^{s+1}[N]$-norm },~~arXiv:1009.3998

\bibitem{Grom} M. Gromov,~{\it Almost flat manifolds},~J. Differential Geom. 13 (1978), no. 2, 231--241.

\bibitem {HKr}
B. Host, B. Kra,~{\it Nonconventional ergodic averages and nilmanifolds},~Ann. of Math. (2) 161 (2005), no. 1, 397--488

\bibitem {HKr2}
B. Host, B. Kra,~{\it Parallelepipeds, nilpotent groups and Gowers norms},~Bulletin de la Soci�t� Math�matique de France 136, fascicule 3 (2008), 405-437

\bibitem{Ke} Kechris,~ {\it Classical descriptive set theory}

\bibitem{Malc}A. I. Mal'cev,~{\it On a class of homogeneous spaces}, AMS Translation No. 39 (1951).

\bibitem{Miln} J. Milnor,~{\it Curvatures of left invariant metrics on Lie groups},~Advances in Math. 21 (1976), no. 3, 293--329.

\bibitem {Rag} M.S. Raghunathan,~{\it Discrete Subgroups in Lie Groups},~Springer Verlag 1972

\bibitem{Sz1}B. Szegedy,~{\it Higher order Fourier analysis as an algebraic theory I.},~~arXiv:0903.0897

\bibitem{Sz2}B. Szegedy,~{\it Higher order Fourier analysis as an algebraic theory II.},~~arXiv:0911.1157

\bibitem{Sz3}B. Szegedy,~{\it Higher order Fourier analysis as an algebraic theory III.},~~arXiv:1001.4282

\bibitem{Sz0}B. Szegedy,~{\it Gowers norms, regularization and limits of functions on abelian groups},~~arXiv:1010.6211

\bibitem{Sz4}B. Szegedy,~{\it On higher order Fourier analysis},~~arXiv:1203.2260

\bibitem{Sz5}B. Szegedy,~{\it Structure of finite nilspaces and inverse theorems for the Gowers norms in bounded exponent groups},~~arXiv:1011.1057

\bibitem{Zieg}
T. Ziegler,~{\it Universal characteristic factors and F\"urstenberg averages},~J. Amer. Math. Soc. 20 (2007), no. 1, 53--97

\end{thebibliography}
\end{document}